\definecolor{sepia}{rgb}{0,0.3,0.2}
\newtheorem{prop}{Proposition}
\newtheorem{coro}{Corollary}
\newtheorem{cond}{Condition}
\newtheorem{rmk}{Remark}
\newtheorem{lm}{Lemma}
\definecolor{colo}{rgb}{0,0,0.5}
\DeclareMathOperator{\ind}{\perp \!\!\! \perp}
\DeclareMathOperator{\argmin}{arg\,min}
\DeclareMathOperator{\argmax}{arg\,max}
\DeclareMathOperator{\vecc}{vec}
\DeclareMathOperator{\Tr}{Tr}
\DeclareMathOperator{\V}{Var}
\DeclareMathOperator{\pen}{pen}
\DeclareMathOperator{\cov}{cov}
\DeclareMathOperator{\mat}{mat}
\newcommand{\R}{\mathbb{R}}
\newcommand{\N}{\mathbb{N}}
\newcommand{\E}{\mathbb{E}}
\newcommand{\PP}{\mathbb{P}}
\newcommand{\be}{\begin{eqnarray}}
\newcommand{\ee}{\end{eqnarray}}
\newcommand{\beq}{\begin{eqnarray*}}
\newcommand{\eeq}{\end{eqnarray*}}
\begin{document}
\author[1]{Baptiste Broto}

\author[2]{Fran\c{c}ois Bachoc}

\author[3]{Laura Clouvel}

\author[4]{Jean-Marc Martinez}

\affil[1]{CEA, LIST, Universit\'e Paris-Saclay, F-91120, Palaiseau, France}
\affil[2]{Institut de Math\'ematiques de Toulouse, Universit\'e Paul Sabatier, F-31062 Toulouse, France}
\affil[3]{CEA, SERMA, Universit\'e Paris-Saclay, F-91191 Gif-sur-Yvette, France}
\affil[4]{CEA, DEN-STMF, Universit\'e Paris-Saclay, F-91191 Gif-sur-Yvette, France}

\title{Block-diagonal covariance estimation and application to the Shapley effects in sensitivity analysis}
\date\today

\maketitle

\begin{abstract}
    In this paper, we estimate a block-diagonal covariance matrix from Gaussian variables in high dimension. We prove that, under some mild assumptions, we find the block-diagonal structure of the matrix with probability that goes to one. We deduce estimators of the covariance matrix that are as accurate as if the block-diagonal structure where known, with numerical applications. We also prove the asymptotic efficiency of one of these estimators in fixed dimension. Then, we apply these estimators for the computation of sensitivity indices, namely the Shapley effects, in the Gaussian linear framework. We derive estimators of the Shapley effects in high dimension with a relative error that converges to 0 at the parametric rate, up to a logarithm factor.  Finally, we apply the Shapley effects estimators on nuclear data.
\end{abstract}


\section{Introduction}

Sensitivity analysis, and particularly sensitivity indices, have became important tools in applied sciences. The aim of sensitivity indices is to quantify the impact of the input variables $X_1,...,X_p$ on the output $Y$ of a model. This information improves the interpretability of the model. In global sensitivity analysis, the input variables are assumed to be random variables. In this framework, the Sobol indices \cite{sobol_sensitivity_1993} were the first suggested indices to be applicable to general classes of models. Nevertheless, one of the most important limitations of these indices is the assumption of independence between the input variables. Hence, many variants of the Sobol indices have been suggested for dependent input variables \cite{mara_variance-based_2012,chastaing_indices_2013,mara_non-parametric_2015,chastaing2012generalized}.

Recently, Owen defined new sensitivity indices in \cite{owen_sobol_2014} called "Shapley effects". These sensitivity indices have many advantages over the Sobol indices for dependent inputs \cite{iooss_shapley_2017}. For general models, \cite{song_shapley_2016} suggested an estimator of the Shapley effects. However, this estimation requires to be able to generate samples with the conditional distributions of the input variables. A consistent estimator has been suggested in \cite{broto_variance_2018}, requiring only a sample of the inputs-output. This estimator uses nearest-neighbours methods to mimic the generation of samples with these conditional distributions.\bigskip

In this paper, we focus on Gaussian linear models in large dimension. Gaussian linear models are widely used as numerical models of physical phenomena (see for example \cite{kawano_evaluation_2006,hammer_approximate_2011,rosti_linear_2004}). Indeed, uncertainties are often modelled as Gaussian variables and an unknown function $Y=f(X_1,...,X_p)$ is commonly approximated by its linear approximation around $\E(X)$. Furthermore, high-dimensional Gaussian linear models are widely studied in statistics \cite{buhlmann2011statistics,giraud2014introduction}.
In this particular case of Gaussian linear models, the theoretical values of the Shapley effects can be computed explicitly \cite{owen_shapley_2017,iooss_shapley_2017,broto_sensitivity_2019}.
These values depend on the covariance matrix $\Sigma$ of the inputs and on the coefficients $\beta$ of the linear model.

In this paper, we assume that we observe an i.i.d. sample of the input Gaussian variables in high dimension and that the true covariance matrix $\Sigma$ and the vector $\beta$ are unknown.  In this setting, the Shapley effects need to be estimated, replacing the true vector $\beta$ by its estimation and the theoretical covariance matrix $\Sigma$ by an estimated covariance matrix.

There exists a fair amount of work on high-dimensional covariance matrix estimation.
Many researchers took an interest in the empirical covariance matrix in high dimension \cite{Mar_enko_1967,wachter1978,smallest_1985_silverstein,bai_spectral_2010}.  For particular covariance matrices, different estimators than the empirical covariance can be preferred. For some well-conditioned families of covariance matrices, \cite{bickel_regularized_2008} suggests a banded version of the empirical covariance matrix, and several works address the problem of estimating a sparse covariance matrix \cite{huang_covariance_2006,lam_sparsistency_2009, el_karoui_operator_2008}. \bigskip

However, in general, given a high-dimensional covariance matrix, the computation cost of the corresponding Shapley effects grows exponentially with the dimension.
The only setting where a procedure to compute the Shapley effect with a non-exponential cost is the setting of block-diagonal matrices
\cite{broto_sensitivity_2019}. 
Hence, in high dimension, block-diagonal covariance matrices are a very favorable setting for the estimation of the Shapley effects. Thus, we address the estimation of high-dimensional block-diagonal covariance matrices in this paper. In contrasts, we remark that the above methods
are not relevant for the estimation of the Shapley effects, since they do not provide block-diagonal matrices. \bigskip

In our framework, we assume that the true covariance matrix is block-diagonal and we want to estimate this matrix with a similar structure to compute the deduced Shapley effects. Some works address the block-diagonal estimation of covariance matrices. \cite{perrot-dockes_estimation_2018} gives a numerical procedure to estimate such covariance matrices and \cite{hyodo_testing_2015} suggests a test to verify the independence of the blocks.
A block-diagonal estimator of the covariance matrix is proposed in \cite{devijver_diagonal_2018}. The authors of \cite{devijver_diagonal_2018} choose a more general framework, without assuming that the true covariance matrix is block-diagonal. 
They obtain the estimated block-diagonal structure by thresholding the empirical correlation matrix. They also give theoretical guaranties by bounding the average of the squared Hellinger distance between the estimated probability density function and the true one. This bound depends on the dimension $p$ and the sample size $n$. When $p\slash n$ converges to some constant $y\in ]0,1[$, this bound is larger than $1$ and is no longer relevant as the Hellinger distance is always smaller than $1$. \bigskip

Here, we focus on the high dimension setting, when $p\slash n$ converges to some constant $y\in ]0,1[$, and when the true covariance matrix is assumed to be block-diagonal. We give different estimators of the block-diagonal structure and we show that their complexity is small. Then, we provide new asymptotic results for these estimators. Under mild conditions, we show that the estimators of the block structure are equal to the true block structure, with probability converging to one. Furthermore, the square Frobenius distance between the estimated covariance matrices and the true one, normalized by $p$, converge to zero at rate $1/n$. 
Thus, our work complements the one of \cite{devijver_diagonal_2018}. We also study the fixed-dimensional setting, where we show that one of our suggested estimators is asymptotically efficient.
\bigskip

From the estimated block-diagonal covariance matrices, we deduce estimators of the Shapley effects  in the high-dimensional linear Gaussian framework, with reduced computational cost. We recall that in high dimension, the computation of the Shapley effects requires that the corresponding covariance matrix be block-diagonal. We show that the relative estimation error of these estimators goes to zero at the parametric rate $1/n^{1/2}$, up to a logarithm factor, even if the linear model is estimated from noisy observations. \bigskip

Our convergence results are confirmed by numerical experiments. We also apply our algorithm to semi-generated data from nuclear applications.
\bigskip

The rest of the paper is organized as follows. In Section \ref{section_cov}, we focus on the block-diagonal estimation of the block-diagonal covariance matrix. In Section \ref{section_shap}, we apply this block-diagonal estimation of the covariance matrix to deduce Shapley effects estimators. Section \ref{section_appli} is devoted to the numerical application {\color{colo} on nuclear data}, and the conclusion is given in Section \ref{section_conclu}. All the proofs are postponed to the appendix.

\section{Estimation of block-diagonal covariance matrices}\label{section_cov}

\subsection{Problem and notation}\label{sec_problem}

We assume that we observe  $(X^{(l)})_{l\in [1:n]}$,  an i.i.d. sample with distribution $\mathcal{N}(\mu,\Sigma)$, where $\mu \in \R^p$ and $\Sigma$ are not known. Here, $[1:n]$ denotes the set of the integers from 1 to $n$. We assume that $\Sigma=(\sigma_{ij})_{i,j\in [1:p]} \in S_p^{++}(\R)$ (the set of the symmetric positive definite matrices) and has a block-diagonal decomposition. To be more precise on this block-diagonal decomposition, we need to introduce some notation.

Let us write $\mathcal{P}_p$ the set of all the partitions of $[1:p]$.
We endow the set $\mathcal{P}_p$ with the following partial order. If $B, B' \in \mathcal{P}_p$, we say that $B$ is finer than $B'$, and we write $B\leq B'$, if for all $A \in B'$, there exists $A_1,...,A_i \in B$ such that $A=\bigsqcup_{j=1}^i A_j$. We also compare the elements of a partition $B \in \mathcal{P}_p$ with their smallest element; that enables us to talk about "the $k$-th element" of $B$.
If $B\in \mathcal{P}_p$ and $a_1,...,a_i \in [1:p]$, we write $(a_1,...,a_i)\in B$ if there exists $A \in B$ such that $\{a_1,...,a_i\} \subset A$ (in other words, if $a_1,...,a_i$ are in the same group of $B$).
If $\Gamma \in S_p^{++}(\R)$ with $\Gamma=(\gamma_{ij})_{i,j\in [1:p]}$ and if $B\in \mathcal{P}_p$, we define $\Gamma_B$ by
$$
(\Gamma_{B})_{i,j}=\left\{ \begin{array}{ll}
\gamma_{ij} & \text{if }(i,j)\in B\\
0 & \text{otherwise.}
\end{array}\right.
$$
Let us define
$$
S_p^{++}(\R,B):=\{\Gamma \in S_p^{++}(\R)\;|\; \Gamma=\Gamma_B,\;\text{and } \forall B'< B,\; \Gamma \neq \Gamma_{B'} \},
$$
where we define $B'<B$ if $B'\leq B$ and if $B'\neq B$.
Thus $S_p^{++}(\R)=\bigsqcup_{B \in \mathcal{P}_p} S_p^{++}(\R,B)$ and for all $\Gamma \in S_p^{++}(\R)$, we can define an unique $B(\Gamma) \in \mathcal{P}_p$ such that $\Gamma \in S_p^{++}(\R,B(\Gamma))$. 
Here, we assume that $\Sigma \in S_p^{++}(\R,B^*)$, i.e. $B^*$ is the finest decomposition of $\Sigma$, i.e. $B(\Sigma)=B^*$. We say that $\Sigma$ has a block-diagonal decomposition $B^*$.\\

We also write
$$
\overline{X}_n:=\frac{1}{n}\sum_{l=1}^n X^{(l)},
$$
and
$$
S_n:=\frac{1}{n}\sum_{l=1}^n(X^{(l)}-\overline{X}_n) (X^{(l)}-\overline{X}_n)^T,
$$
which are the empirical estimators of $\mu$ and $\Sigma$. To simplify notation, we write $\overline{X}$ for $\overline{X}_n$ and $S$ for $S_n$ (the dependency on $n$ is implicit).
We know that, for all $\Gamma\in S_p^{++}(\R)$, $\overline{X}$ maximizes the likelihood $L_{\Gamma, m}(X^{(1)},...,X^{(n)})$ over the mean parameter $m$, where
$$
L_{\Gamma, m}(X^{(1)},...,X^{(n)}):=\frac{1}{(2\pi)^{\frac{n}{2}}|\Gamma|^{\frac{1}{2}}}\exp\left(- \frac{1}{2}\sum_{l=1}^n (X^{(l)}-m)^T \Gamma^{-1}(X^{(l)}-m)  \right),
$$
and $|\Gamma|$ is the determinant of $\Gamma$.
Thus, for all $\Gamma \in S_p^{++}(\R)$, we define
$$
l_{\Gamma}:= -\frac{2}{ p} \log \left(L_{\Gamma, \overline{X}}(X^{(1)},...,X^{(n)}) \right) - \frac{n}{p}\log(2\pi)= \frac{1}{p}\left(\log|\Gamma|+\Tr(\Gamma^{-1} S) \right).
$$
As we assume that the true covariance matrix is block-diagonal, we consider a block-diagonal promoting penalization of the form
$$
\pen(\Gamma):=\pen(B(\Gamma)):=\sum_{i=1}^K p_k^2,
$$
if $B(\Gamma)=\{B_1,...,B_K\}$ and $|B_k|=p_k$ for all $k \in [1:K]$.
We consider the penalized likelihood criterion
$$
\Phi:\begin{array}{ccc}
S_p^{++}(\R)& \longrightarrow & \R \\
\Gamma & \longmapsto &  l_\Gamma +\kappa \pen(\Gamma),
\end{array}
$$
where $\kappa\geq 0$. In this work, we suggest to estimate $\Sigma$ by the minimizer of $\Phi$, for some choice of penalisation $\kappa$. First, we show in Proposition \ref{prop_min_phi} that a minimizer of $\Phi$ can only be a block-diagonal decomposition of $S$.

\begin{prop}\label{prop_min_phi}
If $\Gamma$ is a minimizer of $\Phi$, then, there exists $ B\in \mathcal{P}_p$ such that $\Gamma=S_B$.
\end{prop}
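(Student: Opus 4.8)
The plan is to exploit the fact that the penalty $\pen(\Gamma)$ depends on $\Gamma$ only through its block structure $B(\Gamma)$. Consequently, once the block structure of a candidate minimizer is fixed, minimizing $\Phi$ reduces to minimizing the likelihood term $l_\Gamma$ over all covariance matrices sharing that structure. The heart of the argument is therefore the classical observation that, for a fixed block-diagonal pattern $B$, the Gaussian log-likelihood is maximized by the block-diagonal truncation $S_B$ of the empirical covariance $S$.

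First I would establish the block decomposition of $l_\Gamma$. If $\Gamma$ is block-diagonal with respect to $B=\{B_1,\ldots,B_K\}$, write $\Gamma_k$ and $S_k$ for the $k$-th diagonal blocks of $\Gamma$ and of $S$. Since both $\log|\cdot|$ and $\Tr(\cdot^{-1}S)$ split across blocks, one gets $p\, l_\Gamma=\sum_{k=1}^K\bigl(\log|\Gamma_k|+\Tr(\Gamma_k^{-1}S_k)\bigr)$, so that $l$ can be minimized block by block.

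The key step is then to show that each map $g(A):=\log|A|+\Tr(A^{-1}M)$, defined on $S_{p_k}^{++}(\R)$ with $M=S_k$ positive definite, attains its minimum uniquely at $A=M$. Writing $\lambda_1,\ldots,\lambda_{p_k}$ for the (positive) eigenvalues of $A^{-1}M$, one has $g(A)-g(M)=\sum_i(\lambda_i-\log\lambda_i-1)$, and the elementary inequality $t-\log t-1\geq 0$, with equality only at $t=1$, forces all $\lambda_i=1$, hence $A=M$. This gives that $S_B$ is the unique minimizer of $l$ among matrices that are block-diagonal with respect to $B$; here one uses that $S\in S_p^{++}(\R)$, which holds almost surely once $n>p$ (and hence eventually in the regime $p\slash n\to y\in\,]0,1[$), so that each $S_k$, being a principal submatrix, is positive definite and $S_B\in S_p^{++}(\R)$.

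Finally I would combine these facts. Let $\Gamma$ minimize $\Phi$ and set $B:=B(\Gamma)$, so that $\Gamma=\Gamma_B$ is block-diagonal with respect to $B$. By the previous step, $l_{S_B}\leq l_\Gamma$, with equality only if $\Gamma=S_B$. Moreover $B(S_B)\leq B$, so $\pen(S_B)\leq\pen(\Gamma)$. Adding these, $\Phi(S_B)\leq\Phi(\Gamma)$; since $\Gamma$ is a minimizer this must be an equality, which forces $l_{S_B}=l_\Gamma$ and hence $\Gamma=S_B$. The main obstacle is the uniqueness in the block minimization, namely the strictness of $t-\log t-1>0$ for $t\neq 1$, together with the bookkeeping ensuring $S_B$ is positive definite and that the penalty can only decrease under refinement of the partition.
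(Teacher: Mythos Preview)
Your proof is correct and follows essentially the same route as the paper: both reduce to the elementary inequality $t-\log t-1\geq 0$ (with equality only at $t=1$) applied to the eigenvalues of $\Gamma^{-1}S_B$, establishing that $S_B$ is the unique minimizer of $l$ among matrices block-diagonal with respect to $B$. The only cosmetic differences are that the paper applies the eigenvalue argument to $\Gamma_B^{-1/2}S_B\Gamma_B^{-1/2}$ globally rather than block by block, and that your concluding step is slightly more explicit: you use $B(S_B)\leq B(\Gamma)$ and the monotonicity $\pen(B')\leq\pen(B)$ for $B'\leq B$ to compare $\Phi(S_B)$ with $\Phi(\Gamma)$ directly, whereas the paper simply observes that the penalty is constant on each stratum $S_p^{++}(\R,B)$ and concludes from there.
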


Hence, the minimization problem on $S_p^{++}(\R)$ becomes a minimization problem on the finite set $\{ S_B,\; B\in \mathcal{P}_{p} \}$. So, we define $\Psi(B):= \Phi(S_B)$ and we suggest to estimate $B^*$ by
\begin{equation}\label{eq_Bhat}
\widehat{B}_{tot}:= \underset{B \in \mathcal{P}_p}{\argmin} \Psi(B),
\end{equation}
as the minimum structure of the penalized log-likelihood. In this paper, we study theoretically this estimator of $B^*$. However, it is unimplementable in high dimension since the number of partitions $B \in \mathcal{P}_p$ is too large. Hence, we will also define other estimators less costly, and study them theoretically.

\subsection{Convergence in high dimension}\label{sec_conv_high}

\subsubsection{Assumptions}\label{sec_assum}
In Section \ref{sec_conv_high}, we assume that $p$ and $n$ go to infinity. The true covariance matrix $\Sigma$ is not constant and depends on $n$ (or $p$). Nevertheless, to simplify notation, we do not write the dependency on $n$. 
In all Section \ref{sec_conv_high}, we choose a penalisation coefficient $\kappa=\frac{1}{pn^\delta}$ for a fixed $\delta \in ]1\slash 2,1[$.

We also add the following assumptions on $\Sigma$ along Section \ref{sec_conv_high}.
\begin{cond}\label{cond1}
$p \slash n \longrightarrow y \in ]0,1[$.
\end{cond}
\begin{cond}\label{cond2}
There exist $\lambda_{\inf}>0$ and $\lambda_{\sup}<+\infty$ such that, for all $n$, the eigenvalues of $\Sigma$ are in $[\lambda_{\inf},\lambda_{\sup}]$.
\end{cond}
\begin{cond}\label{cond3}
There exists $m\in \N^*$ such that for all $n$, all the blocks of $\Sigma$ are smaller than $m$, i.e. $\forall A \in B^*$, we have $|A| \leq m$.
\end{cond}
For a $q\times q$ matrix $M=(m_{ij})_{(i,j)\in [1:q]^2}$, we let $\|M\|_{\max}=\max_{(i,j)\in [1:q]^2}|m_{ij}|$.
\begin{cond}\label{cond4}
There exists $a>0$ such that for all $n$ and for all $ B < B^*$, we have $ \| \Sigma_{B}-\Sigma\|_{\max} \geq  a n^{-1\slash 4}$.
\end{cond}
These four mild assumptions are discussed in Section \ref{sec_assump_discuss}.  However, we also focus on the case when Condition 4 does not hold. We will provide similar results, both when assuming Conditions \ref{cond1} to \ref{cond4}, and when only Conditions \ref{cond1}, \ref{cond2} and \ref{cond3} hold.

\subsubsection{Convergence of $\widehat{B}$ and reduction of the cost}\label{section_conv_cost}

Now that we defined our estimator $\widehat{B}_{tot}$ of the true decomposition $B^*$ in Equation \eqref{eq_Bhat} and we added assumptions in Section \ref{sec_assum}, we give the convergence of $\widehat{B}$ in Proposition \ref{prop_conclu}. Although $\widehat{B}_{tot}$ is not computable in practice, its convergence remains interesting to strengthen the choice of the penalized likelihood criterion and will be useful to prove the convergence of more practical estimators. In Section \ref{sec_conv_high}, all the limits statements are given as $n,p \to +\infty$.

\begin{prop}\label{prop_conclu}
Under Conditions 1 to 4 and for a fixed $\delta \in ]1\slash 2 , 1[$, we have
$$
\PP\left( \widehat{B}_{tot}=B^* \right)\longrightarrow 1. 
$$
\end{prop}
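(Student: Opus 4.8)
The plan is to prove that, on a high-probability event, $\Psi(B) > \Psi(B^*)$ for \emph{every} $B \neq B^*$, so that the unique minimizer $\widehat{B}_{tot}$ equals $B^*$ there. First I would simplify $\Psi$. By Proposition \ref{prop_min_phi} it suffices to work on $\{S_B : B \in \mathcal{P}_p\}$, and since $S_B^{-1}$ is block-diagonal with blocks $S_A^{-1}$, one gets $\Tr(S_B^{-1}S) = \sum_{A\in B}\Tr(S_A^{-1}S_A) = p$ and $\log|S_B| = \sum_{A\in B}\log|S_A|$. Hence $\Psi(B) = 1 + \frac{1}{p}\sum_{A\in B}\log|S_A| + \kappa\,\pen(B)$, and the constant drops out of every comparison. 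The whole proof then reduces to controlling, for each competitor $B$, the trade-off between the log-determinant term and the penalty $\kappa\sum_A|A|^2$ with $\kappa = 1/(pn^\delta)$.

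Next I would fix a good event on which (i) $\|S-\Sigma\|_{\max} \le c_0\sqrt{\log p/n}$ and (ii) all eigenvalues of $S$ lie in a fixed compact $[\underline c,\overline c]\subset(0,\infty)$; by Gaussian concentration for (i) and, using Conditions \ref{cond1}--\ref{cond2} (in particular $y<1$), Bai--Yin/Davidson--Szarek type bounds for (ii), this event has probability tending to $1$. By eigenvalue interlacing, (ii) transfers to every principal submatrix $S_A$, so I can record two deterministic facts valid for all partitions at once: Fischer's inequality, which makes $\sum_{A}\log|S_A|$ nonincreasing under coarsening and additive across blocks; and a two-block estimate obtained from the Schur complement, namely $\frac{1}{\overline c^{\,2}}\|S_{A_1,A_2}\|_{\max}^2 \le \log|S_{A_1}|+\log|S_{A_2}|-\log|S_{A_1\cup A_2}| \le \frac{\overline c}{\underline c^{\,3}}\|S_{A_1,A_2}\|_F^2$, where $S_{A_1,A_2}$ is the off-diagonal block. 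These replace any union bound over partitions by per-block bounds.

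I would then treat a general $B\neq B^*$ through the common refinement $V := B\wedge B^*$ (whose blocks are the nonempty $B_i\cap B_j^*$), splitting $\Psi(B)-\Psi(B^*) = \bigl(\Psi(B)-\Psi(V)\bigr) + \bigl(\Psi(V)-\Psi(B^*)\bigr)$. For the second bracket, $V\le B^*$, so the log-determinant excess decomposes over the true blocks that $V$ splits; applying Condition \ref{cond4} to the corresponding single two-block split shows each split block carries a genuine correlation $\ge a n^{-1/4}$, hence (by the lower estimate and (i)) a log-det excess $\gtrsim n^{-1/2}$, which beats the penalty saving $\lesssim n^{-\delta}$ per block precisely because $\delta>1/2$. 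For the first bracket, $V\le B$ and the merges turning $V$ into $B$ only glue pieces $B_i\cap B^*_{j}$ coming from distinct true blocks, which are genuinely independent; thus the log-det change is pure noise, bounded per merge by $\tfrac{\overline c}{\underline c^{3}}\|S_{A_1,A_2}\|_F^2 \lesssim |A_1||A_2|\log p/n$, while the penalty grows by $2|A_1||A_2|\kappa$. Both scale like $|A_1||A_2|$, so their ratio is the size-independent quantity $\asymp n^{1-\delta}/\log p\to\infty$, and the penalty wins because $\delta<1$. Each bracket is therefore $\ge0$, and since $B\ne B^*$ forces strictness in at least one of them ($V<B^*$ or $V<B$), we obtain $\Psi(B)>\Psi(B^*)$ uniformly on the good event.

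The main obstacle is exactly this uniformity over the super-exponentially many partitions in $\mathcal{P}_p$, for which no naive union bound is available. The mechanism that defeats it is the reduction to elementary per-block and per-merge contributions controlled by a \emph{single} good event, together with the cancellation of the block sizes $|A_1||A_2|$ in the decisive ratio and the matched exponents ($\delta>1/2$ to preserve true correlations of size $n^{-1/4}$, $\delta<1$ to suppress spurious merges). The most delicate technical point is obtaining the uniform eigenvalue bounds in (ii) on arbitrarily large merged sub-blocks, so that the constants in the two-block Schur estimate do not degrade as blocks grow; this is where Condition \ref{cond1} ($y<1$) is essential.
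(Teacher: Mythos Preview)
Your proof is correct and follows essentially the same strategy as the paper: decompose every competitor via the meet $B\cap B^*$, and reduce both brackets to per-entry bounds controlled on a single high-probability event, thereby avoiding any union bound over $\mathcal P_p$. The only technical difference is that the paper obtains the quadratic upper and lower bounds on log-determinant differences by a second-order Taylor expansion of $\log\det$ with integral remainder (its Lemma~\ref{lm_etape1}), whereas you obtain the same bounds from the Schur complement formula and Fischer's inequality; both routes lead to the identical decisive comparisons $n^{-1/2}$ vs.\ $n^{-\delta}$ (for refinements of $B^*$, using Condition~\ref{cond4} and $\delta>1/2$) and $(\log p)/n$ vs.\ $n^{-\delta}$ (for spurious coarsenings, using $\delta<1$).
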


Hence, under Conditions 1 to 4, the estimator $\widehat{B}_{tot}$ is equal to the true decomposition $B^*$ with probability which goes to one. When Condition 4 does not hold, we can not state such a convergence result but we get a weaker result in Proposition \ref{prop_Btot_Ba}. In this case, we need to define $B({\alpha})$ as the partition given by thresholding $\Sigma$ by $n^{-\alpha}$. In other words, $B({\alpha})$ is the smallest (or finest) partition $B$ such that $\|\Sigma_B -\Sigma\|_{\max}\leq n^{-\alpha}$.

\begin{prop}\label{prop_Btot_Ba}
Under Conditions 1, 2 and 3, for all $\alpha_1<\delta\slash 2$ and $\alpha_2>\delta\slash 2$, we have
$$
\PP\left( B(\alpha_1) \not >  \widehat{B}_{tot} \leq B(\alpha_2) \right) \longrightarrow 1.
$$
\end{prop}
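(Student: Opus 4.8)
The plan is to localise $\widehat{B}_{tot}$ by comparing it, on a favourable event, to the partitions obtained from it by a single split or a single merge. The favourable event (of probability tending to $1$) combines two deterministic facts: the max-norm concentration $\|S-\Sigma\|_{\max}\le\eta_n$ with $\eta_n\asymp\sqrt{\log p/n}$, obtained from the sub-exponential tails of the Gaussian entries of $S$ together with a union bound over the $p^2$ pairs; and a uniform spectral bound $\lambda_{\min}(S)\ge c_0>0$, $\lambda_{\max}(S)\le C_0<\infty$, coming from random matrix theory under Conditions \ref{cond1}--\ref{cond2}, which transfers to every principal submatrix $S_A$. Throughout I use that $\Tr(S_B^{-1}S)=p$ for every $B$, so minimising $\Psi$ is the same as minimising $\tfrac1p\sum_k\log|S_{B_k}|+\kappa\sum_k p_k^2$, and that merging two blocks $C,D$ changes $\Psi$ by $\tfrac1p\log\frac{|S_{C\cup D}|}{|S_C||S_D|}+2\kappa|C||D|$, where $\frac{|S_{C\cup D}|}{|S_C||S_D|}=|I-M|$ with $M=S_C^{-1}S_{CD}S_D^{-1}S_{DC}$ of spectrum in $[0,1)$ and $\Tr(M)\le c_0^{-2}\|S_{CD}\|_F^2$. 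Two structural observations are used repeatedly. First, since the off-block entries of $\Sigma$ are exactly zero, thresholding cannot connect two distinct true blocks, so $B(\alpha)\le B^*$ for every $\alpha$; in particular every block of $B(\alpha_1)$ and $B(\alpha_2)$ has size at most $m$. Second, from $\Psi(\widehat{B}_{tot})\le\Psi(B^*)$, the boundedness of the log-determinant term and $\kappa\,\pen(B^*)\le m/n^\delta\to0$, the penalty forces $\pen(\widehat{B}_{tot})=O(pn^\delta)$, hence every block of $\widehat{B}_{tot}$ has size $O(n^{(1+\delta)/2})=o(n/\log n)$ (using Condition \ref{cond1}).

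For the upper bound $\widehat{B}_{tot}\le B(\alpha_2)$ I argue by contradiction. If some block $\widehat{A}$ of $\widehat{B}_{tot}$ meets two blocks of $B(\alpha_2)$, split it into $C=\widehat{A}\cap A'$ for one block $A'$ of $B(\alpha_2)$ (so $|C|\le m$) and $D=\widehat{A}\setminus C$ (with $|D|=o(n/\log n)$). Every pair $(i,j)$ with $i\in C,\ j\in D$ lies in distinct blocks of $B(\alpha_2)$, so $|\sigma_{ij}|\le n^{-\alpha_2}$; moreover, for fixed $i$ at most $m$ such $j$ lie in the same true block as $i$ (entries of size $n^{-\alpha_2}+\eta_n$), while for all the others $\sigma_{ij}=0$ (entries only of size $\eta_n$). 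This refined count gives $\|S_{CD}\|_F^2\le m^2(n^{-\alpha_2}+\eta_n)^2+m|D|\eta_n^2=o(1)$, so $\Tr(M)\to0$, the spectrum of $M$ is eventually below $1/2$, and $\log|I-M|\ge-2\Tr(M)$. The change of $\Psi$ under the split is then at most $\tfrac{2}{p}\big(c_0^{-2}\|S_{CD}\|_F^2-|C||D|n^{-\delta}\big)$, and this is strictly negative for large $n$: the $m|D|\eta_n^2$ contribution is $o(n^{-\delta})\,|D|$ because $\delta<1$, while $(n^{-\alpha_2}+\eta_n)^2=o(n^{-\delta})$ because $\alpha_2>\delta/2$, so it is dominated by $|C||D|n^{-\delta}\ge|D|n^{-\delta}$. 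This contradicts the minimality of $\widehat{B}_{tot}$.

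For the lower bound $B(\alpha_1)\not>\widehat{B}_{tot}$, suppose instead $\widehat{B}_{tot}<B(\alpha_1)$. Then $\widehat{B}_{tot}\le B(\alpha_1)\le B^*$, so now all blocks of $\widehat{B}_{tot}$ have size at most $m$; and since the inequality is strict, some block $A'$ of $B(\alpha_1)$ is cut. As $A'$ is connected through pairs with $|\sigma_{ij}|>n^{-\alpha_1}$, the cut must separate such a pair: there are $i\in C$, $j\in D$ in distinct blocks $C,D$ of $\widehat{B}_{tot}$ (both of size $\le m$) with $|\sigma_{ij}|>n^{-\alpha_1}$. On the favourable event $|S_{ij}|\ge n^{-\alpha_1}-\eta_n\ge\tfrac12 n^{-\alpha_1}$, whence the largest squared canonical correlation obeys $\mu_{\max}\ge r_{ij}^2\ge n^{-2\alpha_1}/(4C_0^2)$ and $\log|I-M|\le-\mu_{\max}$. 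Merging $C$ and $D$ therefore changes $\Psi$ by at most $\tfrac1p\big(-n^{-2\alpha_1}/(4C_0^2)+2m^2 n^{-\delta}\big)$, which is negative for large $n$ because $\alpha_1<\delta/2$ gives $n^{\delta-2\alpha_1}\to\infty$; again a contradiction with minimality.

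Combining the two arguments on the favourable event, both inclusions hold simultaneously for large $n$, and since that event has probability tending to $1$ this yields $\PP\big(B(\alpha_1)\not>\widehat{B}_{tot}\le B(\alpha_2)\big)\to1$. The delicate step is the upper bound: there $D$ can be large, and the empirical canonical correlations between $C$ and $D$ could a priori approach $1$ through pure overfitting even though the population correlation vanishes. Controlling this is exactly where the block-size bound $o(n/\log n)$ (from the penalty), the refined same-block/different-block splitting of $\|S_{CD}\|_F^2$, and the uniform spectral lower bound $c_0$ are indispensable, and where the threshold exponent $\delta/2$ is matched to the penalty level $\kappa=1/(pn^\delta)$.
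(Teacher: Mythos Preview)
Your proof is correct, but it takes a genuinely different route from the paper. The paper proceeds by a \emph{global} comparison: for every $B\not\le B(\alpha_2)$ it compares $\Psi(B)$ with $\Psi(B\cap B(\alpha_2))$, and for every $B<B(\alpha_1)$ it compares $\Psi(B)$ with $\Psi(B(\alpha_1))$. The key tool there is a second-order Taylor expansion of $\log|\cdot|$ (Lemma~\ref{lm_etape1}/its $B(\alpha)$ analogue), which sandwiches $l_{S_{B'}}-l_{S_B}$ between constant multiples of $\tfrac1p\|S_{B'}-S_B\|_F^2$; combined with the identity $\pen(B)-\pen(B\cap B(\alpha_2))=|\{(i,j)\in B\setminus B(\alpha_2)\}|$ this reduces both directions to the entrywise event ``every $s_{ij}$ with $(i,j)\notin B(\alpha_2)$ satisfies $s_{ij}^2<cn^{-\delta}$'' (and its dual for $B(\alpha_1)$), handled by a single Bernstein union bound over $p^2$ pairs. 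You instead argue \emph{locally} on the minimizer itself: one split (into $C$ of size $\le m$ and a possibly large $D$) for the upper bound, one merge (of two blocks of size $\le m$) for the lower bound, using the Schur-complement identity $|S_{C\cup D}|/|S_C||S_D|=|I-M|$ and canonical-correlation bounds on $\Tr(M)$ and $\mu_{\max}$.

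What each buys: the paper's projection $B\mapsto B\cap B(\alpha_2)$ is symmetric in all crossing pairs, so no a priori control on the block sizes of $\widehat{B}_{tot}$ is needed; your single split is asymmetric (only $|C|\le m$), which forces you to obtain the auxiliary bound $\max_k|\widehat{B}_{tot,k}|=O(n^{(1+\delta)/2})$ from $\Psi(\widehat{B}_{tot})\le\Psi(B^*)$ and the spectral bounds in order to control the $m|D|\eta_n^2$ contribution to $\|S_{CD}\|_F^2$. On the other hand, your argument is entirely elementary (no Taylor expansion, no integral remainder) and makes the role of the threshold $\delta/2$ very transparent through the two inequalities $n^{-2\alpha_2}=o(n^{-\delta})$ and $n^{-\delta}=o(n^{-2\alpha_1})$. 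Both routes land on the same quantitative comparison between cross-entry sizes and the penalty level $n^{-\delta}$; yours is more combinatorial, the paper's more analytic.
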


Thus, we defined a consistent estimator of $B^*$ that theoretically solves our problem of the lack of knowledge of the true decomposition $B^*$. However, computing $\widehat{B}_{tot}$ is very costly in practice. Indeed, the number of partitions of $[1:p]$ (the Bell number) is exponential in $p$. As in \cite{devijver_diagonal_2018}, we suggest to restrict our estimates of $B^*$ to the partitions given by thresholding the empirical correlation matrix $\widehat{C}:=(\widehat{C}_{ij})_{i,j\in [1:p]}$ where $\widehat{C}_{ij}:= s_{ij}\slash \sqrt{s_{ii} s_{jj}}$, with $S=(s_{ij})_{(i,j)\in [1:p]^2}$.
If $\lambda \in [0,1]$, let $B_\lambda$ be the finest partition of the thresholded empirical correlation matrix $\widehat{C}_\lambda:=(\widehat{C}_{ij} \mathds{1}_{|\widehat{C}_{i,j}|> \lambda})_{i,j\leq p}$. In other words, $B_\lambda:= B(\widehat{C}_\lambda)$. For some value $\lambda \in [0,1]$, $B_\lambda$ can be found by "Breath-First-Search" (BFS) \cite{lee_algorithm_1961}. Furthermore, we do not need to compute $B_\lambda$ for all $\lambda \in [0,1]$ and we suggest in the following three different choices of grids for $\lambda$.\bigskip

First, we suggest the grid $A_{\widehat{C}}:=\{|\widehat{C}_{ij}|\;|\;1\leq i<j\leq p \} $ and we define the estimator $\widehat{B}_{\widehat{C}}:= \underset{ B_\lambda\;|\; \lambda \in A_{\widehat{C}} }{\argmin}\Psi(B)$. This grid is the finest one because that gives all the partitions $\{B_\lambda|\; \lambda \in ]0,1[\}$. Almost surely, the coefficients $(\widehat{C}_{ij})_{i<j}$ are all different. Thus, when we increase the threshold to the next value of $A_{\widehat{C}}$, we only remove two symmetric coefficients from the empirical correlation matrix.

\begin{prop}\label{prop_comp1}
 The computational complexity of $\widehat{B}_{\widehat{C}}$ is $O(p^4)$.
\end{prop}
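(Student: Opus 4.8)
The plan is to exhibit an explicit algorithm computing $\widehat{B}_{\widehat{C}}$ and to bound its cost. The first observation I would make is that evaluating $\Psi$ on a block-diagonal candidate decouples across blocks. Since $n>p$ eventually under Condition \ref{cond1}, $S$ is a.s. positive definite, so every principal submatrix $S_{B_k}$ is invertible; writing $S_B$ in block form shows that $S_B^{-1}S$ has identity diagonal blocks, whence $\Tr(S_B^{-1}S)=\sum_k |B_k| = p$. Therefore
$$
\Psi(B) = \frac1p\sum_{k=1}^K \log|S_{B_k}| + 1 + \kappa \sum_{k=1}^K p_k^2,
$$
so that $\Psi(B)$ is determined by the per-block log-determinants and block sizes alone. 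Computing all the $\log|S_{B_k}|$ by Cholesky factorization costs $\sum_k O(p_k^3)$, and since $\sum_k p_k^3 \le (\sum_k p_k)^3 = p^3$, a single evaluation of $\Psi$ costs $O(p^3)$.

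Next I would reduce the number of partitions that actually need to be evaluated. Identifying each off-diagonal coefficient $|\widehat{C}_{ij}|$ with $i<j$ with the edge $\{i,j\}$, the partition $B_\lambda$ is exactly the decomposition into connected components of the graph keeping the edges with $|\widehat{C}_{ij}|>\lambda$. Sorting the $\binom{p}{2}=O(p^2)$ values of $A_{\widehat{C}}$, at cost $O(p^2\log p)$, and sweeping $\lambda$ from largest to smallest amounts to inserting the edges one at a time, in decreasing order of correlation, into an initially edgeless graph; maintaining the connected components under these insertions is a union--find computation costing $O(p^2\alpha(p))=O(p^2)$ over all $O(p^2)$ insertions. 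The key point is that $B_\lambda$ changes only when an inserted edge joins two distinct components, i.e.\ at a \emph{merge}. Since each merge decreases the number of components by one, starting from $p$ singletons there are at most $p-1$ merges, hence at most $p$ distinct partitions along the whole sweep.

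I would then conclude by combining the two bounds: the sweep visits at most $p$ distinct partitions, and $\Psi$ is evaluated at cost $O(p^3)$ per partition (either from scratch, or incrementally by recomputing only the single block created at each merge), for a total of $O(p^4)$; maintaining the running $\argmin$ adds only $O(1)$ per partition. The preprocessing --- forming $S$ and $\widehat{C}$ in $O(np^2)=O(p^3)$ operations under Condition \ref{cond1}, together with the sorting and union--find --- is dominated by this term, which yields the claimed $O(p^4)$.

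The main obstacle, and the crux of the argument, is avoiding the naive $O(p^5)$ bound that one gets by recomputing $\Psi$ from scratch at each of the $O(p^2)$ thresholds. This is resolved by the two structural facts above: that there are only $O(p)$ genuinely distinct partitions along the threshold path (a consequence of the union--find/merge structure), and that each evaluation of $\Psi$ is only $O(p^3)$ thanks to the block decoupling and the identity $\Tr(S_B^{-1}S)=p$. The delicate points are to justify that one may restrict attention to the merge events rather than to all thresholds, and that the per-block determinant cost telescopes to $O(p^3)$ via $\sum_k p_k^3 \le p^3$.
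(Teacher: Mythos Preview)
Your proof is correct and follows essentially the same approach as the paper: both rely on the fact that the threshold path produces at most $O(p)$ distinct partitions (you via counting union--find merges, the paper via monotonicity of $\lambda\mapsto B_\lambda$), and that each evaluation of $\Psi$ costs $O(p^3)$, yielding the $O(p^4)$ total. Your use of union--find in place of the paper's repeated BFS is a minor implementation refinement, and your explicit justification of $\Tr(S_B^{-1}S)=p$ and of $\sum_k p_k^3\le p^3$ spells out details the paper leaves implicit, but the structure of the argument is the same.
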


Using the rate of convergence of the estimated covariances and by Condition 4, we then suggest the estimator $\widehat{B}_{\lambda}:= B_{n^{-1\slash 3}}$, the partition of the empirical correlation matrix thresholded by $n^{-1\slash 3}$. With this threshold, we can not find all the partitions given by thresholded correlation matrix, but we only have to threshold by only one value.
\begin{prop}\label{prop_comp2}
The complexity of $\widehat{B}_\lambda$ is $O(p^2)$.
\end{prop}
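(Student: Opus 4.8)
The plan is to show that computing $\widehat{B}_\lambda = B_{n^{-1/3}}$ reduces to a single thresholding of the empirical correlation matrix followed by a single connected-components search, each costing $O(p^2)$, in contrast with $\widehat{B}_{\widehat{C}}$ which repeats such a search over the whole grid $A_{\widehat{C}}$.

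First I would translate the combinatorial definition of $B_\lambda = B(\widehat{C}_\lambda)$ into a graph problem. Recall that $B(\Gamma)$ is the finest partition $B$ of $[1:p]$ (for the order $\leq$ on $\mathcal{P}_p$) such that $\Gamma = \Gamma_{B}$. Introduce the undirected graph $G_\lambda$ on the vertex set $[1:p]$ having an edge $\{i,j\}$, with $i\neq j$, exactly when $|\widehat{C}_{ij}| > \lambda$, i.e. when $(\widehat{C}_\lambda)_{ij}\neq 0$. I claim that $B(\widehat{C}_\lambda)$ is precisely the partition of $[1:p]$ into the connected components of $G_\lambda$. Indeed, the condition $\Gamma = \Gamma_B$ forces $\gamma_{ij}=0$ whenever $i$ and $j$ lie in distinct blocks of $B$; equivalently, every connected component of the nonzero-pattern graph of $\Gamma$ is contained in a single block, i.e. each block is a union of connected components. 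The finest partition with this property, which is $B(\Gamma)$ by definition, is therefore exactly the connected-component partition, and one checks directly that $\Gamma=\Gamma_B$ holds for it. This is the content of the BFS construction of $B_\lambda$ recalled before Proposition \ref{prop_comp1}.

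Next I would bound the cost of each stage. Unlike $\widehat{B}_{\widehat{C}}$, which minimizes $\Psi$ over the grid $A_{\widehat{C}}$ of size $O(p^2)$ and hence repeats the connected-component computation many times, the estimator $\widehat{B}_\lambda$ uses the single fixed threshold $n^{-1/3}$; consequently no minimization over a grid and no evaluation of $\Psi$ is required, and one directly returns $B_{n^{-1/3}}$. Running BFS on $G_{n^{-1/3}}$ (letting BFS read the neighbours of a vertex $i$ directly from the $i$-th row of $\widehat{C}$, each entry being tested against the threshold) costs $O(p)$ per visited vertex; since each of the $p$ vertices is dequeued exactly once, the total cost is $O(p^2)$. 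Summing over the single thresholded search gives the announced $O(p^2)$ complexity, where, as for Proposition \ref{prop_comp1}, the cost is counted given the empirical correlation matrix $\widehat{C}$.

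I expect the only genuinely delicate point to be the identification of $B(\widehat{C}_\lambda)$ with the connected components of $G_\lambda$, that is, verifying that connectivity in the nonzero-pattern graph yields the \emph{finest} block-diagonal decomposition in the sense of $\leq$; the remainder is a routine accounting of the BFS cost, which is elementary once the single-threshold nature of $\widehat{B}_\lambda$ is observed.
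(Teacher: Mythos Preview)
Your proof is correct and follows essentially the same approach as the paper: the paper simply recalls that BFS on a graph $G=(V,E)$ costs $O(|V|+|E|)$ and observes that for the single threshold $n^{-1/3}$ this gives $O(p^2)$. Your version is more detailed (in particular you justify the identification of $B(\widehat C_\lambda)$ with the connected components and make explicit that no minimization of $\Psi$ is needed), but the underlying argument is the same.
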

One can see that reducing the grid of thresholds to one value reduces the complexity of the estimator of $B^*$. Finally, we suggest a third grid, in the c case when the maximal size of the groups $m$ is known.

Let $A_s:=\{ s \slash p, (s+1)\slash p,... , (p-1)\slash p, 1\}$, where $s$ is the smallest integer such that all the groups of $B_{s\slash p}$ have a cardinal smaller than $m$. The deduced estimator is $\widehat{B}_s:= \underset{ B_\lambda\;|\; \lambda \in A _{s}}{\argmin}\Psi(B)$. So, this grid is the set $\{ l\slash p\;|\; l\in [1:p]\}$ restricted to the thresholds that give fine enough partition (with groups of size smaller than $m$).

\begin{prop}\label{prop_comp3}
The complexity of $\widehat{B}_s$ is $O(p^2)$.
\end{prop}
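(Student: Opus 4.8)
The plan is to bound separately the number of thresholds in the grid $A_s$ and the cost incurred at each threshold, the crucial point being that, by construction, every partition inspected by $\widehat{B}_s$ has blocks of size at most $m$. For the count: since $A_s \subset \{l\slash p \;|\; l \in [1:p]\}$, we have $|A_s|\leq p$, so there are $O(p)$ candidate partitions $B_\lambda$, $\lambda \in A_s$, to evaluate. For the size bound: by definition of $s$ the partition $B_{s\slash p}$ has all blocks of cardinal $\leq m$, and every $\lambda \in A_s$ satisfies $\lambda \geq s\slash p$, hence $B_\lambda \leq B_{s\slash p}$, so the blocks of $B_\lambda$ are subsets of blocks of $B_{s\slash p}$ and also have cardinal $\leq m$.

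Next I would analyse the cost of evaluating $\Psi(B_\lambda)=\Phi(S_{B_\lambda})$ for a single $\lambda$. Writing $B_\lambda=\{B_1,\dots,B_K\}$ with $|B_k|=p_k$, the matrix $S_{B_\lambda}$ is block-diagonal with diagonal blocks $S^{(1)},\dots,S^{(K)}$ equal to the corresponding diagonal blocks of $S$, so $\log|S_{B_\lambda}|=\sum_k \log|S^{(k)}|$ and, since $(S^{(k)})^{-1}S^{(k)}=I_{p_k}$, one gets $\Tr(S_{B_\lambda}^{-1}S)=\sum_k p_k=p$. Hence $\Psi(B_\lambda)=1+\frac1p\sum_k \log|S^{(k)}|+\kappa\sum_k p_k^2$, and computing each $\log|S^{(k)}|$ by a Cholesky factorisation costs $O(p_k^3)$, for a total of $O(\sum_k p_k^3)$. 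Using the size bound $p_k\leq m$, we obtain $\sum_k p_k^3 \leq m^2 \sum_k p_k = m^2 p = O(p)$ and likewise $\sum_k p_k^2 \leq m p = O(p)$, so each evaluation of $\Psi$ costs $O(p)$.

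The same size bound controls the graph-theoretic part: since all components of $\widehat{C}_\lambda$ have size $\leq m$, every retained edge lies inside a block, so the number of such edges is $\sum_k \binom{p_k}{2}\leq \frac{m-1}{2}\,p=O(p)$. I would then organise the whole computation as a single downward sweep. After forming $\widehat C$ in $O(p^2)$ and bucketing its off-diagonal entries according to $\lfloor p\,|\widehat C_{ij}|\rfloor$ in one $O(p^2)$ pass, I sweep the thresholds $1,(p-1)\slash p,\dots$ downward while maintaining the current partition incrementally with a union-find structure, stopping at the last threshold before a block exceeds size $m$ (which simultaneously determines $s\slash p$). Along the entire sweep only $O(p)$ edges are ever inserted, so the union-find work is $O(p)$; and at each of the $O(p)$ grid values I extract the current partition in $O(p)$ and evaluate $\Psi$ in $O(p)$ as above, keeping the running argmin. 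Summing yields $O(p^2)+O(p)+O(p)\cdot O(p)=O(p^2)$.

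The main obstacle is to guarantee that the per-threshold cost is genuinely $O(p)$ and not $O(p^2)$: a naive implementation that rebuilds the thresholded graph directly from $\widehat C$ at each of the $O(p)$ grid values would rescan the full matrix every time, giving only $O(p^3)$. The point to argue carefully is that only the $O(p)$ edges surviving the threshold $s\slash p$ are ever relevant, so one $O(p^2)$ scan followed by an incremental union-find sweep suffices and avoids any further full-matrix scans. This is precisely where Condition \ref{cond3}, through the bound $m$ on the block sizes, is used twice at once: to keep the number of edges, hence the BFS and partition extraction, in $O(p)$, and to keep each block-determinant evaluation in $O(p)$.
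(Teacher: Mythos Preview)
Your proof is correct and follows the same overall two-phase decomposition as the paper: first bound $|A_s|\le p$ and establish that every candidate partition $B_\lambda$, $\lambda\in A_s$, has blocks of size at most $m$; then show that both computing $B_\lambda$ and evaluating $\Psi(B_\lambda)$ cost $O(p)$ each, for a total of $O(p^2)$. The paper obtains the partitions by running a fresh BFS at each grid value (claiming cost $O(p\,m^2)=O(p)$ per run for $l\ge s$, and one $O(p^2)$ run at $l=s-1$ to detect the first violation), whereas you maintain the partitions incrementally via a single $O(p^2)$ bucketing pass followed by a downward union-find sweep that processes only $O(p)$ edges before a block exceeds $m$. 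Your route is more explicit on the point you rightly flag as the main obstacle: the paper's per-threshold BFS bound $O(|V|+|E|)=O(p)$ tacitly assumes the thresholded adjacency structure is available at each grid value without an $O(p^2)$ rescan, which in effect requires the same one-time $O(p^2)$ preprocessing you spell out; your incremental union-find argument makes this rigorous without relying on that unstated step. The $\Psi$-evaluation analysis, using $\Tr(S_{B_\lambda}^{-1}S)=p$ and $\sum_k p_k^3\le m^2\sum_k p_k=m^2 p$, matches the paper's $O(p\,m^3)=O(p)$ per evaluation.
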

One can see that the complexity of this estimator is as small as the complexity of the previous estimator $\widehat{B}_{\lambda}$. Furthermore, it ensures that the estimated blocks are not too large, which was not the case with the previous estimator. However, the computation of $\widehat{B}_s$ requires the knowledge of $m$ while the other estimators do not.

\bigskip

Now that we have defined new estimators of $B^*$, we give their convergence in the following proposition.

\begin{prop}\label{prop_cost}
Let $\widehat{B}$ be either $\widehat{B}_{tot},\; \widehat{B}_{\widehat{C}},\; \widehat{B}_\lambda$ or $\widehat{B}_s$ indifferently. Under Conditions 1 to 4 and for a fixed $\delta \in ]1\slash 2 , 1[$, we have
$$
\PP\left( \widehat{B}=B^*\right) \longrightarrow 1.
$$
\end{prop}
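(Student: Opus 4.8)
The plan is to treat $\widehat{B}_{tot}$ separately --- it is already covered by Proposition \ref{prop_conclu} --- and to reduce the three thresholding estimators to the single statement that thresholding the empirical correlation matrix at level $n^{-1/3}$ already recovers $B^*$, i.e. $B_{n^{-1/3}} = B^*$ with probability tending to $1$. Granting this, $\widehat{B}_\lambda = B_{n^{-1/3}} = B^*$ is immediate. For the two grid-based estimators I would combine it with the elementary remark that if $B^*$ lies in the search grid while $\widehat{B}_{tot} = B^*$ is the global minimizer of $\Psi$ over $\mathcal{P}_p$, then the grid-restricted minimizer must also equal $B^*$ (uniqueness of the minimizer holding almost surely, as is already implicit in the definition of the $\argmin$ estimators, the values $\Psi(B)$ being a.s. pairwise distinct).

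To prove $B_{n^{-1/3}} = B^*$ I would first establish the uniform bound $\|\widehat{C} - C\|_{\max} = O_P(\sqrt{\log p / n})$, where $C_{ij} := \sigma_{ij}/\sqrt{\sigma_{ii}\sigma_{jj}}$; this follows from the sub-exponential concentration of the entries of $S$, a union bound over the $O(p^2)$ pairs, and the lower bound $\sigma_{ii} \geq \lambda_{\inf}$ from Condition \ref{cond2}. I then separate two scales. For $i,j$ in different blocks of $B^*$ one has $C_{ij} = 0$, hence $|\widehat{C}_{ij}| = O_P(\sqrt{\log p/n}) = o(n^{-1/3})$, so no between-block edge survives the threshold $n^{-1/3}$. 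Conversely, Condition \ref{cond4} forces the graph formed by the within-block entries with $|\sigma_{ij}| \geq a n^{-1/4}$ to keep every block of $B^*$ connected: were a block split by this graph into $A_1 \sqcup A_2$, the refinement $B < B^*$ separating $A_1$ from $A_2$ would satisfy $\|\Sigma_B - \Sigma\|_{\max} = \max_{i\in A_1, j \in A_2}|\sigma_{ij}| < a n^{-1/4}$, contradicting Condition \ref{cond4}. By Condition \ref{cond2} these bottleneck entries obey $|C_{ij}| \geq a n^{-1/4}/\lambda_{\sup}$, so $|\widehat{C}_{ij}| \geq (a/2\lambda_{\sup}) n^{-1/4} > n^{-1/3}$ with probability tending to $1$. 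Thus at the threshold $n^{-1/3}$ every block stays connected and no two blocks merge, giving $B_{n^{-1/3}} = B^*$.

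For $\widehat{B}_{\widehat{C}}$ the grid $A_{\widehat{C}}$ realizes every partition $\{B_\lambda \mid \lambda \in\ ]0,1[\,\}$, so in particular $B_{n^{-1/3}} = B^*$ belongs to it, and the reduction of the first paragraph applies. For $\widehat{B}_s$ the grid $A_s = \{l/p \mid l \in [s:p]\}$ is coarse (spacing $1/p$) and truncated from below, so I must show it still meets a threshold producing $B^*$. Here I would observe that $B_\lambda = B^*$ holds on the whole window $\lambda \in [\lambda_-, \lambda_+)$, where $\lambda_- := \max\{|\widehat{C}_{ij}| : i,j \text{ in different blocks}\} = O_P(\sqrt{\log p/n})$ and $\lambda_+$, the threshold at which some block first disconnects, satisfies $\lambda_+ \geq (a/2\lambda_{\sup}) n^{-1/4}$ by the previous paragraph. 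Since Condition \ref{cond1} gives $1/p = O(1/n) \ll n^{-1/4}$, this window has length $\gtrsim n^{-1/4} \gg 1/p$ and therefore contains a grid point $l/p$; as $B_{l/p} = B^*$ has all blocks of size $\leq m$ by Condition \ref{cond3}, the definition of $s$ yields $l \geq s$, i.e. $l/p \in A_s$. Hence $B^*$ lies in the grid and the reduction again closes the case.

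The main obstacle is the $\widehat{B}_s$ case, where the coarse, bottom-truncated grid forces a quantitative control of the width of the window $[\lambda_-, \lambda_+)$ on which the thresholded empirical correlation partition equals $B^*$. Everything hinges on the separation between the $n^{-1/2+o(1)}$ scale of the spurious between-block correlations and the $n^{-1/4}$ scale of the within-block bottleneck guaranteed by Condition \ref{cond4}, together with the fact that this window is far wider than the grid spacing $1/p \sim 1/(yn)$. The interlaced inequalities $\sqrt{\log p/n} \ll n^{-1/3} \ll n^{-1/4}$ and $1/p \ll n^{-1/4}$ that make the scale separation work are routine once Condition \ref{cond1} pins $p \sim yn$.
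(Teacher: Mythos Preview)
Your proposal is correct and follows essentially the same route as the paper. The paper packages the argument into three lemmas --- Lemma \ref{lm_seuil} showing $B_\lambda=B^*$ for any $\lambda$ in the window $[n^{-1/3},\,a n^{-1/4}/3\lambda_{\sup}(1+\sqrt{y})^2]$, Lemma \ref{lm_seuil_racinep} showing that any grid with spacing $<c/\sqrt{p}$ meets this window, and Lemma \ref{lm_Asn} handling the truncation at $s$ --- but the underlying scale separation $\sqrt{\log p/n}\ll n^{-1/3}\ll n^{-1/4}$ and the reduction ``$B^*$ in the grid $+$ Proposition \ref{prop_conclu} $\Rightarrow$ grid-argmin $=B^*$'' are exactly yours.
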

\bigskip

When Condition 4 is not satisfied, we do not study the convergence of the previous estimators. In this case, we suggest to estimate $B^*$ by $B_{n^{-\delta \slash 2}}$, which is the partition given by the empirical correlation matrix thresholded by $n^{-\delta \slash 2}$. The complexity of this estimator is $O(p^2)$, as for the previous estimator $\widehat{B}_\lambda=B_{n^{-1\slash 3}}$. We show the convergence of this estimator in Proposition \ref{prop_seuil_ba}.

\begin{prop}\label{prop_seuil_ba}
Under Conditions 1, 2 and 3, if $\alpha_1<\delta \slash 2$ and $\alpha_2> \delta \slash 2$,
$$
\PP\left(B(\alpha_1)\leq  B_{n^{-\delta\slash 2}} \leq  B(\alpha_2)\right)\longrightarrow 1.
$$
\end{prop}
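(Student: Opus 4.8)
The plan is to reduce both inclusions to a single uniform-concentration event for the empirical correlation matrix and then argue purely combinatorially on partitions. Throughout, write $C=(C_{ij})$ for the true correlation matrix, $C_{ij}=\sigma_{ij}/\sqrt{\sigma_{ii}\sigma_{jj}}$; since thresholding preserves the zero pattern of a symmetric matrix, it is convenient to view every partition appearing here as the family of connected components of a graph on $[1:p]$. Precisely, $B(\alpha)$ is the partition into connected components of the graph $H_\alpha$ whose edges are the pairs $\{i,j\}$ with $|\sigma_{ij}|>n^{-\alpha}$ (this is exactly the finest partition $B$ with $\|\Sigma_B-\Sigma\|_{\max}\le n^{-\alpha}$, because $\|\Sigma_B-\Sigma\|_{\max}=\max_{(i,j)\notin B}|\sigma_{ij}|$), while $B_{n^{-\delta/2}}$ is the partition into connected components of the graph $G$ whose edges are the pairs with $|\widehat C_{ij}|>n^{-\delta/2}$. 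With this description the refinement order is monotone in edge sets: if the edge set of one graph is contained in that of another, then the component partition of the first is finer than that of the second (adding edges can only merge components). Hence it suffices to prove the two edge-set inclusions $E(H_{\alpha_1})\subseteq E(G)$ and $E(G)\subseteq E(H_{\alpha_2})$ on a high-probability event.

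First I would invoke a max-norm concentration bound for $\widehat C$ around $C$, of the same type underlying the results of Section~\ref{sec_conv_high}: under Conditions 1 and 2, with probability tending to one, $\|\widehat C-C\|_{\max}\le \varepsilon_n$ for a sequence $\varepsilon_n$ of order $\sqrt{\log p/n}$. The key quantitative point is that $\varepsilon_n=o(n^{-\delta/2})$: indeed $\varepsilon_n/n^{-\delta/2}$ is of order $n^{(\delta-1)/2}\sqrt{\log n}\to 0$ because $\delta<1$ and $p\asymp n$ by Condition 1. Call $\Omega_n$ the event $\{\|\widehat C-C\|_{\max}\le\varepsilon_n\}$; we work on $\Omega_n$ from now on. Condition 2 also gives $\sigma_{ii}\in[\lambda_{\inf},\lambda_{\sup}]$, so that $\lambda_{\inf}|C_{ij}|\le|\sigma_{ij}|\le\lambda_{\sup}|C_{ij}|$ for all $i,j$; these two-sided bounds let me pass freely between covariance and correlation thresholds at the cost of the fixed constants $\lambda_{\inf},\lambda_{\sup}$, which are harmless since all the relevant thresholds differ by genuine polynomial powers of $n$.

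For $E(G)\subseteq E(H_{\alpha_2})$, take a pair with $|\widehat C_{ij}|>n^{-\delta/2}$. On $\Omega_n$ this forces $|C_{ij}|>n^{-\delta/2}-\varepsilon_n\ge\frac12 n^{-\delta/2}$ for $n$ large, hence $|\sigma_{ij}|\ge\lambda_{\inf}|C_{ij}|>\frac{\lambda_{\inf}}{2}n^{-\delta/2}$. Since $\alpha_2>\delta/2$ we have $n^{-\alpha_2}=o(n^{-\delta/2})$, so $\frac{\lambda_{\inf}}{2}n^{-\delta/2}>n^{-\alpha_2}$ eventually, giving $|\sigma_{ij}|>n^{-\alpha_2}$, i.e. $\{i,j\}\in E(H_{\alpha_2})$. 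Symmetrically, for $E(H_{\alpha_1})\subseteq E(G)$, take $|\sigma_{ij}|>n^{-\alpha_1}$; then $|C_{ij}|\ge|\sigma_{ij}|/\lambda_{\sup}>n^{-\alpha_1}/\lambda_{\sup}$, and on $\Omega_n$, $|\widehat C_{ij}|\ge|C_{ij}|-\varepsilon_n>n^{-\alpha_1}/\lambda_{\sup}-\varepsilon_n$. As $\alpha_1<\delta/2$ we have $n^{-\alpha_1}/n^{-\delta/2}=n^{\delta/2-\alpha_1}\to\infty$, so this lower bound exceeds $n^{-\delta/2}$ eventually, yielding $\{i,j\}\in E(G)$. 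Both inclusions therefore hold on $\Omega_n$ for $n$ large, whence $B(\alpha_1)\le B_{n^{-\delta/2}}\le B(\alpha_2)$ on $\Omega_n$, and $\PP(\Omega_n)\to 1$ finishes the argument.

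The main obstacle is the concentration input: one must control $\|\widehat C-C\|_{\max}$ uniformly over the $O(p^2)$ entries at a rate $o(n^{-\delta/2})$. Granting the earlier estimates this is immediate, but on its own it requires a union bound over all entries of the sub-exponential deviations of the sample covariances $s_{ij}$ of jointly Gaussian coordinates, combined with the deterministic lower bound $\sigma_{ii}\ge\lambda_{\inf}$ to transfer the control from $S$ to $\widehat C$; the $\sqrt{\log p}$ factor produced by the union bound is precisely what survives under Condition 1 and still beats $n^{-\delta/2}$ because $\delta<1$. Everything else is the elementary monotonicity of connected-component partitions under edge inclusion together with the comparison of polynomial rates, which is why no analogue of Condition 4 is needed here.
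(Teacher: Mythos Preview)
Your proof is correct and rests on the same two ingredients as the paper's: sub-exponential concentration of the empirical covariances (Bernstein plus a union bound over $O(p^2)$ pairs) together with the comparison of the three polynomial scales $n^{-\alpha_1}\gg n^{-\delta/2}\gg n^{-\alpha_2}$, all of which dominate the concentration rate $\sqrt{\log p/n}$ since $\delta<1$.

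The organization differs in a way worth noting. You package everything into a single high-probability event $\Omega_n=\{\|\widehat C-C\|_{\max}\le\varepsilon_n\}$ and then argue \emph{deterministically} on $\Omega_n$ via edge-set inclusions $E(H_{\alpha_1})\subseteq E(G)\subseteq E(H_{\alpha_2})$, invoking the monotonicity of connected-component partitions under edge inclusion. The paper instead treats the two directions separately, following the template of Lemma~\ref{lm_seuil}: for $B_{n^{-\delta/2}}\le B(\alpha_2)$ it checks directly that no pair $(i,j)\notin B(\alpha_2)$ can have $|\widehat C_{ij}|\ge n^{-\delta/2}$; for $B(\alpha_1)\le B_{n^{-\delta/2}}$ it uses a bipartition argument, observing that for every split $B_1\sqcup B_2$ of a block $B_k(\alpha_1)$ the maximal crossing covariance $|\sigma_{i^*j^*}|$ exceeds $n^{-\alpha_1}$ by connectivity, and then shows that this particular empirical correlation survives thresholding. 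Your edge-wise criterion is formally stronger than the paper's connectivity-preservation (you show \emph{every} edge of $H_{\alpha_1}$ appears in $G$, not just one per cut), but it is no harder to prove and makes the mechanism---pure threshold comparison once concentration is in place---more transparent. The paper's route has the advantage of avoiding an explicit statement of $\|\widehat C-C\|_{\max}$ control, working instead with $\widehat\sigma_{ij}$ and separate eigenvalue bounds on the diagonal; your route requires one extra (standard) step to pass from covariance to correlation concentration, which you correctly flag.
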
 
As Condition 4 is not satisfied, the true partition $B^*$ is again not reached by this estimator.
Nevertheless, we get stronger results for the practical estimator $B_{n^{-\delta \slash 2}}$ than for the theoretical estimator $\widehat{B}_{tot}$ when Condition 4 is not verified. Indeed, the condition "to be larger or equal than" is stronger that "not to be smaller than".

\subsubsection{Convergence of the estimator of the covariance matrix}
We have seen in Propositions \ref{prop_cost} and \ref{prop_seuil_ba} how to estimate the decomposition $B^*$ by $\widehat{B}$. Now to estimate the covariance matrix $\Sigma$, it suffices to impose the block-diagonal decomposition $\widehat{B}$ to the empirical covariance matrix $S_{\widehat{B}}$.
We show in Proposition \ref{prop_rate_sigma} that the resulting block-diagonal matrix estimator $S_{\widehat{B}}$ reaches the optimal rate of convergence under Conditions 1 to 4.

\begin{prop}\label{prop_rate_sigma}
Let $\|.\|_F$ be the Frobenius norm defined by $\|\Gamma\|_F^2:=\sum_{i,j=1}^p \gamma_{ij}^2$. Let $\widehat{B}$ be either $\widehat{B}_{tot},\; \widehat{B}_{\widehat{C}}, \widehat{B}_{\lambda}$ or $\widehat{B}_s$.
Under Conditions 1 to 4 and for a fixed $\delta \in ]1\slash 2 , 1[$, we have
$$
\frac{1}{p}\| S_{B^*}-\Sigma \|_F^2=O_p(1\slash n)
$$
and 
$$
\frac{1}{p}\| S_{\widehat{B}}-\Sigma \|_F^2=O_p(1\slash n).
$$
Moreover, it is the best rate that we can have because
$$
\frac{1}{p}\| S_{B^*}-\Sigma \|_F^2\neq o_p(1\slash n).
$$
\end{prop}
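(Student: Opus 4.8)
The plan is to treat the three assertions separately, the first two being upper bounds and the third a matching lower bound. Throughout I would use that, since $\Sigma$ is block-diagonal with structure $B^*$, we have $\Sigma=\Sigma_{B^*}$, so that $S_{B^*}-\Sigma=(S-\Sigma)_{B^*}$ retains only the within-block entries of $S-\Sigma$. Consequently $\|S_{B^*}-\Sigma\|_F^2=\sum_{(i,j)\in B^*}(s_{ij}-\sigma_{ij})^2$, and by Condition \ref{cond3} the number of index pairs $(i,j)$ lying in a common block is at most $\sum_k p_k^2\le m\sum_k p_k=mp$, i.e. $O(p)$.

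For the first bound I would control each summand in expectation. Since the $X^{(l)}$ are Gaussian and the eigenvalues of $\Sigma$ lie in $[\lambda_{\inf},\lambda_{\sup}]$ by Condition \ref{cond2}, the entries $\sigma_{ij}$ are uniformly bounded, and a direct moment computation for the empirical covariance gives $\E[(s_{ij}-\sigma_{ij})^2]\le C/n$ with a constant $C$ depending only on $\lambda_{\sup}$ (the variance of $s_{ij}$ is of order $1/n$ and the bias coming from the $1/n$ normalization is of order $1/n$). Summing the $O(p)$ terms and dividing by $p$ yields $\E[\tfrac{1}{p}\|S_{B^*}-\Sigma\|_F^2]=O(1/n)$, and Markov's inequality gives $\tfrac{1}{p}\|S_{B^*}-\Sigma\|_F^2=O_p(1/n)$. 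The second bound then follows by transferring to the event $\{\widehat{B}=B^*\}$: by Proposition \ref{prop_cost} we have $\PP(\widehat{B}=B^*)\to1$, and on this event $S_{\widehat{B}}=S_{B^*}$, so splitting $\PP(\tfrac{1}{p}\|S_{\widehat{B}}-\Sigma\|_F^2>M/n)$ according to whether $\widehat{B}=B^*$ and using the first bound to make the main term small for large $M$ gives $\tfrac{1}{p}\|S_{\widehat{B}}-\Sigma\|_F^2=O_p(1/n)$.

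For the lower bound I would show that $Z_n:=\tfrac{n}{p}\|S_{B^*}-\Sigma\|_F^2$ does not converge to $0$ in probability. Keeping only the (always in-block) diagonal terms, $Z_n\ge \tfrac{n}{p}\sum_{i=1}^p(s_{ii}-\sigma_{ii})^2=:Z_n'$. A moment computation gives $n\,\E[(s_{ii}-\sigma_{ii})^2]\to 2\sigma_{ii}^2\ge 2\lambda_{\inf}^2$ (using $\sigma_{ii}\ge\lambda_{\inf}$, valid since diagonal entries lie in the eigenvalue range), so $\E[Z_n']\ge \lambda_{\inf}^2>0$ for all large $n$. The crux is to show $\V(Z_n')\to0$: here I would exploit block independence, namely that $s_{ii}$ and $s_{jj}$ are independent whenever $i$ and $j$ lie in different blocks of $B^*$, so that only the $O(mp)$ within-block pairs contribute to the variance; bounding each covariance by $O(1/n^2)$ via the uniformly controlled fourth moments of the Gaussian quadratic forms $s_{ii}-\sigma_{ii}$ gives $\V(Z_n')=O(1/p)\to0$. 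Chebyshev's inequality then forces $Z_n'$ above $\lambda_{\inf}^2/2$ with probability tending to $1$, hence $Z_n\not\to0$, which is exactly $\tfrac{1}{p}\|S_{B^*}-\Sigma\|_F^2\neq o_p(1/n)$.

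The main obstacle is this last variance estimate. The delicate point is not the uniform fourth-moment control of the empirical variances (routine for Gaussian quadratic forms under Condition \ref{cond2}), but rather the use of the independence of distinct blocks to keep the number of non-vanishing covariances linear in $p$ instead of quadratic; without this reduction the $\tfrac{n^2}{p^2}$ prefactor would not be compensated and the variance would fail to vanish, so the concentration argument would collapse. The other two parts are comparatively direct, resting on the entry count from Condition \ref{cond3} and the convergence $\PP(\widehat{B}=B^*)\to1$ already established in Proposition \ref{prop_cost}.
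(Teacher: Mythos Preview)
Your proposal is correct and follows the same structure as the paper's proof: entrywise moment bounds plus Markov for the upper bound, a mean lower bound combined with a variance estimate exploiting block independence plus Chebyshev for the lower bound, and Proposition \ref{prop_cost} to pass from $B^*$ to $\widehat{B}$. The paper first works with the unbiased $\widehat{\Sigma}$ and then transfers to $S$, and it controls the variance of the full block Frobenius norm rather than only the diagonal part; your direct handling of $S$ and your diagonal-only lower bound are mild simplifications of the same argument.
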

Thus, we see that the quantity $\frac{1}{p}\| S_{\widehat{B}}-\Sigma \|_F^2$ decreases to 0 in probability with rate $1\slash n$, which is the same rate as $S_{B^*}$ if we know the true decomposition $B^*$. Thus, the lack of knowledge of $B^*$ does not deteriorate the convergence of our estimator.

Now that we gave the rate of convergence of our estimator $S_{\widehat{B}}$, we compare it with that of the empirical estimator $S$ in the next proposition.

\begin{prop}\label{prop_rate_S} Under Conditions 1 and 2, the rate of the empirical covariance is
$$
\frac{1}{p}\| S-\Sigma \|_F^2=O_p(p\slash n).
$$
and we have
$$
\E\left(\frac{1}{p}\| S-\Sigma \|_F^2\right)\geq \frac{\lambda_{\inf}^2p}{2n}.
$$
\end{prop}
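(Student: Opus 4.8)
The plan is to reduce everything to the second moments of the individual entries of $S$, compute $\E\|S-\Sigma\|_F^2$ exactly, and then read off both claims. Writing $S=(s_{ij})$ and $\Sigma=(\sigma_{ij})$, the definition of the Frobenius norm gives
$$
\frac{1}{p}\|S-\Sigma\|_F^2=\frac{1}{p}\sum_{i,j=1}^p (s_{ij}-\sigma_{ij})^2 .
$$
So it suffices to control $\E[(s_{ij}-\sigma_{ij})^2]$ for each pair $(i,j)$ and sum. For the upper bound I would bound $\E\|S-\Sigma\|_F^2$ from above and then invoke Markov's inequality to pass from the bound in expectation to the $O_p$ statement; the lower bound is a statement about the expectation directly, so no probabilistic step is needed there.

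The central computation is the entrywise bias--variance decomposition. Since the $X^{(l)}$ are i.i.d.\ Gaussian and $S$ is formed after centering by $\overline{X}$, the matrix $nS$ follows a Wishart distribution $W_p(n-1,\Sigma)$ with $n-1$ degrees of freedom. Using the standard Wishart moment identities (equivalently, Isserlis' formula for Gaussian fourth moments) one gets $\E[s_{ij}]=\tfrac{n-1}{n}\sigma_{ij}$ and $\V(s_{ij})=\tfrac{n-1}{n^2}\big(\sigma_{ii}\sigma_{jj}+\sigma_{ij}^2\big)$. Combining the variance with the squared bias $(\E[s_{ij}]-\sigma_{ij})^2=\tfrac{1}{n^2}\sigma_{ij}^2$ yields
$$
\E\big[(s_{ij}-\sigma_{ij})^2\big]=\frac{n-1}{n^2}\big(\sigma_{ii}\sigma_{jj}+\sigma_{ij}^2\big)+\frac{1}{n^2}\sigma_{ij}^2 .
$$
Summing over $i,j$ and using $\sum_{i,j}\sigma_{ii}\sigma_{jj}=(\Tr\Sigma)^2$ and $\sum_{i,j}\sigma_{ij}^2=\|\Sigma\|_F^2=\Tr(\Sigma^2)$ gives the exact identity
$$
\E\|S-\Sigma\|_F^2=\frac{n-1}{n^2}\Big((\Tr\Sigma)^2+\|\Sigma\|_F^2\Big)+\frac{1}{n^2}\|\Sigma\|_F^2 .
$$

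For the upper bound I would invoke Condition 2: the eigenvalues of $\Sigma$ lie in $[\lambda_{\inf},\lambda_{\sup}]$, so $\Tr\Sigma\le p\lambda_{\sup}$ and $\|\Sigma\|_F^2=\Tr(\Sigma^2)\le p\lambda_{\sup}^2$. The dominant term is $\tfrac{n-1}{n^2}(\Tr\Sigma)^2$, of order $p^2/n$, so $\E\|S-\Sigma\|_F^2\le C\,p^2/n$ for a constant depending only on $\lambda_{\sup}$, whence $\E\big[\tfrac1p\|S-\Sigma\|_F^2\big]=O(p/n)$ and Markov's inequality gives $\tfrac1p\|S-\Sigma\|_F^2=O_p(p/n)$. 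For the lower bound I would discard the two nonnegative $\|\Sigma\|_F^2$ terms, keep only $\tfrac{n-1}{n^2}(\Tr\Sigma)^2$, and use $\Tr\Sigma\ge p\lambda_{\inf}$ together with $\tfrac{n-1}{n}\ge\tfrac12$ (valid for $n\ge2$) to conclude $\E\big[\tfrac1p\|S-\Sigma\|_F^2\big]\ge \tfrac{\lambda_{\inf}^2 p}{2n}$.

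I expect the only delicate point to be pinning down the exact constants in the second moments, in particular the $n-1$ degrees of freedom and the $-\sigma_{ij}/n$ bias produced by centering with $\overline{X}$ rather than the true mean; once the Wishart moment identities are in hand, both inequalities follow immediately from the eigenvalue bounds of Condition 2. It is worth noting that the lower bound crucially relies on the off-diagonal accumulation in $(\Tr\Sigma)^2\asymp p^2$: the diagonal entries alone would only yield a bound of order $1/n$, so the $p$ in $\tfrac{\lambda_{\inf}^2 p}{2n}$ genuinely comes from summing the $\sigma_{ii}\sigma_{jj}$ variance contributions over all $p^2$ pairs.
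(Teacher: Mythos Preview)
Your proof is correct and follows essentially the same route as the paper: compute the entrywise mean squared error $\E[(s_{ij}-\sigma_{ij})^2]$ via the Wishart second moments (variance $\tfrac{n-1}{n^2}(\sigma_{ii}\sigma_{jj}+\sigma_{ij}^2)$ plus squared bias $\sigma_{ij}^2/n^2$), then invoke the eigenvalue bounds $\lambda_{\inf}\le\sigma_{ii}\le\lambda_{\sup}$ from Condition~2. The only cosmetic difference is that the paper bounds $\tfrac{1}{p^2}\sum_{i,j}\E[(s_{ij}-\sigma_{ij})^2]$ by the entrywise $\max$ (resp.\ $\min$), obtaining $\le 3\lambda_{\sup}^2/n$ and $\ge \lambda_{\inf}^2/(2n)$ directly, whereas you first sum exactly to the closed form $\tfrac{n-1}{n^2}\big((\Tr\Sigma)^2+\|\Sigma\|_F^2\big)+\tfrac{1}{n^2}\|\Sigma\|_F^2$ and then bound $\Tr\Sigma$ and $\|\Sigma\|_F^2$; both arrive at the same constants.
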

So, we know that $\frac{1}{p}\| S-\Sigma \|_F^2$ is lower-bounded in average and is bounded in probability. Thus, the rate of convergence of our suggested estimator $S_{\widehat{B}}$ is better than the empirical covariance matrix $S$.\\

If Condition 4 does not hold, the rate of convergence is given in the following proposition.
\begin{prop}\label{prop_rate_sigma_ba}
Under Conditions 1, 2 and 3, for all $\delta \in ]0,1[$ and for all $\varepsilon>0$,  we have
$$
\frac{1}{p}\| S_{B_{n^{-\delta\slash 2}}}-\Sigma\|_F^2 =o_p\left( \frac{1}{n^{\delta-\varepsilon}}\right).
$$
\end{prop}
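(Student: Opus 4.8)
The plan is to write $\widehat{B}:=B_{n^{-\delta\slash 2}}$ and to control the error through the two-term decomposition
$$
\frac1p \|S_{\widehat{B}} - \Sigma\|_F^2 \le \frac2p \|S_{\widehat{B}} - \Sigma_{\widehat{B}}\|_F^2 + \frac2p \|\Sigma_{\widehat{B}} - \Sigma\|_F^2,
$$
which separates the sampling fluctuation on the kept entries from the bias created by the entries that $\widehat{B}$ wrongly sets to zero. Since $(S_{\widehat{B}} - \Sigma_{\widehat{B}})_{ij} = (s_{ij}-\sigma_{ij})\1_{(i,j)\in\widehat{B}}$ and $(\Sigma_{\widehat{B}}-\Sigma)_{ij} = -\sigma_{ij}\1_{(i,j)\notin\widehat{B}}$, the two terms equal $\sum_{(i,j)\in\widehat{B}}(s_{ij}-\sigma_{ij})^2$ and $\sum_{(i,j)\notin\widehat{B}}\sigma_{ij}^2$ respectively. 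I would first fix $\varepsilon\in\,]0,\delta[$ (the case $\varepsilon\ge\delta$ being weaker), pick $\alpha_1\in\,](\delta-\varepsilon)\slash2,\delta\slash2[$ and any $\alpha_2>\delta\slash2$; Proposition \ref{prop_seuil_ba} then guarantees that the event $E_n := \{B(\alpha_1)\le\widehat{B}\le B(\alpha_2)\}$ has probability tending to one. I would work on $E_n$ throughout, using the elementary fact that if $Z_n\ge0$, $\PP(E_n)\to1$ and $Z_n\le W_n$ on $E_n$ with $W_n\xrightarrow{P}0$, then $Z_n\xrightarrow{P}0$.

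For the bias term I would first observe that $B(\alpha_2)\le B^*$ holds deterministically: since $\Sigma=\Sigma_{B^*}$, all off-$B^*$-block entries vanish, so $\|\Sigma_{B^*}-\Sigma\|_{\max}=0$ and the finest partition keeping $\|\Sigma_B-\Sigma\|_{\max}\le n^{-\alpha_2}$ is finer than $B^*$. Hence on $E_n$ we get $\widehat{B}\le B^*$, so every block of $\widehat{B}$ has size at most $m$ by Condition \ref{cond3}. The only pairs contributing to $\sum_{(i,j)\notin\widehat{B}}\sigma_{ij}^2$ are those with $(i,j)\in B^*$ (elsewhere $\sigma_{ij}=0$), of which there are at most $pm$; and for each such pair not in $\widehat{B}$, the inclusion $B(\alpha_1)\le\widehat{B}$ forces $(i,j)\notin B(\alpha_1)$, hence $|\sigma_{ij}|\le n^{-\alpha_1}$ by definition of $B(\alpha_1)$. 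Therefore, on $E_n$, $\frac1p\|\Sigma_{\widehat{B}}-\Sigma\|_F^2\le m\,n^{-2\alpha_1}$, which is $o(n^{-(\delta-\varepsilon)})$ precisely because I chose $2\alpha_1>\delta-\varepsilon$.

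For the fluctuation term, on $E_n$ the pairs of $\widehat{B}$ form a subset of those of the deterministic partition $B^*$, so $\sum_{(i,j)\in\widehat{B}}(s_{ij}-\sigma_{ij})^2\le\sum_{(i,j)\in B^*}(s_{ij}-\sigma_{ij})^2$. The right-hand side is a sum over at most $pm$ \emph{fixed} indices, and under Condition \ref{cond2} (bounded eigenvalues, hence uniformly bounded variances) one has $\E[(s_{ij}-\sigma_{ij})^2]=O(1\slash n)$ uniformly in $i,j$; thus its expectation is $O(pm\slash n)$ and Markov's inequality gives $\frac1p\sum_{(i,j)\in B^*}(s_{ij}-\sigma_{ij})^2=O_p(m\slash n)=O_p(1\slash n)$. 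Combining the two bounds yields $\frac1p\|S_{\widehat{B}}-\Sigma\|_F^2 = O_p(1\slash n)+o(n^{-(\delta-\varepsilon)})=o_p(n^{-(\delta-\varepsilon)})$, since $\delta-\varepsilon<1$.

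The parts that need care are the deterministic comparison $B(\alpha_2)\le B^*$ and, above all, the bias term, which is the crux of the argument: the decisive point is that the entries wrongly zeroed are simultaneously \emph{few} (at most $pm$, thanks to the genuine block structure and Condition \ref{cond3}) and individually \emph{small} (bounded by $n^{-\alpha_1}$, thanks to the lower inclusion in Proposition \ref{prop_seuil_ba}). Pushing $\alpha_1$ up toward $\delta\slash2$ is exactly what produces the exponent $\delta$ in the rate, and the freedom to give up an arbitrarily small $\varepsilon$ is what allows $\alpha_1$ to stay strictly below $\delta\slash2$ while keeping $2\alpha_1>\delta-\varepsilon$.
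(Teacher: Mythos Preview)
Your proof is correct and follows essentially the same route as the paper: both arguments sandwich $\widehat{B}$ via Proposition \ref{prop_seuil_ba}, split the error into the fluctuation on the retained entries (bounded by the $O_p(1/n)$ rate of $S_{B^*}$, using only Conditions 1--3) and the bias from the zeroed entries (at most $pm$ of them, each of size $\le n^{-\alpha_1}$), and then push $\alpha_1$ close to $\delta/2$. The paper just packages this slightly differently by bounding $\max_{B(\alpha_1)\le B\le B^*}\E\!\left[\tfrac{1}{p}\|\widehat{\Sigma}_B-\Sigma\|_F^2\right]$ directly and applying Markov, but the decomposition and the key estimates are identical; as a minor remark, since $S_{\widehat{B}}-\Sigma_{\widehat{B}}$ and $\Sigma_{\widehat{B}}-\Sigma$ have disjoint supports, your inequality with the factor $2$ is in fact an equality.
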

We remark that, for $\delta$ close to $1$, this rate of convergence almost reaches the optimal rate of $S_{B^*}$, whereas the partition estimator $B_{n^{-\delta\slash 2}}$ does not reach the true decomposition $B^*$. That comes from the fact that the elements $\sigma_{ij}$ of $\Sigma$ such that the indices $(i,j)$ are not in the estimated partition $B_{n^{-\delta\slash 2}}$ are small (with high probability). Hence, estimating these values by $0$ does not increase so much the error $\frac{1}{p}\| S_{B_{n^{-\delta\slash 2}}}-\Sigma\|_F^2$.

\bigskip

Theoretical guaranties for a block-diagonal estimator of the covariance matrix are also provided in \cite{devijver_diagonal_2018}. Their framework is more general, with a true covariance matrix which is not necessarily block-diagonal. They bound the average of the square Hellinger distance between the true normal density and the density with the block-diagonal estimated covariance matrix. However, when $p\slash n$ does not go to $0$, their theoretical results becom uninformative. Indeed, they give an upper-bound which is larger than one, while the square Hellinger distance remains always smaller than $1$.

\subsubsection{Discussion about the assumptions}\label{sec_assump_discuss}

For  the previous results, we needed to make four assumptions on $\Sigma$ (Conditions 1 to 4, given in Section \ref{sec_assum}).

Condition 1 provides a standard setting for high-dimensional problems, in particular for estimation of covariance matrices \cite{Mar_enko_1967,smallest_1985_silverstein}. Studying an higher dimensional setting where $p\slash n \longrightarrow +\infty$ would be interesting in future work.

Condition 2 is needed to bound the operator norm of $\Sigma$ and $\Sigma^{-1}$ and the eigenvalues of the empirical covariance matrix (with high probability). It also enables to bound the diagonal terms of $\Sigma$, which allow to derive the rate of convergence of each component of the empirical covariance (using in particular Bernstein's inequality, see the proofs for more details).

Condition 3 states that the blocks of the true decomposition have a maximal size. It implies that the number of non-zero terms of $\Sigma$ is $O(p)$.

Condition 4 requires that a finer block decomposition $\Sigma_B$ is not too close to the true $\Sigma$. This condition is needed to not confuse $B^*$ with a finer decomposition. However, Condition 4 seems to be less mild than the others. That is why we also focus on the case when Condition 4 is not satisfied.
\bigskip

Nevertheless, even Condition 4 is not so restrictive. Indeed, we suggest in Proposition \ref{prop_generate_sigma} a reasonable example where $\Sigma$ is randomly generated and where a condition similar to Condition \ref{cond4} holds.

\begin{prop}\label{prop_generate_sigma}
Let $L\in \N$ and $\varepsilon>0$. Assume that for all $p$, $\Sigma$ is generated in the following way:
\begin{itemize}
\item Let $B^*$ be a partition of  $[1:p]$ such that all its elements have a cardinal between $10$ and $m\geq 10$. Let $K$ be the number of groups (the cardinal of $B^*$). For all $k \in [1:K]$, let $p_k$ be the cardinal of the "$k$-th element" of $B^*$.
\item For all $k \in [1:K]$, let $(U_{i}^{(l)})_{i\in [1 : p_k],\; l\in [1: L]}$ be i.i.d. with distribution $\mathcal{U}([-1,1])$. Let $U\in \mathcal{M}_{L,p_k}(\R)$ such that the coefficient $(l,i)$ is $U_{i}^{(l)}$. Let $\Sigma_{B_k^*}=U^TU+\varepsilon I_{p_k}$, where $\Sigma_{B_k^*}$ is the sub-matrix of $\Sigma$ indexed by the elements of $B_k^*$.
\item Let $\sigma_{ij}=0$ for all $(i,j) \notin B^*$.
\end{itemize}
Then, Conditions 2 and 3 are verified and the following slightly modified version of Condition 4 is satisfied for all $a>0$:
$$
\PP\left( \exists B< B^*,\; \|\Sigma_{B}-\Sigma\|_{\max} < a n^{-\frac{1}{4}} \right) \longrightarrow 0.
$$
Thus, if $p\slash n \longrightarrow y\in ]0,1[$, the conclusions of Propositions \ref{prop_conclu}, \ref{prop_cost} and \ref{prop_rate_sigma} remain true when the probabilities are defined with respect to $\Sigma$ and $X$ which distribution conditionally to $\Sigma$ is $\mathcal{N}(\mu,\Sigma)$.
\end{prop}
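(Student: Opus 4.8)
The plan is to verify the three structural conditions on the randomly generated $\Sigma$ and then invoke the earlier propositions. First I would check Conditions 2 and 3. Condition 3 is immediate by construction, since every block of $B^*$ has cardinal at most $m$. For Condition 2, I would bound the eigenvalues of each block $\Sigma_{B_k^*}=U^TU+\varepsilon I_{p_k}$. The lower bound $\lambda_{\inf}=\varepsilon$ is clear because $U^TU$ is positive semidefinite, so every eigenvalue of $\Sigma_{B_k^*}$ is at least $\varepsilon$. For the upper bound, note that the entries of $U^TU$ are sums of $L$ products of $\mathcal{U}([-1,1])$ variables, hence bounded in absolute value by $L$; since each block has size at most $m$, the operator norm of $\Sigma_{B_k^*}$ is at most $mL+\varepsilon=:\lambda_{\sup}$, uniformly in $n$. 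Because $\Sigma$ is block-diagonal, its spectrum is the union of the block spectra, so Condition 2 holds with these constants.

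The main work is the modified Condition 4. I would fix a pair $(i,j)$ with $i\neq j$ lying in the same block $B_k^*$ and examine $\sigma_{ij}=\sum_{l=1}^L U_i^{(l)}U_j^{(l)}$. The key point is that for any $B< B^*$, there must exist at least one such in-block pair $(i,j)$ that is separated by $B$, and then $\|\Sigma_B-\Sigma\|_{\max}\geq |\sigma_{ij}|$. So it suffices to show that $|\sigma_{ij}|$ is not too small, simultaneously over the relevant pairs. Concretely, I would bound
$$
\PP\left(\exists B<B^*,\ \|\Sigma_B-\Sigma\|_{\max}<an^{-1/4}\right)\leq \PP\left(\exists\, i\neq j \text{ in the same block},\ |\sigma_{ij}|<an^{-1/4}\right),
$$
and then apply a union bound over the $O(p)$ in-block pairs (there are $O(p)$ of them by Condition 3). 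Thus I need $\PP(|\sigma_{ij}|<an^{-1/4})=o(1/p)$ for each fixed pair.

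The hard part will be showing that a single off-diagonal entry $\sigma_{ij}=\sum_{l=1}^L U_i^{(l)}U_j^{(l)}$ is anti-concentrated near zero at the required rate. Since $L$ is a fixed integer, $\sigma_{ij}$ is a fixed (non-Gaussian) random variable whose law does not depend on $n$ or $p$; its density near $0$ is bounded, so $\PP(|\sigma_{ij}|<t)=O(t)$ as $t\to 0$. I would make this quantitative: the product $U_i^{(l)}U_j^{(l)}$ of two independent uniforms has a bounded density, the sum of $L$ such independent products has a bounded density $f$ on a neighborhood of $0$, and hence $\PP(|\sigma_{ij}|<an^{-1/4})\leq 2a\,\|f\|_\infty\, n^{-1/4}$. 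Combined with $p/n\to y\in\,]0,1[$, the union bound over $O(p)=O(n)$ pairs gives a total probability of order $n\cdot n^{-1/4}$, which does \emph{not} vanish; so I would instead exploit that these in-block pairs only require the maximum, not the minimum, to be large. Rephrasing: for $B<B^*$ we need \emph{some} separated in-block pair to have $|\sigma_{ij}|\geq an^{-1/4}$, and within each block (of size at least $10$) there are many such pairs that are mutually independent, so the probability that \emph{all} of them are small decays geometrically, i.e. like $(Cn^{-1/4})^{c}$ for a constant $c$ coming from the block size. Summing this over the $O(1)$ possible coarsenings within each block and over the $K=O(p)$ blocks then yields a bound of order $p\cdot n^{-c/4}\to 0$ once the block size guarantees $c>4$, which is exactly why the cardinals are assumed to be at least $10$.

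Finally, having established Conditions 2, 3 and the modified Condition 4 with probability tending to one, I would transfer the conclusions of Propositions \ref{prop_conclu}, \ref{prop_cost} and \ref{prop_rate_sigma} to the randomized setting by conditioning on $\Sigma$. On the event where the conditions hold, each of those propositions applies to the conditional law $\mathcal{N}(\mu,\Sigma)$; integrating over $\Sigma$ and using that the good event has probability $\to 1$ gives the unconditional statements, where the probabilities are now taken jointly over $\Sigma$ and $X$.
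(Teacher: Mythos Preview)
Your treatment of Conditions 2 and 3, the anti-concentration estimate $\PP(|\sigma_{ij}|<an^{-1/4})=O(n^{-1/4})$, and the final transfer by conditioning on $\Sigma$ all match the paper's proof. Your core idea for Condition 4 is also the paper's: inside a block of size at least $10$ one can pick five disjoint pairs $(i_1,j_1),\dots,(i_5,j_5)$; the entries $\sigma_{i_sj_s}=\langle U_{i_s},U_{j_s}\rangle$ involve ten distinct columns and are therefore independent, giving a bound of order $p\,(Cn^{-1/4})^5\to 0$. This is exactly why the block sizes are taken $\geq 10$.

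There is, however, a genuine gap in the step ``there are many such pairs that are mutually independent.'' The event $\{\exists B<B^*:\|\Sigma_B-\Sigma\|_{\max}<an^{-1/4}\}$ is equivalent to ``some block $B_k^*$ admits a nontrivial bipartition $B_1\sqcup B_2$ with every \emph{cross}-entry small,'' and you must union-bound over these bipartitions. For a lopsided split $B_1=\{i_0\}$, the cross-entries are $\sigma_{i_0 j}=\langle U_{i_0},U_j\rangle$, $j\in B_2$; they all share $U_{i_0}$, so no two of them are independent and your exponent collapses to $c=1$. The paper's five disjoint pairs are not all cross-pairs for such a split, and the paper's first displayed ``equality'' (reducing to $\exists k,\ \max_{i\neq j\in B_k^*}|\sigma_{ij}|<an^{-1/4}$) is actually an inclusion in the wrong direction, so the issue is present there too. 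The gap is not merely cosmetic: for $L=1$ one has $\sigma_{i_0 j}=U_{i_0}U_j$, and with probability tending to $1$ some index among the $p$ has $|U_{i_0}|<an^{-1/4}$, whence the singleton split at $i_0$ witnesses the bad event and the stated conclusion fails. A correct argument needs either a lower bound on $L$ (e.g.\ $L\geq 5$) or, for the lopsided bipartitions, conditional independence given $U_{i_0}$ together with control of $\E[\min(1,(t/\|U_{i_0}\|)^{|B_2|})]$.
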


\subsubsection{Numerical applications}\label{section_num_Sigma}
We present here numerical applications of the previous results with simulated data. We generate a covariance matrix $\Sigma$ as in Proposition \ref{prop_generate_sigma} with blocks of random size distributed uniformly on $[10:15]$, with $L=5$ and $\varepsilon=0.2$. We assume here that we know that the maximal size of the block is $m=15$, so we can use the estimator $\widehat{B}=\widehat{B}_s$ given in Proposition \ref{prop_cost} to reduce the complexity to $O(p^2)$ and to prevent the blocks from being too large.

\begin{figure}
    \centering
    \includegraphics[height=10cm, width=10cm]{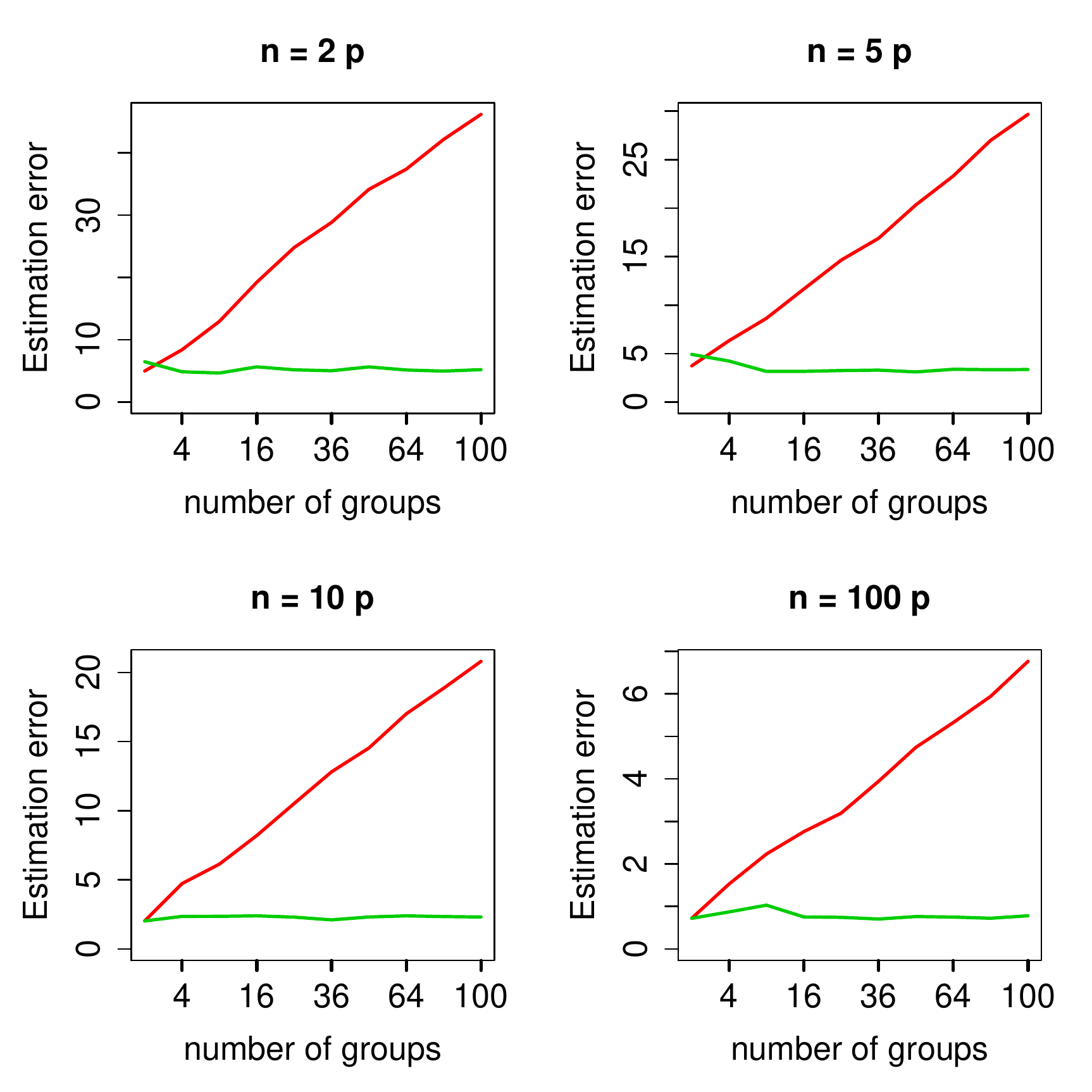}
    \caption{Frobenius error of the empirical covariance matrix $S$ in red and the suggested estimator $S_{\widehat{B}}$ in green, in function of the number of groups $K$. The scale of the x-axis is in $\sqrt{K}$. }
    \label{fig:error_mat}
\end{figure}

We plot in Figure \ref{fig:error_mat} the Frobenius norm of the error of the empirical covariance matrix $S$ and the Frobenius norm of the error of the suggested estimator $S_{\widehat{B}}$, with $n=N\, p$ for different values of $N$.  We can remark that the error of $S$ is in $\sqrt{K}$ (where $K$ is the number of groups) whereas the error of $S_{\widehat{B}}$ stays bounded as in Proposition \ref{prop_rate_sigma}. For $K=100$, the Frobenius error of $S_{\widehat{B}}$ on Figure \ref{fig:error_mat} is about 10 times smaller than the one of $S$.

\subsection{Convergence and efficiency in fixed dimension}\label{sec_pfix}

In this section, $p$ and $\Sigma$ are fixed and $n$ goes to $+\infty$. We choose a different penalisation $\kappa=\frac{1}{pn^\delta}$ with $\delta \in ]0,1\slash 2[$ (instead of $\delta \in ]1\slash 2,1[$ in the previous setting). This framework enables to study the efficiency of estimators of $\Sigma$. Contrary to the high-dimensional setting of Section \ref{sec_conv_high}, we do not assume particular condition in addition to the ones given in Section \ref{sec_problem}.

We first give the convergence of $\widehat{B}_{tot}$ defined in Equation \eqref{eq_Bhat} in the next proposition.

\begin{prop}\label{prop_conv_pfix}
We have
$$
\PP\left( \widehat{B}_{tot}=B^* \right)\longrightarrow 1. 
$$
\end{prop}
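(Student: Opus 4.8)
The plan is to prove $\PP(\widehat{B}_{tot}=B^*) \to 1$ in the fixed-dimension regime by showing that, with probability going to one, the penalized criterion $\Psi$ separates $B^*$ from every competing partition $B \neq B^*$. By Proposition \ref{prop_min_phi} the minimization reduces to the finite set $\{S_B : B \in \mathcal{P}_p\}$, and since $p$ is fixed, $\mathcal{P}_p$ is a \emph{fixed finite set}. This is the crucial simplification relative to the high-dimensional case: I only need to control finitely many comparisons $\Psi(B^*)$ versus $\Psi(B)$, each of which I can handle by a union bound at the very end. The strategy is therefore to split the partitions into two families and treat each by a different mechanism.

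First I would treat the partitions $B$ that are \emph{not finer} than $B^*$ (i.e. $B \not\leq B^*$), which in particular includes all $B > B^*$ and all incomparable $B$. For these, $\Sigma_B \neq \Sigma$, so the matrix $\Sigma_B$ is a strictly worse model. The key fact I would invoke is that $\Gamma \mapsto l_\Gamma$ has, in the limit $n \to \infty$, the expectation $\frac{1}{p}(\log|\Gamma| + \Tr(\Gamma^{-1}\Sigma))$, which by a standard Kullback--Leibler argument is uniquely minimized at $\Gamma = \Sigma$. Since $S \to \Sigma$ almost surely and $B \mapsto S_B$ is continuous, $l_{S_B} \to \frac{1}{p}(\log|\Sigma_B| + \Tr(\Sigma_B^{-1}\Sigma))$, which is strictly greater than $l_{S_{B^*}} \to l_\Sigma$ by a fixed positive gap $c_B > 0$. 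Because the penalization term $\kappa\,\pen(\Gamma) = \frac{1}{pn^\delta}\pen(\Gamma)$ with $\delta \in ]0,1/2[$ tends to $0$ (the penalties $\pen(B)$ are bounded, as $p$ is fixed), it cannot overturn this fixed likelihood gap. Hence $\Psi(B) - \Psi(B^*) \to c_B > 0$ in probability, so $\PP(\Psi(B) > \Psi(B^*)) \to 1$ for each such $B$.

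The more delicate family is the partitions $B$ that are \emph{strictly finer} than $B^*$ ($B < B^*$), for which $\Sigma_B = \Sigma$, so the population likelihoods coincide and the first-order argument gives no gap. Here the penalization must do the work: $\pen(B) < \pen(B^*)$ since $B$ is finer, so the penalty term \emph{favors} $B$, and I must show the likelihood term $l_{S_{B^*}} - l_{S_B}$ is negative but of smaller order than $\kappa$. The natural tool is a second-order (chi-square-type) expansion: $l_{S_{B^*}} - l_{S_B}$ measures the log-likelihood cost of zeroing out the off-block entries of $S$ that lie in $B^*$ but not in $B$; since the true values of these entries are nonzero, this difference is of order $O_p(1)$ rather than vanishing, whereas $\kappa \cdot (\pen(B^*) - \pen(B)) = O(n^{-\delta}) \to 0$. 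Thus the likelihood again dominates and $\Psi(B^*) < \Psi(B)$ with probability tending to one. I expect this contrast---likelihood difference $O_p(1)$ against penalty difference $o(1)$---to be the engine of the proof for the finer partitions.

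The main obstacle, and the step requiring the most care, is the finer-partition case: I must verify that removing a genuinely nonzero covariance entry strictly increases $l_S$ in the limit by a fixed positive amount, uniformly over the finitely many $B < B^*$. Concretely, one shows $\frac{1}{p}(\log|\Sigma_B| + \Tr(\Sigma_B^{-1}\Sigma)) > \frac{1}{p}(\log|\Sigma| + \Tr(\Sigma^{-1}\Sigma))$ whenever $\Sigma_B \neq \Sigma$, which is exactly the strict KL positivity applied with $\Sigma_B \neq \Sigma$ (this holds precisely because $B < B^* = B(\Sigma)$ forces $\Sigma_B \neq \Sigma$ by definition of $B^*$ as the \emph{finest} decomposition). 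So in fact both families reduce to the same strict inequality $l_{\Sigma_B} > l_\Sigma$ for $\Sigma_B \neq \Sigma$, and the only subtlety is confirming that the vanishing penalty never reverses a fixed positive likelihood gap. Once that gap is established, I would conclude with a union bound over the fixed finite collection $\mathcal{P}_p \setminus \{B^*\}$: $\PP(\widehat{B}_{tot} \neq B^*) \leq \sum_{B \neq B^*} \PP(\Psi(B) \leq \Psi(B^*)) \to 0$, giving the claim.
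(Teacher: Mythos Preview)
Your proposal contains a genuine gap: you have reversed which partitions satisfy $\Sigma_B = \Sigma$ and which do not. If $B > B^*$ (coarser), then every nonzero entry of $\Sigma$ lies inside a block of $B$, so $\Sigma_B = \Sigma$; conversely, if $B < B^*$ (strictly finer), then by definition of $B^*$ as the \emph{finest} block decomposition of $\Sigma$, at least one nonzero entry is zeroed out, so $\Sigma_B \neq \Sigma$. You state the opposite in both places, and although your final paragraph partially self-corrects for $B < B^*$, it then concludes that ``both families reduce to the same strict inequality $l_{\Sigma_B} > l_\Sigma$.'' This is false precisely for $B > B^*$, which your argument never handles.

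The $B > B^*$ case is where the penalty is essential and where the constraint $\delta \in\, ]0,1/2[$ actually bites. Since $\Sigma_B = \Sigma_{B^*}$ there, the population limits of $l_{S_B}$ and $l_{S_{B^*}}$ coincide, and in fact $l_{S_B} \leq l_{S_{B^*}}$ always (more free parameters fit better), so the KL gap argument gives nothing. The paper handles this via a CLT (its Lemma~\ref{lm_conv_pfix}): $\sqrt{n}\,(l_{S_B} - l_{S_{B^*}}) = O_p(1)$, while $\sqrt{n}\,\kappa\,(\pen(B) - \pen(B^*)) = c\, n^{1/2 - \delta} \to +\infty$, so the penalty dominates and $\Psi(B) > \Psi(B^*)$ with probability tending to one. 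Your write-up attributes this penalty-versus-fluctuation mechanism to $B < B^*$, where it is not needed (your own KL argument already covers that case), and omits it for $B > B^*$, where it is the entire content. Once you swap the two cases, your outline matches the paper's proof: KL gap for $B \not\geq B^*$ (the paper's Step~1), penalty beating $O_p(n^{-1/2})$ fluctuations for $B > B^*$ (the paper's Step~2), then a union bound over the fixed finite $\mathcal{P}_p$.
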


\begin{coro}
Let  $\widehat{B}_{\widehat{C}}:= \underset{ B_\lambda\;|\; \lambda \in A_{\widehat{C}} }{\argmin}\Psi(B)
$, where $A_{\widehat{C}}:=\{|\widehat{C}_{ij}|\;|\;1\leq i<j\leq p \} $ as in Proposition \ref{prop_cost}. Then
$$
\PP\left( \widehat{B}_{\widehat{C}}=B^* \right)\longrightarrow 1. 
$$
\end{coro}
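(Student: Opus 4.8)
\emph{Proof strategy.} The plan is to piggy-back on Proposition~\ref{prop_conv_pfix}, which already gives $\PP(\widehat{B}_{tot}=B^*)\to 1$ for the global minimiser $\widehat{B}_{tot}$ of $\Psi$ over the whole partition lattice $\mathcal{P}_p$. Writing $\mathcal{S}_n:=\{B_\lambda\;|\;\lambda\in A_{\widehat{C}}\}$ for the (random) set of partitions over which $\widehat{B}_{\widehat{C}}$ is optimised, the corollary follows once I show that, with probability tending to one, $B^*\in\mathcal{S}_n$. Indeed, on the event $\{\widehat{B}_{tot}=B^*\}\cap\{B^*\in\mathcal{S}_n\}$ we have $\Psi(\widehat{B}_{\widehat{C}})\le\Psi(B^*)$ because $B^*$ belongs to the set $\mathcal{S}_n$ over which $\widehat{B}_{\widehat{C}}$ minimises $\Psi$, while simultaneously $\Psi(\widehat{B}_{\widehat{C}})\ge\Psi(\widehat{B}_{tot})=\Psi(B^*)$ because $\widehat{B}_{tot}$ minimises $\Psi$ over all of $\mathcal{P}_p\supseteq\mathcal{S}_n$. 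Hence $\widehat{B}_{\widehat{C}}$ is itself a global minimiser of $\Psi$, and since on this event $B^*$ is the (unique) such minimiser, $\widehat{B}_{\widehat{C}}=B^*$. As both events have probability tending to one, so does their intersection, which gives the claim.

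The whole task therefore reduces to proving $\PP(B^*\in\mathcal{S}_n)\to 1$. I would first record the graph interpretation of the block structure. Let $G^*$ be the graph on $[1:p]$ with an edge between $i$ and $j$ exactly when $\sigma_{ij}\neq 0$. Since $\Sigma=\Sigma_{B^*}$ has no entries across distinct blocks, $G^*$ has no edge joining two different blocks of $B^*$; and since $B^*$ is the \emph{finest} decomposition of $\Sigma$ (the defining property of $S_p^{++}(\R,B^*)$), each block of $B^*$ induces a \emph{connected} subgraph of $G^*$ — otherwise the block could be split along its connected components, contradicting minimality. Consequently the connected components of $G^*$ are exactly the blocks of $B^*$.

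Next I would invoke the consistency of the empirical correlation in fixed dimension: because $p$ and $\Sigma$ are fixed while $n\to\infty$, each $\widehat{C}_{ij}$ converges in probability to the true correlation $\sigma_{ij}/\sqrt{\sigma_{ii}\sigma_{jj}}$, so with finitely many pairs the event $E_n:=\{\,|\widehat{C}_{ij}-\sigma_{ij}/\sqrt{\sigma_{ii}\sigma_{jj}}|<c^*/2\ \text{for all } i<j\,\}$ has probability tending to one, where $c^*:=\min\{|\sigma_{ij}|/\sqrt{\sigma_{ii}\sigma_{jj}}:\sigma_{ij}\neq 0\}>0$ is a fixed positive constant. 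On $E_n$, every pair with $\sigma_{ij}=0$ satisfies $|\widehat{C}_{ij}|<c^*/2$, while every edge of $G^*$ satisfies $|\widehat{C}_{ij}|>c^*/2$. Setting $v:=\max\{|\widehat{C}_{ij}|:\sigma_{ij}=0,\ i<j\}\in A_{\widehat{C}}$, the threshold $v$ separates the two groups, so the thresholded correlation matrix $\widehat{C}_v$ retains precisely the edges of $G^*$; its finest partition has connected components equal to the blocks of $B^*$, i.e. $B_v=B^*$ with $v\in A_{\widehat{C}}$, whence $B^*\in\mathcal{S}_n$ on $E_n$. (The only degenerate situation is when $G^*$ has no non-edge, i.e. $B^*$ is a single block; one then takes instead the smallest element of $A_{\widehat{C}}$, whose removal still leaves the block connected when $p\ge 3$.) This yields $\PP(B^*\in\mathcal{S}_n)\to1$.

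The main obstacle is this second step — verifying that the data-dependent grid $A_{\widehat{C}}$ actually realises $B^*$. Its delicate points are the connectivity characterisation of $B^*$ through $G^*$ and the choice of a grid threshold that simultaneously deletes all the (asymptotically vanishing) cross-block empirical correlations and retains a connected within-block subgraph. Once $B^*\in\mathcal{S}_n$ is secured, the comparison with $\widehat{B}_{tot}$ furnished by Proposition~\ref{prop_conv_pfix} is immediate.
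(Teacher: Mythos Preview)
Your proposal is correct and follows essentially the same route the paper uses (implicitly here, and explicitly for the high-dimensional analogue in Proposition~\ref{prop_cost}): show that $B^*$ lies in the threshold family $\{B_\lambda:\lambda\in A_{\widehat{C}}\}$ with probability tending to one, and then conclude via Proposition~\ref{prop_conv_pfix} that the restricted minimiser coincides with the global one. Your fixed-dimension argument for $B^*\in\mathcal S_n$---using consistency of $\widehat C$ to separate the nonzero correlations from the zero ones by a grid threshold---is the natural simplification of Lemma~\ref{lm_seuil} when $p$ is fixed, and your graph characterisation of $B^*$ as the connected components of $G^*$ is exactly what makes the thresholded partition equal $B^*$.

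Two minor remarks. First, your ``i.e.''\ in the degenerate case is an implication, not an equivalence: $G^*$ having no non-edge forces $B^*$ to be a single block, but a single block does not force $G^*$ to be complete; your main argument already covers the latter situation, so only the truly complete-$G^*$ case needs the fallback threshold. Second, the fallback (taking the smallest element of $A_{\widehat C}$) indeed requires $p\ge 3$; for $p=2$ with $\sigma_{12}\neq 0$ the set $\{B_\lambda:\lambda\in A_{\widehat C}\}$ never contains the single-block partition, which is a boundary issue in the paper's own definition rather than a flaw in your reasoning.
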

In the rest of Section \ref{sec_pfix}, we write $\widehat{B}$ for  $\widehat{B}_{tot}$ or $\widehat{B}_{\widehat{C}}$.
The aim of this framework is to show that the suggested estimator $S_{\widehat{B}}$ is asymptotically efficient as if the true decomposition $B^*$ were known. 

As the parameter $\Sigma$ is in the set $S_p^{++}(\R)$ or even $S_p^{++}(\R,B^*)$, which are not open subsets of $\R^{p^2}$, the classical Cram\'{e}r-Rao bound is no longer a lower-bound for the estimation error. Furthermore, as $B^*$ is not known, the number of parameters of $S_{\widehat{B}}$ is not constant. That is why the classical Cram\'{e}r-Rao bound is not relevant in our setting. We remark that applying this classical Cram\'{e}r-Rao bound to a subset of the matrix estimator does not solve this problem.

A specific Cram\'{e}r-Rao bound is suggested in \cite{stoica_Cramer-rao_1998} for parameters and estimators which satisfy continuously differentiable constraints. We shall consider linear constraints here. We let $\theta \in \mathbb{R}^d$ be the parameter, that is assumed to be restricted to a linear subspace $V$ of dimension $q$ in $\R^d$. In this case, if $U\in \mathcal{M}_{d,q}(\R)$ is a matrix whose columns are the elements of an orthonormal basis of $V$ and if $J$ is the Fisher Information Matrix (FIM) of $\theta$ in the non-constraint case, \cite{stoica_Cramer-rao_1998} states that for unbiased estimator $\widehat{\theta}\in V$, we have
\begin{equation}\label{eq_CRbound}
\E\left[(\widehat{\theta}-\theta)(\widehat{\theta}-\theta)^T\right]\leq U(U^T J U)^{-1} U^T,
\end{equation}
where $\leq$ is the partial order on the symmetric positive semi-definite matrices.

In our setting, remark that $S_p^{++}(\R)$ is an open subset of the linear subspace $S_p(\R)$ of symmetric matrices and $S_p^{++}(\R,B^*)$ is an open subset of the linear subspace $
\overline{S_p(\R,B^*)}:=\{ \Gamma \in S_p(\R),\; \Gamma_{B^*}=\Gamma \}$. We let $\vecc(\Sigma)$ be the column vectorization of $\Sigma$. Hence, the parameter is $\vecc(\Sigma)$ and there are $p(p-1)/2$ linear constraints arising from the symmetry and $p(p-1)/2 - \sum_{k=1}^K p_k(p_k-1)/2$ linear constraints arising from the block structure $B^*$.

So, the Cram\'{e}r-Rao bound of Equation \eqref{eq_CRbound} is adapted to our framework, by considering the parameter $\vecc(\Sigma)\in \R^{p^2}$, and we say that an estimator is efficient if it reaches the Cram\'{e}r-Rao bound \eqref{eq_CRbound} (meaning that there is an equality in this equation), where the constraints (symmetry only or symmetry and block structure) will be stated explicitly.\bigskip

Proposition \ref{prop_CR_Sigma} states that, in general, the empirical covariance matrix is efficient with this Cram\'{e}r-Rao bound. 
This supports this choice of Cram\'{e}r-Rao Bound, since in fixed dimension, one would expect that the empirical matrix is the most appropriate estimator.

If the empirical covariance matrix did not reach the Cram\'{e}r-Rao Bound, we could not hope that $S_{\widehat{B}}$ would be efficient in the model where $B^*$ was known, and this Cram\'{e}r-Rao bound would not be well tuned to our problem.

\begin{prop}\label{prop_CR_Sigma}
If $\mu$ is known, the empirical estimator $S$ is an efficient estimator of $\Sigma$ in the model $\{\mathcal{N}(\mu,\Sigma),\; \Sigma\in S_p^{++}(\R)\}$.
\end{prop}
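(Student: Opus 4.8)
The plan is to verify directly that $\vecc(S)$ attains equality in the constrained Cram\'er--Rao bound \eqref{eq_CRbound}, where the parameter is $\vecc(\Sigma)\in\R^{p^2}$ and the only constraints are the $p(p-1)/2$ symmetry relations, so that $V=\{\vecc(H):H=H^T\}$ has dimension $q=p(p+1)/2$ and $U$ has columns forming an orthonormal basis of $V$. Since $\mu$ is known, I would work with $S=\frac1n\sum_{l=1}^n(X^{(l)}-\mu)(X^{(l)}-\mu)^T$, which is symmetric, so $\vecc(S)\in V$ almost surely, and unbiased, $\E(S)=\Sigma$. The whole statement then reduces to the single matrix identity $\cov(\vecc(S))=U(U^TJU)^{-1}U^T$.

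First I would compute $\cov(\vecc(S))$. Writing $Z^{(l)}=X^{(l)}-\mu\sim\mathcal{N}(0,\Sigma)$ and using the Gaussian fourth-moment (Isserlis) identity $\cov(Z_iZ_j,Z_kZ_l)=\sigma_{ik}\sigma_{jl}+\sigma_{il}\sigma_{jk}$, one obtains, with $K_{pp}$ the $p^2\times p^2$ commutation matrix defined by $K_{pp}\vecc(M)=\vecc(M^T)$,
\[
\cov(\vecc(S))=\tfrac1n\,(I_{p^2}+K_{pp})(\Sigma\otimes\Sigma).
\]

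Next I would compute the Fisher information in the symmetric directions. Differentiating the log-density twice along two symmetric directions $H,H'$ gives the classical form $\tfrac n2\Tr(\Sigma^{-1}H\Sigma^{-1}H')$; rewriting this in vectorized coordinates shows that the quadratic form of the unconstrained FIM on $V$ is that of $\tfrac n2(\Sigma^{-1}\otimes\Sigma^{-1})$, i.e. $U^TJU=\tfrac n2\,U^T(\Sigma^{-1}\otimes\Sigma^{-1})U$. The full matrix $J$ on $\R^{p^2}$ is degenerate because the density only depends on the symmetric part of $\Sigma$, which is exactly why the naive Cram\'er--Rao bound fails and \eqref{eq_CRbound} is needed; only the invertible compression $U^TJU$ enters.

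Finally I would combine the two pieces using two structural facts: $\Sigma\otimes\Sigma$ maps $V$ into $V$, since $(\Sigma\otimes\Sigma)\vecc(H)=\vecc(\Sigma H\Sigma)$ is symmetric when $H$ is, and $K_{pp}$ acts as the identity on $V$. These give $U^T(I_{p^2}+K_{pp})(\Sigma\otimes\Sigma)U=2\,U^T(\Sigma\otimes\Sigma)U$, hence $U^T\cov(\vecc(S))U=\tfrac2n\,U^T(\Sigma\otimes\Sigma)U$. Multiplying by $U^TJU$ and using $UU^T=P_V$ together with $(\Sigma^{-1}\otimes\Sigma^{-1})U\in V$ yields $U^T\cov(\vecc(S))U\cdot U^TJU=U^T(\Sigma\otimes\Sigma)(\Sigma^{-1}\otimes\Sigma^{-1})U=I_q$, so $U^T\cov(\vecc(S))U=(U^TJU)^{-1}$; since $\vecc(S)\in V$ forces $\cov(\vecc(S))=U\,U^T\cov(\vecc(S))U\,U^T$, the desired identity follows. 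I expect the main obstacle to be precisely this bookkeeping around the degenerate FIM and the commutation matrix: one must argue that only $U^TJU$ is meaningful and track how $K_{pp}$ and $\Sigma\otimes\Sigma$ restrict to $V$, since a wrong factor of $2$ or a missing projection destroys the final cancellation.
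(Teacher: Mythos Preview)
Your proof is correct and takes a genuinely different route from the paper's. The paper fixes an explicit orthonormal basis of $\vecc(S_p(\R))$, computes the entries of $A=U^TJU$ case by case (Lemma~\ref{lm_UJU}), then guesses the inverse $B=A^{-1}$ and verifies $AB=I$ entry by entry before computing $(UBU^T)_{ij,i'j'}$ explicitly (Lemma~\ref{lm_inv}) and matching it against $\cov(s_{ij},s_{i'j'})$. You instead work coordinate-free with Kronecker algebra: the identities $\cov(\vecc(S))=\tfrac1n(I+K_{pp})(\Sigma\otimes\Sigma)$ and $U^TJU=\tfrac n2\,U^T(\Sigma^{-1}\otimes\Sigma^{-1})U$, together with the observations that $\Sigma\otimes\Sigma$ and $\Sigma^{-1}\otimes\Sigma^{-1}$ preserve $V$ and that $K_{pp}|_V=\mathrm{id}$, collapse everything to $U^T\cov(\vecc(S))U\cdot U^TJU=I_q$. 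This avoids all the $m<n$, $m=n$, $m'<n'$ casework and makes the factor-of-$2$ bookkeeping transparent. The paper's approach, in return, is entirely elementary and makes the reuse for the block model in Proposition~\ref{prop_CR_SigmaB} a matter of restricting indices. One small inaccuracy: your aside that the ambient $J$ on $\R^{p^2}$ is degenerate depends on how one extends the likelihood off $S_p(\R)$; with the paper's convention $J=\tfrac n2 K_{pp}(\Sigma^{-1}\otimes\Sigma^{-1})$ is actually invertible. This does not affect your argument, since only $U^TJU$ enters and you compute it correctly.
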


\begin{rmk}
In Proposition \ref{prop_CR_Sigma}, we assume that $\mu$ is known to reach the Cram\'{e}r-Rao bound for fixed $n$ (and not only asymptotically). This will be the same in Proposition \ref{prop_CR_SigmaB}.
\end{rmk}

Now, we deduce the efficiency of $S_{B^*}$ when $B^*$ is known. 

\begin{prop}\label{prop_CR_SigmaB}
If $\mu$ and $B^*$ are known, $S_{B^*}$ is an efficient estimator of $\Sigma$ in the model  $\{\mathcal{N}(0,\Sigma), \; \Sigma\in S_p^{++}(\R,B^*)\}$.
\end{prop}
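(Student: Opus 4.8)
The plan is to realise $S_{B^*}$ as the orthogonal projection of the empirical covariance $S$ onto the constraint subspace, and then to exploit the block-diagonal structure of $\Sigma$ to decouple the Cram\'er--Rao bound of Equation \eqref{eq_CRbound} into a part carried by the constraint subspace and a part carried by its orthogonal complement. Since $\mu$ is known I may assume $\mu=0$, so that $S=\frac1n\sum_{l=1}^n X^{(l)}(X^{(l)})^T$. Write $W:=\vecc(S_p(\R))$ for the vectorized space of symmetric matrices and $V:=\vecc(\overline{S_p(\R,B^*)})\subset W$ for the subspace cut out by both the symmetry and the block constraints; let $U$ be a matrix whose columns form an orthonormal basis of $V$, so that $UU^T$ is the Euclidean orthogonal projection onto $V$. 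Because $S$ is symmetric, zeroing its off-block entries is exactly this projection, hence $\vecc(S_{B^*})=UU^T\vecc(S)$; as $\Sigma=\Sigma_{B^*}$ is fixed by the projection, $S_{B^*}$ is unbiased and lies in $V$, so Equation \eqref{eq_CRbound} applies with the non-constraint FIM $J=\frac{n}{2}(\Sigma^{-1}\otimes\Sigma^{-1})$, the same $J$ appearing in Proposition \ref{prop_CR_Sigma}.

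The key structural step, where I would use the block assumption, is an invariance statement. Since $\Sigma\in S_p^{++}(\R,B^*)$ is block-diagonal, so is $\Sigma^{-1}$. Using $(\Sigma^{-1}\otimes\Sigma^{-1})\vecc(M)=\vecc(\Sigma^{-1}M\Sigma^{-1})$, I would check that if $M$ is symmetric and block-diagonal then $\Sigma^{-1}M\Sigma^{-1}$ is again symmetric and block-diagonal, while if $M$ is symmetric and supported on the cross-block positions then $\Sigma^{-1}M\Sigma^{-1}$ is again symmetric and cross-block; both facts follow at once from $\Sigma^{-1}$ being block-diagonal. Consequently $J$ preserves $W$, and within $W$ both $V$ and its orthogonal complement $V':=W\cap V^{\perp}$ are $J$-invariant.

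Let $U'$ complete $U$ to an orthonormal basis $U_0=[\,U\mid U'\,]$ of $W$ adapted to $W=V\oplus V'$. The $J$-invariance gives $JU=UA$ and $JU'=U'A'$ for some square matrices $A,A'$, whence by orthonormality $U'^TJU=0$ and $U^TJU'=0$, so $U_0^TJU_0=\mathrm{diag}(U^TJU,\,U'^TJU')$ is block-diagonal. Inverting block by block yields the splitting of the symmetry-only Cram\'er--Rao bound
\begin{equation*}
U_0(U_0^TJU_0)^{-1}U_0^T=U(U^TJU)^{-1}U^T+U'(U'^TJU')^{-1}U'^T.
\end{equation*}
Conjugating both sides by $UU^T$ annihilates the $V'$-term and fixes the $V$-term, giving $UU^T\bigl[U_0(U_0^TJU_0)^{-1}U_0^T\bigr]UU^T=U(U^TJU)^{-1}U^T$. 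By Proposition \ref{prop_CR_Sigma} the empirical matrix is efficient in the symmetry-only model, i.e. $\cov(\vecc(S))=U_0(U_0^TJU_0)^{-1}U_0^T$; combining this with $\vecc(S_{B^*})=UU^T\vecc(S)$ gives $\cov(\vecc(S_{B^*}))=UU^T\cov(\vecc(S))UU^T=U(U^TJU)^{-1}U^T$, which is precisely the Cram\'er--Rao bound of Equation \eqref{eq_CRbound} for the constrained model. Hence $S_{B^*}$ attains the bound and is efficient.

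The main obstacle is the invariance claim of the second paragraph: everything reduces to showing that conjugation by the block-diagonal $\Sigma^{-1}$ leaves the symmetric block-diagonal subspace and its symmetric cross-block complement separately stable. Once this is in place the Cram\'er--Rao bound decouples along an orthogonal $J$-invariant splitting and the result follows by projecting the already-established efficiency of $S$; the remaining work is only the routine bookkeeping with vectorization and commutation identities needed to make the covariance and FIM formulas precise.
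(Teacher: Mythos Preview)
Your proof is correct and takes a genuinely different route from the paper's. The paper proceeds by brute force: it writes down an explicit orthonormal basis of $\overline{S_p(\R,B^*)}$, computes the entries of $U^TJU$ coordinate-by-coordinate, guesses and verifies an explicit inverse $(U^TJU)^{-1}$, expands $U(U^TJU)^{-1}U^T$ entrywise, and finally checks that each entry matches $\cov\bigl((S_{B^*})_{ij},(S_{B^*})_{i'j'}\bigr)=\sigma_{ii'}\sigma_{jj'}+\sigma_{ij'}\sigma_{ji'}$ when $(i,j,i',j')\in B^*$ and $0$ otherwise. Your argument replaces all of this with the structural observation that conjugation by the block-diagonal $\Sigma^{-1}$ preserves both the symmetric block-diagonal subspace $V$ and its symmetric cross-block complement $V'$, so that $U_0^TJU_0$ is block-diagonal and the symmetry-only Cram\'er--Rao bound from Proposition~\ref{prop_CR_Sigma} splits orthogonally; projecting onto $V$ then gives the constrained bound directly. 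This is shorter, more conceptual, and makes transparent \emph{why} the block structure yields efficiency, whereas the paper's computation merely verifies it.

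One small point to tighten: the unconstrained FIM computed in the paper is $J_{ij,kl}=\tfrac{n}{2}\gamma_{il}\gamma_{jk}$, which sends $\vecc(M)\mapsto\tfrac{n}{2}\vecc(\Sigma^{-1}M^T\Sigma^{-1})$, whereas your $\tfrac{n}{2}(\Sigma^{-1}\otimes\Sigma^{-1})$ sends $\vecc(M)\mapsto\tfrac{n}{2}\vecc(\Sigma^{-1}M\Sigma^{-1})$; these differ by the commutation matrix and are not literally equal on $\R^{p^2}$. However, they coincide on $W=\vecc(S_p(\R))$, and since every column of $U$, $U'$, $U_0$ lies in $W$, all the quantities $U^TJU$, $U_0^TJU_0$ and your invariance claims are unaffected. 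It would be worth adding one sentence acknowledging this, so that the phrase ``the same $J$ appearing in Proposition~\ref{prop_CR_Sigma}'' is fully justified.
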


Finally, Proposition \ref{prop_CR_final} states the asymptotic efficiency of our estimator $S_{\widehat{B}}$ (even for unknown $\mu$)

\begin{prop}\label{prop_CR_final}
$$
\sqrt{n}( \vecc( S_{\widehat{B}})- \vecc(\Sigma) ) \overset{\mathcal{L} }{\underset{n\rightarrow+\infty }{\longrightarrow}} \mathcal{N} (0,\mathrm{CR}(\Sigma,B^*)),
$$
where $\mathrm{CR}(\Sigma,B^*)$ is the Cram\'{e}r-Rao bound of $\vecc(\Sigma)$ in the model $\{\mathcal{N}(0,\Sigma), \; \Sigma\in S_p^{++}(\R,B^*)\}$.
\end{prop}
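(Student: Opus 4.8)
The plan is to prove the convergence first for the oracle estimator $S_{B^*}$, for which the true structure is known, and then to transfer the limiting law to $S_{\widehat{B}}$ using the selection consistency $\PP(\widehat{B}=B^*)\to 1$ from Proposition \ref{prop_conv_pfix} and its Corollary. Since the whole section is in fixed dimension, every limiting object is finite-dimensional and the only tools needed are those of classical asymptotic statistics: the multivariate central limit theorem and Slutsky's lemma.

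First I would establish the oracle CLT
$$
\sqrt{n}\bigl(\vecc(S_{B^*})-\vecc(\Sigma)\bigr)\overset{\mathcal{L}}{\longrightarrow}\mathcal{N}\bigl(0,\mathrm{CR}(\Sigma,B^*)\bigr).
$$
Introduce the known-mean sample covariance $\widetilde{S}:=\frac1n\sum_{l=1}^n (X^{(l)}-\mu)(X^{(l)}-\mu)^T$. Its summands are i.i.d. with mean $\Sigma$ and finite second moments (Gaussian variables have all moments), so the multivariate CLT gives $\sqrt{n}(\vecc(\widetilde{S})-\vecc(\Sigma))\overset{\mathcal{L}}{\longrightarrow}\mathcal{N}(0,\Omega)$ for some covariance $\Omega$. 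Since $\vecc(\widetilde{S}_{B^*})=P\,\vecc(\widetilde{S})$ with $P$ the linear projection that retains the entries indexed by $B^*$ and zeroes the others, and since $P\,\vecc(\Sigma)=\vecc(\Sigma)$ because $\Sigma\in S_p^{++}(\R,B^*)$, linearity together with the continuous mapping theorem yields $\sqrt{n}(\vecc(\widetilde{S}_{B^*})-\vecc(\Sigma))\overset{\mathcal{L}}{\longrightarrow}\mathcal{N}(0,P\Omega P^T)$. The crucial identification $P\Omega P^T=\mathrm{CR}(\Sigma,B^*)$ comes from Proposition \ref{prop_CR_SigmaB}: as $\widetilde{S}_{B^*}$ attains the Cram\'er-Rao bound for every $n$ when $\mu$ is known, its variance is exactly $\frac1n\mathrm{CR}(\Sigma,B^*)$, so the asymptotic covariance of the $\sqrt{n}$-normalised quantity equals $\mathrm{CR}(\Sigma,B^*)$.

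I then would remove the assumption that $\mu$ is known. From $S=\widetilde{S}-(\overline{X}-\mu)(\overline{X}-\mu)^T$ and $\overline{X}-\mu=O_p(1/\sqrt n)$, the correction term is $O_p(1/n)$, hence $\sqrt{n}(\vecc(S_{B^*})-\vecc(\widetilde{S}_{B^*}))\to 0$ in probability; by Slutsky's lemma the estimator $S_{B^*}$ built from $\overline{X}$ has the same limiting law $\mathcal{N}(0,\mathrm{CR}(\Sigma,B^*))$. Finally I would pass to $S_{\widehat{B}}$: on the event $\{\widehat{B}=B^*\}$ one has $S_{\widehat{B}}=S_{B^*}$ exactly, so
$$
\PP\bigl(\sqrt{n}(\vecc(S_{\widehat{B}})-\vecc(S_{B^*}))\neq 0\bigr)\leq \PP(\widehat{B}\neq B^*)\longrightarrow 0,
$$
whence $\sqrt{n}(\vecc(S_{\widehat{B}})-\vecc(S_{B^*}))\to 0$ in probability. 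Writing $\sqrt{n}(\vecc(S_{\widehat{B}})-\vecc(\Sigma))$ as the sum of this vanishing term and the oracle term, a last application of Slutsky's lemma gives the claim.

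I expect the only genuinely delicate point to be the identification of the limiting covariance with the Cram\'er-Rao bound: one must check that the finite-$n$ efficiency of Proposition \ref{prop_CR_SigmaB} is stated for the known-mean estimator and that this finite-$n$ variance equality transfers cleanly, through the CLT, into the asymptotic covariance of the $\sqrt{n}$-scaled estimator. The selection-consistency transfer from $S_{B^*}$ to $S_{\widehat{B}}$, by contrast, is routine once $\PP(\widehat{B}=B^*)\to 1$ is available.
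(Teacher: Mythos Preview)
Your proof is correct and follows essentially the same route as the paper: establish the CLT for an oracle known-mean block estimator, identify its asymptotic covariance with $\mathrm{CR}(\Sigma,B^*)$ via Proposition~\ref{prop_CR_SigmaB}, pass to the unknown-mean $S_{B^*}$ by Slutsky, and finally transfer to $S_{\widehat{B}}$ using $\PP(\widehat{B}=B^*)\to 1$. The only cosmetic difference is that the paper uses the Wishart representation $\widehat{\Sigma}\sim\frac{1}{n-1}\sum_{k=1}^{n-1}M^{(k)}M^{(k)T}$ as its intermediate known-mean quantity rather than your $\widetilde{S}=\frac{1}{n}\sum_{l}(X^{(l)}-\mu)(X^{(l)}-\mu)^T$, and orders the last two Slutsky steps in the opposite order; neither change affects the substance.
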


The explicit expression of the $p^2 \times p^2$ matrix $\mathrm{CR}(\Sigma,B^*)$ can be found in the appendix where Propositions \ref{prop_CR_Sigma}, \ref{prop_CR_SigmaB} and \ref{prop_CR_final} are proved.

\section{Application to the estimation of the Shapley effects}\label{section_shap}

In this section, we apply the block-diagonal estimation of the covariance matrix $\Sigma$ to estimate the Shapley effects in high dimension and for Gaussian linear models. In Section \ref{section_def_shap}, we recall the definition of the Shapley effects with their particular expression in the Gaussian linear framework with a block-diagonal covariance matrix. In Section \ref{section_conv_shap}, we address the problem of estimating the Shapley effects when the covariance matrix $\Sigma$ is estimated. We derive the convergence of the estimators of the Shapley effects from the results of Section \ref{section_cov}.

\subsection{The Shapley effects}\label{section_def_shap}

Let $(X_i)_{i\in [1:p]}$ be random inputs variables on $\R^p$ and let $Y=f(X)$ be the real random output variable in $L^2$ . We assume that $\V(Y)\neq 0$. Here, $f$ can be a numerical simulation model \cite{santner2003design}.

If $u\subset[1:p]$ and $x=(x_i)_{i \in [1:p]}\in \R^p$, we write $x_u:=(x_i)_{i\in u}$.
We can define the Shapley effects as in \cite{owen_sobol_2014} for the input variable $X_i$ as:
\begin{equation}\label{Shapley}
\eta_i:=\frac{1}{p\V(Y)}\sum_{u\subset -i}  \begin{pmatrix}
p-1\\ |u|
\end{pmatrix} ^{-1}\left(\V(\E(Y|X_{u\cup \{i\}}))-\V(\E(Y|X_u)) \right)
\end{equation}
where $-i$ is the set $[1:p]\setminus \{i\}$. One can see in Equation \eqref{Shapley} that adding a $X_i$ to $X_u$ changes the conditional expectation of $Y$, and increases the variability of this conditional expectation. The Shapley effect $\eta_i$ is large when, on average, the variance of this conditional expectation increases significantly when $X_i$ is observed. Thus, a large Shapley effect $\eta_i$ corresponds to an important input variable $X_i$.

The Shapley effects have interesting properties for global sensitivity analysis. Indeed, there is only one Shapley effect for each variable (contrary to the Sobol indices). Moreover, the sum of all the Shapley effects is equal to $1$ (see \cite{owen_sobol_2014}) and all these values lie in $[0,1]$ even with dependent inputs. This is very convenient for the interpretation of these sensitivity indices.\\

Here, we assume that $X\sim \mathcal{N}(\mu,\Sigma)$, that $\Sigma \in S_p^{++}(\R)$ and that the model is linear, that is $f:x\longmapsto \beta_0+\beta^T x$, for a fixed $\beta_0\in \R$ and a fixed vector $\beta$. This framework is widely used to model physical phenomena (see for example \cite{kawano_evaluation_2006,hammer_approximate_2011,rosti_linear_2004}). Indeed, uncertainties are often modelled as Gaussian variables and an unknown function is commonly estimated by its linear approximation.  Furthermore, the main focus on this paper is on the high-dimensional case, where $p$ is large. In high dimension, linear models are often considered, as more complex models are not necessarily more relevant.
In this framework, the sensitivity indices can be calculated explicitly \cite{owen_shapley_2017}:
\begin{eqnarray}\label{eq_varianceShapley}
\eta_i:=\frac{1}{p\V(Y)}\sum_{u\subset -i}  \begin{pmatrix}
p-1\\ |u|
\end{pmatrix}^{-1}\left(\V(Y|X_u)-\V(Y|X_{u\cup \{i\}}) \right).
\end{eqnarray}
with
\begin{equation}\label{eq_V}
\V(Y|X_u)=\V(\beta_{-u}^T X_{-u}|X_u)=\beta_{-u}^T(\Sigma_{-u,-u}-\Sigma_{-u,u}\Sigma_{u,u}^{-1}\Sigma_{u,-u})\beta_{-u}
\end{equation}
where $\beta_u:=(\beta_i)_{i\in u}$ and $\Gamma_{u,v}:=(\Gamma_{i,j})_{i\in u, j\in v}$. 
Thus, in the Gaussian linear framework, the Shapley effects are functions of the parameters $\beta$ and $\Sigma$. 

Despite the analytical formula \eqref{eq_V}, even in the case where $\Sigma$ and $\beta$ are known, the computational cost of the Shapley effects remains an issue when the number of input variables $p$ is too large ($p\geq 30$), as it is highlighted in \cite{broto_sensitivity_2019}. Indeed, the Shapley effects depend on $2^p$ values, namely the $(\V(Y|X_u))_{u\subset[1:p]}$. However, when the covariance matrix is block-diagonal, \cite{broto_sensitivity_2019} showed that this high-dimensional computational problem boils down to a collection of lower dimensional problems. 

Indeed, assume that $\Sigma \in S_p^{++}(\R,B^*)$ with $B^*=\{B_1^*,B_2^*,...,B_K^*\}$. If $i\in [1:p]$, let $[i]$ denotes the group of $i$, that is $i \in B_{[i]}^*$. Using Corollary 2 of \cite{broto_sensitivity_2019}, we have for all $i \in [1:p]$,
\begin{equation}\label{eq_shap_groups}
\eta_i=\frac{1}{  \beta^T \Sigma \beta}\frac{1}{|B_{[i]}^*|}\sum_{u\subset B_{[i]}^*-i} \begin{pmatrix}
|B_{[i]}^*|-1 \\ |u|
\end{pmatrix}^{-1}\left(V_{u }^{B_{[i]}^*}-V_{u\cup\{i\} }^{B_{[i]}^*}\right),
\end{equation}
where for all $v\subset B_{[i]}^*$, 
\begin{equation}\label{eq_VB}
V_{v}^{B_{[i]}^*}:=\V\left( \beta_{B_{[i]}^*}^T X_{B_{[i]}^*}| X_v\right)=\beta_{B_{[i]}^*- v}^T\left( \Sigma_{B_{[i]}^*- v,B_{[i]}^*- v}- \Sigma_{B_{[i]}^*- v, v}\Sigma_{v,v}^{-1} \Sigma_{v,B_{[i]}^*- v} \right) \beta_{B_{[i]}^*- v}
\end{equation}
and where $w-v=w\subset v$.
Thus, when $\Sigma$ and $\beta$ are known, to compute all the Shapley effects $(\eta_i)_{i\in [1:p]}$, we only have to compute the $\sum_{k=1}^K 2^{|B_k^*|}$ values $\{\V(Y|X_u),\; u\subset B_k^*,\;k\in[1:K]\}$ instead of all the $2^p$ values $\{\V(Y|X_u), \; u\subset[1:p]\}$. Some numerical experiments highlighting this gain are given in \cite{broto_sensitivity_2019}. The complexity of the computation of the Shapley effects is $O(K 2^m)$, where $m$ denotes the size of the maximal group in $B^*$.

If  $\Sigma$ is known, but the decomposition $B^*$ is unknown, we can compute $B^*$ from $\Sigma$. We can for example use "Breath-First-Search" (BFS). The complexity of this algorithm is in $O(p m^2)$.

To conclude, when the parameters $\beta$ and $\Sigma$ are known with $\Sigma \in S_p^{++}(\R,B^*)$, the computation of all the Shapley effects has a complexity $O(K 2^m)$.
\bigskip

\subsection{Estimation of the Shapley effects in high dimension} \label{section_conv_shap}
We now address the problem when the parameters $\mu$, $\Sigma$ and thus $B^*$ are unknown. 

We assume that we just observe a sample $(X^{(l)}, \tilde{Y}^{(l)})_{l\in [1:n]}$ where $\tilde{Y}=(\tilde{Y}^{(l)})_{l\in [1:n]}$ are noisy observations:
$$
\tilde{Y}^{(l)}= \beta_0+ \beta^TX^{(l)}+ \varepsilon^{(l)}, 
$$
for $l \in [1:n]$ where $(\varepsilon^{(l)})_{l\in [1:n]}$ are i.i.d. with distribution $\mathcal{N}(0, \sigma^2 _n)$ and where $\sigma_n \leq C_{\sup}$ is unknown, where $C_{\sup}$ is a fixed finite constant.

Remark that the computation of the Shapley effects requires the parameters $\beta$ and $\Sigma$ (see Equations \eqref{eq_varianceShapley} and \eqref{eq_V} or \eqref{eq_shap_groups} and \eqref{eq_VB}). Here, as we do not know the parameters $\beta$ and $\Sigma$, we will estimate them and replace the true parameters by their estimation in Equations \eqref{eq_varianceShapley} and \eqref{eq_V} or \eqref{eq_shap_groups} and \eqref{eq_VB}.

First, we estimate $(\beta_0\;\beta^T)^T$ as usual by $$
\begin{pmatrix}
\widehat{\beta}_0\\ \widehat{\beta}
\end{pmatrix}:= (A^T A)^{-1}A^T \tilde{Y},
$$
where $A\in \mathcal{M}_{n,p+1}(\R)$ is defined by $A_{l,i+1}:=X_i^{(l)}$ and $A_{l,1}=1$, and where $n> p$.

At first glance, we could estimate $\Sigma$ by the empirical covariance matrix $S$ and replace it in the computation of the Shapley effects given by Equations \eqref{eq_varianceShapley} and \eqref{eq_V} or \eqref{eq_shap_groups} and \eqref{eq_VB}. However, $B^*$ is not known and we can not find it using BFS with the empirical covariance matrix $S$ (which usually has the simple structure $\{ [1:p] \}$ with probability one). Thus, we can not use the formula \eqref{eq_shap_groups} of the Shapley effects with independent groups. So, the only way to estimate the Shapley effects is using Equations \eqref{eq_varianceShapley} and \eqref{eq_V}, replacing $\Sigma$ by the empirical covariance matrix $S$. However, as we have seen, the complexity of this computation would be exponential in $p$ and it would be no longer tractable for $p\geq 30$. Furthermore, in high dimension, the Frobenius error between $S$ and $\Sigma$ does not go to $0$ (see Proposition \ref{prop_rate_sigma}). Thus, using the empirical covariance matrix could yield estimators of the Shapley effects that do not converge.\\

For that reason, to estimate $\eta=(\eta_i)_{i\in [1:p]}$, we suggest to estimate $B^*$ by $\widehat{B}$ (defined in Section \ref{section_conv_cost}) and $\Sigma$ by $S_{\widehat{B}}$ and to replace them in the analytical formula \eqref{eq_shap_groups}. We write $\widehat{\eta}=(\widehat{\eta}_i)_{i\in [1:p]}$ the estimator of the Shapley effects obtained replacing $B^*$ by $\widehat{B}$, $\Sigma$ by $S_{\widehat{B}}$ and $\beta$ by $\widehat{\beta}$ in Equations \eqref{eq_shap_groups} and \eqref{eq_VB}. We use our previous results on the estimation of the covariance matrix to obtain the convergence rate of $\widehat{\eta}$.

We focus on the high-dimensional case, when $p$ and $n$ go to $+\infty$. In this case, $\beta$ and $\Sigma$ are not fixed but depend on $n$ (or $p$). As in Section \ref{sec_conv_high}, we choose $\kappa=\frac{1}{p n^{\delta}}$ with $\delta \in ]1\slash 2,1[$ to compute $\widehat{B}$.
To prevent problematic cases, we also add an assumption on the vector $\beta$.

\begin{cond}\label{cond5}
There exist $\beta_{\inf}>0$ and $\beta_{\sup}<+\infty$ such that for all $n$ and for all $j\leq p$, we have $\beta_{\inf}\leq |\beta_j| \leq \beta_{\sup}$.
\end{cond}

\begin{prop}\label{prop_conv_shapley_high}
Under Conditions 1 to 5 and if $\delta \in ]1\slash 2,1[$, then for all $\gamma>1\slash 2$,  we have
$$
\sum_{i=1}^p \left|\widehat{\eta}_i-\eta_i\right|=o_p\left(\frac{\log(n)^\gamma}{\sqrt{n}}\right).
$$
\end{prop}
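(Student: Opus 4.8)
The plan is to reduce everything to the event $E_n:=\{\widehat{B}=B^*\}$, on which the estimated Shapley effects are built from the \emph{correct} block structure, so that the only errors come from replacing $\Sigma$ by $S_{B^*}$ and $\beta$ by $\widehat\beta$. By Proposition \ref{prop_cost}, $\PP(E_n)\to 1$. Both $(\eta_i)_i$ and $(\widehat\eta_i)_i$ are genuine Shapley allocations computed via \eqref{eq_shap_groups}, hence each sums to $1$ (the estimated ones because $\sum_{i\in B_k}\widehat{\tilde\eta}_i=\widehat\beta_{B_k}^T S_{B_k}\widehat\beta_{B_k}$ and these block sums add up to $\widehat\beta^T S_{B^*}\widehat\beta$), so $\sum_i|\widehat\eta_i-\eta_i|\leq 2$ deterministically. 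Thus the contribution of $E_n^c$ is at most $2\cdot\1_{E_n^c}$, and since $\PP(E_n^c)\to 0$ this is already $o_p(\log(n)^\gamma/\sqrt n)$ for every $\gamma$. It therefore suffices to bound the error on $E_n$.

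On $E_n$, write $D:=\beta^T\Sigma\beta$ and $\widehat D:=\widehat\beta^T S_{B^*}\widehat\beta$, and set $\eta_i=\tilde\eta_i/D$, $\widehat\eta_i=\widehat{\tilde\eta}_i/\widehat D$, where $\tilde\eta_i$ (resp. $\widehat{\tilde\eta}_i$) is the block numerator in \eqref{eq_shap_groups} depending only on $\beta_{B^*_{[i]}},\Sigma_{B^*_{[i]}}$ (resp. $\widehat\beta_{B^*_{[i]}},S_{B^*_{[i]}}$). Because $\tilde\eta_i$ is the Shapley value of player $i$ in the block game, the efficiency property gives $\sum_{i\in B^*_k}\tilde\eta_i=\beta_{B^*_k}^T\Sigma_{B^*_k}\beta_{B^*_k}$, hence $\sum_i\tilde\eta_i=D$ and $\sum_i\widehat{\tilde\eta}_i=\widehat D$. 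I then split
$$
\sum_i|\widehat\eta_i-\eta_i|\leq \frac{1}{D}\sum_i|\widehat{\tilde\eta}_i-\tilde\eta_i|+\frac{|\widehat D-D|}{D}.
$$
By Conditions \ref{cond2} and \ref{cond5} one has $\lambda_{\inf}\beta_{\inf}^2\,p\leq D\leq \lambda_{\sup}\beta_{\sup}^2\,p$, so $D\asymp p$; this is the key fact turning a per-index error of order $1/p$ into a total error of order $1$.

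By Condition \ref{cond3} each $\tilde\eta_i$ is an explicit function of at most $O(1)$ entries, continuously differentiable wherever the relevant blocks are positive definite, and Conditions \ref{cond2}, \ref{cond5} keep the true parameters in a fixed compact subset of that region. The estimated blocks lie in the same compact set with probability tending to $1$, since $\max_k\|S_{B^*_k}-\Sigma_{B^*_k}\|_{\mathrm{op}}\leq m\,\|S_{B^*}-\Sigma\|_{\max}$ tends to $0$ in probability. A uniform mean-value bound then yields, for some constant $C$,
$$
|\widehat{\tilde\eta}_i-\tilde\eta_i|\leq C\bigl(\|S_{B^*}-\Sigma\|_{\max}+\|\widehat\beta-\beta\|_\infty\bigr),
$$
so that $\tfrac1D\sum_i|\widehat{\tilde\eta}_i-\tilde\eta_i|\leq \tfrac{Cp}{D}\bigl(\|S_{B^*}-\Sigma\|_{\max}+\|\widehat\beta-\beta\|_\infty\bigr)=O(1)\cdot\bigl(\|S_{B^*}-\Sigma\|_{\max}+\|\widehat\beta-\beta\|_\infty\bigr)$.

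The remaining work, and the main obstacle, is to control the two max-norms and the normalization term at the rate $O_p(\sqrt{\log(n)/n})$. For $S$, each entry $s_{ij}-\sigma_{ij}$ is sub-exponential, so Bernstein's inequality and a union bound over the $O(p)=O(n)$ pairs of $B^*$ give $\|S_{B^*}-\Sigma\|_{\max}=O_p(\sqrt{\log(n)/n})$. For $\widehat\beta$, conditionally on the design the vector $\widehat\beta-\beta$ is centered Gaussian with covariance $\tfrac{\sigma_n^2}{n}S^{-1}$; since $p/n\to y<1$ (Condition \ref{cond1}) the smallest eigenvalue of $S$ is bounded away from $0$ with probability tending to $1$, so the marginal variances are $O_p(1/n)$ and the Gaussian maximal inequality over $p$ coordinates (with $\sigma_n\leq C_{\sup}$) gives $\|\widehat\beta-\beta\|_\infty=O_p(\sqrt{\log(n)/n})$. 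For the normalization I expand $\widehat D-D=(\widehat\beta-\beta)^T S_{B^*}(\widehat\beta+\beta)+\beta^T(S_{B^*}-\Sigma)\beta$; using $\|\widehat\beta-\beta\|_2=O_p(\sqrt{p/n})$ (from $\Tr(S^{-1})=O_p(p)$) the first summand is $O_p(p/\sqrt n)$, while the second is $\leq\beta_{\sup}^2\,O(p)\,\|S_{B^*}-\Sigma\|_{\max}$, so $|\widehat D-D|/D=O_p(\sqrt{\log(n)/n})$. Collecting the bounds, the whole error on $E_n$ is $O_p(\log(n)^{1/2}/\sqrt n)$, which is $o_p(\log(n)^\gamma/\sqrt n)$ precisely because $\gamma>1/2$. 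The delicate point is obtaining the single $\sqrt{\log n}$ factor (and no worse) simultaneously for the covariance entries and the regression coefficients, which is exactly what forces the union-bound/Gaussian-maximal arguments and the control of $\lambda_{\min}(S)$; the structural reduction in the earlier steps is routine once the uniform smoothness granted by Conditions \ref{cond2}, \ref{cond3} and \ref{cond5} is established.
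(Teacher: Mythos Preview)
Your proof is correct and follows essentially the same strategy as the paper: reduce to the event $\{\widehat B=B^*\}$ via Proposition~\ref{prop_cost}, then control the three key quantities $\|S_{B^*}-\Sigma\|_{\max}$ (Bernstein plus union bound), $\|\widehat\beta-\beta\|_\infty$ (conditional Gaussianity with covariance $\sigma_n^2 S^{-1}/n$ and control of $\lambda_{\min}(S)$), and the normalizer $|\widehat D-D|/D$, each at the $\sqrt{\log n/n}$ scale. The only cosmetic differences are that the paper works with $p\max_i|\tilde\eta_i-\eta_i|$ and explicit operator-norm expansions of each $V_u^k$, whereas you use the summed decomposition $\tfrac{1}{D}\sum_i|\widehat{\tilde\eta}_i-\tilde\eta_i|+|\widehat D-D|/D$ and a uniform Lipschitz/mean-value argument; note that your displayed inequality tacitly uses $\widehat{\tilde\eta}_i\geq 0$ (true since $S_{B_k^*}$ is a.s.\ positive definite, so the conditional-variance increments are nonnegative), not just the efficiency identity $\sum_i\widehat{\tilde\eta}_i=\widehat D$.
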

Recall that $\sum_{i=1}^p\eta_i=1$. Thus, to quantify the error estimation, the value of $\sum_{i=1}^p \left|\widehat{\eta}_i-\eta_i\right|$ is a relative error. Proposition \ref{prop_conv_shapley_high} states that this relative error goes to zero at the parametric rate $1/n^{1/2}$, up to a logarithm factor.

We have seen in Section \ref{section_def_shap} that, once we have the block-diagonal covariance matrix, the computation of the Shapley effects has the complexity $O(K 2^m)$ which is equal to $O(n)$ under Condition 3. In Section \ref{sec_conv_high}, we gave four different choices of $\widehat{B}$, with four different complexities, all larger than $O(n)$. Thus, the complexity of the whole estimation of the Shapley effects (including the estimation of $\Sigma$) is the same as the complexity of $\widehat{B}$ (see Section \ref{section_conv_cost}).

\bigskip

When Condition 4 is not satisfied, we still have the convergence of the relative error, with almost the same rate.
\begin{prop}\label{prop_conv_shapley_high_ba}
Under Conditions 1, 2, 3 and 5, for all $\delta \in ]0,1[$, choosing the partition $B_{n^{-\delta\slash 2}}$ and for all $\varepsilon>0$,  we have
$$
\sum_{i=1}^p \left|\widehat{\eta}_i-\eta_i\right|=o_p\left(\frac{1}{n^{-(\delta-\varepsilon)\slash 2}}\right).
$$
\end{prop}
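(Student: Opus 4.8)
The plan is to transport the covariance rates already obtained in Proposition \ref{prop_rate_sigma_ba} and the structural sandwiching of Proposition \ref{prop_seuil_ba} through the analytic Shapley formula \eqref{eq_shap_groups}--\eqref{eq_VB}, targeting the rate $n^{-(\delta-\varepsilon)/2}$. First I would fix $\alpha_1,\alpha_2$ with $(\delta-\varepsilon)/2<\alpha_1<\delta/2<\alpha_2$ and work on the event $\Omega_n:=\{B(\alpha_1)\leq \widehat{B}\leq B(\alpha_2)\}$, where here $\widehat{B}=B_{n^{-\delta/2}}$. By Proposition \ref{prop_seuil_ba}, $\PP(\Omega_n)\to 1$. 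On $\Omega_n$ three facts hold: since the connected components of $\Sigma$ thresholded at any level refine $B^*$, one has $\widehat{B}\leq B(\alpha_2)\leq B^*$, so the blocks of $\widehat{B}$ have size at most $m$; and, $\widehat{B}$ being coarser than $B(\alpha_1)$, it zeroes out fewer entries, whence $\|\Sigma_{\widehat{B}}-\Sigma\|_{\max}\leq \|\Sigma_{B(\alpha_1)}-\Sigma\|_{\max}\leq n^{-\alpha_1}$.

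The key observation is that $\widehat{\eta}_i$ is exactly the Shapley effect of the Gaussian linear model with parameters $(\widehat{\beta},S_{\widehat{B}})$, and $\eta_i$ that of $(\beta,\Sigma)$, because the block formula \eqref{eq_shap_groups}--\eqref{eq_VB} computed with any partition coarser than the true block structure of the matrix returns the true Shapley effects (and $S_{\widehat{B}},\Sigma,\Sigma_{\widehat{B}}$ are all block-diagonal for their respective structures). Writing $\eta_i(\cdot,\cdot)$ for this map, I would insert the intermediate model $(\beta,\Sigma_{\widehat{B}})$ and split
\[
\sum_{i=1}^p \left|\widehat{\eta}_i-\eta_i\right| \leq \sum_{i=1}^p \left|\eta_i(\widehat{\beta},S_{\widehat{B}})-\eta_i(\beta,\Sigma_{\widehat{B}})\right| + \sum_{i=1}^p \left|\eta_i(\beta,\Sigma_{\widehat{B}})-\eta_i(\beta,\Sigma)\right|,
\]
the first summand being an estimation error at fixed structure and the second a purely structural error.

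For the structural term, since $\Sigma$ and $\Sigma_{\widehat{B}}$ are both block-diagonal for $B^*$, I would evaluate both with the partition $B^*$, writing $\eta_i(\beta,\Sigma)=N_i/D$ and $\eta_i(\beta,\Sigma_{\widehat{B}})=\widehat{N}_i/\widehat{D}$ with $D=\beta^T\Sigma\beta$, $\widehat{D}=\beta^T\Sigma_{\widehat{B}}\beta$, and numerators $N_i,\widehat{N}_i$ depending only on the block $B^*_{[i]}$ (size $\leq m$). Then I would split the difference into a numerator part and a normalization part. For $n$ large the operator-norm perturbation on each block is at most $m\,n^{-\alpha_1}\to 0$, so $\Sigma_{\widehat{B}}$ and every submatrix inverted in \eqref{eq_VB} keep eigenvalues $\geq \lambda_{\inf}/2$; hence $N_i$ is Lipschitz with a constant depending only on $m,\lambda_{\inf},\beta_{\sup}$, giving $|N_i-\widehat{N}_i|\leq C n^{-\alpha_1}$. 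Summing the $p$ terms and using $D,\widehat{D}\geq \lambda_{\inf}\beta_{\inf}^2 p$, together with $\sum_i \widehat{N}_i=\widehat{D}$ and $|\widehat{D}-D|\leq \beta_{\sup}^2 m p\, n^{-\alpha_1}$, both parts are $O(n^{-\alpha_1})$; the normalization by $D$ of order $p$ is what cancels the factor $p$ from the number of indices. Since $\alpha_1>(\delta-\varepsilon)/2$, this is $o(n^{-(\delta-\varepsilon)/2})$.

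For the estimation term, on $\Omega_n$ the partition $\widehat{B}$ has blocks of size $\leq m$ and $S_{\widehat{B}},\Sigma_{\widehat{B}}$ share the structure $\widehat{B}$, which is exactly the configuration of Proposition \ref{prop_conv_shapley_high} with $\widehat{B}$ in place of $B^*$; the same block-by-block Lipschitz arguments, combined with $\|S_{\widehat{B}}-\Sigma_{\widehat{B}}\|_{\max}=O_p(\sqrt{\log n/n})$ and the coordinatewise control of $\widehat{\beta}-\beta$, yield $O_p(\log(n)^\gamma/\sqrt{n})$, which is $o_p(n^{-(\delta-\varepsilon)/2})$ because $\delta<1$. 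Combining the two bounds on $\Omega_n$ and using $\PP(\Omega_n)\to 1$ gives the claim. The hard part is the structural term: one must check that $\Sigma_{\widehat{B}}$ and all its inverted submatrices stay uniformly well-conditioned so the Lipschitz constants of the $N_i$ are uniform in $n$ and $i$, and, most importantly, arrange the summation so the error stays of order $n^{-\alpha_1}$ rather than $p\,n^{-\alpha_1}$, which hinges on the global normalization by $D$ being of order $p$.
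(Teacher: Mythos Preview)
Your argument is correct, and it follows a genuinely different route from the paper's.

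The paper does \emph{not} split into a structural term and an estimation term via the intermediate model $(\beta,\Sigma_{\widehat{B}})$. Instead, it proves a single operator-norm lemma
\[
\max_{B(\alpha_1)\leq B\leq B^*}\|S_B-\Sigma\|_2=o_p\!\left(n^{-(\delta-\varepsilon)/2}\right),
\]
which merges both error sources at once (the proof bounds $\|S_B-\Sigma\|_{\max}$ by $\max_{i,j}|s_{ij}-\sigma_{ij}|+n^{-\alpha_1}$ on each true block). Then it literally reruns the Lipschitz computations of Proposition~\ref{prop_conv_shapley_high}, taking a supremum over all admissible $B$. Your approach, by contrast, keeps the estimation and structural contributions separate, uses $\|\Sigma_{\widehat{B}}-\Sigma\|_{\max}\leq n^{-\alpha_1}$ deterministically for the latter, and uses $\|S-\Sigma\|_{\max}=O_p(\log(n)^\gamma/\sqrt{n})$ for the former; the Lipschitz constants are uniform because Lemma~\ref{lm_vp_blocs} gives $\lambda_{\min}(\Sigma_{\widehat{B}})\geq\lambda_{\inf}$ and block sizes are $\leq m$ on $\Omega_n$. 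What you gain is transparency: the estimation part is seen to achieve the parametric rate $\log(n)^\gamma/\sqrt{n}$ regardless of $\delta$, and the entire loss down to $n^{-(\delta-\varepsilon)/2}$ comes from the structural bias. What the paper gains is brevity: one lemma replaces two parallel Lipschitz passes. Both arguments rely on exactly the same ingredients (Proposition~\ref{prop_seuil_ba}, Bernstein, bounded block sizes, eigenvalue control), and both correctly handle the factor $p$ via the global normalization $D=\beta^T\Sigma\beta\asymp p$.
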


\begin{rmk}\label{rmk_conv_shapley_fix}
When the dimension $p$ is fixed, the rate of convergence is $O_p(1\slash \sqrt{n})$, as if we estimated $\Sigma$ by the empirical covariance matrix. Moreover, we have seen in Proposition \ref{prop_CR_final} that the computation of $S_{\widehat{B}}$ enables to reach asymptotically the Cram\'{e}r-Rao bound of \cite{stoica_Cramer-rao_1998} as if $B^*$ were known. We then deduce the asymptotic efficiency of $\widehat{\eta}$. If we define $g:\Sigma \mapsto \eta$, let $\mathrm{CR}(\eta,B^*):=Dg (\Sigma) \mathrm{CR}(\Sigma,B^*)Dg(\Sigma)$ be the Cram\'{e}r-Rao bound of $\eta$ in the model $\{\mathcal{N}(\mu,\Sigma), \; \Sigma\in S_p^{++}(\R,B^*)\}$. Thus,
$$
\sqrt{n}(\widehat{\eta}-\eta) \overset{\mathcal{L} }{\underset{n\rightarrow+\infty }{\longrightarrow}} \mathcal{N} (0,\mathrm{CR}(\eta,B^*)).
$$
\end{rmk}

\subsection{Numerical application}\label{section_num_shap}

We have seen in Proposition \ref{prop_generate_sigma} a way to generate $\Sigma$ which verifies Conditions 1 to 3 and some slightly modified version of Condition 4. So, with this choice of $\Sigma$, we derive in Proposition \ref{prop_shap_generate} the convergence of the Shapley effects estimation.

\begin{prop}\label{prop_shap_generate}
 Under Condition \ref{cond5}, if $\Sigma$ is generated as in Proposition \ref{prop_generate_sigma}, then, for all $\gamma> 1\slash 2$,
$$
\sum_{i=1}^p \left|\widehat{\eta}_i-\eta_i\right|=o_p\left(\frac{\log(n)^\gamma}{\sqrt{n}}\right),
$$
where the probabilities are defined with respect to $\Sigma$ and $X$, which distribution conditionally to $\Sigma$ is $\mathcal{N}(\mu,\Sigma)$.
\end{prop}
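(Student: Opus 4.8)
The plan is to recognize that Proposition \ref{prop_shap_generate} is nothing more than a transfer of Proposition \ref{prop_conv_shapley_high} from the setting of a deterministic $\Sigma$ satisfying Conditions 1 to 5 to the setting of a randomly generated $\Sigma$. The crucial observation is that the proof of Proposition \ref{prop_conv_shapley_high} uses Condition 4 only through two of its consequences: the conclusion of Proposition \ref{prop_cost}, namely $\PP(\widehat{B} = B^*) \to 1$, and the rate of Proposition \ref{prop_rate_sigma}, namely $\frac{1}{p}\|S_{\widehat{B}} - \Sigma\|_F^2 = O_p(1/n)$. First I would isolate these two facts as the only places where Condition 4 enters the argument, so that the rest of the proof depends only on Conditions 1, 2, 3 and 5.

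Then I would invoke Proposition \ref{prop_generate_sigma} directly. Under the random generation model, Conditions 2 and 3 hold by construction (the eigenvalues of each block $U^T U + \varepsilon I_{p_k}$ lie in a fixed interval and every block has size at most $m$), and, more importantly, the conclusions of Propositions \ref{prop_cost} and \ref{prop_rate_sigma} remain valid when the probability is taken jointly over $\Sigma$ and $X \mid \Sigma \sim \mathcal{N}(\mu,\Sigma)$. Since Condition 1 is part of the hypotheses and Condition \ref{cond5} is assumed, every probabilistic ingredient needed to run the proof of Proposition \ref{prop_conv_shapley_high} is thereby made available under the joint probability, even though the deterministic Condition 4 is replaced only by the weaker "slightly modified version'' that holds with probability tending to one.

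Next I would check the one piece not covered by Proposition \ref{prop_generate_sigma}, namely the control of the regression estimator $\widehat{\beta} = (A^T A)^{-1} A^T \tilde{Y}$. This estimator does not depend on the block structure; conditionally on $\Sigma$, its error is governed only by the bounded eigenvalues of $\Sigma$ (Condition 2), the ratio $p/n \to y < 1$ (Condition 1), the boundedness of $\beta$ (Condition \ref{cond5}) and the bounded noise variance $\sigma_n \leq C_{\sup}$. Because these bounds are uniform over all covariance matrices with eigenvalues in $[\lambda_{\inf},\lambda_{\sup}]$, and every realization of the generated $\Sigma$ satisfies this, the required control of $\widehat{\beta} - \beta$ transfers to the joint probability as well. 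Assembling these facts, on the event $\{\widehat{B} = B^*\}$ the vector of Shapley effects is a rational, hence locally smooth, function of $(\Sigma,\beta)$ whose perturbation is controlled by the $O_p(1/n)$ covariance rate and the error on $\widehat{\beta}$; re-running the estimates of Proposition \ref{prop_conv_shapley_high} verbatim then yields the announced $o_p(\log(n)^\gamma / \sqrt{n})$ bound.

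The main obstacle is conceptual rather than computational: one must carefully verify that Condition 4 feeds into the original proof only via Propositions \ref{prop_cost} and \ref{prop_rate_sigma}, so that downgrading it to a statement holding merely with probability tending to one breaks no other step. Once this bookkeeping is complete, the result is a direct transfer, since Proposition \ref{prop_generate_sigma} was designed precisely to reproduce the conclusions of Propositions \ref{prop_conclu}, \ref{prop_cost} and \ref{prop_rate_sigma} under the joint probability over $\Sigma$ and $X$.
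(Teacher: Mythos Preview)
Your approach is correct and reaches the same conclusion, but the organization differs from the paper's. The paper avoids re-opening the proof of Proposition~\ref{prop_conv_shapley_high}: it introduces a surrogate matrix $\check{\Sigma}$, equal to $\Sigma$ on the event $\{\forall B<B^*,\ \|\Sigma_B-\Sigma\|_{\max}\ge an^{-1/4}\}$ and equal to $I_p$ otherwise, so that every realization of $\check{\Sigma}$ satisfies Conditions~1--4 deterministically; Proposition~\ref{prop_conv_shapley_high} then applies conditionally on $\check{\Sigma}$, dominated convergence removes the conditioning, and the fact that $\check{\Sigma}=\Sigma$ with probability tending to~$1$ (Proposition~\ref{prop_generate_sigma}) finishes the argument. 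Your route instead tracks Condition~4 inside the proof of Proposition~\ref{prop_conv_shapley_high}, supplies its sole consequence $\PP(\widehat B=B^*)\to 1$ via Proposition~\ref{prop_generate_sigma}, and observes that all remaining conditional estimates are uniform in $\Sigma$ because Conditions~2 and~3 hold for every realization. One minor correction: Proposition~\ref{prop_rate_sigma} is not actually invoked in the proof of Proposition~\ref{prop_conv_shapley_high}; the relevant covariance control is the operator-norm bound of Lemma~\ref{lm_convshapley1}, whose proof uses only Conditions~1--3. The paper's swap trick is more modular, since it re-applies Proposition~\ref{prop_conv_shapley_high} as a black box; your approach is more explicit about which hypothesis does what and is equally valid once the uniformity of the intermediate bounds is stated.
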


We now present a numerical application of Proposition \ref{prop_shap_generate}. The matrix $\Sigma$ is generated by Proposition \ref{prop_generate_sigma} as in Section \ref{section_num_Sigma}, with blocks of random size distributed uniformly on $[10,15]$, $L=5$ and $\varepsilon=0.2$. For all $p$, the vector $\beta$ is generated with distribution $\mathcal{U}([1,2]^p)$, so that Condition \ref{cond5} is satisfied. As in Section \ref{section_num_Sigma}, we assume that we know that the maximal size of the block is $m=15$, so we can use the estimator $\widehat{B}=\widehat{B}_s$ given in Proposition \ref{prop_cost}. As the computation of the Shapley effects is exponential in the maximal block size, the estimator $\widehat{B}_s$ is preferred. The complexity of the estimation of the Shapley effects is then in $O(p^2)$.

\begin{figure}
    \centering
    \includegraphics[height=10cm, width=10cm]{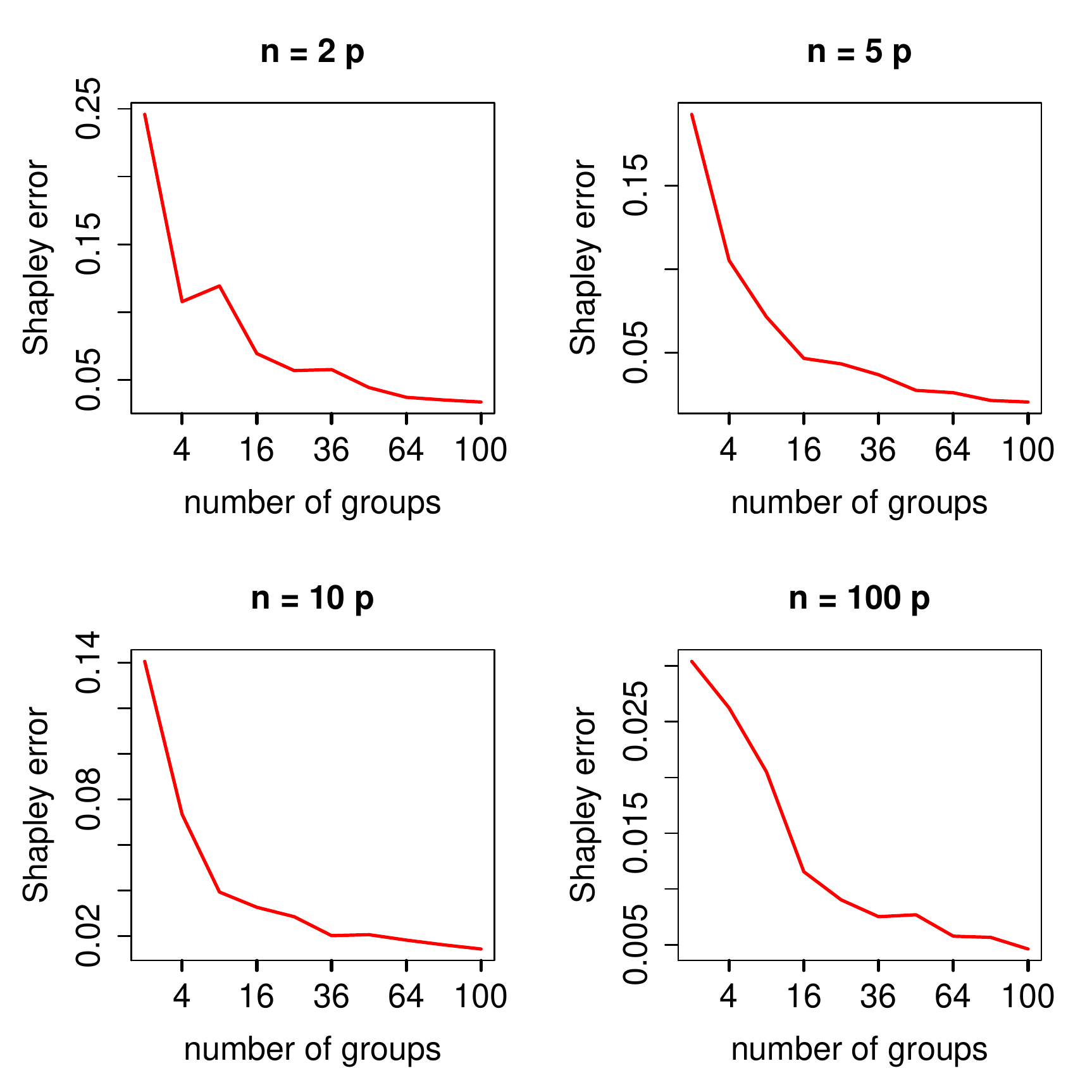}
    \caption{Sum of errors of the Shapley effects estimations in function of the number of groups $K$. The scale of the x-axis is in $\sqrt{K}$. }
    \label{fig:error_shap}
\end{figure}

We plot in Figure \ref{fig:error_shap} the sum of the Shapley effects estimation error $\sum_{i=1}^p \left|\widehat{\eta}_i-\eta_i\right|$, with $n=N\, p$ for different values of $N$. We can remark that the sum of the errors seems to be or order $1\slash\sqrt{K}$, which is confirmed by Proposition \ref{prop_shap_generate}.

\section{Application on real data}\label{section_appli}

In this section, we consider a real application of our suggested estimators of block-diagonal covariance matrices and of the Shapley effects, to nuclear data.

\subsection{The Shapley effects with nuclear data}

Uncertainty propagation methods are increasingly being used in nuclear calculation (neutron dosimetry, reactor design, criticality assessments, etc.) to deduce the accuracy of safety parameters (fast fluence, reactivity coefficients, criticality, etc.) and to establish safety margins. In fact, the resulting output of a nuclear computer model can be considered with a random portion as far as the inputs are uncertain. In this context, sensitivity analysis evaluates the impact of input uncertainties in terms of their relative contributions to the uncertainty in the output. Therefore, it helps to prioritize efforts for uncertainty reduction, improving the quality of the data.

Of particular interest for us, the uncertain inputs, in nuclear applications, tend to be correlated because of the measurement processes and the different calculations made to obtain the variables of interest from the observable quantities. This is why the Shapley effects are particularly convenient as sensitivity indices in nuclear uncertainty quantification. Moreover, these uncertain inputs are easily modeled as a Gaussian vector and the output is often modeled as a linear function of the inputs \cite{kawano_evaluation_2006}. Thus, the Shapley effects can be computed or estimated, as it is described in Section \ref{section_shap}.

\subsection{Details of the nuclear data}

In this application, the output is the neutron flux $Y$ which is a quantity of interest in nuclear studies. For example, it can be calculated to evaluate the vessel neutron irradiation which is in fact one of the limiting factors for pressurized water reactor (PWR) lifetime. The quality of radiation damage prediction depends in part on the calculation of the fast neutron flux for energy larger than $1 MeV=10^6 eV$ ($eV$ means electron-volt). In that sense, a lack of knowledge on the fast neutron flux will require larger safety margins on the plant lifetime affecting operating conditions and the cost of nuclear installations. To make correct decisions when designing the plant lifetime and on safety margins for PWRs, it is therefore essential to assess the uncertainty in vessel flux calculations.

One of the major sources of uncertainties in fast flux calculations are the cross sections which are used to characterise the probability that a neutron will interact with a nucleus and are the inputs $X$ of our model. They are expressed in barn, where 1 $barn = 10^{-4} cm^2$. The values of the cross sections and their uncertainties are provided by international libraries as the American Library ENDF/B-VII \cite{ENDF}, the European library JEFF-3 \cite{JEFF}, and the Japan Library JENDL-4 \cite{JENDL}. Using the standardized format,  each cross section is defined for an isotope $iso$ of the target nuclei, an energy level $E$ of the target nuclei and a reaction number $mt$ (see \cite{ENDF} for more informations on $mt$ numbers).

We assume that if $(iso,mt)\neq (iso',mt')$, then, $X_{(iso,mt,E)} \ind X_{(iso',mt',E')}$ for any $E,E'$. Thus, the covariance $\Sigma$ is block-diagonal, where each block corresponds to a value of $(iso,mt)$. Here, we have 292 input variables divided in 50 groups of size between 2 and 18. Using reference data, \cite{clouvel_quantification_2019} has shown that the perturbation of the cross sections of the $^{56}$Fe, $^1$H, $^{16}$O isotopes are linearly related to the perturbation of the flux:
\begin{equation}
Y = \sum_{j\in  \{(iso, mt,E) \}} \beta_j X_j.
\end{equation}
Thus, if $\Sigma$ and $\beta$ are given, the Shapley effects are easily computable by Equations \eqref{eq_shap_groups} and \eqref{eq_VB}. 
We show the values of the Shapley effects in Figure  \ref{fig:nucleaire_ref}.

\begin{figure}
    \centering
    \includegraphics[scale=0.4]{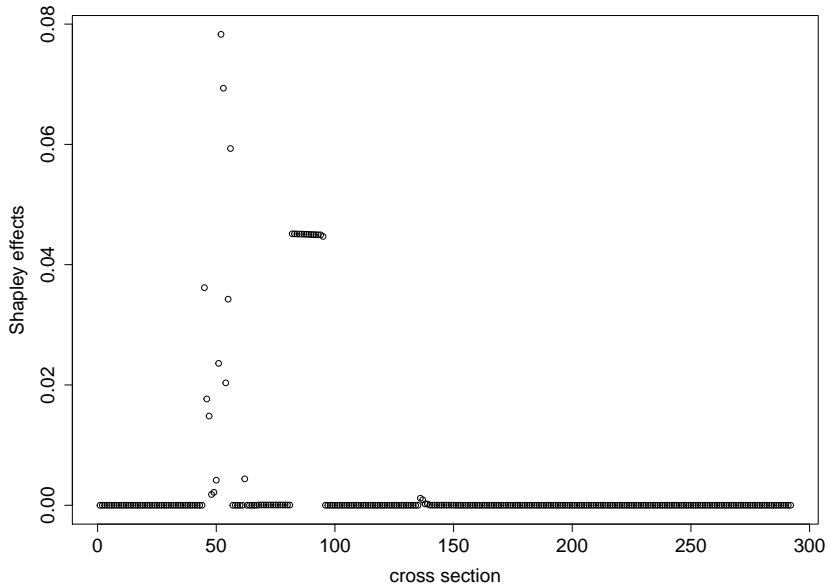}
    \caption{Shapley effects of all the cross sections.}
    \label{fig:nucleaire_ref}
\end{figure}

We can remark that almost all the Shapley effects are close to 0. Now, we plot all the Shapley effects that are larger than $1\%$ on Figure \ref{fig:nucleaire_1} with the names of the corresponding cross sections. For example, "Fe56$\_$S4$\_$950050" means the cross section for the isotope $^{56}Fe$, the reaction scattering 4 and a level of energy larger than $950050 eV$ (and smaller than 1353400$eV$).

\begin{figure}
    \centering
    \includegraphics[scale=0.4]{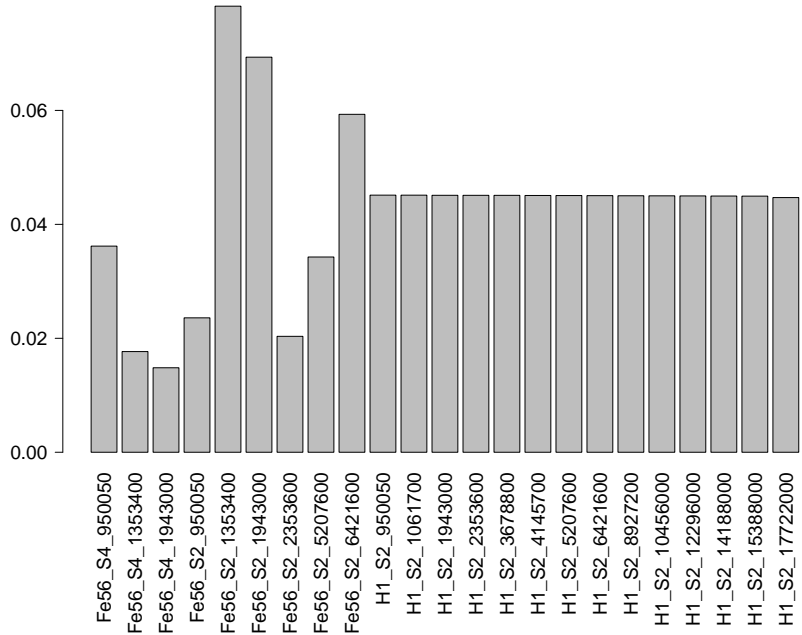}
    \caption{Shapley effects larger than $1\%$.}
    \label{fig:nucleaire_1}
\end{figure}

We remark than only 23 cross sections have a Shapley effect larger than $1\%$. The latter are associated with the lower energies (around 1 to 6 $MeV$). Moreover, they all come from three different groups of $(iso,mt)$: ($^{56}$Fe, scattering 4),  ($^{56}$Fe, scattering 2) and ($^{1}$H, scattering 2).

In fact, in PWR reactors, fission mostly results from the absorption of slow neutrons by nuclei of high atomic number as the uranium $^{235}$U and the plutonium $^{239}$Pu  which are the main fissile isotopes. The nucleus splits into two lighter nuclei, called fission fragments
and often produces an average of 2.5 neutrons with an energy of about 1 $MeV$ or more. Figure \ref{fig:spectrum_U235_P239} illustrates the  neutron fission spectrum which defines the probability for a neutron to be emitted in the energy group $g$ by the isotope $i$. One can note that there are more neutrons produced in the first energy groups between 1 to 6 $MeV$. In that sense, those neutrons have a larger potential to interact with matter. 

\begin{figure}[!htbp]
\centering
\includegraphics[scale=1.1]{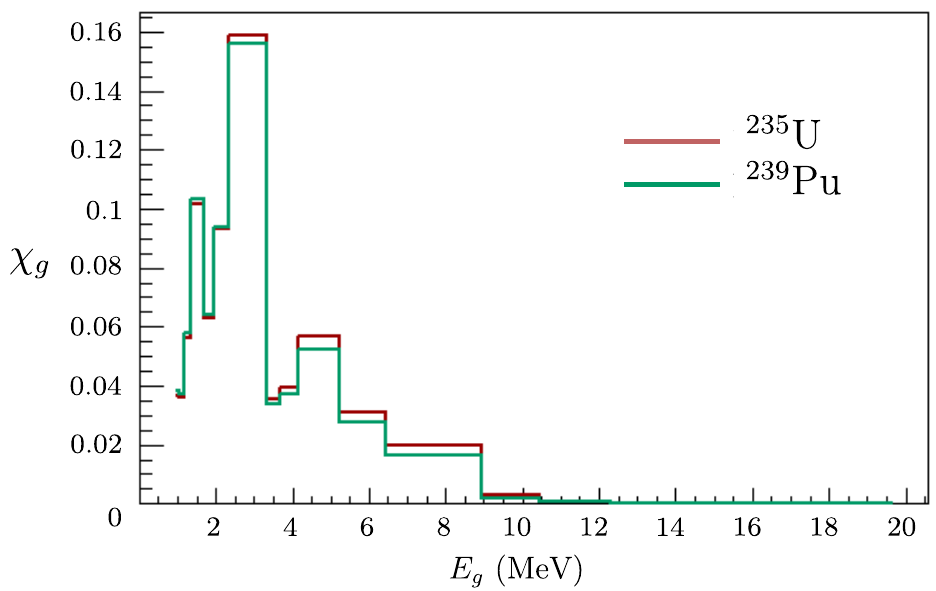}
\caption{$^{235}$U and $^{239}$Pu multigroup fission spectra.}\label{fig:spectrum_U235_P239}
\end{figure}

Moreover, most of the fast neutrons which escape from the core are scattered back by the reflector (essentially comprised of $^{56}$Fe) and slowed down by water
(hydrogen and oxygen), which acts as a moderator, until they reach thermal equilibrium. The accuracy of the neutron flux received by the reflector is closely linked to the precision of the scattering cross sections.

On the other hand, we can notice that all the Shapley effects of the cross sections from $(^{1}$H, scattering 2) are close, and that comes from the fact that the correlations between these different levels of energy are close to $1$ in this group.  

Thus, when the true parameters $\Sigma$ and $\beta$ are known, we can compute the corresponding Shapley effects of the uncertainties of the cross sections on the uncertainty of the neutron flux $Y$. Furthermore, the interpretation of these Shapley effects is insightful and consistent with the available expert knowledge.

\subsection{Estimation of the Shapley effects}

In order to assess the efficiency of our suggested estimation procedures of the Shapley effects, we now assume that the true covariance matrix $\Sigma$ is unknown and that we observe an i.i.d. sample $(X^{(l)})_{l\in [1:n]}$ with distribution $\mathcal{N}(\mu,\Sigma)$ (with $\mu$ unknown). We assume that the maximal group size is known to be smaller or equal to $20$ and that the vector $\beta$ is known. Then, we estimate the block-diagonal structure by the block-diagonal structure $\widehat{B}$ that maximizes the penalized likelihood $\Phi$ among all the block-diagonal structures obtained by thresholding the empirical correlation matrix from its largest value to the smallest value such that the maximal size of the blocks is smaller or equal to $20$. Thus, our estimator $\widehat{B}$ is a mix of the estimators $\widehat{B}_{\widehat{C}}$ and $\widehat{B}_{s}$ detailed in Section \ref{section_conv_cost}.

We plot the Frobenius error of the estimated covariance matrix and the sum of the absolute values of the errors of the estimated Shapley effects for different values of $y= p\slash n$ in Figure \ref{fig:Shapley_nucleaire_estim}, where $p=292$.

\begin{figure}
    \centering
    \includegraphics[scale=0.4]{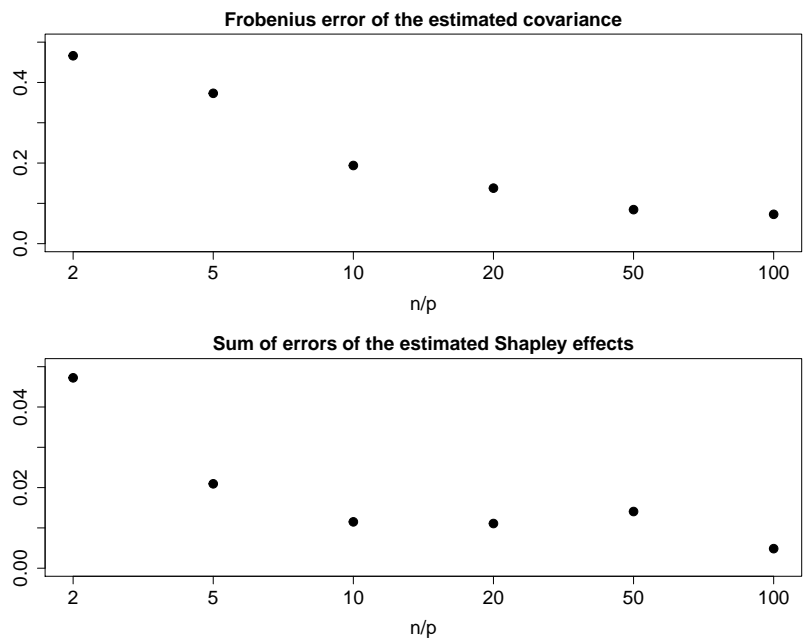}
    \caption{Errors of the estimated covariance matrix and the corresponding Shapley effects for different values of $\frac{n}{p}$.}
    \label{fig:Shapley_nucleaire_estim}
\end{figure}

We can remark that the errors decrease globally when the value of $\frac{n}{p}$ increases. The larger value of the sum of the errors of the estimated Shapley effects for $n\slash p=50$ is due to the randomness of the estimated Shapley effects. Note that, even when $n=2p$, the sum of the errors of the Shapley effects is less than $0.05$ (recall that, in comparison, the sum of the Shapley effects is $1$).
We plot in Figure \ref{fig:estim_1} the estimated Shapley effects that are larger than $1\%$ with $n\slash p=2$. Remark that these estimated values are similar to the true ones displayed in Figure \ref{fig:nucleaire_1} and the physical interpretation is the same.

\begin{figure}
    \centering
    \includegraphics[scale=0.4]{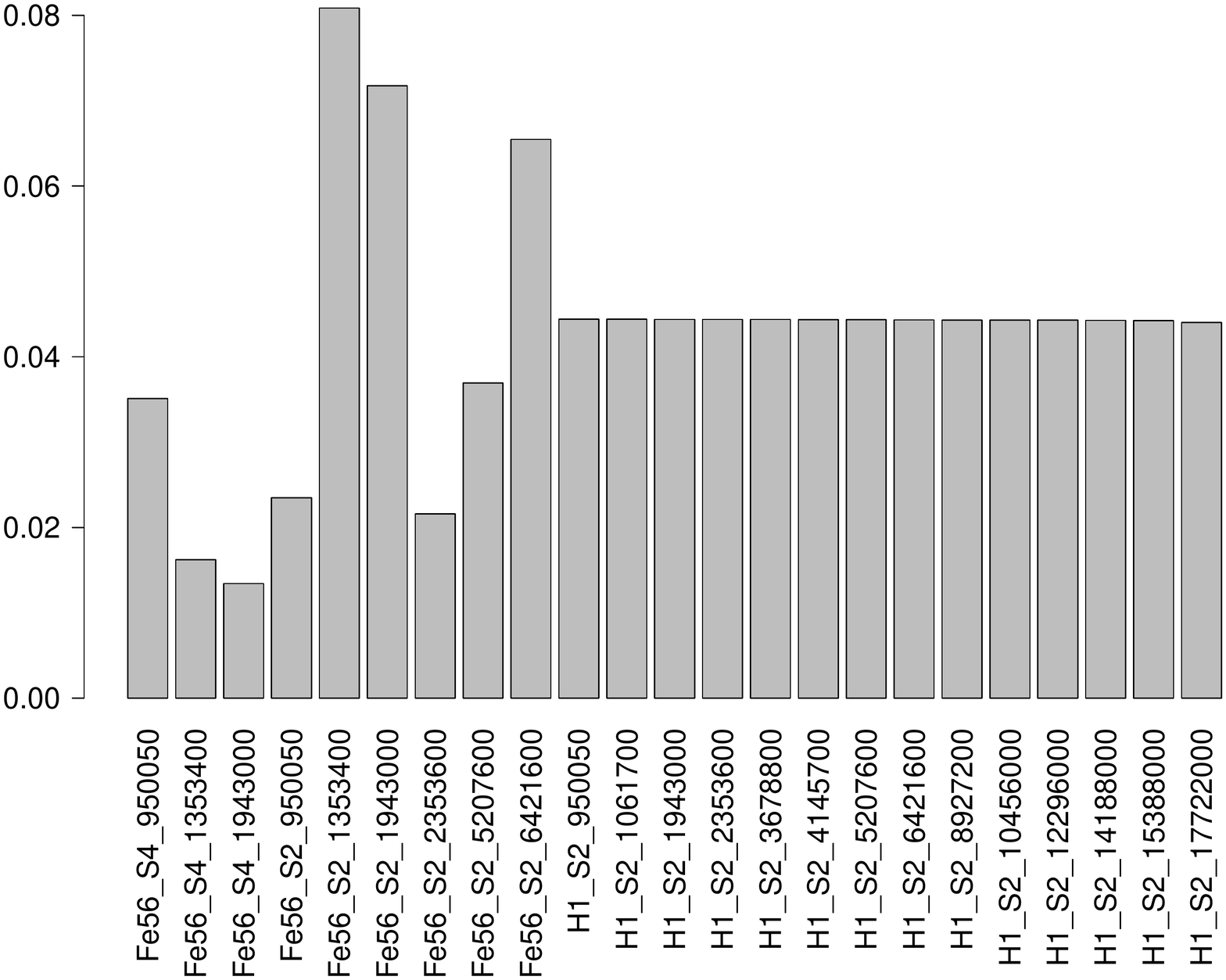}
    \caption{Shapley effects that are larger that $1\%$ estimated with $n\slash p=2$.}
    \label{fig:estim_1}
\end{figure}

\bigskip

In conclusion, we implemented an estimator of the block-diagonal covariance matrix originating from nuclear data when we only observe an i.i.d. sample of the inputs. Then, the derived estimated Shapley effects are shown to be very close to the true Shapley effects, that quantify the impact of the uncertainties of cross sections on the uncertainty on the neutron flux. When the sample size $n$ is equal to $2p$, the physical conclusions are the same as when the true covariance matrix is known.

\section{Conclusion}\label{section_conclu}
In this work, we suggest an estimator of a block-diagonal covariance matrix for Gaussian data. We prove that in high dimension, this estimator converges to the same block-diagonal structure with complexity in $O(p^2)$. For fixed dimension, we also prove the asymptotic efficiency of this estimator, that performs asymptotically as well as as if the true block-diagonal structure were known. Then, we deduce convergent estimators of the Shapley effects in high dimension for Gaussian linear models. These estimators are still available for thousands input variables, as long as the maximal block is not too large. Moreover, we prove the convergence of the Shapley effects estimators when the observations of the output are noisy and so the parameter $\beta$ is estimated. Finally, we applied these estimator on real nuclear data.

In future works, it would be interesting to treat the higher dimension setting when $p\slash n$ goes to $+\infty$.

\section*{Acknowledgements}
We acknowledge the financial support of the Cross-Disciplinary Program on Numerical Simulation of CEA, the French Alternative Energies and Atomic Energy Commission. We would like to thank BPI France for co-financing this work, as part of the PIA (Programme d'Investissements d'Avenir) - Grand D\'{e}fi du Num\'{e}rique 2, supporting the PROBANT project. We thank Vincent Prost for his help.

\bibliographystyle{acm}
\bibliography{biblio}

\section*{Appendix}

\textbf{Notation}

We will write $C_{\sup}$ for a generic non-negative finite constant (depending only on $\lambda_{\inf}$, $\lambda_{\sup}$ and $m$ in Conditions 2 and 3). The actual value of $C_{sup}$ is of no interest and can change in the same sequence of equations. Similarly, we will write $C_{\inf}$ for a generic strictly positive constant.

If $B, B' \in \mathcal{P}_p$, and $(i,j)\in [1:p]^2$, we will wite $(i,j) \in B \setminus B'$ if $(i,j)\in B$ and $(i,j) \notin B'$, that is, if $i$ and $j$ are in the same group with the partition $B$ and are in different groups with the partition $B'$.

If $B, B' \in \mathcal{P}_p$, we define $B\cap B'$ as the maximal partition $B''$ such that $B''\leq B$ and $B''\leq B'$.

If $\Gamma \in \mathcal{M}_p(\R)$ (the set of the matrices of dimension $p \times p$), and if $u,v\subset [1:p]$, we define $\Gamma_{u,v}:=(\Gamma_{i,j})_{i\in u, j\in v}$ and $\Gamma_u:=\Gamma_{u,u}$. 

Recall that $\vecc:\mathcal{M}_p(\R) \rightarrow \R^{p^2}$ is defined by $(\vecc(M))_{p(j-1)+i}:=M_{i,j}$.

If $\Gamma \in S_p(\R)$ (the set of the symmetric positive definite matrices) and $i\in [1:p]$, let $\phi_i(M)$ be the $i$-th largest eigenvalue of $M$. We also write $\lambda_{\max}(M)$ (resp. $\lambda_{\min}(M)$) for the largest (resp. smallest) eigenvalue of $M$.

We define $\widehat{\Sigma}:= \frac{1}{n-1}\sum_{l=1}^l(X^{(l)}-\overline{X})(X^{(l)}-\overline{X})^T=\frac{n}{n-1}S$, the unbiased empirical estimator of $\Sigma$. Let $(\widehat{\sigma}_{ij})_{i,j\leq p}$ be the coefficients of $\widehat{\Sigma}$ and $(s_{ij})_{i,j\leq p}$ be the coefficients of $S$.

Recall that when Condition 4 does not hold, we need to define $B({\alpha})$ as the partition given by thresholding $\Sigma$ by $n^{-\alpha}$. We also define $K(\alpha):=|B(\alpha)|$ and write $B(\alpha)=\{B_1(\alpha),B_2(\alpha),...B_{K(\alpha)}(\alpha) \}$.
\bigskip

\textbf{Proof of Proposition \ref{prop_min_phi}}

\begin{proof}

Let us write
$$
\overline{S_p^{++}(\R,B)}:=\bigsqcup_{B'\leq B} S_p^{++}(\R,B')=\{\Gamma \in S_p^{++},\;\Gamma=\Gamma_B\},
$$
which is the closure of $S_p^{++}(\R,B)$ in $S_p^{++}(\R)$.

First, let us show that, for all $B$, $S_B$ is the minimum of $\Gamma \mapsto l_\Gamma$ on $\overline{S_p^{++}(\R,B)}$. If $\Gamma_B \in \overline{S_p^{++}(\R,B)}$, we have
\begin{eqnarray*}
p\left(l_{\Gamma_B}-l_{S_B} \right)
&=&-\log(|\Gamma_B^{-1}|)+\Tr(\Gamma_B^{-1}S) +\log(|S_B^{-1}|)-\Tr(S_B^{-1}S)\\
&=& - \log\left( \left| \Gamma_B^{-1} S_B\right| \right) +\Tr \left( \Gamma_B^{-1} S_B \right)-p \\
&=&\sum_{i=1}^p\left \{ - \log\left(\phi_i \left[ \Gamma_B^{-1\slash 2} S_B\Gamma_B^{-1\slash 2}\right] \right) +\phi_i \left( \Gamma_B^{-1\slash 2} S_B \Gamma_B^{-1\slash 2} \right)-1 \right\}.
\end{eqnarray*}
The function $f:\R_+^*\rightarrow \R$ defined by $f(t):=-\log(t)+t-1$ has an unique minimum at $1$. Thus, the function $g:S_p^{++}\rightarrow \R$ defined by $g(M):=\sum_{i=1}^p- \log\left(\phi_i \left[ M\right] \right) +\phi_i \left( M \right)-1$ has an unique minimum at $I_p$. Thus
$\Gamma_B \in \overline{S_p^{++}(\R,B)} \mapsto l_{\Gamma_B}-l_{S_B}$ has an unique minimum at $\Gamma_B=S_B$.

Now, the penalisation term is constant on each $S_p^{++}(\R,B)$. Thus $\Phi$ has a global minimum (not necessary unique) at $S_B$, for some $B \in \mathcal{P}_p$. 
\end{proof}
\bigskip

\textbf{Notation for Section \ref{sec_conv_high} }\\
Here and in all the proofs of Section \ref{sec_conv_high}, we assume Conditions 1 to 3 of Section \ref{sec_assum}.

In the following, we introduce some notation.

We know that
$$
\sum_{k=1}^{n}(X^{(k)}-\overline{X})(X^{(k)}-\overline{X})^T\sim \mathcal{W}(n-1,\Sigma),
$$
where $\mathcal{W}(n-1,\Sigma)$ is the Wishart distribution with parameter $n-1$ and $\Sigma$ \cite{gupta1999matrix}. Thus, if we write $(M^{(k)})_{k}$ i.i.d. with distribution $\mathcal{N}(0,\Sigma)$, we have
$$
\widehat{\Sigma}:=\frac{1}{n-1}\sum_{k=1}^{n}(X^{(k)}-\overline{X})(X^{(k)}-\overline{X})^T\sim \frac{1}{n-1}\sum_{k=1}^{n-1} M^{(k)} M^{(k)T}.
$$

\begin{lm}\label{lm_eingenvalue}
For all $C_{\inf}>0$,
$$
\PP\left( \lambda_{\max}(S)> \lambda_{\sup}(1+\sqrt{y})^2+ C_{\inf}  \right)\longrightarrow 0,
$$
$$
\PP\left( \lambda_{\max}(S)< \lambda_{\inf}(1+\sqrt{y})^2- C_{\inf}  \right)\longrightarrow 0,
$$
$$
\PP\left( \lambda_{\min}(S)< \lambda_{\inf}(1-\sqrt{y})^2- C_{\inf}  \right)\longrightarrow 0,
$$
and
$$
\PP\left( \lambda_{\min}(S)> \lambda_{\sup}(1-\sqrt{y})^2+ C_{\inf}  \right)\longrightarrow 0.
$$
Moreover, that holds also for $\widehat{\Sigma}$ instead of $S$.
\end{lm}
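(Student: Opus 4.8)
The plan is to prove the four bounds for $\widehat{\Sigma}$ first, using the Wishart representation recalled just above the statement, and then to transfer them to $S=\frac{n-1}{n}\widehat{\Sigma}$. Writing $M^{(k)}=\Sigma^{1/2}Z^{(k)}$ with $(Z^{(k)})_k$ i.i.d. $\mathcal{N}(0,I_p)$, the representation gives $\widehat{\Sigma}\sim \Sigma^{1/2}W\Sigma^{1/2}$, where $W:=\frac{1}{n-1}\sum_{k=1}^{n-1}Z^{(k)}Z^{(k)T}$ is the normalized Wishart matrix with identity population covariance. The point is that the spectrum of $W$ is universal (it does not depend on $\Sigma$), and Condition 2 lets me pinch the spectrum of $\widehat{\Sigma}$ between the spectrum of $W$ scaled by $\lambda_{\inf}$ and by $\lambda_{\sup}$.

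For the pinching I would work with Rayleigh quotients. For any $x\in\R^p$, setting $z=\Sigma^{1/2}x$ gives $x^T\Sigma^{1/2}W\Sigma^{1/2}x=z^TWz$ and $\|z\|^2=x^T\Sigma x$, so Condition 2 yields $\lambda_{\inf}\|x\|^2\leq \|z\|^2\leq \lambda_{\sup}\|x\|^2$. Combining this with $\lambda_{\min}(W)\|z\|^2\leq z^TWz\leq \lambda_{\max}(W)\|z\|^2$ and taking the supremum (resp. infimum) of the Rayleigh quotient over $x$, I obtain the sandwich
$$
\lambda_{\inf}\lambda_{\max}(W)\leq \lambda_{\max}(\widehat{\Sigma})\leq \lambda_{\sup}\lambda_{\max}(W),\qquad \lambda_{\inf}\lambda_{\min}(W)\leq \lambda_{\min}(\widehat{\Sigma})\leq \lambda_{\sup}\lambda_{\min}(W).
$$

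It then suffices to control the extreme eigenvalues of $W$. Since $p/(n-1)\to y\in\,]0,1[$ by Condition 1, the Marchenko--Pastur / Bai--Yin theory for Gaussian sample covariance matrices \cite{Mar_enko_1967,smallest_1985_silverstein} gives the almost sure (hence in probability) convergences $\lambda_{\max}(W)\to (1+\sqrt{y})^2$ and $\lambda_{\min}(W)\to (1-\sqrt{y})^2$. Plugging these into the sandwich yields each of the four statements. For instance, given $C_{\inf}>0$, on the event $\{\lambda_{\max}(W)<(1+\sqrt{y})^2+C_{\inf}/\lambda_{\sup}\}$, whose probability tends to $1$, we have $\lambda_{\max}(\widehat{\Sigma})\leq\lambda_{\sup}\lambda_{\max}(W)<\lambda_{\sup}(1+\sqrt{y})^2+C_{\inf}$, which is the first bound for $\widehat{\Sigma}$; the other three are analogous, the lower pinches giving the two bounds involving $\lambda_{\inf}$ and the upper pinches the two involving $\lambda_{\sup}$. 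Finally, since $S=\frac{n-1}{n}\widehat{\Sigma}$, the eigenvalues of $S$ are those of $\widehat{\Sigma}$ multiplied by $\frac{n-1}{n}\to 1$; as the eigenvalues of $\widehat{\Sigma}$ are bounded in probability by the above, this factor can be absorbed into an arbitrarily small modification of $C_{\inf}$ for $n$ large, so all four bounds hold for $S$ as well.

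The computations are routine; the only genuine input is the Marchenko--Pastur/Bai--Yin asymptotics for $\lambda_{\max}(W)$ and $\lambda_{\min}(W)$, so the main work is simply to reduce the problem, via the Wishart representation and the Condition 2 pinching, to that universal result rather than to re-derive it. The one place requiring a little care is ensuring the $\pm C_{\inf}$ slacks and the passage from $\widehat{\Sigma}$ to $S$ are compatible, which is immediate because $\frac{n-1}{n}\to 1$ and the limiting spectral edges are finite.
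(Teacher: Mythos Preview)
Your proof is correct and follows essentially the same approach as the paper: both reduce to the identity-covariance Wishart matrix $W=\frac{1}{n-1}\sum_{k=1}^{n-1}Z^{(k)}Z^{(k)T}$ via the factorization $\widehat{\Sigma}\sim\Sigma^{1/2}W\Sigma^{1/2}$, sandwich the extreme eigenvalues using Condition~2, and invoke the Bai--Yin/Silverstein convergence $\lambda_{\max}(W)\to(1+\sqrt{y})^2$, $\lambda_{\min}(W)\to(1-\sqrt{y})^2$. The only cosmetic difference is that the paper handles $S$ directly (absorbing the $\frac{n-1}{n}$ factor as an $o_p(1)$), whereas you treat $\widehat{\Sigma}$ first and then pass to $S$; both are equivalent.
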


\begin{proof}
Let $(A^{(k)})_k$ i.i.d. with distribution $\mathcal{N}(0,I_p)$.
Using the result in \cite{smallest_1985_silverstein} which states that
$$
\lambda_{\max}\left(\frac{1}{n-1}\sum_{k=1}^{n-1}A^{(k)}A^{(k)T}\right)\overset{a.s.}{\underset{n\rightarrow +\infty}{\longrightarrow}}(1+\sqrt{y})^2,
$$
we have,
\begin{eqnarray*}
\lambda_{\max}(S) &=& \frac{n}{n-1} \lambda_{\max}\left( \frac{1}{n-1} \sum_{k=1}^{n-1}M^{(k)}M^{(k)T} \right)\\
&\leq &\frac{n}{n-1} \lambda_{\sup}\lambda_{\max}\left(\frac{1}{n-1}\sum_{k=1}^{n-1} A^{(k)}A^{(k)T} \right)\\
&=&\lambda_{\sup}(1+\sqrt{y})^2+o_p(1),
\end{eqnarray*}
and
$$
\lambda_{\max}(S)\geq \frac{n}{n-1} \lambda_{\inf}\lambda_{\max}\left(\frac{1}{n-1}\sum_{k=1}^{n-1} A^{(k)}A^{(k)T} \right)=\lambda_{\inf}(1+\sqrt{y})^2+o_p(1).
$$
Thus,
$$
\lambda_{\inf}(1+\sqrt{y})^2+o_p(1)\leq \lambda_{\max}(S) \leq \lambda_{\sup}(1+\sqrt{y})^2+o_p(1).
$$

The proof is the same for $\lambda_{\min}$.
\end{proof}

We also verify the assumptions of Bernstein's inequality (see for example Theorem 2.8.1 in \cite{vershynin_high-dimensional_2018}).
For all $i,j,k$, let
\begin{equation}\label{eq_Z}
Z_{ij}^{(k)}:=M_i^{(k)}M_j^{(k)}-\sigma_{ij}.
\end{equation} 
The random-variables $(Z_{ij}^{(k)})_k$ are independent, mean zero, sub-exponential and we have $\|Z_{ik}^{(k)}\|_{\psi_1}\leq \|M_i\|_{\psi_2}\|M_j\|_{\psi_2}\leq C_{\sup} \sqrt{\sigma_{ii}\sigma_{jj}}\leq C_{\sup}$, where $\|.\|_{\psi_1}$ is the sub-exponential norm (for example, see Definition 2.7.5 in \cite{vershynin_high-dimensional_2018}). So, we can use Bernstein's inequality with $(Z_{ij}^{(k)})_k$: there exists $C_{\inf}$ such that, for all $\varepsilon>0$ and $n\in \N$,
$$
\max_{i,j\in [1:p]}\PP\left( \left| \sum_{k=1}^{n}Z_{ij}^{(k)} \right| \geq n\varepsilon \right) \leq 2 \exp \left(-C_{\inf}n\min(\varepsilon,\varepsilon^2) \right).
$$

\bigskip

\textbf{Proof of Proposition \ref{prop_conclu} }\\

In this proof, we assume that Conditions 1 to 4 are satisfied. We first show several Lemmas.

\begin{lm}\label{lm_vp_blocs}
For all symmetric positive definite $\Gamma$ and for all $B\in \mathcal{P}_p$, if we write $\Delta=\Gamma-\Gamma_B$, we have:
\begin{itemize}
\item $v\mapsto \lambda_{\min}(\Gamma_B+v\Delta)$ decreases and so $\min_{v \in [0,1]} \lambda_{\min}(\Gamma_B+v\Delta)=\lambda_{\min}(\Gamma)$.
\item $v\mapsto \lambda_{\max}(\Gamma_B+v\Delta)$ increases and so $\max_{v \in [0,1]} \lambda_{\max}(\Gamma_B+v\Delta)=\lambda_{\max}(\Gamma)$.
\end{itemize}
\end{lm}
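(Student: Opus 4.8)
The plan is to exploit the concavity of $\Gamma\mapsto\lambda_{\min}(\Gamma)$ and the convexity of $\Gamma\mapsto\lambda_{\max}(\Gamma)$ on symmetric matrices, combined with the fact that conjugating by a \emph{block-constant} sign matrix leaves $\Gamma_B$ invariant while scaling the off-block part $\Delta$. Write $M(v):=\Gamma_B+v\Delta$, so $v\mapsto M(v)$ is affine with $M(0)=\Gamma_B$ and $M(1)=\Gamma$. Since $\lambda_{\min}(M)=\min_{\|u\|=1}u^TMu$ is a pointwise minimum of linear functions of $M$, it is concave, and likewise $\lambda_{\max}(M)=\max_{\|u\|=1}u^TMu$ is convex. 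To prove the first item it therefore suffices to show that $v\mapsto\lambda_{\min}(M(v))$ is non-increasing on $[0,\infty)$, and the second item will follow from the symmetric argument applied to $\lambda_{\max}$; monotonicity on $[0,1]$ then locates the extrema at $v=1$.

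For a sign vector $\epsilon=(\epsilon_k)_k$ indexed by the blocks of $B$, let $D_\epsilon$ be the diagonal $\pm1$ matrix carrying $\epsilon_k$ on the coordinates of the $k$-th block. Then $D_\epsilon$ is orthogonal, $D_\epsilon\Gamma_BD_\epsilon=\Gamma_B$ (because $(\Gamma_B)_{ij}\neq0$ forces $i,j$ into the same block, where $\epsilon$ is constant), and $(D_\epsilon\Delta D_\epsilon)_{ij}=\epsilon_k\epsilon_l\,\Delta_{ij}$ whenever $i,j$ lie in blocks $k,l$, with $\Delta$ supported off the diagonal blocks. Fix $0\le v_1\le v_2$ with $v_2>0$ and set $\rho:=v_1/v_2\in[0,1]$ (the case $v_2=0$ being trivial). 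I would then draw $\epsilon$ at random so that $\E[\epsilon_k]=0$ and $\E[\epsilon_k\epsilon_l]=\rho$ for all $k\neq l$, which forces $\E[D_\epsilon\Delta D_\epsilon]=\rho\,\Delta$ and hence
$$
\E\!\left[D_\epsilon M(v_2)D_\epsilon\right]=\Gamma_B+v_2\rho\,\Delta=\Gamma_B+v_1\Delta=M(v_1).
$$
Since each $D_\epsilon M(v_2)D_\epsilon$ is orthogonally similar to $M(v_2)$, and $M(v_1)$ is thus a convex combination of matrices all having the eigenvalues of $M(v_2)$, concavity of $\lambda_{\min}$ (Jensen) yields
$$
\lambda_{\min}(M(v_1))=\lambda_{\min}\!\left(\E[D_\epsilon M(v_2)D_\epsilon]\right)\ge \E\!\left[\lambda_{\min}(D_\epsilon M(v_2)D_\epsilon)\right]=\lambda_{\min}(M(v_2)),
$$
which is exactly the claimed monotonicity; reversing the inequality via convexity of $\lambda_{\max}$ gives $\lambda_{\max}(M(v_1))\le\lambda_{\max}(M(v_2))$.

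The main obstacle, and the only subtle point, is the construction of these random signs. One cannot simply conjugate by a single fixed $D_\epsilon$ to send $M(v)$ to $M(-v)$: flipping \emph{every} off-block sign at once would require $\epsilon_k\epsilon_l=-1$ for all pairs, which is impossible as soon as $B$ has three or more blocks. The probabilistic averaging sidesteps this entirely: taking $\epsilon_k=Z\zeta_k$ with $Z$ a uniform $\pm1$ sign and $\zeta_k$ i.i.d.\ $\pm1$ of mean $\sqrt{\rho}$ (all independent of $Z$) gives $\E[\epsilon_k]=0$ and $\E[\epsilon_k\epsilon_l]=\E[\zeta_k]\E[\zeta_l]=\rho$ for $k\neq l$, realizing the required convex combination. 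This is cleaner than differentiating a possibly multiple eigenvalue at $v=0$. Finally, non-increasingness of $\lambda_{\min}(M(\cdot))$ and non-decreasingness of $\lambda_{\max}(M(\cdot))$ on $[0,1]$ give $\min_{v\in[0,1]}\lambda_{\min}(M(v))=\lambda_{\min}(M(1))=\lambda_{\min}(\Gamma)$ and $\max_{v\in[0,1]}\lambda_{\max}(M(v))=\lambda_{\max}(M(1))=\lambda_{\max}(\Gamma)$, as required.
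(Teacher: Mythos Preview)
Your argument is correct and takes a genuinely different route from the paper. The paper works directly with the Rayleigh quotient: for the unit eigenvector $e_v$ attaining $\lambda_{\max}(M(v))$ it shows $e_v^T\Delta e_v\ge 0$ (at $v=0$ because $e_0$ is supported on a single block and $\Delta$ is purely off-block; for $v>0$ by a short contradiction), and then $\lambda_{v'}\ge e_v^T M(v')e_v=\lambda_v+(v'-v)\,e_v^T\Delta e_v\ge\lambda_v$. You instead realise $M(v_1)$ as an \emph{average} of orthogonal conjugates $D_\epsilon M(v_2)D_\epsilon$ of $M(v_2)$, using random block-constant sign matrices, and conclude via the concavity of $\lambda_{\min}$ and convexity of $\lambda_{\max}$. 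Your approach is more structural and in fact yields more: it shows that the full eigenvalue vector of $M(v_1)$ is majorised by that of $M(v_2)$, not just the extreme eigenvalues. The paper's argument, by contrast, is entirely elementary and needs no auxiliary randomness. One small simplification of your construction: the condition $\E[\epsilon_k]=0$ is never used---only the pairwise correlations $\E[\epsilon_k\epsilon_l]=\rho$ for $k\neq l$ enter, since $\Delta$ is supported off the diagonal blocks---so i.i.d.\ signs $\zeta_k$ with mean $\sqrt{\rho}$ already do the job without the extra global sign $Z$.
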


\begin{proof}
Let us show that $v\mapsto \lambda_{\max}(\Gamma_B+v\Delta)$ increases (the proof if the same for $\lambda_{\min}$).

For all $v\in [0,1]$, let $\Gamma_v=\Gamma_B+v\Delta$, $\lambda_v=\lambda_{\max}(\Gamma_v)$ and $e_v$ an unit eigenvector of $\Gamma_v$ associated to $\lambda_v$. Let $v,v'\in [0,1], \;v<v'$. Thus
$$
\lambda_{v'}=\max_{u,\;\|u\|=1} u^T (\Gamma_B+v'\Delta) u\geq e_v^T(\Gamma_B+v'\Delta)e_v= \lambda_v+(v-v') e_v^T\Delta e_v.
$$
If we show that $e_v^T\Delta e_v\geq 0$, we proved that $v\mapsto\lambda_v$ increases.
First, assume that $v=0$. If we write $B_k$ the group of the largest eigenvalue of $\Gamma_B$, then $(e_0)_i$ is equal to zero for all $i \notin B_k$, so 
$(e_0^T\Delta)_j$ is equal to zero for all $j \in B_k$, and so $e_0^T\Delta e_0$ is equal to zero.

Assume now that $v>0$ and let us show that $e_v^T\Delta e_v\geq 0$ by contradiction. Assume that $e_v^T\Delta e_v<0$. Then
\begin{eqnarray*}
e_v^T(\Gamma_B+v\Delta)e_v  <  e_v^T \Gamma_B e_v \leq e_0^T\Gamma_B e_0.
\end{eqnarray*}
Furthermore, we have seen that $e_0^T \Delta e_0=0$. Thus, we have
$$
e_v^T(\Gamma_B+v\Delta)e_v <e_0^T(\Gamma_B+v\Delta)e_0,
$$
that is in contradiction with $e_v\in \argmax_{u,\;\|u\|=1} u^T (\Gamma_B+v\Delta)u$.
\end{proof}

In the following, let $\Delta_{B,B'}:=S_{B}-S_{B'}$ for all $B,B'\in \mathcal{P}_p$.

\begin{lm}\label{lm_etape1}
For all $B\in \mathcal{P}_p$, we have 
\begin{equation}\label{eq1_etape1}
l_{S_{B\cap B^*}}-l_{S_B}\leq \frac{1}{2 \lambda_{\min}(S)}\frac{1}{p}\| \Delta_{B,B\cap B^*}\|_F^2.
\end{equation}
Moreover, for all $B<B^*$, we have
\begin{equation}\label{eq2_etape1}
l_{S_{B}}-l_{S_{B^*}}\geq \frac{1}{2 \lambda_{\max} (S_{B^*})}\frac{1}{p}\| \Delta_{B^*,B}\|_F^2.
\end{equation}
\end{lm}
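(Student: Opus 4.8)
The plan is to reduce both inequalities \eqref{eq1_etape1} and \eqref{eq2_etape1} to estimates on log-determinants, and then to exploit the convexity of $-\log|\cdot|$ along a segment of block-truncations, anchoring the expansion at the \emph{finer} of the two partitions so that the first-order term disappears. The starting observation is a clean algebraic simplification of $l$: for any $B\in\mathcal{P}_p$, the matrix $S_B^{-1}$ is block-diagonal for the structure $B$ and coincides with $S$ on those blocks, so $\Tr(S_B^{-1}S)=\Tr(S_B^{-1}S_B)=p$. Hence $l_{S_B}=\frac{1}{p}\log|S_B|+1$ for every $B$, and consequently the trace terms cancel in any difference:
\[
l_{S_{B_1}}-l_{S_{B_2}}=\frac{1}{p}\bigl(\log|S_{B_1}|-\log|S_{B_2}|\bigr).
\]
Both claims thereby become purely log-determinant comparisons between a coarser and a finer block-truncation, where one matrix is the block-truncation of the other.

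For \eqref{eq1_etape1}, I would note that $B\cap B^*\leq B$, so $S_{B\cap B^*}=(S_B)_{B\cap B^*}$ and both matrices live on the structure $B$. I interpolate linearly, $\Gamma_v:=S_{B\cap B^*}+v\Delta$ with $\Delta:=\Delta_{B,B\cap B^*}=S_B-S_{B\cap B^*}$, which stays positive definite on $[0,1]$. The map $v\mapsto\log|\Gamma_v|$ is concave, with first derivative $\Tr(\Gamma_v^{-1}\Delta)$ and second derivative $-\Tr(\Gamma_v^{-1}\Delta\Gamma_v^{-1}\Delta)$. The decisive structural fact is that the first-order term vanishes at $v=0$: $\Tr(S_{B\cap B^*}^{-1}\Delta)=0$, because $S_{B\cap B^*}^{-1}$ is supported on the blocks of $B\cap B^*$ while $\Delta$ vanishes there (on a common $B\cap B^*$-block, $S_B$ and $S_{B\cap B^*}$ agree). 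A second-order expansion then yields $\log|S_{B\cap B^*}|-\log|S_B|=\tfrac12\Tr(\Gamma_\xi^{-1}\Delta\Gamma_\xi^{-1}\Delta)$ for some $\xi\in(0,1)$, which I bound from above by $\frac{1}{2\lambda_{\min}(\Gamma_\xi)}\|\Delta\|_F^2$ using the smoothness of $-\log|\cdot|$. Invoking Lemma \ref{lm_vp_blocs} (first with $\Gamma=S_B$ for the segment itself, then with $\Gamma=S$) shows that the smallest eigenvalue along the segment is at least $\lambda_{\min}(S_B)\geq\lambda_{\min}(S)$, which produces exactly the constant $\lambda_{\min}(S)$ of \eqref{eq1_etape1}.

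For \eqref{eq2_etape1} I would argue symmetrically, this time anchoring at the finer vertex $S_B$ (here $B<B^*$, so $S_B=(S_{B^*})_B$) and interpolating towards the coarser $S_{B^*}$ with $\Delta:=\Delta_{B^*,B}=S_{B^*}-S_B$. The first-order term again vanishes, $\Tr(S_B^{-1}\Delta)=0$, by the same support argument, and now the \emph{strong convexity} of $-\log|\cdot|$ gives the lower bound $\log|S_B|-\log|S_{B^*}|\geq\frac{1}{2\lambda_{\max}}\|\Delta\|_F^2$, where $\lambda_{\max}$ is the largest eigenvalue met along the segment. By Lemma \ref{lm_vp_blocs} applied with $\Gamma=S_{B^*}$ and structure $B$, this largest eigenvalue is precisely $\lambda_{\max}(S_{B^*})$, matching the constant in the claim.

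\textbf{Main obstacle.} The routine calculus (derivatives of $\log|\Gamma_v|$, the Taylor remainder) is straightforward; the two genuinely load-bearing points are the vanishing of the first-order term at the finer vertex, i.e. the orthogonality $\Tr(\Gamma_0^{-1}\Delta)=0$ coming from the disjoint supports of $\Gamma_0^{-1}$ and $\Delta$, and the \emph{uniform} control of the extreme eigenvalues of $\Gamma_v$ over the entire segment. Both are exactly what Lemma \ref{lm_vp_blocs} supplies through its monotonicity statements, and this is why $\lambda_{\min}(S)$ and $\lambda_{\max}(S_{B^*})$, rather than any intermediate quantities, are the constants that appear.
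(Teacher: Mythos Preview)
Your approach is essentially identical to the paper's: both reduce to a pure log-determinant comparison via $\Tr(S_B^{-1}S)=p$, expand $v\mapsto\log|\Gamma_v|$ at the \emph{finer} vertex so that the first-order term $\Tr(\Gamma_0^{-1}\Delta)$ vanishes by block-support orthogonality, and then invoke Lemma \ref{lm_vp_blocs} to control the extreme eigenvalues of $\Gamma_v$ uniformly on $[0,1]$. The only cosmetic difference is that the paper writes the remainder in integral form, $\vecc(\Delta)^T\bigl[\int_0^1\Gamma_v^{-1}\otimes\Gamma_v^{-1}(1-v)\,dv\bigr]\vecc(\Delta)$, rather than evaluating at a Lagrange point $\xi$.

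One caveat worth flagging: the quadratic remainder $\tfrac12\Tr(\Gamma_\xi^{-1}\Delta\,\Gamma_\xi^{-1}\Delta)=\tfrac12\|\Gamma_\xi^{-1/2}\Delta\,\Gamma_\xi^{-1/2}\|_F^2$ is bounded above and below by $\tfrac{1}{2}\lambda_{\min}(\Gamma_\xi)^{-2}\|\Delta\|_F^2$ and $\tfrac{1}{2}\lambda_{\max}(\Gamma_\xi)^{-2}\|\Delta\|_F^2$, i.e.\ with \emph{squared} eigenvalues, not the first powers you wrote. You have matched the lemma as stated, but the statement itself carries a typo: the paper's own proof obtains $\lambda_{\min}(S)^{-2}$ and $\lambda_{\max}(S_{B^*})^{-2}$, and it is precisely these squared constants that are used downstream in Lemmas \ref{lm_B_notleq} and \ref{lm_B<}.
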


\begin{proof}
First, we prove Equation \eqref{eq1_etape1}.
Doing the Taylor expansion of $t\mapsto\log \circ \det\left(S_{B\cap B^*}+t\Delta_{B,B\cap B^*}\right)$ and using the integral form of the remainder (as Equation (9) of \cite{rothman_sparse_2008} or in \cite{lam_sparsistency_2009}), we have 
\begin{eqnarray*}
& &  p\left(l_{S_{B\cap B^*}}-l_{S_{B}}\right)\\
&=& \log(|S_{B\cap B^*}|)-\log(|S_B|)\\
&=& -\Tr(S_{B\cap B^*} \Delta_{B,B\cap B^*})  +\vecc(\Delta_{B,B\cap B^*})^T \Bigg[\int_0^1(S_{B\cap B^*}+v\Delta_{B,B\cap B^*})^{-1}  \\
& &  \otimes(S_{B\cap B^*}+v\Delta_{B,B\cap B^*})^{-1}(1-v)dv\Bigg]\vecc(\Delta_{B,B\cap B^*}),
\end{eqnarray*}
where $\otimes$ is the Kronecker product.
The trace is equal to zero.  Now,
\begin{eqnarray*}
p\left(l_{S_{B\cap B^*}}-l_{S_{B}}\right) & \leq &  1\slash 2 \max_{0\leq v \leq 1} \lambda_{\max}^2[(S_{B\cap B^*}+v\Delta_{B,B\cap B^*})^{-1}]\|\vecc(\Delta_{B,B\cap B^*})\|^2\\
&=& 1\slash 2 \max_{0\leq v \leq 1} \lambda_{\min}^{-2}(S_{B\cap B^*}+v\Delta_{B,B\cap B^*})\|\vecc(\Delta_{B,B\cap B^*})\|^2\\
&= & \frac{1}{2\min_v(\lambda_{\min}(S_{B\cap B^*}+v\Delta_{B,B\cap B^*}))^{2}}\|\vecc(\Delta_{B,B\cap B^*})\|^2\\
&= & \frac{1}{2\lambda_{\min}(S_B)^{2}}\|\vecc(\Delta_{B,B\cap B^*})\|^2\\
& \leq & \frac{1}{2\lambda_{\min}(S)^{2}}\|\vecc(\Delta_{B,B\cap B^*})\|^2,
\end{eqnarray*}
using Lemma \ref{lm_vp_blocs} for the two last steps.

Now, we prove Equation \eqref{eq2_etape1} similarly. We have, using Lemma \ref{lm_vp_blocs},
\begin{eqnarray*}
  p\left(l_{S_{B}}-l_{S_{B^*}}\right) &=& -\Tr(S_{B} \Delta_{B^*,B})  +\vecc(\Delta_{B^*,B})^T \Bigg[\int_0^1(S_{B}+v\Delta_{B^*,B})^{-1}  \\
& &  \otimes(S_{B}+v\Delta_{B^*,B})^{-1}(1-v)dv\Bigg]\vecc(\Delta_{B^*,B})\\
& \geq &  1\slash 2 \min_{0\leq v \leq 1} \lambda_{\min}^2[(S_{B}+v\Delta_{B^*,B})^{-1}]\|\vecc(\Delta_{B^*,B})\|^2\\
&=& 1\slash 2 \min_{0\leq v \leq 1} \lambda_{\max}^{-2}(S_{B}+v\Delta_{B^*,B})\|\vecc(\Delta_{B^*,B})\|^2\\
&= & \frac{1}{2\max_v(\lambda_{\max}(S_{B}+v\Delta_{B^*,B}))^{2}}\|\vecc(\Delta_{B^*,B})\|^2\\
&= & \frac{1}{2\lambda_{\max}(S_B)^{2}}\|\vecc(\Delta_{B^*,B})\|^2\\
& \geq & \frac{1}{2\lambda_{\max}(S)^{2}}\|\vecc(\Delta_{B^*,B})\|^2.
\end{eqnarray*}
\end{proof}

\begin{lm}\label{lm_B_notleq}
$$
\PP\left( \max_{B\not \leq B^*} \Phi(B\cap B^*)-\Phi(B)\geq 0\right) \longrightarrow 0.
$$
\end{lm}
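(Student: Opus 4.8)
The plan is to bound, uniformly over all $B\not\leq B^*$, the decomposition
$$
\Phi(B\cap B^*)-\Phi(B)=\bigl(l_{S_{B\cap B^*}}-l_{S_B}\bigr)+\kappa\bigl(\pen(B\cap B^*)-\pen(B)\bigr),
$$
and to show that the nonnegative likelihood gap is strictly dominated by the (negative) penalty gap on an event of probability tending to one. Since $B\cap B^*\leq B$, the refinement $B\cap B^*$ is obtained from $B$ by splitting each block $A\in B$ into the pieces $A\cap A'$, $A'\in B^*$, so the penalty can only decrease; the heart of the matter is to relate \emph{how much} it decreases to the likelihood increase.

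First I would make the penalty gap explicit. A direct count gives
$$
\pen(B)-\pen(B\cap B^*)=\sum_{A\in B}\Bigl(|A|^2-\sum_{A'\in B^*}|A\cap A'|^2\Bigr)=:N,
$$
which is exactly the number of ordered pairs $(i,j)$ lying in the same block of $B$ but in different blocks of $B^*$, i.e. $N=|\{(i,j): (i,j)\in B\setminus B^*\}|\geq 1$. These pairs form the support of $\Delta_{B,B\cap B^*}=S_B-S_{B\cap B^*}$, and since $\Sigma=\Sigma_{B^*}$ forces $\sigma_{ij}=0$ on them,
$$
\|\Delta_{B,B\cap B^*}\|_F^2=\sum_{(i,j)\in B\setminus B^*}s_{ij}^2\leq N\,\rho_n^2,\qquad \rho_n:=\max_{(i,j)\notin B^*}|s_{ij}|.
$$
The key observation is that the same count $N$ controls both sides: combining this bound with Equation \eqref{eq1_etape1} of Lemma \ref{lm_etape1} and with $\pen(B\cap B^*)-\pen(B)=-N$ yields, for every $B\not\leq B^*$,
$$
\Phi(B\cap B^*)-\Phi(B)\leq \frac{\|\Delta_{B,B\cap B^*}\|_F^2}{2\lambda_{\min}(S)\,p}-\frac{N}{p\,n^\delta}\leq \frac{N}{p}\left(\frac{\rho_n^2}{2\lambda_{\min}(S)}-\frac{1}{n^\delta}\right).
$$

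As $N/p>0$ and the bracketed factor is \emph{independent of $B$}, the event $\{\max_{B\not\leq B^*}(\Phi(B\cap B^*)-\Phi(B))\geq 0\}$ is contained in $\{\rho_n^2\geq 2\lambda_{\min}(S)/n^\delta\}$, and it only remains to show that the latter has vanishing probability. By Lemma \ref{lm_eingenvalue} (taking $C_{\inf}$ small), $\lambda_{\min}(S)\geq c$ for some constant $c>0$ with probability tending to one, so it suffices to prove $\PP(\rho_n\geq \sqrt{2c}\,n^{-\delta/2})\to 0$. Using the distributional identity recalled before Lemma \ref{lm_eingenvalue}, each $s_{ij}$ with $\sigma_{ij}=0$ has the law of $\frac1n\sum_{k=1}^{n-1}Z_{ij}^{(k)}$, so the stated Bernstein inequality applies with $\varepsilon=\sqrt{2c}\,n^{-\delta/2}$; since $\delta<1$ we have $\min(\varepsilon,\varepsilon^2)=\varepsilon^2$ for $n$ large, hence $\PP(|s_{ij}|\geq\varepsilon)\leq 2\exp(-2cC_{\inf}\,n^{1-\delta})$. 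A union bound over the at most $p^2$ relevant pairs, together with $p=O(n)$ (Condition \ref{cond1}) and $1-\delta>0$, gives
$$
\PP\bigl(\rho_n\geq \sqrt{2c}\,n^{-\delta/2}\bigr)\leq 2p^2\exp\bigl(-2cC_{\inf}\,n^{1-\delta}\bigr)\longrightarrow 0,
$$
which proves the lemma.

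The step I expect to be the main obstacle is the uniformization in $B$: one has to recognize that both the likelihood increase (through $\|\Delta_{B,B\cap B^*}\|_F^2$) and the penalty decrease are governed by the same integer $N$, so that $N$ cancels and the decisive inequality $\rho_n^2<2\lambda_{\min}(S)/n^\delta$ no longer references the partition. Once this is seen, the remaining probabilistic content is a routine maximal deviation estimate resting on the already-stated Bernstein inequality: the choice $\delta<1$ keeps $n^{1-\delta}\to\infty$, so the sub-exponential tail beats the $p^2=O(n^2)$ union bound, while $\delta>0$ makes the threshold $n^{-\delta/2}\to 0$ small enough that $\min(\varepsilon,\varepsilon^2)=\varepsilon^2$.
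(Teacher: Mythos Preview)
Your proof is correct and follows essentially the same route as the paper's: both invoke Equation \eqref{eq1_etape1} of Lemma \ref{lm_etape1}, recognize that $\pen(B)-\pen(B\cap B^*)$ and $\|\Delta_{B,B\cap B^*}\|_F^2$ are governed by the same set of pairs $(i,j)\in B\setminus B^*$, reduce to a bound on $\max_{(i,j)\notin B^*}|s_{ij}|$, and conclude by Lemma \ref{lm_eingenvalue} plus Bernstein with a $p^2$ union bound. The only cosmetic difference is that you package the argument through the explicit count $N$ and the scalar $\rho_n$, whereas the paper writes the difference as a single sum $\sum_{(i,j)\in B\setminus B^*}\{c\,s_{ij}^2-n^{-\delta}\}$ and argues it is nonnegative only if some individual term is; these are the same observation.
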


\begin{proof}
Using Lemma \ref{lm_etape1}, we have
\begin{eqnarray*}
& & \PP\left( \max_{B\not \leq B^*} \Phi(B\cap B^*)-\Phi(B)\geq 0\right)\\
& \leq & \PP\left( \max_{B\not \leq B^*} \left[l_{S_{B\cap B^*}}-l_{S_B}-\frac{1}{pn^\delta}\left(\pen(B) - \pen (B\cap B^*) \right) \right]\geq 0\right)\\
& \leq & \PP\left( \max_{B\not \leq B^*} \left[ \frac{1}{2 \lambda_{\min}(S)^2}\| \Delta_{B,B\cap B^*}\|_F^2-\frac{1}{n^\delta}\left(\pen(B) - \pen (B\cap B^*) \right) \right]\geq 0\right)\\
& \leq & \PP\left( \lambda_{\min}(S)\leq  \frac{1}{2}\lambda_{\inf}(1-\sqrt{y})^2 \right)\\
& & + \PP\left( \max_{B\not \leq B^*} \left[ \frac{1}{ (1-\sqrt{y})^4\lambda_{\inf}^2}\| \Delta_{B,B\cap B^*}\|_F^2-\frac{1}{n^\delta}\left(\pen(B) - \pen (B\cap B^*) \right) \right]\geq 0\right).
\end{eqnarray*}
We show that the two terms go to $0$.
The first term goes to $0$ with Lemma \ref{lm_eingenvalue}. For the second term, we have
\begin{eqnarray*}
&& \PP\left( \max_{B\not \leq B^*} \left[ \frac{1}{ (1-\sqrt{y})^4\lambda_{\inf}^2}\| \Delta_{B,B\cap B^*}\|_F^2-\frac{1}{n^\delta}\left(\pen(B) - \pen (B\cap B^*) \right) \right]\geq 0\right)\\
& =&  \PP\left( \max_{B\not \leq B^*} \left[ \sum_{(i,j)\in B \setminus B^*} \left\{\frac{1}{ (1-\sqrt{y})^4 \lambda_{\inf}^2} s_{ij}^2-\frac{1}{n^\delta} \right\} \right]\geq 0\right)\\
& \leq & \PP\left( \exists (i,j)\notin B^*,\;\frac{1}{ (1-\sqrt{y})^4\lambda_{\inf}^2}s_{ij}^2-\frac{1}{n^\delta} \geq 0\right)\\
& \leq & \PP\left( \exists (i,j)\notin B^*,\;\frac{2}{ (1-\sqrt{y})^4\lambda_{\inf}^2}\widehat{\sigma}_{ij}^2-\frac{1}{n^\delta} \geq 0\right)\\
& \leq & p^2\max_{ (i,j)\notin B^*}\PP\left(  \frac{2}{ (1-\sqrt{y})^4\lambda_{\inf}^2}\widehat{\sigma}_{ij}^2- \frac{1}{ n^\delta}\geq 0 \right)\\
& \leq & p^2\max_{ (i,j)\notin B^*}\PP\left(  \frac{\sqrt{2}}{ (1-\sqrt{y})^2\lambda_{\inf}}|\widehat{\sigma}_{ij}|\geq \frac{1}{ n^{\delta\slash 2}} \right)\\
& \leq & p^2\max_{ (i,j)\notin B^*} \PP \left( \left|\sum_{k=1}^{n-1} Z_{ij}^{(k)} \right|\geq \frac{(1-\sqrt{y})^2}{ \sqrt{2}} \lambda_{\inf} n^{1-\delta\slash2} \ \right)\\
&\leq &  2p^2 \exp\left( -C_{\inf} n^{1-\delta}\right) \longrightarrow 0,
\end{eqnarray*}
using Bernstein's inequality, where $Z_{ij}^{(k)}$ is defined in Equation \eqref{eq_Z}. That concludes the proof.
\end{proof}

\begin{lm}\label{lm_B<}
$$
\PP\left( \max_{B<B^*} \Phi(B^*)-\Phi(B)\geq 0\right)\longrightarrow 0. 
$$
\end{lm}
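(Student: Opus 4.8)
The plan is to show that, with probability tending to one, for every partition $B<B^*$ the likelihood reward for keeping the true blocks strictly exceeds the penalty credit gained by refining them. First I would lower bound the likelihood gap by Equation \eqref{eq2_etape1} of Lemma \ref{lm_etape1}, namely $l_{S_B}-l_{S_{B^*}}\geq \frac{1}{2\lambda_{\max}(S_{B^*})}\frac{1}{p}\|\Delta_{B^*,B}\|_F^2$. Since $S_{B^*}$ is block-diagonal, its diagonal blocks are principal submatrices of $S$, so eigenvalue interlacing gives $\lambda_{\max}(S_{B^*})\leq \lambda_{\max}(S)$, which is at most a constant $C_{\sup}$ with probability tending to one by Lemma \ref{lm_eingenvalue}. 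Observing that $\|\Delta_{B^*,B}\|_F^2=\sum_{(i,j)\in B^*\setminus B}s_{ij}^2$ and that $\pen(B^*)-\pen(B)=|B^*\setminus B|$ (both decompose over the ordered off-diagonal pairs that are together in $B^*$ but separated by the refinement $B$), I obtain, on the eigenvalue event,
\[
\Phi(B^*)-\Phi(B)\leq \frac{1}{p}\sum_{(i,j)\in B^*\setminus B}\left(\frac{1}{n^\delta}-\frac{s_{ij}^2}{2C_{\sup}}\right),
\]
so it remains to prove this sum is negative, uniformly in $B<B^*$.

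The hard part is that the penalty rewards refinement even across pairs with negligible $\sigma_{ij}$: for such a pair $s_{ij}^2=O_p(1/n)$ while the penalty contributes $1/n^\delta\gg 1/n$, making the corresponding term positive and pushing the difference the wrong way. I would resolve this with a connectivity argument built on Condition \ref{cond4}. Suppose $B<B^*$ refines exactly the blocks $B^*_{k_1},\dots,B^*_{k_t}$. For each refined block $B^*_{k_i}$, let $B^{(i)}$ be the partition coinciding with $B$ on $B^*_{k_i}$ and with $B^*$ elsewhere; then $B^{(i)}<B^*$, and since $\Sigma_{B^{(i)}}-\Sigma$ is supported on the pairs of $B^*_{k_i}$ separated by $B$, Condition \ref{cond4} forces one such pair $(i^\star_i,j^\star_i)\in B^*\setminus B$ inside $B^*_{k_i}$ with $|\sigma_{i^\star_i j^\star_i}|\geq a n^{-1/4}$. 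These $t$ ``large'' pairs are distinct, one per refined block, and their $n^{-1/2}$ likelihood gain is what must dominate the $O(n^{-\delta})$ penalty credit of the accompanying near-zero pairs.

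Finally I would control the empirical covariances uniformly. By the Bernstein bound stated after Lemma \ref{lm_eingenvalue} applied to the variables $Z_{ij}^{(k)}$ of Equation \eqref{eq_Z}, together with a union bound over the $O(p^2)$ pairs, the event $E_2:=\{\forall (i,j),\ |s_{ij}-\sigma_{ij}|\leq \tfrac a2 n^{-1/4}\}$ has probability tending to one, its complement being at most $2p^2\exp(-C_{\inf}n^{1/2})\to 0$. On $E_2$ each large pair satisfies $s_{ij}^2\geq \tfrac{a^2}{4}n^{-1/2}$, so its term is at most $\frac{1}{n^\delta}-\frac{a^2}{8C_{\sup}}n^{-1/2}\leq -c\,n^{-1/2}$ for some $c>0$ and all large $n$, using $\delta>1/2$. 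Every other separated pair contributes at most $1/n^\delta$, and by Condition \ref{cond3} each refined block carries at most $m^2$ separated pairs, hence at most $m^2t$ in total. Summing, the bracketed expression is at most $t\!\left(-c\,n^{-1/2}+m^2 n^{-\delta}\right)$, strictly negative for large $n$ since $\delta>1/2$ makes $n^{-\delta}=o(n^{-1/2})$. Thus $\Phi(B^*)-\Phi(B)<0$ for all $B<B^*$ simultaneously on $E_2$ intersected with the eigenvalue event, and bounding the probabilities of these two exceptional events via Lemma \ref{lm_eingenvalue} and Bernstein's inequality yields the claim.
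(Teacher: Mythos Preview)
Your proof is correct and follows essentially the paper's route: both invoke Equation~\eqref{eq2_etape1} of Lemma~\ref{lm_etape1} together with Lemma~\ref{lm_eingenvalue}, then use Condition~\ref{cond4} to locate, in every refined block, one entry with $|\sigma_{i^\star j^\star}|\geq an^{-1/4}$ whose squared empirical value (controlled by Bernstein's inequality and a union bound) dominates the at most $m^2$ penalty terms of size $n^{-\delta}$ thanks to $\delta>1/2$. The only difference is organisational---the paper union-bounds over the $O(p\,2^m)$ bipartitions of individual blocks, whereas you first pass to a single uniform concentration event over all $p^2$ entries and then argue deterministically, which is if anything slightly cleaner.
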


\begin{proof}
Using Lemma \ref{lm_etape1}, we have
\begin{eqnarray*}
& & \PP\left( \max_{B<B^*} \Phi(B^*)-\Phi(B)\geq 0\right)\\
& \leq & \PP\left( \min_{B<B^*} \left[l_{S_{ B}}-l_{S_{B^*}}-\frac{1}{pn^\delta}\left(\pen(B^*) - \pen (B) \right) \right]\leq 0\right)\\
& \leq & \PP\left(  \min_{B<B^*} \left[ \frac{1}{2 \lambda_{\max}(S_{B^*})^2}\| \Delta_{B^*, B}\|_F^2-\frac{1}{n^\delta}\left(\pen(B^*) - \pen (B) \right) \right]\leq 0\right)\\
& \leq & \PP\left( \lambda_{\max}(S_{B})\leq \frac{\lambda_{\inf}(1+\sqrt{y})^2}{2} \right)\\
& & + \PP\left( \min_{B<B^*} \left[ \frac{1}{(1+\sqrt{y})^4\lambda_{\inf}^2}\| \Delta_{B^*, B}\|_F^2-\frac{1}{n^\delta}\left(\pen(B^*) - \pen (B) \right) \right]\leq 0\right).
\end{eqnarray*}
The first term goes to 0 with Lemma \ref{lm_eingenvalue}. The second term is
\begin{eqnarray*}
& & \PP\left( \exists B<B^*,\;\sum_{(i,j)\in B^* \setminus B}\left[ \frac{1}{(1+\sqrt{y})^4\lambda_{\inf}^2}s_{i,j}^2-n^{-\delta}\right] \leq 0\right)\\
& \leq & \PP\left( \exists B<B^*,\;\sum_{(i,j)\in B^* \setminus B}\left[ \frac{1}{\lambda_{\inf}^2}s_{i,j}^2-n^{-\delta}\right] \leq 0\right)\\
& \leq & \PP\left( \exists k \in [1:K],\;\emptyset \varsubsetneq B_1 \varsubsetneq B_k^*,\;\sum_{i\in B_1, \;j\in B_k^*\setminus B_1} \left[  \frac{1}{\lambda_{\inf}^2}s_{ij}^2-n^{-\delta} \right] \leq 0\right)\\
& \leq & p 2^m \max_{\substack{k\in [1:K],\\ \emptyset \varsubsetneq B_1\varsubsetneq B_k^*}}  \PP\left( \sum_{i\in B_1, \;j\in B_k^*\setminus B_1} \left[  \frac{1}{\lambda_{\inf}^2}s_{ij}^2-n^{-\delta}\right] \leq 0\right)\\
& \leq & p 2^m \max_{\substack{k\in [1:K],\\ \emptyset \varsubsetneq B_1\varsubsetneq B_k^*}}  \PP\left( \sum_{i\in B_1, \;j\in B_k^*\setminus B_1} \left[  \frac{1}{2\lambda_{\inf}^2}\widehat{\sigma}_{ij}^2-n^{-\delta} \right] \leq 0\right).
\end{eqnarray*}
Now, for all $k\in [1;K]$ and for all $\emptyset \varsubsetneq B_1\varsubsetneq B_k^*$, let $(i^*,j^*)\in \argmax_{i\in B_1,j \in B_k^*\setminus B_1}|\sigma_{ij}|$ (with an implicit dependence on $k$ and $B_1$). Remark that
$$
\frac{1}{2\lambda_{\inf}^2}\widehat{\sigma}_{i^*j^*}^2 \geq m^2 n^{-\delta} \Longrightarrow  \sum_{i\in B_1, \;j\in B_k^*\setminus B_1}\left(\frac{1}{2\lambda_{\inf}^2}\widehat{\sigma}_{ij}^2-n^{-\delta}\right)\geq 0.
$$
Thus,
\begin{eqnarray*}
& & \PP\left( \exists B<B^*,\;\sum_{(i,j)\in B^* \setminus B} \left[  \frac{1}{\lambda_{\inf}^2}s_{i,j}^2-n^{-\delta}\right] \leq 0\right)\\
 & \leq & p2^m  \max_{\substack{k\in [1:K],\\ \emptyset \varsubsetneq B_1\varsubsetneq B_k^*}}  \PP \left( \frac{1}{2\lambda_{\inf}^2}\widehat{\sigma}_{i^*j^*}^2 \leq m^2 n^{-\delta}\right)\\
&=&p2^m \max_{\substack{k\in [1:K],\\ \emptyset \varsubsetneq B_1\varsubsetneq B_k^*}} \PP \left( |\widehat{\sigma}_{i^*j^*}| \leq \sqrt{2}\lambda_{\inf} m n^{-\delta\slash 2}\right)\\
& \leq &p2^m \max_{\substack{k\in [1:K],\\ \emptyset \varsubsetneq B_1\varsubsetneq B_k^*}} \PP \left( |\widehat{\sigma}_{i^*j^*}-\sigma_{i^*j^*}| \geq |\sigma_{i^*j^*}|-\sqrt{2}\lambda_{\inf} m n^{-\delta\slash 2}\right)\\
& = & p2^m \max_{\substack{k\in [1:K],\\ \emptyset \varsubsetneq B_1\varsubsetneq B_k^*}} \PP \left( \left|\sum_{k=1}^{n-1} Z_{i^*j^*}^{(k)}\right| \geq n\left[|\sigma_{i^*j^*}|- \sqrt{2}\lambda_{\inf} m n^{-\delta\slash 2}\right] \right)\\
& = & p2^m  \PP \left( \left|\sum_{k=1}^{n-1} Z_{i^*j^*}^{(k)}\right| \geq n \min_{\substack{k\in [1:K],\\ \emptyset \varsubsetneq B_1\varsubsetneq B_k^*}}\left[|\sigma_{i^*j^*}|- \sqrt{2}\lambda_{\inf} m n^{-\delta\slash 2}\right] \right).
\end{eqnarray*}
Now, by Condition 4, we know that $\min_{\substack{k\in [1:K],\\ \emptyset \varsubsetneq B_1\varsubsetneq B_k^*}}|\sigma_{i^*j^*}|\geq a n^{-1\slash 4}$, so, for $n$ large enough,
$$
\min_{\substack{k\in [1:K],\\ \emptyset \varsubsetneq B_1\varsubsetneq B_k^*}} \left[|\sigma_{i^*j^*}|- \sqrt{2}\lambda_{\inf} m n^{-\delta\slash 2}\right]\geq C_{\inf}( n^{- 1 \slash 4}-n^{ - \delta\slash 2})\geq C_{\inf}(\delta)n^{-1\slash 4}.
$$
Thus, by Bernstein's inequality, for $n$ large enough,
\begin{eqnarray*}
& & \PP\left( \exists B<B^*,\;\sum_{(i,j)\in B^* \setminus B} \left[  \frac{1}{(1-\sqrt{y})^2\lambda_{\inf}^2}s_{i,j}^2-n^{-\delta}\right] \leq 0\right)\\
& &  \leq p2^{m+1}\exp\left( - C_{\inf}(\delta)n^{1\slash 2} \right) \longrightarrow 0.
\end{eqnarray*}
\end{proof}

Now, we can prove Proposition \ref{prop_conclu}.
\begin{proof}
We have
\begin{eqnarray*}
\PP\left( \max_{B\neq B^*} \Phi(B^*)-\Phi(B)\geq 0\right)
& \leq & \PP\left( \max_{B< B^*} \Phi(B^*)-\Phi(B)\geq 0\right)\\
& & + \PP\left( \max_{B> B^*} \Phi(B^*)-\Phi(B)\geq 0\right)\\
& & + \PP\left( \max_{B\not \leq B^*,\; B \not \geq B^*} \Phi(B^*)-\Phi(B)\geq 0\right).
\end{eqnarray*}

The two first terms go to 0 tanks to  Lemmas \ref{lm_B_notleq} and \ref{lm_B<}. For the last term, we have
\begin{eqnarray*}
& & \PP\left( \max_{B\not \leq B^*,\; B \not \geq B^*} \Phi(B^*)-\Phi(B)\geq 0\right)\\
& =&  \PP\left( \max_{B\not \leq B^*,\; B \not \geq B^*} \Phi(B^*)-\Phi(B\cap B^*)+\Phi(B \cap B^*)-\Phi(B)\geq 0\right)\\
& \leq & \PP\left( \max_{B\not \leq B^*,\; B \not \geq B^*} \Phi(B^*)-\Phi(B\cap B^*)\geq 0\right)\\
& & + \PP\left( \max_{B\not \leq B^*,\; B \not \geq B^*} \Phi(B \cap B^*)-\Phi(B)\geq 0\right)\\
& \leq &  \PP\left( \max_{B'< B^*} \Phi(B^*)-\Phi(B')\geq 0\right)\\
& & + \PP\left( \max_{B\not \leq B^*} \Phi(B \cap B^*)-\Phi(B)\geq 0\right).
\end{eqnarray*}
These two last terms go to 0 thanks to Lemmas \ref{lm_B_notleq} and \ref{lm_B<}.

\end{proof}
\bigskip

\textbf{Proofs of Proposition \ref{prop_Btot_Ba}}

In this proof, we assume that Conditions 1 to 3 hold.

\begin{lm}\label{lm_etape1_Ba}
For all $B\in \mathcal{P}_p$, we have 
\begin{equation}\label{eq1_etape1_Ba}
l_{S_{B\cap B(\alpha_2)}}-l_{S_B}\leq \frac{1}{2 \lambda_{\min}(S)}\frac{1}{p}\| \Delta_{B,B\cap B(\alpha_2)}\|_F^2.
\end{equation}
Moreover, for all $B<B(\alpha_2)$, we have
\begin{equation}\label{eq2_etape2_Ba}
l_{S_{B}}-l_{S_{B(\alpha_2)}}\geq \frac{1}{2 \lambda_{\max} (S_{B(\alpha_2)})}\frac{1}{p}\| \Delta_{B(\alpha_2),B}\|_F^2.
\end{equation}
\end{lm}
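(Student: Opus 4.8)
The plan is to observe that the proof of Lemma \ref{lm_etape1} nowhere used the defining property of $B^*$ (that $B^*=B(\Sigma)$ is the true block structure of $\Sigma$); it used only that $B^*\in\mathcal{P}_p$, the ordering relations between the partitions involved, the $\log\det$ Taylor expansion with integral remainder, and the eigenvalue monotonicity of Lemma \ref{lm_vp_blocs}. Since $B(\alpha_2)\in\mathcal{P}_p$ is itself a partition, I would transcribe that proof verbatim, substituting $B(\alpha_2)$ for $B^*$ throughout. The two inequalities are handled exactly as the two parts of Lemma \ref{lm_etape1}.

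For Equation \eqref{eq1_etape1_Ba}, I would first note that $S_{B\cap B(\alpha_2)}$ and $S_B$ are both block-diagonal restrictions of $S$, so the trace terms in $l_\Gamma=\frac{1}{p}(\log|\Gamma|+\Tr(\Gamma^{-1}S))$ each equal $p$ and cancel, leaving $p(l_{S_{B\cap B(\alpha_2)}}-l_{S_B})=\log|S_{B\cap B(\alpha_2)}|-\log|S_B|$. Expanding $t\mapsto\log\det(S_{B\cap B(\alpha_2)}+t\Delta_{B,B\cap B(\alpha_2)})$ with the integral form of the remainder, the first-order term $\Tr(S_{B\cap B(\alpha_2)}^{-1}\Delta_{B,B\cap B(\alpha_2)})$ vanishes by the same argument as before: because $B\cap B(\alpha_2)\le B$, the entries of $S_B$ and $S_{B\cap B(\alpha_2)}$ agree on every block of $B\cap B(\alpha_2)$, on which $S_{B\cap B(\alpha_2)}^{-1}$ is supported. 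Bounding the quadratic remainder by $\tfrac12\max_v\lambda_{\min}^{-2}(S_{B\cap B(\alpha_2)}+v\Delta)$ and invoking Lemma \ref{lm_vp_blocs} (first to identify $\min_{v\in[0,1]}\lambda_{\min}(S_{B\cap B(\alpha_2)}+v\Delta)=\lambda_{\min}(S_B)$, then to get $\lambda_{\min}(S_B)\ge\lambda_{\min}(S)$) gives the claimed upper bound.

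For Equation \eqref{eq2_etape2_Ba}, the argument is the symmetric one. Using the hypothesis $B<B(\alpha_2)$, I would expand $\log\det$ around the finer matrix $S_B$ with increment $\Delta_{B(\alpha_2),B}=S_{B(\alpha_2)}-S_B$; the first-order trace term vanishes because $B\le B(\alpha_2)$ forces $S_B$ and $S_{B(\alpha_2)}$ to agree on the blocks of $B$, which support $S_B^{-1}$. Bounding the quadratic remainder from below by $\tfrac12\min_v\lambda_{\max}^{-2}(S_B+v\Delta_{B(\alpha_2),B})$ and using Lemma \ref{lm_vp_blocs} to obtain $\max_{v\in[0,1]}\lambda_{\max}(S_B+v\Delta_{B(\alpha_2),B})=\lambda_{\max}(S_{B(\alpha_2)})$ yields the lower bound.

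I expect there to be no genuine obstacle: the only point requiring care is verifying that every step above depends solely on the partition-lattice relations ($B\cap B(\alpha_2)\le B$, and $B\le B(\alpha_2)$ in the second part) and on Lemma \ref{lm_vp_blocs}, rather than on $B(\alpha_2)$ being a \emph{true} decomposition of $\Sigma$ — which it is not. Once this is checked, the proof is a line-by-line copy of that of Lemma \ref{lm_etape1}.
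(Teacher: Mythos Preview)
Your proposal is correct and matches the paper's own proof, which consists of the single sentence ``Same proof as Lemma \ref{lm_etape1} replacing $B^*$ by $B(\alpha_2)$.'' You have simply spelled out in detail the verification that each step of that earlier argument depends only on the lattice relations $B\cap B(\alpha_2)\le B$ and $B\le B(\alpha_2)$ together with Lemma \ref{lm_vp_blocs}, and never on $B^*$ being the true block structure of $\Sigma$; this is exactly what the paper asserts implicitly.
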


\begin{proof}
Same proof as Lemma \ref{lm_etape1} replacing $B^*$ by $B(\alpha_2)$.
\end{proof}

\begin{lm}\label{lm_B_notleq_Ba}  If $\alpha_2 > \delta\slash 2$, then,
$$
\PP\left( \max_{B\not \leq B(\alpha_2)} \Phi(B\cap B(\alpha_2))-\Phi(B)\geq 0\right) \longrightarrow 0.
$$
\end{lm}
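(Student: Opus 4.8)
The plan is to transcribe the proof of Lemma~\ref{lm_B_notleq} almost verbatim, replacing $B^*$ by $B(\alpha_2)$ throughout, the only genuinely new ingredient being that the off-block coefficients are no longer exactly zero but merely bounded by $n^{-\alpha_2}$. First I would write
$\Phi(B\cap B(\alpha_2))-\Phi(B)=l_{S_{B\cap B(\alpha_2)}}-l_{S_B}+\tfrac{1}{pn^\delta}\big(\pen(B\cap B(\alpha_2))-\pen(B)\big)$
and apply Equation~\eqref{eq1_etape1_Ba} of Lemma~\ref{lm_etape1_Ba} to bound the log-likelihood difference by $\tfrac{1}{2\lambda_{\min}(S)^2}\tfrac1p\|\Delta_{B,B\cap B(\alpha_2)}\|_F^2$. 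As in Lemma~\ref{lm_B_notleq}, I would then split off the event $\{\lambda_{\min}(S)\le \tfrac12\lambda_{\inf}(1-\sqrt y)^2\}$, whose probability tends to $0$ by Lemma~\ref{lm_eingenvalue}, so that it remains to control
\[
\PP\!\left(\max_{B\not\le B(\alpha_2)}\left[\frac{\|\Delta_{B,B\cap B(\alpha_2)}\|_F^2}{(1-\sqrt y)^4\lambda_{\inf}^2}-\frac{\pen(B)-\pen(B\cap B(\alpha_2))}{n^\delta}\right]\ge 0\right).
\]

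Since refining $B$ to $B\cap B(\alpha_2)$ reduces $\sum_k p_k^2$ by exactly the number of ordered pairs $(i,j)$ that lie in $B\setminus B(\alpha_2)$, i.e. the pairs contributing $s_{ij}^2$ to $\|\Delta_{B,B\cap B(\alpha_2)}\|_F^2$, the penalty difference matches the Frobenius sum term by term, and the same argument as in Lemma~\ref{lm_B_notleq} reduces the bracketed maximum to a per-coefficient condition. This yields the union bound
\[
\PP\!\left(\exists\,(i,j)\notin B(\alpha_2),\ \frac{s_{ij}^2}{(1-\sqrt y)^4\lambda_{\inf}^2}\ge \frac{1}{n^\delta}\right)\le p^2\max_{(i,j)\notin B(\alpha_2)}\PP\!\left(|\widehat{\sigma}_{ij}|\ge C_{\inf}n^{-\delta/2}\right),
\]
after passing from $s_{ij}$ to $\widehat{\sigma}_{ij}=\tfrac{n}{n-1}s_{ij}$ exactly as in the earlier lemma.

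Here is where the hypothesis $\alpha_2>\delta/2$ enters, and this is the one step that differs from the $B^*$ case. By definition of $B(\alpha_2)$ we have $\|\Sigma_{B(\alpha_2)}-\Sigma\|_{\max}\le n^{-\alpha_2}$, so every pair $(i,j)\notin B(\alpha_2)$ satisfies $|\sigma_{ij}|\le n^{-\alpha_2}$. The event $|\widehat{\sigma}_{ij}|\ge C_{\inf}n^{-\delta/2}$ then forces $|\widehat{\sigma}_{ij}-\sigma_{ij}|\ge C_{\inf}n^{-\delta/2}-n^{-\alpha_2}$, and because $\alpha_2>\delta/2$ the subtracted term is negligible, so that for $n$ large $|\widehat{\sigma}_{ij}-\sigma_{ij}|\ge C_{\inf}n^{-\delta/2}$. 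Writing $\widehat{\sigma}_{ij}-\sigma_{ij}=\tfrac{1}{n-1}\sum_{k=1}^{n-1}Z_{ij}^{(k)}$ with $Z_{ij}^{(k)}$ as in \eqref{eq_Z}, Bernstein's inequality bounds each term by $2\exp(-C_{\inf}n^{1-\delta})$, and multiplying by $p^2=O(n^2)$ still tends to $0$ since $\delta<1$. The main obstacle is precisely this comparison: unlike in Lemma~\ref{lm_B_notleq}, where the off-block $\sigma_{ij}$ vanish and the deviation is directly of order at least the threshold, here one must verify that the true value $n^{-\alpha_2}$ sits strictly below the effective threshold scale $n^{-\delta/2}$, which is exactly what $\alpha_2>\delta/2$ guarantees; everything else is a direct transcription of the earlier proof.
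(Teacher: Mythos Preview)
Your proposal is correct and follows the paper's own proof essentially line by line: apply Lemma~\ref{lm_etape1_Ba} to bound the log-likelihood gap, peel off the small-eigenvalue event via Lemma~\ref{lm_eingenvalue}, reduce to a per-coefficient comparison and union bound, then use $|\sigma_{ij}|\le n^{-\alpha_2}$ for $(i,j)\notin B(\alpha_2)$ together with $\alpha_2>\delta/2$ before invoking Bernstein. Your identification of the single new ingredient compared to Lemma~\ref{lm_B_notleq} is exactly right.
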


\begin{proof}
Following the proof of Lemma \ref{lm_B_notleq} (and using Lemma \ref{lm_etape1_Ba}), it is enough to prove that the following term goes to 0:
\begin{eqnarray*}
&& \PP\left( \max_{B\not \leq B(\alpha_2)} \left[ \frac{1}{ (1-\sqrt{y})^4\lambda_{\inf}^2}\| \Delta_{B,B\cap B(\alpha_2)}\|_F^2-\frac{1}{n^\delta}\left(\pen(B) - \pen (B\cap B(\alpha_2)) \right) \right]\geq 0\right)\\
& =&  \PP\left( \max_{B\not \leq B(\alpha_2)} \left[ \sum_{(i,j)\in B \setminus B(\alpha_2)} \left\{\frac{1}{ (1-\sqrt{y})^4 \lambda_{\inf}^2} s_{ij}^2-\frac{1}{n^\delta} \right\} \right]\geq 0\right)\\
& \leq & \PP\left( \exists (i,j)\notin B(\alpha_2),\;\frac{1}{ (1-\sqrt{y})^4\lambda_{\inf}^2}s_{ij}^2-\frac{1}{n^\delta} \geq 0\right)\\
& \leq & \PP\left( \exists (i,j)\notin B(\alpha_2),\;\frac{2}{ (1-\sqrt{y})^4\lambda_{\inf}^2}\widehat{\sigma}_{ij}^2-\frac{1}{n^\delta} \geq 0\right)\\
& \leq & p^2\max_{ (i,j)\notin B(\alpha_2)}\PP\left(  \frac{2}{ (1-\sqrt{y})^4\lambda_{\inf}^2}\widehat{\sigma}_{ij}^2- \frac{1}{ n^\delta}\geq 0 \right)\\
& \leq & p^2\max_{ (i,j)\notin B(\alpha_2)}\PP\left(  \frac{\sqrt{2}}{ (1-\sqrt{y})^2\lambda_{\inf}}|\widehat{\sigma}_{ij}|\geq \frac{1}{ n^{\delta\slash 2}} \right)\\
& \leq & p^2\max_{ (i,j)\notin B(\alpha_2)}\PP\left(  \frac{\sqrt{2}}{ (1-\sqrt{y})^2\lambda_{\inf}}|\widehat{\sigma}_{ij}-\sigma_{ij}|\geq \frac{1}{ n^{\delta\slash 2}} -\frac{\sqrt{2}}{ (1-\sqrt{y})^2\lambda_{\inf}}n^{-\alpha_2} \right)\\
&\leq &  2p^2 \exp\left( -C_{\inf} n^{1-\delta}\right) \longrightarrow 0,
\end{eqnarray*}
using again Bernstein's inequality. That concludes the proof.
\end{proof}

\begin{lm}\label{lm_B<_Ba}
If $\alpha_1 < \delta\slash 2$, then, 
$$
\PP\left( \max_{B<B(\alpha_1)} \Phi(B(\alpha_1))-\Phi(B)\geq 0\right)\longrightarrow 0. 
$$
\end{lm}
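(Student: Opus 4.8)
The plan is to mirror the proof of Lemma \ref{lm_B<}, replacing the reference partition $B^*$ by the thresholded partition $B(\alpha_1)$ and replacing the role of Condition 4 by the defining property of $B(\alpha_1)$. First I would rewrite the event. Since for $B < B(\alpha_1)$ the strict refinement decreases $\sum_k p_k^2$, we have $\pen(B(\alpha_1)) - \pen(B) > 0$, and the event $\Phi(B(\alpha_1)) - \Phi(B)\geq 0$ is equivalent to $l_{S_B} - l_{S_{B(\alpha_1)}} \leq \kappa(\pen(B(\alpha_1)) - \pen(B))$ with $\kappa = 1/(pn^\delta)$. Applying the analogue of Lemma \ref{lm_etape1_Ba} with $\alpha_1$ in place of $\alpha_2$ (its proof is valid for any fixed partition), together with Lemma \ref{lm_eingenvalue} to replace $\lambda_{\max}(S_{B(\alpha_1)})$ by the constant $\lambda_{\sup}(1+\sqrt y)^2$ up to a vanishing-probability event, I would reduce to bounding
$$
\PP\left( \exists B < B(\alpha_1),\; \sum_{(i,j)\in B(\alpha_1)\setminus B}\left[\frac{1}{(1+\sqrt y)^4 \lambda_{\inf}^2}s_{ij}^2 - n^{-\delta}\right] \leq 0\right),
$$
exactly as in Lemma \ref{lm_B<}.

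Next comes the combinatorial reduction. Because $\Sigma = \Sigma_{B^*}$, every pair $(i,j)$ with $|\sigma_{ij}| > n^{-\alpha_1}$ lies inside a block of $B^*$, hence $B(\alpha_1) \leq B^*$, and by Condition 3 the blocks of $B(\alpha_1)$ again have size at most $m$. This lets me bound the number of relevant refinements by a factor $p\, 2^m$ and reduce, as in Lemma \ref{lm_B<}, to a single separated pair $(i^*,j^*) \in B(\alpha_1)\setminus B$. The essential new point, which plays the role of Condition 4, is that $B(\alpha_1)$ is by definition the finest partition with $\|\Sigma_B - \Sigma\|_{\max}\leq n^{-\alpha_1}$: if $B$ strictly refines $B(\alpha_1)$, it must split some block, i.e. some connected component of the graph on $[1:p]$ with edges $\{(i,j):|\sigma_{ij}| > n^{-\alpha_1}\}$, and any such split severs an edge, so one can choose the crossing pair $(i^*,j^*)$ with $|\sigma_{i^*j^*}| > n^{-\alpha_1}$.

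Finally I would apply Bernstein's inequality to $\sum_k Z_{i^*j^*}^{(k)}$, passing from $s_{ij}$ to $\widehat{\sigma}_{ij}$ via $s_{ij}=\tfrac{n-1}{n}\widehat{\sigma}_{ij}$, and using $|\sigma_{i^*j^*}| > n^{-\alpha_1}$ to lower-bound the deviation to be controlled. The decisive use of the hypothesis $\alpha_1 < \delta/2$ is that $n^{-\alpha_1}$ dominates the penalization scale $n^{-\delta/2}$, giving, for $n$ large enough,
$$
|\sigma_{i^*j^*}| - \sqrt 2\, \lambda_{\inf}\, m\, n^{-\delta/2} \geq C_{\inf}\, n^{-\alpha_1}.
$$
Bernstein's inequality then yields a bound of order $p\,2^{m+1}\exp(-C_{\inf}\, n^{1-2\alpha_1})$, which tends to $0$ since $1 - 2\alpha_1 > 1 - \delta > 0$.

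The main obstacle is the second step: correctly exploiting the definition of $B(\alpha_1)$ through the connectedness argument to guarantee a separated covariance coefficient of magnitude at least $n^{-\alpha_1}$ after any strict refinement, and verifying that $B(\alpha_1)$ inherits the block-size bound $m$ from $B^*$, so that the same counting and Bernstein machinery as in Lemma \ref{lm_B<} applies, now with the strengthened signal $n^{-\alpha_1}$ in place of the $a\,n^{-1/4}$ furnished by Condition 4.
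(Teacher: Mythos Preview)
Your proposal is correct and follows essentially the same route as the paper: reduce to Lemma \ref{lm_B<} with $B(\alpha_1)$ in place of $B^*$, use Lemma \ref{lm_etape1_Ba} and Lemma \ref{lm_eingenvalue}, apply the $p\,2^m$ union bound, and conclude via Bernstein with the lower bound $|\sigma_{i^*j^*}|\geq n^{-\alpha_1}$ dominating $n^{-\delta/2}$. In fact you are more explicit than the paper on two points it leaves implicit: that $B(\alpha_1)\leq B^*$ (so the block-size bound $m$ is inherited and the counting argument goes through), and the connectedness reason why any nontrivial split of a block of $B(\alpha_1)$ must sever a pair with $|\sigma_{i^*j^*}|>n^{-\alpha_1}$, which the paper simply attributes to ``the definition of $B(\alpha_1)$''.
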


\begin{proof}
Following the proof of Lemma \ref{lm_B<}, it suffices to prove that
\begin{eqnarray*}
 p2^m  \PP \left( \left|\sum_{k=1}^{n-1} Z_{i^*j^*}^{(k)}\right| \geq n \min_{\substack{k\in [1:K(\alpha_1)],\\ \emptyset \varsubsetneq B_1\varsubsetneq B_k(\alpha_1)}}\left[|\sigma_{i^*j^*}|- \sqrt{2}\lambda_{\inf} m n^{-\delta\slash 2}\right] \right) \longrightarrow 0.
\end{eqnarray*}
Now, by definition of $B(\alpha_1)$, we know that $\min_{\substack{k\in [1:K(\alpha_1)],\\ \emptyset \varsubsetneq B_1\varsubsetneq B_k(\alpha_1)}}|\sigma_{i^*j^*}|\geq n^{-\alpha_1}$, so, for $n$ large enough,
$$
\min_{\substack{k\in [1:K(\alpha_1)],\\ \emptyset \varsubsetneq B_1\varsubsetneq B_k(\alpha_1)}} \left[|\sigma_{i^*j^*}|- \sqrt{2}\lambda_{\inf} m n^{-\delta\slash 2}\right]\geq C_{\inf}( n^{- \alpha_1}-n^{ - \delta\slash 2})\geq C_{\inf}(\alpha_1,\delta)n^{-\alpha_1}.
$$
Thus, by Bernstein's inequality, for $n$ large enough,
\begin{eqnarray*}
& & \PP\left( \exists B<B(\alpha_1),\;\sum_{(i,j)\in B(\alpha_1) \setminus B} \left[  \frac{1}{(1-\sqrt{y})^2\lambda_{\inf}^2}s_{i,j}^2-n^{-\delta}\right] \leq 0\right)\\
& &  \leq p2^{m+1}\exp\left( - C_{\inf}(\alpha_1,\delta)n^{1-2\alpha_1} \right) \longrightarrow 0.
\end{eqnarray*}
\end{proof}

We can now prove Proposition \ref{prop_Btot_Ba}.
\begin{proof}
We have
\begin{eqnarray*}
 & & \PP \left( \left\{ B (\alpha_1)\not > \widehat{B}_{tot} \leq B(\alpha_2) \right\}^c \right)\\
 &= & \PP \left( \min_{B < B(\alpha_1) \text{ or } B \not \leq B(\alpha_2)} \Phi(B) \leq  \min_{B \not< B(\alpha_1) \text{ and } B  \leq B(\alpha_2)} \Phi(B) \right)\\
 & \leq & \PP \left( \min_{B < B(\alpha_1)} \Phi(B) \leq \min_{B \not< B(\alpha_1) \text{ and } B  \leq B(\alpha_2)} \Phi(B) \right)\\
 && + \PP \left( \min_{ B \not \leq B(\alpha_2) \text{ s.t. }B\cap B(\alpha_2) \not < B(\alpha_1)} \Phi(B) \leq \min_{B \not< B(\alpha_1) \text{ and } B  \leq B(\alpha_2)} \Phi(B) \right)\\
 && + \PP \left( \min_{ B \not \leq B(\alpha_2) \text{ s.t. }B\cap B(\alpha_2) < B(\alpha_1)} \Phi(B) \leq \min_{B \not< B(\alpha_1) \text{ and } B  \leq B(\alpha_2)} \Phi(B) \right).
\end{eqnarray*}
First, 
\begin{eqnarray*}
& & \PP \left( \min_{B < B(\alpha_1)} \Phi(B) \leq \min_{B \not< B(\alpha_1) \text{ and } B  \leq B(\alpha_2)} \Phi(B) \right)\\
& \leq & \PP \left( \min_{B < B(\alpha_1)} \Phi(B) -\Phi(B(\alpha_1)) \leq 0 \right) \longrightarrow 0,
\end{eqnarray*}
from Lemma \ref{lm_B<_Ba}. Secondly,
\begin{eqnarray*}
&&  \PP \left( \min_{ B \not \leq B(\alpha_2) \text{ s.t. }B\cap B(\alpha_2) \not < B(\alpha_1)} \Phi(B) \leq \min_{B \not< B(\alpha_1) \text{ and } B  \leq B(\alpha_2)} \Phi(B) \right)\\
& \leq & \PP\left(\min_{ B \not \leq B(\alpha_2) \text{ s.t. }B\cap B(\alpha_2) \not < B(\alpha_1)} \Phi(B) -\Phi(B\cap B(\alpha_2)) \leq 0\right) \longrightarrow 0,
\end{eqnarray*}
from Lemma \ref{lm_B_notleq_Ba}.
Finally,
\begin{eqnarray*}
& & \PP \left( \min_{ B \not \leq B(\alpha_2) \text{ s.t. }B\cap B(\alpha_2) < B(\alpha_1)} \Phi(B) \leq \min_{B \not< B(\alpha_1) \text{ and } B  \leq B(\alpha_2)} \Phi(B) \right)\\
& \leq &  \PP \left( \min_{ B \not \leq B(\alpha_2) \text{ s.t. }B\cap B(\alpha_2) < B(\alpha_1)} \Phi(B) \leq  \Phi(B(\alpha_1)) \right)\\
& \leq &  \PP \bigg( \min_{ B \not \leq B(\alpha_2) \text{ s.t. }B\cap B(\alpha_2) < B(\alpha_1)} \Phi(B)-\Phi(B\cap B(\alpha_2))\\ & & +\Phi(B\cap B(\alpha_2)) - \Phi(B(\alpha_1)\leq 0 \bigg)\\
& \leq & \PP \left( \min_{ B \not \leq B(\alpha_2) \text{ s.t. }B\cap B(\alpha_2) < B(\alpha_1)} \Phi(B)-\Phi(B\cap B(\alpha_2))\leq 0 \right)\\
&  &+ \PP \left( \min_{ B \not \leq B(\alpha_2) \text{ s.t. }B\cap B(\alpha_2) < B(\alpha_1)} \Phi(B\cap B(\alpha_2)) - \Phi(B(\alpha_1)\leq 0\right)\\
& \leq & \PP \left( \min_{ B \not \leq B(\alpha_2) } \Phi(B)-\Phi(B\cap B(\alpha_2))\leq 0 \right)\\
&  &+ \PP \left( \min_{ B < B(\alpha_1)} \Phi(B) - \Phi(B(\alpha_1)\leq 0\right)\\
& \longrightarrow &0,
\end{eqnarray*}
from Lemmas \ref{lm_B_notleq_Ba} and \ref{lm_B<_Ba}.
\end{proof}

\bigskip

\textbf{Proofs of Propositions \ref{prop_comp1}, \ref{prop_comp2} and \ref{prop_comp3}}

\begin{proof}
In the three cases, the computation of $\widehat{B}$ requires carrying out the BFS algorithm for $B_\lambda$ and the computation of a determinant for $\Psi(B_\lambda)$. Recall that if $G=(V,E)$ is a graph (where $V$ is the set of vertices and $E$ the set of edges), the complexity of the BFS algorithm is $O(|V|+|E|)$. Recall that, if $M$ is a squared matrix of size $p$, the complexity of $\det(M)$ is $O(p^3)$ using the LU decomposition.

Now, we compute the complexity of the three estimators $\widehat{B}_{\widehat{C}}$, $\widehat{B}_A$ and $\widehat{B}_s$.
\begin{itemize}
    \item  For all $\lambda \in A_{\widehat{C}}$, the complexity of $B_\lambda$ is $O(p^2)$, and the cardinal of $A_{\widehat{C}}$ is $O(p^2)$. Thus, the complexity of the computation of $\{ B_\lambda\;|\; \lambda \in A_{\widehat{C}}\}$ is $O(p^4)$. 
    
    Now, for all $\lambda\in A_{\widehat{C}}$, the complexity of $\Psi(B_\lambda)$ is $O(p^3)$ and the cardinal of $\{ B_\lambda\;|\; \lambda \in A_{\widehat{C}}\}$ is $O(p)$ (because the function $\lambda \mapsto B_\lambda$ decreases). Thus, the complexity of the evaluations $\{\Psi(B),\; B\in \{ B_\lambda\;|\; \lambda \in A_{\widehat{C}}\} \} $ is $O(p^4)$
    
    So the complexity of $\widehat{B}_{\widehat{C}}$ is $O(p^4)$.
    \item For the threshold $n^{-1\slash 3}$, the complexity of $B_{n^{-1\slash 3}}$ is $O(p^2)$.
    
    So the complexity of $\widehat{B}_\lambda$ is $O(p^2)$.
    
    \item One can divide the computation of $\widehat{B}_s$ into two steps.
    
    For the first step, as we do not know the value of $s$, we have to compute $B_{l\slash p}$ from $l=p$ to $l=s-1$, verifying each time if the maximal size of group is smaller than $m$ or not.
    First, for each value of $l$ from $p$ decreasing to $s$, the complexity of the BFS algorithm to $B_{l\slash p}$  is $O(p \times m^2)=O(p)$, thus, the complexity of all these partitions if $O(p^2)$. Then, for $l=s-1$, the complexity of $B_{(s-1)\slash p}$  is $O(p^2)$. So, the complexity of this first step is $O(p^2)$.
    
    In the second step, we have to evaluate $\Psi(B_{l\slash p})$ for all $l \in [s:p]$. The complexity of each evaluation 
    is $O(p m^3)=O(p)$, and the the number of evaluations is $O(p)$. Thus, the complexity of this second step is $O(p^2)$.
\end{itemize}
\end{proof}

\textbf{Proof of Proposition \ref{prop_cost} }\\

To prove the convergence of $\widehat{B}$ in the three cases, we need the three following Lemmas.

\begin{lm}\label{lm_seuil}
For all sequence $(\lambda_n)_n$ such that for all $n$, $\lambda_n \in [ n^{-1\slash 3}, a n^{-1\slash 4}\slash 3 \lambda_{\sup}(1+\sqrt{y})^2]$ (we assume that $n$ is large enough and that subset is not empty), we have
$$
\PP(B_\lambda = B^*)\longrightarrow 1.
$$
\end{lm}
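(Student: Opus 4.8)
The plan is to characterize the event $\{B_\lambda = B^*\}$ combinatorially and then control the two ways it can fail by Bernstein's inequality. Recall that $B_\lambda = B(\widehat{C}_\lambda)$ is the partition whose groups are the connected components of the graph on $[1:p]$ with an edge between $i$ and $j$ whenever $|\widehat{C}_{ij}| > \lambda$. Consequently $B_\lambda = B^*$ holds if and only if (i) no edge survives between two distinct groups of $B^*$, i.e. $|\widehat{C}_{ij}| \leq \lambda$ for every $(i,j) \notin B^*$, and (ii) for each block $B_k^*$ the subgraph induced on $B_k^*$ by the surviving edges is connected. Thus $\{B_\lambda \neq B^*\}$ is contained in the union of the failures of (i) and (ii), and I would bound each probability separately.

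First I would translate Condition 4 into a connectivity statement. For a fixed block $B_k^*$ and any nontrivial bipartition $B_k^* = B_1 \sqcup (B_k^* \setminus B_1)$, the refinement $B < B^*$ obtained by splitting only $B_k^*$ along this cut satisfies $\|\Sigma_B - \Sigma\|_{\max} = \max_{i \in B_1,\, j \in B_k^* \setminus B_1} |\sigma_{ij}|$, so Condition 4 forces a crossing pair $(i,j)$ with $|\sigma_{ij}| \geq a n^{-1/4}$. In graph terms, inside each $B_k^*$ the graph of the ``strong'' pairs $\{|\sigma_{ij}| \geq a n^{-1/4}\}$ admits no separating cut and is therefore connected.

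Next I would control the entries of $\widehat{C}$ via those of $S$. Using Lemma \ref{lm_eingenvalue} and $\lambda_{\min}(S) \leq s_{ii} \leq \lambda_{\max}(S)$, with probability tending to one all diagonal terms $s_{ii}$ lie in a fixed interval $[c_{\inf}, c_{\sup}]$ with $c_{\inf} > 0$ and $c_{\sup} \leq \lambda_{\sup}(1+\sqrt{y})^2 + o(1)$, so $\sqrt{s_{ii} s_{jj}}$ is bounded away from $0$ and above by essentially $\lambda_{\sup}(1+\sqrt{y})^2$. On this event, for failure (i) with $(i,j)\notin B^*$ one has $\sigma_{ij}=0$ and $|\widehat{C}_{ij}| > \lambda \geq n^{-1/3}$ forces $|s_{ij}| \geq c_{\inf} n^{-1/3}$; since $s_{ij}$ concentrates around $0$ at rate $n^{-1/2}$, Bernstein's inequality applied to the $Z_{ij}^{(k)}$ of Equation \eqref{eq_Z} bounds this by $\exp(-C_{\inf} n^{1/3})$, and a union bound over the $O(p^2)=O(n^2)$ pairs sends it to $0$. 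For failure (ii), if some $B_k^*$ disconnects then some nontrivial cut of $B_k^*$ has all crossing correlations below $\lambda$; in particular the strong crossing pair $(i,j)$ produced by the connectivity step has $|\widehat{C}_{ij}| \leq \lambda$, hence $|s_{ij}| \leq \lambda \sqrt{s_{ii}s_{jj}} \leq \tfrac{a}{3} n^{-1/4}(1+o(1))$ by the upper bound $\lambda \leq a n^{-1/4}/(3\lambda_{\sup}(1+\sqrt{y})^2)$, whereas $|\sigma_{ij}| \geq a n^{-1/4}$. This yields a deviation $|s_{ij}-\sigma_{ij}| \geq \tfrac{2}{3} a n^{-1/4}(1+o(1))$, which Bernstein bounds by $\exp(-C_{\inf} n^{1/2})$; a union bound over the $O(p)$ blocks and the at most $2^m$ cuts per block (Condition 3) keeps this $o(1)$ since $p\,2^m e^{-C_{\inf}n^{1/2}} \to 0$.

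The main obstacle — and where the precise constants matter — is failure (ii): one must ensure that the within-block connectivity coming from Condition 4 survives thresholding. The delicate point is that $\lambda$ must simultaneously be large enough (at least $n^{-1/3}$) to kill all genuinely-zero off-block correlations in step (i), yet small enough (at most $a n^{-1/4}/(3\lambda_{\sup}(1+\sqrt{y})^2)$) to retain every strong within-block correlation in step (ii). The factor $3$ and the normalization by $\sqrt{s_{ii}s_{jj}}$ are exactly what create the needed gap between $\lambda\sqrt{s_{ii}s_{jj}}$ and $a n^{-1/4}$, so that the relevant Bernstein deviation is of the useful order $n^{-1/4}$ rather than vanishing. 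The remaining computations are routine applications of the Bernstein bound recorded after Equation \eqref{eq_Z}, together with Lemma \ref{lm_eingenvalue} for the diagonal control.
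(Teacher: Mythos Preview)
Your proof is correct and follows essentially the same two-step route as the paper: first kill all cross-block correlations using $\lambda\geq n^{-1/3}$ and Bernstein (your failure (i), the paper's Step 1 showing $B_\lambda\leq B^*$), then preserve within-block connectivity by using Condition~4 together with $\lambda\leq a n^{-1/4}/(3\lambda_{\sup}(1+\sqrt{y})^2)$ and Bernstein again (your failure (ii), the paper's Step 2 showing $B_\lambda\geq B^*$). The only cosmetic differences are that you phrase things in graph-connectivity language while the paper uses the partial order on partitions, and that you work with $s_{ij}$ where the paper passes to $\widehat{\sigma}_{ij}$; neither affects the argument.
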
 

\begin{proof}

\underline{Step 1: $B_\lambda \leq B^*$ with probability which goes to 1.}

\begin{eqnarray*}
& & \PP\left( B_\lambda \not \leq B^*\right) \\
&=& \PP \left( \exists (i,j)\notin B^*,\; |\widehat{C}_{ij}| \geq \lambda \right)\\
&\leq & \PP \left( \exists (i,j)\notin B^*,\; |\widehat{\sigma}_{ij}| \geq \lambda \frac{\lambda_{\inf}(1-\sqrt{y})^2}{2} \right)+ \PP\left( \exists i \leq p,\; \widehat{\sigma}_{ii}< \lambda_{\inf}\frac{(1-\sqrt{y})^2}{2}\right) \\
& \leq & p^2 \max_{(i,j)\notin B^*} \PP \left( |\widehat{\sigma}_{ij}| \geq \lambda\frac{\lambda_{\inf}(1-\sqrt{y})^2}{2} \right)+ \PP\left(\lambda_{\min}(\widehat{\Sigma})< \lambda_{\inf}\frac{(1-\sqrt{y})^2}{2}\right) \\
& \leq & p^2 \max_{(i,j)\notin B^*} \PP \left( |\widehat{\sigma}_{ij}| \geq \frac{\lambda_{\inf}(1-\sqrt{y})^2}{2} n^{-1\slash 3} \right)+ o(1)\\
& \leq & 2 p^2 \exp\left( - C_{\inf}n^{1\slash 3} \right) + o(1) \longrightarrow 0,
\end{eqnarray*}
using Lemma \ref{lm_eingenvalue} and Bernstein's inequality.

\underline{Step 2: $B_\lambda \geq  B^*$ with probability which goes to 1.}\bigskip

For all $k \in [1:K]$, and all $\emptyset \varsubsetneq B_1 \varsubsetneq B_k^*$, let $B_2:=B_k^*\setminus B_1$ and $(i^*,j^*):=\argmax_{(i,j)\in B_1 \times B_2} |\sigma_{ij}|$, where the dependency on $k$ and $B_1$ is implicit. Thanks to Condition 4, we have $|\sigma_{i^*j^*}| \geq a n^{-1 \slash 4}$. Then, using Lemma \ref{lm_eingenvalue},
\begin{eqnarray*}
& & \PP\left( B_\lambda \not \geq B^*\right)\\
&=& \PP \left( \exists k \in [1:K],\; \exists \; \emptyset \varsubsetneq B_1 \varsubsetneq B_k^*,\; \max_{(i,j)\in B_1 \times B_2} |\widehat{C}_{ij}|< \lambda \right)\\
&\leq & \PP \left( \exists k \in [1:K],\; \exists \; \emptyset \varsubsetneq B_1 \varsubsetneq B_k^*,\; \max_{(i,j)\in B_1 \times B_2} |\widehat{\sigma}_{ij}|< 2\lambda \lambda_{\sup} (1+\sqrt{y})^2\right)\\
& & + \PP\left( \exists i \leq p,\; \widehat{\sigma}_{ii}\geq 2 \lambda_{\sup}(1+\sqrt{y})^2 \right) \\
&\leq & \PP \left( \exists k \in [1:K],\; \exists \; \emptyset \varsubsetneq B_1 \varsubsetneq B_k^*,\;  |\widehat{\sigma}_{i^*j^*}|< \frac{2}{3}a n^{-1\slash 4} \right)\\
& & + \PP\left( \lambda_{\max}(\widehat{\Sigma})\geq 2 \lambda_{\sup}(1+\sqrt{y})^2 \right) \\
&\leq & \PP \left(\exists k \in [1:K],\; \exists \; \emptyset \varsubsetneq B_1 \varsubsetneq B_k^*,\;  |\widehat{\sigma}_{i^*j^*}-\sigma_{i^*j^*}|> \frac{1}{3}a n^{-1\slash 4} \right)+o(1)\\
&\leq & \PP \left( \exists (i,j)\in [1:p]^2,\;  |\widehat{\sigma}_{ij}-\sigma_{ij}|> \frac{1}{3}a n^{-1\slash 4} \right)+o(1)\\
& \leq & p^2 \max_{(i,j)} \PP \left(  |\widehat{\sigma}_{ij}-\sigma_{ij}|> \frac{1}{3}a n^{-1\slash 4} \right)+o(1)\\
& \leq & 2 p^2 \exp\left( - C_{\inf}n^{1\slash 2} \right) + o(1) \longrightarrow 0,
\end{eqnarray*}
by Bernstein's inequality.
\end{proof}

\begin{lm}\label{lm_seuil_racinep}
Let $c>0$. Let $\tilde{A}:=\{a_0,\;a_1,...,\;a_L\}$ such that $a_0=0,\; a_L=1,\; 0<a_{l+1}-a_l< c \slash \sqrt{p}$ for all $l \in [0:L-1]$. Then,
$$
\PP \left( B^* \in \left\{ B_\lambda,\;\lambda \in \tilde{A} \right\} \right) \longrightarrow 1.
$$
\end{lm}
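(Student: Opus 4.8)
The plan is to exploit the fact that Lemma \ref{lm_seuil} does not merely supply one good threshold, but an \emph{entire interval} of thresholds, every one of which recovers $B^*$ with probability tending to one. It then suffices to show that any deterministic grid as fine as $\tilde A$ must eventually contain a point of this good interval, after which the result follows by a single appeal to Lemma \ref{lm_seuil}.

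First I would set $I_n := [\,n^{-1/3},\, a\,n^{-1/4}/(3\lambda_{\sup}(1+\sqrt{y})^2)\,]$, the interval appearing in the hypothesis of Lemma \ref{lm_seuil}. For $n$ large enough this interval is nonempty and contained in $(0,1)$: its lower endpoint $n^{-1/3}$ tends to $0$, and it is of strictly smaller order than the upper endpoint, since their ratio is of order $n^{-1/12}\to 0$; consequently the upper endpoint exceeds the lower one and the length of $I_n$ is of order $n^{-1/4}$.

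Next I would compare this length with the mesh of the grid. By hypothesis consecutive points of $\tilde A$ are at distance less than $c/\sqrt{p}$, and under Condition 1 we have $p\sim y\,n$, so $c/\sqrt{p}$ is of order $n^{-1/2}$, which is negligible compared with the length $\sim C_{\inf}\,n^{-1/4}$ of $I_n$. Hence for all large $n$ the length of $I_n$ strictly exceeds $c/\sqrt{p}$. Since the points $a_0=0<a_1<\cdots<a_L=1$ cut $[0,1]$ into subintervals each of length less than $c/\sqrt{p}$, the interval $I_n\subset[0,1]$ cannot be contained in a single such subinterval $(a_l,a_{l+1})$, so it must contain at least one grid point; I would then let $\lambda^{(n)}$ be, say, the smallest element of $\tilde A\cap I_n$, giving a deterministic sequence with $\lambda^{(n)}\in I_n$ for all large $n$.

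Finally, applying Lemma \ref{lm_seuil} to the sequence $(\lambda^{(n)})_n$ gives $\PP(B_{\lambda^{(n)}}=B^*)\to 1$, and since $\lambda^{(n)}\in\tilde A$ we conclude
$$
\PP\left(B^*\in\{B_\lambda,\ \lambda\in\tilde A\}\right)\geq \PP\left(B_{\lambda^{(n)}}=B^*\right)\longrightarrow 1,
$$
which is the claim. The only genuine content is the elementary counting argument that a grid point lands inside $I_n$; the sole mild obstacle is bookkeeping the competing rates ($n^{-1/4}$ for the interval length against $n^{-1/2}$ for the spacing) and checking $I_n\subset[0,1]$, so that the grid truly brackets the good interval — everything else is immediate from Lemma \ref{lm_seuil}.
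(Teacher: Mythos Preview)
Your proof is correct and takes essentially the same approach as the paper: both arguments show that the interval $I_n=[n^{-1/3},\,a n^{-1/4}/(3\lambda_{\sup}(1+\sqrt{y})^2)]$ has length of order $n^{-1/4}$, which eventually exceeds the grid mesh $c/\sqrt{p}\asymp n^{-1/2}$, so some grid point falls in $I_n$, and then Lemma \ref{lm_seuil} applies. The only cosmetic difference is that the paper phrases the pigeonhole step by contradiction whereas you state it directly.
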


\begin{proof}
Thanks to Lemma \ref{lm_seuil}, it suffices to show that, for $n$ large enough, there exists $l \in [0:L]$ such that $a_l \in [ n^{-1\slash 3}, a n^{-1\slash 4}\slash 3 \lambda_{\sup}(1+\sqrt{y})^2]$. By contradiction, let us assume that there does not exist such $l$. Let $j \in [0:L]$ such that $a_j< n^{-1\slash 3}$ and $a_{j+1}> a n^{-1\slash 4}\slash 3 \lambda_{\sup}$. Thus, we have 
\begin{eqnarray*}
\sqrt{p}\left( a_{j+1}-a_j \right)& >&\sqrt{p}\left(\frac{a n^{-1\slash 4}}{3 \lambda_{\sup}(1+\sqrt{y})^2}-n^{-1\slash 3}\right) \\
&\geq & C_{\inf}n^{1\slash 4}\longrightarrow +\infty,
\end{eqnarray*}
which is in contradiction with the definition of $\tilde{A}$.

\end{proof}

\begin{lm}\label{lm_Asn}
We have,
$$
\PP \left( B^* \in \left\{ B_\lambda,\;\lambda \in A_{s} \right\} \right) \longrightarrow 1.
$$
\end{lm}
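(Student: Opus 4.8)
The plan is to reduce the statement to Lemma \ref{lm_seuil} by exhibiting, with probability tending to one, a single threshold $\lambda_0 \in A_s$ that also lies in the admissible window $I_n := [\,n^{-1/3},\, a n^{-1/4}/(3\lambda_{\sup}(1+\sqrt{y})^2)\,]$ of that lemma. Concretely I take $\lambda_0 := \lceil p\,n^{-1/3}\rceil/p$, the smallest multiple of $1/p$ that is at least $n^{-1/3}$. By construction $\lambda_0 \ge n^{-1/3}$, and since $\lambda_0 \le n^{-1/3} + 1/p$ with $1/p = O(1/n)$ under Condition 1, while $n^{-1/3} = o(n^{-1/4})$, we get $\lambda_0 \in I_n$ for $n$ large. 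Thus, by Lemma \ref{lm_seuil}, $\PP(B_{\lambda_0} = B^*) \to 1$; the only thing left is to check that $\lambda_0 \in A_s$ on this event, for then $B^* = B_{\lambda_0} \in \{B_\lambda : \lambda \in A_s\}$.

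To see that $\lambda_0 \in A_s$, recall $A_s = \{l/p : s \le l \le p\}$, so $\lambda_0 \in A_s$ is equivalent to $s/p \le \lambda_0$. On the event $\{B_{\lambda_0} = B^*\}$, every group of $B_{\lambda_0}$ has cardinal at most $m$ by Condition 3. Moreover $\lambda \mapsto B_\lambda$ is non-increasing for the refinement order --- raising the threshold only deletes edges of the thresholded correlation graph and hence only subdivides groups --- so all groups of $B_\lambda$ have size at most $m$ for every $\lambda \ge \lambda_0$. By the very definition of $s$ as the smallest integer for which $B_{s/p}$ has all groups of size at most $m$, this yields $s/p \le \lambda_0$, i.e. $\lambda_0 \in A_s$. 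Intersecting with the high-probability event $\{B_{\lambda_0} = B^*\}$ then gives $\PP(B^* \in \{B_\lambda : \lambda \in A_s\}) \to 1$.

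The main obstacle is precisely the randomness of the grid $A_s$: unlike the deterministic grid $\tilde{A}$ of Lemma \ref{lm_seuil_racinep} (which contains $0$ and whose spacing is controlled outright), the smallest point $s/p$ of $A_s$ is random, since $s$ is defined through the random partition $B_{s/p}$. The key step is therefore the control $s/p \le \lambda_0$, which I obtain from the monotonicity of $\lambda \mapsto B_\lambda$ combined with Condition 3; once this is in hand, the comparison of the grid spacing $1/p = O(1/n)$ with the window width of order $n^{-1/4}$ is routine, and Lemma \ref{lm_seuil} closes the argument. Here I read the size constraint defining $s$ as ``at most $m$'', consistent with Condition 3; any off-by-one in the strict/non-strict convention only affects the rounding in $\lambda_0$ and not the conclusion.
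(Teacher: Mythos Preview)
Your proof is correct. Both you and the paper ultimately reduce to Lemma~\ref{lm_seuil} by finding a threshold of the form $l/p$ inside the admissible window $I_n$, and both deduce membership in $A_s$ from the fact that the corresponding $B_{l/p}$ lies in $\mathcal{P}_p(m)$ together with the minimality defining~$s$. The paper routes this through the auxiliary Lemma~\ref{lm_seuil_racinep} applied to the full grid $G=\{l/p:0\le l\le p\}$ and then proves the set identity $\{B_\lambda:\lambda\in G\}\cap\mathcal{P}_p(m)=\{B_\lambda:\lambda\in A_s\}$; you instead pick the single concrete witness $\lambda_0=\lceil pn^{-1/3}\rceil/p$ and check $\lambda_0\in A_s$ directly on the high-probability event $\{B_{\lambda_0}=B^*\}$. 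Your route is marginally more economical in that it bypasses Lemma~\ref{lm_seuil_racinep}; the paper's route has the small bonus of isolating the reusable set identity. One incidental remark: the monotonicity of $\lambda\mapsto B_\lambda$ that you invoke is true but not actually needed for the key step---once $B_{\lambda_0}=B^*\in\mathcal{P}_p(m)$, the very definition of $s$ as a minimum already forces $s\le\lceil pn^{-1/3}\rceil$, without appealing to what happens at larger thresholds.
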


\begin{proof}
Let $\mathcal{P}_{p}(m)$ be the set of the partitions of $[1:p]$ such that all their elements have cardinal smaller than $m$. By assumption (Condition 3), $B^*\in \mathcal{P}_{p}(m)$. Let $G:=\{l \slash p|\; l \in [0,p]\}$.
Thus $G$ verifies the assumption of $\tilde{A}$ in Lemma \ref{lm_seuil_racinep}, so
$$
\PP \left( B^* \in \left\{ B_\lambda,\;\lambda \in  G \right\} \right) \longrightarrow 1.
$$
Thus 
$$
\PP \left( B^* \in \left\{ B_\lambda,\;\lambda \in  G\right\} \cap \mathcal{P}_{p}(m)  \right) \longrightarrow 1.
$$
To conclude, it suffices to prove that $\left\{ B_\lambda,\;\lambda \in  G_{s}\right\} \cap \mathcal{P}_{p}(m) = \left\{ B_\lambda,\;\lambda \in  A_{s}\right\} $.\\

We have immediately $ \left\{ B_\lambda,\;\lambda \in  A_{s}\right\}\subset  \left\{ B_\lambda,\;\lambda \in  G\right\} \cap \mathcal{P}_{p}(m)$. We have to prove the other inclusion. Assume that $B \in     \left\{ B_\lambda,\;\lambda \in  G\right\} \cap \mathcal{P}_{p}(m)$. We know that there exists $\lambda=l\slash p \in G$ such that $B=B_\lambda$. As $B_{l\slash p} \in \mathcal{P}_p(m)$, we know by definition of $s$ that $l\geq s$ and thus $\lambda \in A$.
\end{proof}

Now, we prove Proposition \ref{prop_cost}.
\begin{proof} 
\begin{itemize}
    \item  Using Lemma \ref{lm_seuil}, Proposition \ref{prop_conclu}, and the fact that $ \{ B_\lambda\;|\; \lambda \in A_{\widehat{C}} \}= \{ B_\lambda\;|\; \lambda \in [0,1[ \} $, we have  $\PP\left( \widehat{B}_{\widehat{C}}=B^*\right) \longrightarrow 1$.

    \item Using Lemma \ref{lm_seuil} and Proposition \ref{prop_conclu}, we have $\PP\left( \widehat{B}_\lambda=B^*\right) \longrightarrow 1$.
    
    \item Using Lemma \ref{lm_Asn} and Proposition \ref{prop_conclu}, we have $\PP\left( \widehat{B}_s=B^*\right) \longrightarrow 1$.
\end{itemize}
\end{proof}
\bigskip

\textbf{Proof of Proposition \ref{prop_seuil_ba} }\\

\begin{proof}
We follow the proof of Lemma \ref{lm_seuil}.

\underline{Step 1: $B_{n^{-\delta \slash 2}} \leq B(\alpha_2)$ with probability which goes to 1.}

\begin{eqnarray*}
& & \PP\left( B_{n^{-\delta\slash 2}} \not \leq B(\alpha_2) \right) \\
&=& \PP \left( \exists (i,j)\notin B(\alpha_2),\; |\widehat{C}_{ij}| \geq n^{-\delta\slash 2} \right)\\
&\leq & \PP \left( \exists (i,j)\notin B(\alpha_2),\; |\widehat{\sigma}_{ij}| \geq n^{-\delta} \frac{\lambda_{\inf}(1-\sqrt{y})^2}{2} \right)+ \PP\left( \exists i \leq p,\; \widehat{\sigma}_{ii}< \lambda_{\inf}\frac{(1-\sqrt{y})^2}{2}\right) \\
& \leq & p^2 \max_{(i,j)\notin B(\alpha_2)} \PP \left( |\widehat{\sigma}_{ij}| \geq n^{-\delta\slash 2} \frac{\lambda_{\inf}(1-\sqrt{y})^2}{2} \right)+ \PP\left(\lambda_{\min}(\widehat{\Sigma})< \lambda_{\inf}\frac{(1-\sqrt{y})^2}{2}\right) \\
& \leq & p^2 \max_{(i,j)\notin B(\alpha_2)} \PP \left( |\widehat{\sigma}_{ij}-\sigma_{ij}| \geq n^{-\delta\slash 2} \frac{\lambda_{\inf}(1-\sqrt{y})^2}{2}-n^{-\alpha_2} \right)+ o(1)\\
& \leq & 2 p^2 \exp\left( - C_{\inf}(\delta,\alpha_2)n^{1-\delta} \right) + o(1) \longrightarrow 0,
\end{eqnarray*}
using Lemma \ref{lm_eingenvalue} and Bernstein's inequality.\bigskip

\underline{Step 2: $B_{n^{-\delta \slash 2}} \geq  \mathcal{B}(\alpha_1)$ with probability which goes to 1.}\bigskip

For all $k \in [1:K(\alpha_1)]$, and all $\emptyset \varsubsetneq B_1 \varsubsetneq B_k(\alpha_1)$, let $B_2:=B_k(\alpha_1) \setminus B_1$ and $(i^*,j^*):=\argmax_{(i,j)\in B_1 \times B_2} |\sigma_{ij}|$, where the dependency on $k$ and $B_1$ is implicit. Then, using Lemma \ref{lm_eingenvalue},
\begin{eqnarray*}
& & \PP\left(B_{n^{-\delta \slash 2}} \not \geq  \mathcal{B}(\alpha_1) \right)\\
&=& \PP \left( \exists k\in [1:K(\alpha_1)],\; \exists \; \emptyset \varsubsetneq B_1 \varsubsetneq B_k(\alpha_1),\; \max_{(i,j)\in B_1 \times B_2} |\widehat{C}_{ij}|< n^{-\alpha_1} \right)\\
&\leq & \PP \left( \exists k\in [1:K(\alpha_1)],\; \exists \; \emptyset \varsubsetneq B_1 \varsubsetneq B_k(\alpha_1),\;  |\widehat{\sigma}_{i^*j^*}|< 2 \lambda_{\sup} (1+\sqrt{y})^2 n^{-\alpha_1}\right)\\
& & + \PP\left( \exists i \leq p,\; \widehat{\sigma}_{ii}\geq 2 \lambda_{\sup}(1+\sqrt{y})^2 \right) \\
&\leq & \PP \left( \exists k\in [1:K(\alpha_1)],\; \exists \; \emptyset \varsubsetneq B_1 \varsubsetneq B_k(\alpha_1),\;  |\widehat{\sigma}_{i^*j^*}|<2 \lambda_{\sup} (1+\sqrt{y})^2 n^{-\alpha_1} \right)\\
& & + \PP\left( \lambda_{\max}(\widehat{\Sigma})\geq 2 \lambda_{\sup}(1+\sqrt{y})^2 \right) \\
&\leq & \PP \left(\exists k \in [1:K],\; \exists \; \emptyset \varsubsetneq B_1 \varsubsetneq B_k^*,\;  |\widehat{\sigma}_{i^*j^*}-\sigma_{i^*j^*}|>  n^{-\alpha_1}-2 \lambda_{\sup} (1+\sqrt{y})^2 n^{-\delta \slash 2} \right)+o(1)\\
&\leq & \PP \left( \exists (i,j)\in [1:p]^2,\;  |\widehat{\sigma}_{ij}-\sigma_{ij}|> n^{-\alpha_1}-2 \lambda_{\sup} (1+\sqrt{y})^2 n^{-\delta \slash 2} \right)+o(1)\\
& \leq & p^2 \max_{(i,j)} \PP \left(  |\widehat{\sigma}_{ij}-\sigma_{ij}|> n^{-\alpha_1}-2 \lambda_{\sup} (1+\sqrt{y})^2 n^{-\delta \slash 2} \right)+o(1)\\
& \leq & 2 p^2 \exp\left( - C_{\inf}(\delta,\alpha_1)n^{1-2\alpha_1} \right) + o(1) \longrightarrow 0,
\end{eqnarray*}
by Bernstein's inequality.\\
\end{proof}\bigskip

\textbf{Proof of Proposition \ref{prop_rate_sigma} }\\

\begin{proof} First, we prove the results for $\widehat{\Sigma}_{B^*}$.
We have, using again the notation $M\sim \mathcal{N}(0,\Sigma)$,
\begin{eqnarray*}
\E\left( \frac{n}{p}\|\widehat{\Sigma}_{B^*}-\Sigma \|_F^2\right)& \leq & n\; m^2 \max_{(i,j)\in B^*} \E\left[ (\widehat{\sigma}_{ij} -\sigma_{ij})^2  \right]\\
&= &  n\; m^2 \max_{(i,j)\in B^*} \V\left( \widehat{\sigma}_{ij} \right)\\
&\leq  &  m^2 \frac{n}{n-1} \max_{(i,j)\in B^*} \V\left( M_i M_j \right)\\
& \leq  &2m^2 \max_{(i,j)\in B^*} \left( \sigma_{ii}\sigma_{jj}+2\sigma_{ij}^2 \right)\\
& \leq & 6m^2 \lambda_{\sup}^2.
\end{eqnarray*}
By Markov's inequality, that proves 
$$
\frac{1}{p}\| \widehat{\Sigma}_{B^*}-\Sigma \|_F^2=O_p(1\slash n).
$$
Now, we want to prove that
$$
\frac{1}{p}\| \widehat{\Sigma}_{B^*}-\Sigma \|_F^2\neq o_p(1\slash n).
$$
First, we have
\begin{eqnarray*}
\E\left( \frac{n}{p}\| \widehat{\Sigma}_{B^*}-\Sigma \|_F^2\right)& \geq & n \min_{i \in [1:p]} \E\left[ (\widehat{\sigma}_{ii} -\sigma_{ii})^2  \right]\\
& =& n \min_{i \in [1:p]}\V\left( \widehat{\sigma}_{ii} \right)\\
& \geq & \frac{n}{n-1}\V(M_{ii}^2)\\
&\geq & \min_{i \in [1:p]} 2 \sigma_{ii}^2\\
& \geq & 2 \lambda_{\inf}^2.
\end{eqnarray*}
Now, the variance is 
$$
\V\left(\frac{1}{p}\| \widehat{\Sigma}_{B^*}-\Sigma \|_F^2\right)=\sum_{k=1}^K \V \left( \frac{1}{p} \| \widehat{\Sigma}_{B_k^*}-\Sigma_{B_k^*} \|_F^2\right)\leq p \max_{k \in [1:K]}  \V \left( \frac{1}{p} \| \widehat{\Sigma}_{B_k^*}-\Sigma_{B_k^*}\|_F^2\right).
$$
Now,
\begin{eqnarray*}
\V \left( \frac{1}{p} \| \widehat{\Sigma}_{B_k^*}-\Sigma_{B_k^*}\|_F^2\right) &=&\frac{1}{p^2}\V\left(\sum_{i,j\in B_k^*}(\widehat{\sigma}_{ij}-\sigma_{ij})^2\right).
\end{eqnarray*}
Remark that if  $A_1,...,A_d$ are random variables, we have 
$$
\V\left(\sum_{i=1}^d A_i\right)=\sum_{i,j=1}^d \cov(A_i,A_j)\leq \sum_{i,j=1}^d\sqrt{\V(A_i)}\sqrt{\V(A_j)}=\left( \sum_{i=1}^d \sqrt{\V(A_i)}\right)^2\leq d \sum_{i=1}^d \V(A_i).
$$
Thus
\begin{eqnarray*}
\V \left( \frac{1}{p} \| \widehat{\Sigma}_{B_k^*}-\Sigma_{B_k^*}\|_F^2\right) &\leq & \frac{m^4}{p^2} \max_{i,j \in B_k^*} \V\left( (\widehat{\sigma}_{ij}-\sigma_{ij})^2 \right).
\end{eqnarray*}
Let $i,j \in B_k^*$ for some $k$. We want to upper-bound $\V\left( (\widehat{\sigma}_{ij}-\sigma_{ij})^2\right)$. Let us define $a_{k}:=X_i^{(k)}X_j^{(k)}-\sigma_{ij}$. We know that
$$
(\widehat{\sigma}_{ij}-\sigma_{ij})^2= \left( \frac{1}{n}\sum_{k=1}^n a_k \right)^2= \frac{1}{n^2}\sum_{k=1}^n a_k^2+ \frac{1}{n^2}\sum_{k\neq k'}a_k a_{k'}.
$$
So, using the independence of $a_1,...,a_n$, we obtain
\begin{eqnarray*}
\V\left( (\widehat{\sigma}_{ij}-\sigma_{ij})^2 \right) &=& \frac{1}{n^4}\sum _{k_1,k_2=1}^n \cov \left(a_{k_1}^2, a_{k_2}^2 \right) + 2\frac{1}{n^4} \sum_{\substack{k_1, k_2, k_2'=1,\\ k_2 \neq k_2'}}^n\cov \left( a_{k_1}^2, a_{k_2} a_{k_2'} \right)\\
&& + \frac{1}{n^4}\sum_{\substack{k_1,k_1', k_2, k_2'=1,\\ k_1\neq k_1',\; k_2 \neq k_2'}}^n \cov \left( a_{k_1}a_{k_1'}, a_{k_2} a_{k_2'} \right)\\
&=& \frac{1}{n^3} \cov \left(a_{1}^2, a_{1}^2 \right) + 4\frac{n-1}{n^3} \cov \left( a_{1}^2, a_{1} a_{2} \right)\\
&& + 2\frac{n-1}{n^3}\cov \left( a_{1}a_{2}, a_{1} a_{2} \right),
\end{eqnarray*}
where we observed that $\cov(a_1 a_2,a_1,a_3)=0$.
Now, by Isserlis' theorem and using the fact that $\sigma_{ij}$ is upper-bounded by $\lambda_{\sup}$, we have $\cov \left(a_{1}^2, a_{1}^2 \right)\leq C_{\sup}$, $\cov \left( a_{1}^2, a_{1} a_{2} \right)\leq C_{\sup}$ and $\cov \left( a_{1}a_{2}, a_{1} a_{2} \right)\leq C_{\sup}$ (and these bounds do not depend on $k,i,j$). So
$$
\V\left( \widehat{\sigma}_{ij}^2 \right) \leq \frac{C_{\sup}}{n^2}.
$$
Thus, 
$$
\V \left( \frac{1}{p} \| \widehat{\Sigma}_{B_k^*}-\Sigma_{B_k^*}\|_F^2\right) \leq \frac{C_{\sup}}{p^2n^2},
$$
and
$$
\V \left( \frac{1}{p} \| \widehat{\Sigma}_{B^*} -\Sigma\|_F^2\right) \leq \frac{C_{\sup}}{p\;n^2}.
$$
Thus, by Chebyshev's inequality
\begin{eqnarray*}
& & \PP\left( \frac{1}{p} \| \widehat{\Sigma}_{B^*}-\Sigma\|_F^2 < \frac{\lambda_{\inf}^2}{n} \right) \\
& \leq & \PP\left(\left| \frac{1}{p} \| \widehat{\Sigma}_{B^*}-\Sigma\|_F^2- \E \left[ \frac{1}{p} \| \widehat{\Sigma}_{B^*}-\Sigma\|_F^2 \right]\right| > \frac{\lambda_{\inf}^2}{n}\right)\\
& \leq &  \frac{\V \left(\frac{1}{p} \| \widehat{\Sigma}_{B^*}-\Sigma\|_F^2 \right)  n^2}{ \lambda_{\inf}^4}\\
& \leq & \frac{C_{\sup}}{p} \longrightarrow 0.
\end{eqnarray*}
So, we proved that $ \frac{1}{p} \| \widehat{\Sigma}_{B^*}-\Sigma\|_F^2$ is not an $o_p(1\slash n)$.\\

Now, we show that the same results hold for $S_{B^*}$ proving that $  \frac{1}{p}\| S_{B^*}-\Sigma \|_F^2- \frac{1}{p}\| \widehat{\Sigma}_{B^*}-\Sigma \|_F^2=o_p(1\slash n)$. We have
\begin{eqnarray*}
 & &\left| \frac{1}{p}\| S_{B^*}-\Sigma \|_F^2- \frac{1}{p}\| \widehat{\Sigma}_{B^*}-\Sigma \|_F^2\right|\\
 & = &\left| \frac{1}{p}\sum_{(i,j)\in B^*} 2 \sigma_{ij}\widehat{\sigma}_{ij} \frac{1}{n}- \widehat{\sigma}_{ij}^2 \frac{2n-1}{n^2} \right|\\
 & \leq & \frac{ m^2}{n} \max_{(i,j)\in B^*} \left|2 \sigma_{ij} \widehat{\sigma}_{ij}-\frac{2n-1}{n}\widehat{\sigma}_{ij}^2 \right|\\
 & \leq & \frac{m^2}{n} \max_{(i,j)\in B^*} \left(2 |\widehat{\sigma}_{ij} | |\widehat{\sigma}_{ij}-\sigma_{ij}| +  |\widehat{\sigma}^2_{ij} | \slash n \right).
\end{eqnarray*}
Yet, by Bernstein's inequality,
$$
\max_{(i,j)\in B^*} |\widehat{\sigma}_{ij} |=O_p(1),
$$
$$
\max_{(i,j)\in B^*} |\widehat{\sigma}_{ij}-\sigma_{ij}|=o_p(1),
$$
and
$$
\max_{(i,j)\in B^*} \widehat{\sigma}_{ij}^2 =O_p(1).
$$
That proves 
$$
 \frac{1}{p}\| S_{B^*}-\Sigma \|_F^2- \frac{1}{p}\| \widehat{\Sigma}_{B^*}-\Sigma \|_F^2=o_p(1\slash n).
$$
Now, on the one hand, we have
$$
\frac{1}{p}\|S_{B^*}-\Sigma \|_F^2=O_p(1\slash n),
$$
and by Proposition \ref{prop_cost},
$$
\frac{1}{p}\|S_{\widehat{B}}-\Sigma \|_F^2=O_p(1\slash n).
$$
On the other hand,
$$
\frac{1}{p}\|S_{B^*}-\Sigma \|_F^2\neq o_p(1\slash n).
$$
\end{proof}
\bigskip

\textbf{Proof of Proposition \ref{prop_rate_S}}

\begin{proof}
It suffices to prove that
\begin{equation}\label{eq_ineg_frob}
\frac{\lambda_{\inf}^2}{2}\leq \E\left( \frac{n}{p^2}\| S-\Sigma \|_F^2\right)\leq C_{\sup}.
\end{equation}
First,
\begin{eqnarray*}
\E\left( \frac{n}{p^2}\| S-\Sigma \|_F^2\right)& \leq & n\;  \max_{(i,j)\in [1:p]^2} \E\left[ (s_{ij} -\sigma_{ij})^2  \right]\\
&= &  n\; \max_{(i,j)\in [1:p]^2} \V\left( s_{ij} \right)+\frac{\sigma_{ij}^2}{n}\\
&= & \frac{n-1}{n}  \max_{(i,j)\in [1:p]^2} \V\left( M_i M_j \right)+\frac{\sigma_{ij}^2}{n}\\
&  \leq  & \max_{(i,j)\in [1:p]^2} \left( \sigma_{ii}\sigma_{jj}+\sigma_{ij}^2+ \frac{\sigma_{ij}^2}{n}\right)\\
& \leq & 3 \lambda_{\sup}^2.
\end{eqnarray*}
Secondly,
\begin{eqnarray*}
\E\left( \frac{n}{p^2}\| S-\Sigma \|_F^2\right)& \geq & n \min_{(i,j)\in [1:p]^2} \E\left[ (s_{ij} -\sigma_{ij})^2  \right]\\
& \geq & n\min_{(i,j)\in [1:p]^2}\V\left( s_{ij} \right)\\
&=& \frac{n-1}{n} \min_{(i,j)\in [1:p]^2} \left(\sigma_{ii}\sigma_{jj}+\sigma_{ij}^2 \right)\\
& \geq & \frac{1}{2} \lambda_{\inf}^2.
\end{eqnarray*}
\end{proof}
\bigskip

\textbf{Proof of Proposition \ref{prop_rate_sigma_ba} }\\

\begin{proof}
We follow the proof of Proposition \ref{prop_rate_sigma}.
Let $\delta \in ]1\slash 2,1[,\varepsilon>0$, and $\alpha_1:=\delta\slash 2-\varepsilon \slash 4$.

We have
\begin{eqnarray*}
& & \max_{ B(\alpha_1) \leq  B \leq B^*}\E \left( \frac{n^{\delta-\varepsilon}}{p}\|\widehat{\Sigma}_B -\Sigma\|_F^2 \right) \\
&=& \max_{ B(\alpha_1) \leq  B \leq B^*}  \frac{n^{\delta-\varepsilon}}{p}\sum_{k=1}^K \left[ \sum_{\substack{i,j \in B_k^*,\\ (i,j)\in B}} \E \left( (\widehat{\sigma}_{ij}-\sigma_{ij})^2 \right)+ \sum_{\substack{i,j \in B_k^*,\\ (i,j)\in B}} \sigma_{ij}^2 \right]\\
 & \leq & n^{\delta-\varepsilon}m^2 \left(\max_{(i,j)\in B^*} \E\left( \widehat{\sigma}_{ij}-\sigma_{ij})^2 \right)+ \max_{ B(\alpha_1) \leq  B \leq B^*}\max_{(i,j)\in B^*\setminus B}\sigma_{ij}^2 \right)\\
 & \leq & n^{\delta-\varepsilon}m^2 \left(O\left( \frac{1}{n}\right)+ n^{-2\alpha_1} \right) \longrightarrow 0.
\end{eqnarray*}
Thus,
$$
\max_{ B(\alpha_1) \leq  B \leq B^*}\frac{1}{p}\|\widehat{\Sigma}_B -\Sigma\|_F^2 =o_p\left( \frac{1}{n^{\delta-\varepsilon}}\right),
$$
and thus
$$
\max_{ B(\alpha_1) \leq  B \leq B^*}\frac{1}{p}\|S_B -\Sigma\|_F^2 =o_p\left( \frac{1}{n^{\delta-\varepsilon}}\right).
$$
We conclude using Proposition \ref{prop_seuil_ba} and using that $B(\alpha_2)\leq B^*$.
\end{proof}
\bigskip

\textbf{Proof of Proposition \ref{prop_generate_sigma} }\\

\begin{proof}
The eigenvalues of $\Sigma$ are lower-bounded by $\varepsilon$ and upper-bounded by $mL$, so $\Sigma$ verifies Condition 2. Condition 3 is verified by construction. It remains to prove the slightly modified Condition 4 given in Proposition \ref{prop_generate_sigma}. Let $a>0$.
\begin{eqnarray*}
& & \PP\left( \exists B< B^*,\; \| \Sigma_{B}-\Sigma\|_{\max} < a n^{-1 \slash 4}\right)\\
&=& \PP \left( \exists k ,\; \max_{i,j\in B_k^*,\; i\neq j}|\sigma_{ij}| < a n^{-1 \slash 4}\right)\\
& \leq & p \PP \left( \max_{i,j\in [1:10],\; i\neq j} |\sum_{l=1}^L U_i^{(l)}U_{j}^{(l)}| \leq a n^{-1 \slash 4} \right),
\end{eqnarray*}
using an union bound and the fact that all the blocks have a size larger that 10. Then, by independence of  $\left(\sum_{l=1}^L U_{2k-1}^{(l)}U_{2k}^{(l)}\right)_{k\leq 5}$, we have
$$
\PP\left( \exists B< B^*,\; \| \Sigma_{B}-\Sigma\|_{\max} < a n^{-1 \slash 4} \right)  \leq p\PP \left(  |\sum_{l=1}^L U_1^{(l)}U_{2}^{(l)}| \leq a n^{-1 \slash 4} \right)^5
$$
Let $U_i:=(U_i^{(l)})_{l\leq L} \in \R^L$ for $i=1,2$. Then, $U_1$ and $U_2$ are independent and uniformly distributed on $[-1,1]^L$. Thus
$$
\PP \left(  |\sum_{l=1}^L U_1^{(l)}U_{2}^{(l)}| \leq a n^{-1 \slash 4} \right) = \E \left[\PP \left(\left.  |\langle U_1,U_2 \rangle| \leq a n^{-1 \slash 4}\right| U_2 \right) \right]
$$
Let $u_2\in [-1,1]^L \setminus \{0\}$. The set $\{u_1\in [-1,1]^L|\;  |\langle u_1,u_2 \rangle| \leq a n^{-1 \slash 4}\}$ is a subset of $\{ \sum_{l=1}^L x_i e_i|\; -a n^{-1 \slash 4} \|u_2\|\leq x_1 \leq a n^{-1 \slash 4}\|u_2\|,\; |x_l| \leq \sqrt{L}\;  \forall l \}$ where $e_1=u_2\slash \|u_2\|$ and $(e_1,...,e_L)$ is an orthonormal basis of $\R^L$. The Lebesgue measure of this subset is $(2\sqrt{L})^{L-1}2 a n^{-1 \slash 4}\|u_2\|$. Furthermore, (conditionally to $U_2=u_2$) the probability density function of $U_1$ on this set is either $0$ or $2^{-L}$. So, for all $u_2 \in [-1,1]^L\setminus\{0\}$,
$$
\PP \left(\left.  |\langle U_1,U_2 \rangle| \leq a n^{-1 \slash 4}\right| U_2=u_2 \right)\leq (2\sqrt{L})^{L-1}2 a n^{-1 \slash 4}\|u_2\|  2^{-L} \leq \sqrt{L}^{L-1}a n^{-1 \slash 4}. 
$$
Thus
$$
\PP \left(  |\sum_{l=1}^L U_1^{(l)}U_{2}^{(l)}| \leq a n^{-1 \slash 4} \right)\leq  \sqrt{L}^{L-1}a n^{-1 \slash 4}. 
$$
Then
$$
\PP\left( \exists B < B^*,\; \| \Sigma_{B}-\Sigma\|_{\max} < a n^{-1 \slash 4}\right)   \leq p (\sqrt{L}^{L-1}a n^{-1 \slash 4})^5 \longrightarrow 0.
$$
Hence, it remains to prove that the conclusion of Proposition \ref{prop_conclu} holds. That will imply the same for Propositions \ref{prop_cost} and \ref{prop_rate_sigma}. Let $a>0$ and $E:=\{ \Gamma \in S_p^{++}(\R,B^*)|\;  \forall B < B^*,\; \| \Sigma_{B}-\Sigma\|_{\max} \geq a n^{-1 \slash 4} \}$, where the generation of $B^*$ is defined in Proposition \ref{prop_generate_sigma}. We have
\begin{eqnarray*}
\PP\left( \widehat{B}_{tot} \neq B^* \right) & \leq & \PP \left( \Sigma \notin E\right) + \PP \left( \widehat{B}_{tot} \neq B^*| \; \Sigma \in E\right)\\
& \leq & o(1)+ \int_{E}\PP \left( \widehat{B}_{tot} \neq B^*| \; \Sigma= \Gamma \right) d \PP_{\Sigma}(\Gamma).
\end{eqnarray*}
Yet, for all $\Sigma \in E$, $\PP \left( \widehat{B}_{tot} \neq B^*| \; \Sigma= \Gamma \right)\longrightarrow 0 $ thanks to Proposition \ref{prop_conclu} (even in Condition 4 is not verified, the proof is still valid since the covariance matrix is in $E$).  We conclude by dominated convergence theorem.
\end{proof}
\bigskip

\textbf{Notation for the proofs of Section \ref{sec_pfix}}

For all $i,j \in [1:p]$, let $e_i\in \R^p$ be such that all coefficients are zero except the $i$-th one which is equal to 1, and let $e_{ij}\in \mathcal{M}_p(\R)$ be such that all coefficients are zero except the $(i,j)$-th one which is equal to 1. Let $\gamma_{ij}$ be the $(i,j)$-th coefficient of $\Sigma^{-1}$. Finally, as we use matrices $M$ of size $p^2\times p^2$, and vectors $v$ of size $p^2$, we define $v_{ij}:=v_{(j-1)p+i}$ and $M_{ij,kl}:=M_{(j-1)p+i, (l-1)p+k}$.

\bigskip

\textbf{Proof of Proposition \ref{prop_conv_pfix} }\\
 We see that, for all $B \in \mathcal{P}_p$, $l_{S_B}=\log(| S_B|)\slash p + \frac{n-1}{n}$ converges almost surely to $\log(| \Sigma_B|)\slash p+1$. The following Lemma gives a central limit theorem for this convergence.

\begin{lm}\label{lm_conv_pfix}
For all $B\in \mathcal{P}_p$, we have
\begin{equation}\label{eq_loi_asymp}
\sqrt{n}(\log|S_B|-\log |\Sigma_B|)\overset{\mathcal{L}}{\underset{n\rightarrow +\infty}{\longrightarrow}}\mathcal{N}(0,2\Tr(\Sigma_{B}^{-1} \Sigma \Sigma_{B}^{-1} \Sigma)\slash p),
\end{equation}
with $2\Tr(\Sigma_{B}^{-1} \Sigma \Sigma_{B}^{-1} \Sigma)\slash p \leq 2p$. In particular
$$
\sqrt{n}(\log|S|-\log |\Sigma|)\overset{\mathcal{L}}{\underset{n\rightarrow +\infty}{\longrightarrow}}\mathcal{N}(0,2).
$$
\end{lm}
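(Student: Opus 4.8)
The plan is to view $\log|S_B|$ as a smooth function of the empirical second moments and to combine a multivariate central limit theorem with the delta method. Since $B=\{B_1,\dots,B_K\}$ is fixed, $S_B$ is block-diagonal and $\log|S_B|=\sum_{k=1}^K\log|S_{B_k}|$ depends only on the entries $(s_{ij})_{(i,j)\in B}$; the dependence between the different blocks is harmless, because the delta method only needs the joint limiting law of all these entries. As $p$ is fixed and $n\to+\infty$, the law of large numbers gives $S\to\Sigma$ in probability, so with probability tending to one every block $S_{B_k}$ is positive definite and $g:\Gamma\mapsto\log|\Gamma_B|$ is well defined and $C^1$ near $\Sigma$; this lets me discard the event where some block of $S_B$ fails to be invertible.

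First I would establish the joint CLT for the entries of $S$. Using the representation recalled above, $S$ has the same law as $\frac{1}{n-1}\sum_{k=1}^{n-1}M^{(k)}M^{(k)T}$ with $M^{(k)}\sim\mathcal N(0,\Sigma)$ i.i.d. The products $M_i^{(k)}M_j^{(k)}$ are i.i.d.\ across $k$ with finite variance, so the ordinary multivariate CLT yields $\sqrt n\,(\vecc(S)-\vecc(\Sigma))\overset{\mathcal L}{\longrightarrow}\mathcal N(0,\Omega)$, where Isserlis' theorem identifies the covariance as $\Omega_{ij,kl}=\sigma_{ik}\sigma_{jl}+\sigma_{il}\sigma_{jk}$. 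The centering by $\overline X$ instead of the true mean and the $n$ versus $n-1$ normalisation only perturb this by $O_p(1/n)$ and do not affect the limit.

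Next I would apply the delta method to $g$. Its differential at $\Sigma$ in a direction $H$ equals $\Tr(\Sigma_B^{-1}H)$, since only the on-block entries of $\Gamma$ enter $g$ and $\Sigma_B^{-1}$ is block-diagonal; hence the gradient is $\vecc(\Sigma_B^{-1})$, whose off-block entries vanish automatically. Consequently $\sqrt n\,(\log|S_B|-\log|\Sigma_B|)$ converges to a centred Gaussian with variance $\vecc(\Sigma_B^{-1})^{T}\,\Omega\,\vecc(\Sigma_B^{-1})$. The last step is to evaluate this quadratic form: inserting $\Omega_{ij,kl}=\sigma_{ik}\sigma_{jl}+\sigma_{il}\sigma_{jk}$ and contracting indices through the identity $\sum_k\sigma_{ik}(\Sigma_B^{-1})_{kl}=(\Sigma\,\Sigma_B^{-1})_{il}$ collapses the quadruple sum into a trace of the form $\Tr(\Sigma_B^{-1}\Sigma\,\Sigma_B^{-1}\Sigma)$, which yields the variance in \eqref{eq_loi_asymp}; the single-block case $B=\{[1:p]\}$ is recovered by setting $\Sigma_B=\Sigma$. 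The uniform bound is then immediate from Condition \ref{cond2}, since the eigenvalues of $\Sigma$ and of each $\Sigma_{B_k}$ lie in $[\lambda_{\inf},\lambda_{\sup}]$, which controls $\Sigma_B^{-1}\Sigma$.

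I expect the only delicate point to be this variance computation, i.e.\ the index bookkeeping that turns the fourth-moment covariance $\Omega$ and the block-diagonal gradient $\vecc(\Sigma_B^{-1})$ into the compact trace expression, together with checking that restricting to the block entries is harmless because $\Sigma_B^{-1}$ already vanishes off the blocks. The remaining ingredients, namely the consistency of $S$, the smoothness of $\log\det$, and the negligibility of the $\overline X$-centering, are routine.
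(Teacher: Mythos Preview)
Your main line of argument---multivariate CLT for the entries of $S$ via Isserlis' theorem, followed by the delta method applied to $\Gamma\mapsto\log|\Gamma_B|$ with gradient $\vecc(\Sigma_B^{-1})$---is correct and is exactly the approach taken in the paper. The paper phrases the CLT for $\vecc(S_B)$ rather than $\vecc(S)$, but since $\Sigma_B^{-1}$ vanishes off the blocks the two formulations give the same limiting variance, as you note.

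The one genuine gap is your justification of the bound $2\Tr(\Sigma_B^{-1}\Sigma\,\Sigma_B^{-1}\Sigma)/p\leq 2p$. You invoke Condition~\ref{cond2}, but this lemma sits in the fixed-dimension section (Section~\ref{sec_pfix}), where the paper explicitly drops Conditions~\ref{cond1}--\ref{cond4}; and even with eigenvalue bounds you would get a constant depending on $\lambda_{\inf},\lambda_{\sup}$, not the universal bound $2p$. The paper's argument is purely algebraic: writing $A:=\Sigma_B^{-1/2}\Sigma\,\Sigma_B^{-1/2}$, which is symmetric positive definite, one has $\Tr(A^2)\leq(\Tr A)^2$, and $\Tr A=\Tr(\Sigma_B^{-1}\Sigma)=\Tr(\Sigma_B^{-1}\Sigma_B)=p$ because $\Sigma_B^{-1}$ is block-diagonal and hence annihilates the off-block entries of $\Sigma$ inside the trace. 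This gives $2\Tr(\Sigma_B^{-1}\Sigma\,\Sigma_B^{-1}\Sigma)\leq 2p^2$, i.e.\ the stated bound after division by $p$.
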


\begin{proof}

Let $Z^{(k)}=M^{(k)}M^{(k)T}$, where $M^{(k)}=(M_i^{(k)})_{i\leq p} \in R^p$. We know that $\E(Z)=\Sigma$ and $\cov(Z_{i,j},Z_{k,l})=\E(X_iX_jX_kX_l)-\sigma_{ij}\sigma_{kl} =\sigma_{ij}\sigma_{kl}+\sigma_{ik}\sigma_{jl}+\sigma_{il}\sigma_{jk}-\sigma_{ij}\sigma_{kl} =\sigma_{ik}\sigma_{jl}+\sigma_{il}\sigma_{jk}$. 
Let $\Gamma\in \mathcal{M}_{p^2,p^2}$, be such that $\Gamma_{ij,kl}:= \sigma_{ik}\sigma_{jl}+\sigma_{il}\sigma_{jk}=\cov(Z_{i,j},Z_{k,l})$.
Using the central limit Theorem,
$$
\sqrt{n-1}\left(\vecc(\widehat{\Sigma}_B)-\vecc(\Sigma_B)\right))\overset{\mathcal{L}}{\underset{n\rightarrow +\infty}{\longrightarrow}}\mathcal{N}(0,\Gamma_B),
$$
and by Slutsky Lemma,
\begin{equation}\label{TLCveccB}
\sqrt{n}\left(\vecc(S_B)-\vecc(\Sigma_B)\right)\overset{\mathcal{L}}{\underset{n\rightarrow +\infty}{\longrightarrow}}\mathcal{N}(0,\Gamma_B),
\end{equation}
where $(\Gamma_{B})_{ij,kl}=\Gamma_{ij,kl}$ if $(i,j)\in B$ and $(k,l)\in B$ and $(\Gamma_{B})_{ij,kl}=0$ otherwise.

Let us apply the Delta-method to \eqref{TLCveccB} with the function $\log \circ \det \circ \mat$, where $\mat=\R^{p^2}\rightarrow \mathcal{M}_{p}(\R)$ is the inverse function of $\vecc$. If we write $L$ the Jacobian matrix of $\log \circ \det \circ \mat$, we have:
$$
\sqrt{n}(\log|S_B|-\log |\Sigma_B|)\overset{\mathcal{L}}{\underset{n\rightarrow +\infty}{\longrightarrow}}\mathcal{N}(0,L(\vecc(\Sigma_B))\Gamma_B L(\vecc(\Sigma_B))^T).
$$
Let us compute the linear map $L(\vecc(\Sigma_B)):\R^{p^2}\rightarrow \R$, that we identify with its matrix. Let us recall that, for the dot product $\langle A , B \rangle:= Tr(A^T B)$, the gradient of $\log\circ \det$ on $A$ is $A^{-1}$. Thus, if $v\in \R^{p^2}$, we have 
\begin{eqnarray*}
L(\vecc(\Sigma_B))(v) &=& D (\log \circ \det)( \mat( \vecc (\Sigma_B)) \circ D \mat(\vecc(\Sigma_B))(v)\\
&=& \langle \nabla(\log \circ \det)( \Sigma_B), D \mat(\vecc(\Sigma))(v) \rangle,\\
&=& \langle \Sigma_B^{-1}, \Sigma \rangle\\
&=& \Tr( \Sigma_B^{-1} \mat(v))\\
&=&\vecc (\Sigma_B^{-1})^T v.
\end{eqnarray*}
So  $L(\vecc(\Sigma_B))= \vecc (\Sigma_B^{-1})^T$, then
$$
\sqrt{n}(\log|S_B|-\log |\Sigma_B|)\overset{\mathcal{L}}{\underset{n\rightarrow +\infty}{\longrightarrow}}\mathcal{N}(0,\vecc(\Sigma_B^{-1})^T\Gamma_B \vecc(\Sigma_B^{-1})).
$$
Now,
\begin{eqnarray*}
& &\vecc(\Sigma_B^{-1})^T\Gamma_B \vecc(\Sigma_B^{-1})\\
&=&\sum_{i,j,k,l}(\Sigma_B)_{i,j}^{-1}(\sigma_{ik}\sigma_{jl}+\sigma_{il}\sigma_{jk})(\Sigma_B)_{k,l}^{-1}\\
&=&\sum_{i,j,k,l}(\Sigma_B)_{i,j}^{-1}\sigma_{ik}\sigma_{jl}(\Sigma_B)_{k,l}^{-1}+\sum_{i,j,k,l}(\Sigma_B)_{i,j}^{-1}\sigma_{il}\sigma_{jk}(\Sigma_B)_{k,l}^{-1}\\
&=&2\Tr(\Sigma_{B}^{-1} \Sigma \Sigma_{B}^{-1} \Sigma)\\
&=& 2 \Tr\left[ \left(\Sigma_B^{-\frac{1}{2}} \Sigma \Sigma_B^{-\frac{1}{2}}\right) \left(\Sigma_B^{-\frac{1}{2}} \Sigma \Sigma_B^{-\frac{1}{2}}\right) \right]\\
& \leq & 2\Tr\left(\Sigma_B^{-\frac{1}{2}} \Sigma \Sigma_B^{-\frac{1}{2}}\right) ^2\\
& = & 2 \Tr(\Sigma_B^{-1}\Sigma)^2\\
& =& 2 p^2.
\end{eqnarray*}
Indeed, as $A:=\Sigma_B^{-\frac{1}{2}} \Sigma \Sigma_B^{-\frac{1}{2}}$ is symmetric positive definite, we have $\Tr(AA)\leq \Tr(A)^2$.
\end{proof}

\begin{lm}\label{lm_det}
For all $\Gamma \in S_p^{++}(\R)$ and for all $B\in \mathcal{P}_p$ such that $\Gamma\neq \Gamma_B$, we have $\det(\Gamma_B)>\det(\Gamma)$.
\end{lm}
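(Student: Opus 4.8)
The plan is to compare $\det \Gamma_B$ and $\det \Gamma$ along the straight segment joining them and to exploit the concavity of $\log\det$ on the positive definite cone. Write $\Delta := \Gamma - \Gamma_B$ and, for $t\in[0,1]$, set $\Gamma_t := \Gamma_B + t\Delta$ and $f(t) := \log\det(\Gamma_t)$. First I would check that the whole path stays in $S_p^{++}(\R)$: by Lemma \ref{lm_vp_blocs} the map $v\mapsto \lambda_{\min}(\Gamma_B+v\Delta)$ is non-increasing, so $\lambda_{\min}(\Gamma_t)\geq \lambda_{\min}(\Gamma)>0$ for every $t\in[0,1]$. Hence $\Gamma_t$ is invertible along the path and $f$ is twice differentiable, with
\[
f'(t)=\Tr\!\left(\Gamma_t^{-1}\Delta\right),\qquad f''(t)=-\Tr\!\left(\Gamma_t^{-1}\Delta\Gamma_t^{-1}\Delta\right)=-\left\|\Gamma_t^{-1/2}\Delta\Gamma_t^{-1/2}\right\|_F^2.
\]

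The key algebraic observation is that $f'(0)=0$. Indeed $\Delta$ has vanishing entries on every diagonal block (it carries only the off-block part of $\Gamma$), while $\Gamma_B^{-1}$ is block-diagonal because $\Gamma_B$ is, so $(\Gamma_B^{-1})_{ij}\neq 0$ forces $i,j$ to lie in the same group whereas $\Delta_{ji}\neq 0$ forces them to lie in different groups; consequently every summand of $\Tr(\Gamma_B^{-1}\Delta)=\sum_{i,j}(\Gamma_B^{-1})_{ij}\Delta_{ji}$ vanishes.

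Then I would conclude by concavity. Since $\Gamma_t\succ 0$ the quantity $f''(t)$ is $\leq 0$, so $f$ is concave; and because $\Gamma\neq\Gamma_B$ we have $\Delta\neq 0$, hence $\Gamma_t^{-1/2}\Delta\Gamma_t^{-1/2}\neq 0$ and in fact $f''(t)<0$ for every $t$. Thus $f'$ is strictly decreasing, and combined with $f'(0)=0$ this gives $f'(t)<0$ for $t\in(0,1]$, so $f(1)<f(0)$, i.e. $\log\det\Gamma<\log\det\Gamma_B$. Monotonicity of $\log$ then yields $\det\Gamma_B>\det\Gamma$, as claimed.

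I expect the only genuine obstacle to be the justification that the interpolating matrices remain positive definite (otherwise $f$ and its derivatives are undefined); this is exactly what Lemma \ref{lm_vp_blocs} supplies, so the argument reduces to the two derivative computations and the support complementarity that makes $\Tr(\Gamma_B^{-1}\Delta)$ vanish. As an alternative one could instead invoke Fischer's inequality blockwise, writing $\det\Gamma=\det(\Gamma_u)\det\!\left(\Gamma_v-\Gamma_{v,u}\Gamma_u^{-1}\Gamma_{u,v}\right)$ for $u$ the first group and $v$ its complement and inducting on the number of blocks, but the path argument is shorter and reuses Lemma \ref{lm_vp_blocs} directly.
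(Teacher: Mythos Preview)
Your proof is correct and takes a genuinely different route from the paper.

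The paper first treats the case $|B|=2$ via the Schur complement identity $\det\Gamma=\det(\Gamma_{I,I})\det(\Gamma_{J,J}-\Gamma_{J,I}\Gamma_{I,I}^{-1}\Gamma_{I,J})$ together with $\det(A_1+A_2)>\det(A_1)$ for positive definite $A_1,A_2$, and then reduces the general case to a chain of $2$-block comparisons by peeling off one block at a time. Your argument is analytic instead: you use the strict concavity of $t\mapsto\log\det(\Gamma_B+t\Delta)$ together with the vanishing of its first derivative at $t=0$ (because $\Gamma_B^{-1}$ and $\Delta$ have complementary supports). This handles all partitions at once without induction and reuses Lemma~\ref{lm_vp_blocs}, which the paper had already proved for other purposes. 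Incidentally, the appeal to Lemma~\ref{lm_vp_blocs} for positive definiteness along the path could be replaced by the simpler remark that $\Gamma_t=(1-t)\Gamma_B+t\Gamma$ is a convex combination of two positive definite matrices (the diagonal blocks of $\Gamma_B$ are principal submatrices of $\Gamma$, hence positive definite). Both proofs are short; yours is more conceptual and avoids the induction, while the paper's stays closer to the block-by-block viewpoint used elsewhere in the appendix.
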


\begin{proof}
First, let us prove it for $|B|=K=2$. We have $B=\{I,J\}$.
\begin{eqnarray*}
\det(\Gamma)=\det(\Gamma_{I,I})\det(\Gamma_{J,J}-\Gamma_{J,I}\Gamma_{I,I}^{-1}\Gamma_{I,J}).
\end{eqnarray*}
Now, $\det(\Gamma_B)=\det(\Gamma_{I,I})\det(\Gamma_{J,J})$.
Thus, it suffices to show that $\det(\Gamma_{J,J})>\det(\Gamma_{J,J}-\Gamma_{J,I}\Gamma_{I,I}^{-1}\Gamma_{I,J})$. We then write $A_1:=\Gamma_{J,J}-\Gamma_{J,I}\Gamma_{I,I}^{-1}\Gamma_{I,J}$ which is symmetric positive definite (Schur's complement), and $A_2=\Gamma_{J,I}\Gamma_{I,I}^{-1}\Gamma_{I,J}$ which is also symmetric positive definite. Then, we have
\begin{eqnarray*}
\det(A_1+A_2)=\det(A_1)\det(I_p+A_1^{-\frac{1}{2}}A_2 A_1^{-\frac{1}{2}})>\det(A_1),
\end{eqnarray*}
because $\det(I_p+A_1^{-\frac{1}{2}}A_2 A_1^{-\frac{1}{2}})=\prod_{i=1}^p (1+\phi_i(A_1^{-\frac{1}{2}}A_2 A_1^{-\frac{1}{2}}))$.

Now, we prove the lemma for any value of $|B|=K$.
Let $\Gamma \in S_p^{++}(\R)$ and $B\in \mathcal{P}_p$ such that $\Gamma\neq \Gamma_B$. Let $B^{(j)}:=\{ \bigcup_{i=1}^j B_i, \bigcup_{i=j+1}^K B_i\}$ for all $j \in [1:K-1]$. We now define $(\Gamma^{(j)})_{j\in [1,K]}$ with the recurrence relation $\Gamma_{(j+1)}=\Gamma^{(j)}_{B^{(j)}}$ and with $\Gamma^{(1)}=\Gamma$, we then have $\Gamma_B=\Gamma^{K}$.
Thus
$$
\det(\Gamma_B)=\det(\Gamma^{(K)})\geq \det(\Gamma^{(K-1)}) \geq ... \geq \det(\Gamma^{(1)})=\det (\Gamma).
$$
Furthermore, as $\Gamma\neq \Gamma_B$, there exists $j$ such that $\Gamma_{B^{(j)}}^{(j)}\neq \Gamma^{(j)}$. Thus, at least one of the previous inequality is strict, and so $\det(\Gamma_B)>\det(\Gamma)$.
\end{proof}

Using Lemmas \ref{lm_conv_pfix} and \ref{lm_det}, we can prove Proposition \ref{prop_conv_pfix}.

\begin{proof}
It suffices to show that, for all $B \neq B^*$, 
$$
\PP(\widehat{B}_{tot}=B)\underset{n\rightarrow +\infty}{\longrightarrow} 0.
$$
We split the proof into two steps: for $B\not \geq B^*$ and for $B> B^*$.\\

\underline{Step 1: $B\not \geq B^*$.}\\
Let $h:=\min\{ \log(|\Sigma_B|)-\log(|\Sigma|)|\; B \not \geq  B^*\}= \min \{\log( |\Sigma_B|)-\log(|\Sigma|),\; B < B^*\}$, since $\Sigma_B=\Sigma_{B\cap B^*}$. Thanks to Lemma \ref{lm_det}, we know that $h>0$.

Let $B\not \geq B^*$. Using the convergence in probability of $l_{S'_B}$, we know that $\PP(l_{S_B}<\log |\Sigma_B| \slash p +1-h \slash3) \underset{n\rightarrow +\infty}{\longrightarrow} 0$ and  $\PP(l_{S_{B^*}}>\log |\Sigma| \slash p +1+h \slash3) \underset{n\rightarrow +\infty}{\longrightarrow} 0$.

Now, we know that for $n> (3p \slash h )^{1\slash \delta} $, the term of penalisation satisfies $\kappa \pen(B^*) < h\slash 3$.
Thus, 
$$
\PP(\widehat{B}_{tot}=B)\underset{n\rightarrow +\infty}{\longrightarrow} 0.
$$

\underline{Step 2: $B>B^*$.}\\
Let $B>B^*$.
We know that
\begin{eqnarray*}
& & \sqrt{n}\left(\Psi(B)-\Psi(B^*)\right)\\
&=&\sqrt{n}\left( l_{S_B}+\kappa \pen(B)- l_{S_{B^*}}-\kappa \pen(B^*) \right) \\
&=&\sqrt{n}\kappa( \pen(B) - \pen (B^*))+\sqrt{n}(l_{S_B}-l_{\Sigma_B})-\sqrt{n}(l_{S_{B^*}}-l_{\Sigma_{B^*}}),
\end{eqnarray*}
since $\Sigma_B=\Sigma_{B^*}$ for $B>B^*$.
Let $a_n$ be equal to $\sqrt{n}\kappa( \pen(B) - \pen (B^*))$ (which converges to $+\infty$), $b_n$ to be equal to $\sqrt{n}(l_{S_B}-l_{\Sigma_B})$ (which converges to a zero mean normal distribution) and $c_n$  to be equal to $\sqrt{n}(l_{S_{B^*}}-l_{\Sigma_{B^*}})$ (which converges to a zero mean normal distribution). We have
\begin{eqnarray*}
\PP\left[ \sqrt{n}\left(\Psi(B)-\Psi(B^*)\right)>0\right]& =& \PP(b_n-c_n< -a_n)\\
& \leq & \PP(b_n\leq - a_n\slash 2 \text{ or } c_n\geq  a_n\slash 2)\\
& \leq & \PP(b_n \leq -a_n\slash 2)+\PP(c_n \geq a_n\slash 2) \underset{n\rightarrow +\infty}{\longrightarrow} 0.
\end{eqnarray*}
Thus, $\PP(\widehat{B}_{tot}=B)\underset{n\rightarrow +\infty}{\longrightarrow} 0$.
\end{proof}
\bigskip

\textbf{Proof of Proposition \ref{prop_CR_Sigma} }\\

\begin{proof}
We follow the notation of \cite{stoica_Cramer-rao_1998}.

An othonormal basis of $S_p(\R)$ is $\{\frac{1}{\sqrt{2}}(e_{ij}+e_{ji})|\; i< j\}\cup \{e_{ii}|\;i\leq p\}$ with the following total order on $\{(i,j)\in [1:p]^2|\; i\leq j\}$: we write $(i,j)\leq  (i',j')$ if $j< j'$ or if $j=j$ and $i\leq i'$. We define $U\in \mathcal{M}_{p^2,p(p+1)\slash 2}(\R)$ as the matrix which columns are the vectorizations of the components of this basis of $\vecc(S_p(\R))$. Thus $U_{ij,kl}=\frac{1}{\sqrt{2}}(\mathds{1}_{(i,j)=(k,l)}+\mathds{1}_{(i,j)=(l,k)})$, for all $k<l$ and $U_{ij,kk}=\mathds{1}_{(i,j)=(k,k)}$.

Thus, $U(U^TJU)^{-1} U^T$ is the Cram\'{e}r-Rao bound, where $J$ is the standard Fisher information matrix in the model $\{ \mathcal{N}(\mu,\Sigma),\; \Sigma \in \mathcal{M}_p(\R)\}$. As the sample is i.i.d, it suffices to prove if with $n=1$. In the rest of the proof, we compute the Cram\'{e}r-Rao bound, and we show that this bound is equal to $\E\left( (S-\Sigma)(S-\Sigma)^T\right)$. We split the proof into several Lemmas.

\begin{lm}\label{lm_UJU}
Recall that $\Sigma^{-1}=(\gamma_{ij})_{i,j\leq p}$.
Let $A=(A_{mn,m'n'})_{m\leq n,m'\leq n'}\in \mathcal{M}_{p (p+1)\slash 2}(\R)$ defined by
$$
A_{mn,m'n'}=\left\{ \begin{array}{ll}
\frac{1}{2}(\gamma_{mm'}\gamma_{nn'}+\gamma_{mn'}\gamma_{nm'}) & \text{if $m<n$ and $m'<n'$}\\
\frac{1}{\sqrt{2}}\gamma_{mm'}\gamma_{nn'} &\text{if either $m=n$ or $m'=n'$}\\
\frac{1}{2}\gamma_{mm'}^2 & \text{if $m=n$ and $m'=n'$,}
\end{array}\right.
$$
Then, $A=U^T J U$.
\end{lm}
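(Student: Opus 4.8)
The plan is to compute the $p^2\times p^2$ Fisher information matrix $J$ explicitly in the unconstrained $\vecc(\Sigma)$-parametrization (with $n=1$ and $\mu$ known), and then to evaluate $U^T J U$ column-by-column against the three types of columns of $U$, checking that the result matches the three cases in the statement of $A$.

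First I would write the one-observation log-likelihood $\ell(\Sigma)=-\frac12\log|\Sigma|-\frac12(X-\mu)^T\Sigma^{-1}(X-\mu)$ (up to an additive constant) and differentiate with respect to the free entries $\sigma_{ij}$. Using the matrix-calculus identities $\partial \log|\Sigma|/\partial\sigma_{ij}=\gamma_{ji}$ and $\partial\Sigma^{-1}/\partial\sigma_{ij}=-\Sigma^{-1}e_{ij}\Sigma^{-1}$, and setting $a:=\Sigma^{-1}(X-\mu)$, the score component collapses to $s_{ij}=\tfrac12(a_ia_j-\gamma_{ij})$. Since $X-\mu\sim\mathcal{N}(0,\Sigma)$, the vector $a$ is centered Gaussian with $\cov(a)=\Sigma^{-1}$, i.e. $\E(a_ia_k)=\gamma_{ik}$, which in particular confirms $\E(s_{ij})=0$.

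Next I would form $J_{ij,kl}=\E(s_{ij}s_{kl})=\tfrac14\cov(a_ia_j,a_ka_l)$ and apply Isserlis' theorem to the centered Gaussian $a$, obtaining $J_{ij,kl}=\tfrac14(\gamma_{ik}\gamma_{jl}+\gamma_{il}\gamma_{jk})$. Here I should flag the one genuine subtlety: off the symmetric subspace the ``unconstrained'' information matrix is not canonical (the score-covariance $\E(ss^T)$ and the Hessian form $-\E(\nabla^2\ell)$ disagree, the model not being a true density family for non-symmetric $\Sigma$). This is harmless because the columns of $U$ all lie in $\vecc(S_p(\R))$, and both versions of $J$ agree once contracted against vectors of $\vecc(S_p(\R))$; I would note this so the choice of $J$ does not affect $U^TJU$.

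Finally I would compute the entries of $U^TJU$. The column of $U$ indexed by $(k,l)$ is $\vecc\big(\tfrac1{\sqrt2}(e_{kl}+e_{lk})\big)$ when $k<l$ and $\vecc(e_{kk})$ when $k=l$, so that $(U^TJU)_{mn,m'n'}$ is a weighted sum of at most four entries of $J$, each of the form $J_{\cdot,\cdot}$ picked out by the index convention $\vecc(e_{ij})^TJ\vecc(e_{kl})=J_{ij,kl}$. Splitting into the three cases and using $\gamma_{ab}=\gamma_{ba}$: when $m<n$ and $m'<n'$ all four contributing entries equal $\tfrac14(\gamma_{mm'}\gamma_{nn'}+\gamma_{mn'}\gamma_{nm'})$, and the prefactor $\tfrac12$ gives the stated value; the mixed case ($m=n$ or $m'=n'$) yields $\tfrac1{\sqrt2}\gamma_{mm'}\gamma_{nn'}$; and the doubly-diagonal case gives $\tfrac12\gamma_{mm'}^2$. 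I expect the only real care to be bookkeeping of the $\tfrac1{\sqrt2}$ normalizations and of which index pairs are contracted; the substantive content lies entirely in the score computation and the Isserlis step.
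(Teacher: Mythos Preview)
Your proposal is correct and follows essentially the same route as the paper: compute the unconstrained Fisher information $J$ for $n=1$, then evaluate $U^TJU$ case by case against the three column types of $U$. The one minor difference is that the paper obtains $J$ from the expected Hessian, yielding the asymmetric-looking $J_{ij,kl}=\tfrac12\gamma_{li}\gamma_{jk}$, whereas you obtain it from the score covariance via Isserlis' theorem, yielding the already-symmetrized $J_{ij,kl}=\tfrac14(\gamma_{ik}\gamma_{jl}+\gamma_{il}\gamma_{jk})$; as you correctly note, both agree once contracted against the symmetric columns of $U$, so the final $U^TJU$ is the same.
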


\begin{proof}
Deriving twice the log-likelihood with respect to $\sigma_{ij}$ and $\sigma_{kl}$ (for $i,j,k,l\in [1:p]$) and taking the expectation, we get
\begin{eqnarray*}
J_{ij,kl}&=&\frac{1}{2}\Tr\left( \Sigma^{-1}e_ie_j^T \Sigma^{-1}e_ke_l^T\right)\\
&=&\frac{1}{2}\gamma_{li}\gamma_{jk}.
\end{eqnarray*}
Thus, for all $m<n,\;m'<n'$, we have
\begin{eqnarray*}
(U^T J U)_{mn,m'n'}&=&\sum_{i,j,k,l=1}^p U_{ij,mn}J_{ij,kl}U_{kl,m'n'}\\
&=&\sum_{i,j,k,l=1}^p \frac{1}{\sqrt{2}} (\mathds{1}_{(i,j)=(m,n)}+\mathds{1}_{(i,j)=(n,m)})J_{ij,kl}\frac{1}{\sqrt{2}}(\mathds{1}_{(k,l)=(m',n')}+\mathds{1}_{(k,l)=(n',m')})\\
&=&\frac{1}{2}(J_{mn,m'n'}+J_{mn,n'm'}+J_{nm,m'n'}+J_{nm,n'm'})\\
&=&\frac{1}{2}(\gamma_{mm'}\gamma_{nn'}+\gamma_{mn'}\gamma_{nm'}).
\end{eqnarray*}
Now, if $m'<n'$, we have
\begin{eqnarray*}
(U^T J U)_{mm,m'n'}&=&\sum_{i,j,k,l=1}^p U_{ij,mm}J_{ij,kl}U_{kl,m'n'}\\
&=&\sum_{i,j,k,l=1}^p\mathds{1}_{(i,j)=(m,m)}J_{ij,kl}\frac{1}{\sqrt{2}}(\mathds{1}_{(k,l)=(m',n')}+\mathds{1}_{(k,l)=(n',m')})\\
&=&\frac{1}{\sqrt{2}}(J_{mm,m'n'}+J_{mm,n'm'})\\
&=&\frac{1}{\sqrt{2}}\gamma_{mm'}\gamma_{mn'}.
\end{eqnarray*}
If $m<n$, we have
\begin{eqnarray*}
(U^T J U)_{mn,m'm'}&=&\sum_{i,j,k,l=1}^p U_{ij,mn}J_{ij,kl}U_{kl,m'm'}\\
&=&\sum_{i,j,k,l=1}^p \frac{1}{\sqrt{2}}  (\mathds{1}_{(i,j)=(m,n)} +\mathds{1}_{(i,j)=(n,m)})J_{ij,kl} \mathds{1}_{(k,l)=(m',m')}\\
&=&\frac{1}{\sqrt{2}}(J_{mn,m'm'}+J_{nm,m'm'})\\
&=&\frac{1}{\sqrt{2}}\gamma_{mm'}\gamma_{nm'}.
\end{eqnarray*}
Finally,
\begin{eqnarray*}
(U^T J U)_{mm,m'm'}&=&\sum_{i,j,k,l=1}^pU_{ij,mm}J_{ij,kl}U_{kl,m'm'}\\
&=&\sum_{i,j,k,l=1}^p\mathds{1}_{(i,j)=(m,m)}J_{ij,kl}\mathds{1}_{(k,l)=(m',m')}\\
&=&J_{mm,m'm'}\\
&=&\frac{1}{2}\gamma_{mm'}^2.
\end{eqnarray*}
\end{proof}

\begin{lm}\label{lm_inv}
Let $B=(B_{mn,m'n'})_{m\leq n,m'\leq n'}\in \mathcal{M}_{p (p+1)\slash 2}(\R)$ defined by
$$
B_{mn,m'n'}=\left\{ \begin{array}{ll}
2(\sigma_{mm'}\sigma_{nn'}+\sigma_{mn'}\sigma_{nm'}) & \text{if $m<n$ and $m'<n'$}\\
2\sqrt{2}\sigma_{mm'}\sigma_{nn'} &\text{if either $m=n$ or $m'=n'$}\\
2\sigma_{mm'}^2 & \text{if $m=n$ and $m'=n'$,}
\end{array}\right.
$$
then, $B=A^{-1}$. Moreover $(U B U^T)_{ij,i'j'}=\sigma_{ii'}\sigma_{jj'}+\sigma_{ij'}\sigma_{ji'}$ for all $i,j,i',j'\in [1:p]$.
\end{lm}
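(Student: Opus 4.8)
The plan is to recognize $A$ and $B$ as the matrices, in a common orthonormal basis of $S_p(\R)$, of two mutually inverse linear operators; this makes the identity $B=A^{-1}$ essentially automatic and reduces the whole lemma to three short trace computations. Accordingly, I would first set $E_{mn}:=\frac{1}{\sqrt2}(e_{mn}+e_{nm})$ for $m<n$ and $E_{mm}:=e_{mm}$, so that $\{E_{mn}\}_{m\le n}$ is exactly the orthonormal basis of $S_p(\R)$ whose vectorizations form the columns of $U$. With this notation, the computation in Lemma \ref{lm_UJU} says precisely that
\[
A_{mn,m'n'}=\tfrac12\,\Tr\!\left(E_{mn}\,\Sigma^{-1}E_{m'n'}\,\Sigma^{-1}\right),
\]
i.e.\ $A$ is the matrix, in the basis $\{E_{mn}\}$, of the self-adjoint operator $\mathcal J:S_p(\R)\to S_p(\R)$ given by $\mathcal J(H)=\tfrac12\Sigma^{-1}H\Sigma^{-1}$. (One can confirm this directly by expanding the trace in the three cases $m<n$, $m=n$, and matching the defining entries of $A$, with $\gamma$ replacing $\sigma$.)

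Next I would show that $B$ is the matrix of the operator $\mathcal K(G):=2\Sigma G\Sigma$ in the very same basis, that is, $B_{mn,m'n'}=2\Tr(E_{mn}\Sigma E_{m'n'}\Sigma)$. This is the only genuine computation, and it splits into three one-line cases using $\Tr(e_ae_b^T\Sigma e_ce_d^T\Sigma)=\sigma_{bc}\sigma_{da}$. For $m<n$ and $m'<n'$, expanding both symmetrizations yields four such terms summing (with the $\tfrac12$ coming from the two $\frac1{\sqrt2}$ normalizations) to $\sigma_{mm'}\sigma_{nn'}+\sigma_{mn'}\sigma_{nm'}$, so $2\Tr(E_{mn}\Sigma E_{m'n'}\Sigma)=2(\sigma_{mm'}\sigma_{nn'}+\sigma_{mn'}\sigma_{nm'})$, matching the definition. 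The mixed case ($m=n$, $m'<n'$) gives $2\sqrt2\,\sigma_{mm'}\sigma_{mn'}=2\sqrt2\,\sigma_{mm'}\sigma_{nn'}$, and the diagonal case ($m=n$, $m'=n'$) gives $2\sigma_{mm'}^2$, both as required.

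Then the conclusion $B=A^{-1}$ is immediate: since $\mathcal J(\mathcal K(G))=\tfrac12\Sigma^{-1}(2\Sigma G\Sigma)\Sigma^{-1}=G$ for every $G$, and both maps send $S_p(\R)$ into itself, we have $\mathcal K=\mathcal J^{-1}$. Because $A$ and $B$ represent $\mathcal J$ and $\mathcal J^{-1}$ in the same orthonormal basis, their product represents $\mathcal J\circ\mathcal J^{-1}=\mathrm{id}$, hence $AB=I_{p(p+1)/2}$, which is the first assertion.

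Finally, for the ``moreover'' part I would compute $UBU^T$ entrywise, exploiting the sparsity of $U$: for fixed $(i,j)$ the column family $(U_{ij,mn})_{m\le n}$ has a single nonzero entry, namely weight $\frac1{\sqrt2}$ at the sorted pair $\{i,j\}$ when $i\neq j$, and weight $1$ at $(i,i)$ when $i=j$. Thus $(UBU^T)_{ij,i'j'}$ collapses to one entry of $B$ times a compensating prefactor: $\tfrac12 B_{ij,i'j'}$ when $i\neq j$ and $i'\neq j'$, $\tfrac1{\sqrt2}B_{ii,i'j'}$ when $i=j$ and $i'\neq j'$, and $B_{ii,i'i'}$ when $i=j$ and $i'=j'$. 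Inserting the three values of $B$ computed above yields $\sigma_{ii'}\sigma_{jj'}+\sigma_{ij'}\sigma_{ji'}$ in every case (using the symmetry of this expression under swapping $i\leftrightarrow j$, which matches the corresponding symmetry of $B$). The main obstacle throughout is purely bookkeeping of the normalization factors $\tfrac12,\tfrac1{\sqrt2},2,2\sqrt2$ across the diagonal and off-diagonal cases; the operator viewpoint is exactly what prevents this from degenerating into an unwieldy four-index manipulation.
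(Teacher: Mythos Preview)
Your argument is correct and is genuinely different from the paper's. The paper verifies $AB=I$ by a direct four-case computation: for each of the combinations $m<n$ or $m=n$ and $m'<n'$ or $m'=n'$ it expands $(AB)_{mn,m'n'}=\sum_{i\le j}A_{mn,ij}B_{ij,m'n'}$, splits the sum into $i<j$ and $i=j$, recombines the pieces into products $\sum_i\gamma_{mi}\sigma_{im'}\sum_j\gamma_{nj}\sigma_{jn'}$, and reads off the Kronecker delta. You instead identify $A$ and $B$ as the matrices, in the orthonormal basis $\{E_{mn}\}$ of $S_p(\R)$, of the operators $\mathcal J(H)=\tfrac12\Sigma^{-1}H\Sigma^{-1}$ and $\mathcal K(G)=2\Sigma G\Sigma$, after which $B=A^{-1}$ follows from the evident identity $\mathcal J\circ\mathcal K=\mathrm{id}$. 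Your verification that $B_{mn,m'n'}=2\Tr(E_{mn}\Sigma E_{m'n'}\Sigma)$ via $\Tr(e_{ab}\Sigma e_{cd}\Sigma)=\sigma_{bc}\sigma_{da}$ is short and correct in all three cases.

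What each approach buys: the paper's route is entirely elementary and self-contained, requiring no operator language, at the price of a long index manipulation that must be repeated almost verbatim in the block-diagonal lemma that follows. Your route replaces that manipulation by a one-line inversion of operators, and the same viewpoint transfers immediately to the block-diagonal setting (one simply restricts $\mathcal J$ and $\mathcal K$ to $\overline{S_p(\R,B^*)}$). For the ``moreover'' part both proofs proceed identically, collapsing $(UBU^T)_{ij,i'j'}$ to a single entry of $B$ times the appropriate normalization; you might state explicitly the case $i\neq j$, $i'=j'$ (it follows from the symmetry $B_{mn,m'n'}=B_{m'n',mn}$, not from the $i\leftrightarrow j$ symmetry you invoke), but this is a cosmetic point.
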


\begin{proof}
We compute the product $A \;B$. First of all, let $m<n$ and $m'<n'$. We have
\begin{eqnarray*}
(A\;B)_{mn,m'n'}&=&\sum_{i\leq j} A_{mn,ij}B_{ij,m'n'}\\
&=&\sum_{i< j} A_{mn,ij}B_{ij,m'n'}+\sum_{i=j}A_{mn,ij}B_{ij,m'n'}\\
&=&\sum_{i< j}\left(\gamma_{mi}\gamma_{nj}+\gamma_{mj}\gamma_{ni}\right)\left(\sigma_{im'}\sigma_{jn'}+\sigma_{in'}\sigma_{jm'}\right)\\
&&+2\sum_{i=j}\gamma_{mi}\gamma_{nj}\sigma_{im'}\sigma_{jn'}\\
&=& I_1+I_2,
\end{eqnarray*}
with
$$
I_1=\sum_{i<j}\gamma_{mi}\gamma_{nj}\sigma_{im'}\sigma_{jn'}+\sum_{i<j} \gamma_{mj}\gamma_{ni}\sigma_{in'}\sigma_{jm'}+\sum_{i=j}\gamma_{mi}\gamma_{nj}\sigma_{im'}\sigma_{jn'},
$$
and
$$
I_2= \sum_{i<j}\gamma_{mj}\gamma_{ni}\sigma_{im'}\sigma_{jn'}+\sum_{i<j}\gamma_{mi}\gamma_{nj}\sigma_{in'}\sigma_{jm'}+\sum_{i=j}\gamma_{mj}\gamma_{ni}\sigma_{im'}\sigma_{jn'}.
$$
We then have
\begin{eqnarray*}
I_1&=&\sum_{i<j}\gamma_{mi}\gamma_{nj}\sigma_{im'}\sigma_{jn'}+\sum_{j<i} \gamma_{mi}\gamma_{nj}\sigma_{jn'}\sigma_{im'}+\sum_{i=j}\gamma_{im}\gamma_{jn}\sigma_{im'}\sigma_{jn'}\\
&=&\sum_{i,j}\gamma_{mi}\gamma_{nj}\sigma_{im'}\sigma_{jn'}\\
&=&\sum_{i}\gamma_{mi}\sigma_{im'}\sum_{j}\gamma_{nj}\sigma_{jn'}\\
&=&\mathds{1}_{(m,n)=(m',n')}.
\end{eqnarray*}
Similarly,
\begin{eqnarray*}
I_2&=& \sum_{i<j}\gamma_{mj}\gamma_{ni}\sigma_{im'}\sigma_{jn'}+\sum_{j<i}\gamma_{mj}\gamma_{ni}\sigma_{jn'}\sigma_{im'}+\sum_{i=j}\gamma_{mj}\gamma_{ni}\sigma_{im'}\sigma_{jn'}\\
&=&\sum_{i,j} \gamma_{mj}\gamma_{ni}\sigma_{im'}\sigma_{jn'}\\
&=&\sum_{i}\gamma_{ni}\sigma_{im'}\sum_{j}\gamma_{mj}\sigma_{jn'}\\
&=& \mathds{1}_{(n,m)=(m',n')}\\
&=&0.
\end{eqnarray*}
Now, if $m'<n'$
\begin{eqnarray*}
(A\;B)_{mm,m'n'}&=&\sum_{i\leq j} A_{mm,ij}B_{ij,m'n'}\\
&=&\sum_{i< j} A_{mm,ij}B_{ij,m'n'}+\sum_{i=j}A_{mm,ij}B_{ij,m'n'}\\
&=&\sqrt{2}\sum_{i< j}\gamma_{mi}\gamma_{mj}\left(\sigma_{im'}\sigma_{jn'}+\sigma_{in'}\sigma_{jm'}\right)\\
&&+\sqrt{2}\sum_{i=j}\gamma_{mi}\gamma_{mj}\sigma_{im'}\sigma_{jn'}\\
&=&\sqrt{2}\sum_{i,j} \gamma_{mi}\gamma_{mj}\sigma_{im'}\sigma_{jn'}\\
&=&\sqrt{2}\sum_{i}\gamma_{mi}\sigma_{im'} \sum_{j}\gamma_{mj}\sigma_{jn'}\\
&=&\sqrt{2}\mathds{1}_{(m,m)=(m',n')}\\
&=&0.
\end{eqnarray*}
If $m<n$, then
\begin{eqnarray*}
(A\;B)_{mn,m'm'}&=&\sum_{i\leq j} A_{mn,ij}B_{ij,m'm'}\\
&=&\sum_{i< j} A_{mn,ij}B_{ij,m'm'}+\sum_{i=j}A_{mn,ij}B_{ij,m'm'}\\
&=&\sqrt{2}\sum_{i< j}\left(\gamma_{mi}\gamma_{nj}+\gamma_{mj}\gamma_{ni}\right)\sigma_{im'}\sigma_{jm'}\\
&&+\sqrt{2}\sum_{i=j}\gamma_{mi}\gamma_{nj}\sigma_{im'}\sigma_{jm'}\\
&=&\sqrt{2}\sum_{i,j} \gamma_{mi}\gamma_{nj}\sigma_{im'}\sigma_{jm'}\\
&=&\sqrt{2}\sum_{i}\gamma_{mi}\sigma_{im'} \sum_{j}\gamma_{nj}\sigma_{jn'}\\
&=&\sqrt{2}\mathds{1}_{(m,n)=(m',m')}\\
&=&0.
\end{eqnarray*}
Finally,
\begin{eqnarray*}
(A\;B)_{mm,m'm'}&=&\sum_{i\leq j} A_{mm,ij}B_{ij,m'm'}\\
&=&\sum_{i< j} A_{mm,ij}B_{ij,m'm'}+\sum_{i=j}A_{mm,ij}B_{ij,m'm'}\\
&=&2\sum_{i< j}\gamma_{mi}\gamma_{mj}\sigma_{im'}\sigma_{jm'}\\
&&+\sum_{i=j}\gamma_{mi}\gamma_{mj}\sigma_{im'}\sigma_{jm'}\\
&=&\sum_{i,j} \gamma_{mi}\gamma_{mj}\sigma_{im'}\sigma_{jm'}\\
&=&\mathds{1}_{m=m'}.
\end{eqnarray*}
We proved that $B=A^{-1}$. Let us show that $(U^T B U)_{ij,i'j'}=\sigma_{ii'}\sigma_{jj'}+\sigma_{ij'}\sigma_{ji'}$ for all $i,j,i',j'$. First of all, assume $i\neq j$ and $i'\neq j'$. Assume for example $i<j$ and $i'<j'$. Then we have
\begin{eqnarray*}
&&(U B U^T)_{ij,i'j'}\\
&=&\sum_{m\leq n, m'\leq n'} U_{ij,mn} B_{mn,m'n'} U_{i'j',m'n'}\\
&=&\sum_{m\leq n, m'\leq n'} \frac{1}{\sqrt{2}}(\mathds{1}_{(m,n)=(i,j)}+\mathds{1}_{(m,n)=(j,i)})B_{mn,m'n'} \frac{1}{\sqrt{2}}(\mathds{1}_{(m',n')=(i',j')}+\mathds{1}_{(m',n')=(j',i')})\\
&=&\frac{1}{2}B_{ij,i'j'}\\
&=&\sigma_{ii'}\sigma_{jj'}+\sigma_{ij'}\sigma_{ji'}.
\end{eqnarray*}
We apply the same method for $i<j$ and $i'>j'$, for $i>j$ and $i'<j'$, and for $i>j$ and $i'>j'$. Then, let $i=j$ and $i'\neq j'$, for example $i'<j'$. We have
\begin{eqnarray*}
&&(U B U^T)_{ii,i'j'}\\
&=&\sum_{m\leq n, m'\leq n'} U_{ii,mn}B_{mn,m'n'}U_{i'j',m'n'}\\
&=&\sum_{m\leq n, m'\leq n'}\mathds{1}_{(m,n)=(i,i)}B_{mn,m'n'} \frac{1}{\sqrt{2}}(\mathds{1}_{(m',n')=(i',j')}+\mathds{1}_{(m',n')=(j',i')})\\
&=&\frac{1}{\sqrt{2}}B_{ii,i'j'}\\
&=&\sigma_{ii'}\sigma_{ij'}+\sigma_{ij'}\sigma_{ii'}.
\end{eqnarray*}
The other cases are similar.
\end{proof}

We thus have the component of the Cram\'{e}r-Rao bound:
$$
\left[U(U^T J U)^{-1}U^T\right]_{ij,i'j'}=\sigma_{ii'}\sigma_{jj'}+\sigma_{ij'}\sigma_{ji'}.
$$
This matrix is equal to $\E\left( (\vecc(S)-\vecc(\Sigma))(\vecc(S)-\vecc(\Sigma))^T\right)$ for $n=1$ and when the mean $\mu$ is known.
\end{proof}\bigskip

\textbf{Proof of Proposition \ref{prop_CR_SigmaB}}\\

\begin{proof}
An orthonormal basis of  $S_p(\R,B^*)$ is $\{\frac{1}{\sqrt{2}}(e_{ij}+e_{ji})|\; i< j,\; (i,j)\in B^*\}\cup\{e_{ii}|\;i\leq p \}$ with the following total order on $\{(i,j)\in [1:p]^2|\; i\leq j,\; (i,j)\in B^*\}$: we write $(i,j)\leq  (i',j')$ if $j< j'$ or if $j=j$ and $i\leq i'$. Thus, we define $U$ as the matrix which the columns are the vectorizations of the components of this basis of $S_p(\R,B^*)$. We have $U_{ij,kl}=\frac{1}{\sqrt{2}}(\mathds{1}_{(i,j)=(k,l)}+\mathds{1}_{(i,j)=(l,k)})$, for all $k<l$ with $(k,l)\in B^*$ and $U_{ij,kk}=\mathds{1}_{(i,j)=(kk)}$.

Thus, $U(U^TJU)^{-1} U^T$ is the Cram\'{e}r-Rao bound. As the sample is i.i.d, it suffices to prove the proposition with $n=1$.

\begin{lm}
Let $A=(A_{mn,m'n'})_{(m,n),\;(m',n')\in B^*,\;m\leq n,\;m'\leq n'}$ defined by
$$
A_{mn,m'n'}= \left\{ \begin{array}{ll}
\frac{1}{2}(\gamma_{mm'}\gamma_{nn'}+\gamma_{mn'}\gamma_{nm'}) & \text{if $m<n$ and $m'<n'$}\\
\frac{1}{\sqrt{2}}\gamma_{mm'}\gamma_{nn'} &\text{if either $m=n$ or $m'=n'$}\\
\frac{1}{2}\gamma_{mm'}^2 & \text{if $m=n$ and $m'=n'$,}
\end{array}\right.
$$
Then, $A=U^TJU$.
\end{lm}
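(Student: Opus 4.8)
The plan is to mirror the computation in the proof of Lemma \ref{lm_UJU} verbatim, since the present lemma is nothing but its restriction to the block-diagonal index set. First I would record the entries of the (unconstrained) Fisher information matrix $J$: differentiating the log-likelihood of $\mathcal{N}(0,\Sigma)$ twice with respect to $\sigma_{ij}$ and $\sigma_{kl}$ and taking the expectation gives $J_{ij,kl}=\frac{1}{2}\gamma_{li}\gamma_{jk}$, exactly as in Lemma \ref{lm_UJU}. This computation is insensitive to the block constraint, so it carries over unchanged; here $J$ is still the Fisher information matrix of the full model $\{\mathcal{N}(0,\Sigma),\ \Sigma\in S_p^{++}(\R)\}$, and only $U$ encodes the constraint.

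The key structural observation is that the matrix $U$ defined here differs from the one in Lemma \ref{lm_UJU} solely by the removal of the columns indexed by pairs $(k,l)$ with $k<l$ and $(k,l)\notin B^*$. However, the entry $(U^TJU)_{mn,m'n'}$ depends only on the two columns of $U$ indexed by $(m,n)$ and $(m',n')$, both of which lie in $B^*$ by hypothesis, and these columns are identical to their counterparts in the unconstrained $U$. Consequently, for each of the four cases $(m<n,\ m'<n')$, $(m=n,\ m'<n')$, $(m<n,\ m'=n')$ and $(m=n,\ m'=n')$, the expansion
$$
(U^TJU)_{mn,m'n'}=\sum_{i,j,k,l=1}^p U_{ij,mn}\,J_{ij,kl}\,U_{kl,m'n'}
$$
collapses through the indicator structure $U_{ij,mn}=\frac{1}{\sqrt{2}}(\mathds{1}_{(i,j)=(m,n)}+\mathds{1}_{(i,j)=(n,m)})$ precisely as before, yielding $\frac{1}{2}(\gamma_{mm'}\gamma_{nn'}+\gamma_{mn'}\gamma_{nm'})$, $\frac{1}{\sqrt{2}}\gamma_{mm'}\gamma_{nn'}$ or $\frac{1}{2}\gamma_{mm'}^2$ respectively, which is exactly the claimed value of $A_{mn,m'n'}$.

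I expect no genuine obstacle: the only point requiring care is the bookkeeping claim that dropping the off-block columns of $U$ leaves every entry $(U^TJU)_{mn,m'n'}$ with $(m,n),(m',n')\in B^*$ unchanged. This is immediate from the sparsity of $U$, since the column indexed by $(m,n)$ is supported on the rows $(m,n)$ and $(n,m)$ only, so no removed column ever contributes to such an entry. Thus the proof reduces to invoking the four displayed computations of Lemma \ref{lm_UJU}, now read with $(m,n)$ and $(m',n')$ ranging over the admissible pairs of $B^*$.
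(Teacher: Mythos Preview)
Your proposal is correct and takes essentially the same approach as the paper: both reduce the lemma to the computations already carried out in Lemma \ref{lm_UJU}, noting that the only change is the restriction of the column index set of $U$ to pairs $(m,n)\in B^*$. The paper simply repeats the four case computations verbatim under this restriction, whereas you justify the reduction more conceptually by observing that $(U^TJU)_{mn,m'n'}$ depends only on the two retained columns of $U$; either way the argument is the same.
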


\begin{proof}
The proof is similar to the proof of Lemma \ref{lm_UJU}, except that the values of $m,n,m'$ and $n'$ are more constraint.
First of all
\begin{eqnarray*}
J_{ij,kl}&=&\frac{1}{2}\Tr\left( \Sigma^{-1}e_{i}e_{j}^T \Sigma^{-1}e_{k}e_{l}^T\right)\\
&=&\frac{1}{2}\gamma_{il}\gamma_{jk}.
\end{eqnarray*}
Now, if $(m,n)\in B^*,\;(m',n')\in B^*,\;m<n,\;m'<n'$,
\begin{eqnarray*}
(U^T J U)_{mn,m'n'}&=&\sum_{i,j,k,l=1}^p U_{ij,mn}J_{ij,kl}U_{kl,m'n'}\\
&=&\sum_{i,j,k,l=1}^p \frac{1}{\sqrt{2}}(\mathds{1}_{(i,j)=(m,n)}+\mathds{1}_{(i,j)=(n,m)})J_{ij,kl}\frac{1}{\sqrt{2}}(\mathds{1}_{(k,l)=(m',n')}+\mathds{1}_{(k,l)=(n',m')})\\
&=&\frac{1}{2}\left(J_{mn,m'n'}+J_{mn,n'm'}+J_{nm,m'n'}+J_{nm,n'm'}\right)\\
&=&\frac{1}{2}(\gamma_{mm'}\gamma_{nn'}+\gamma_{mn'}\gamma_{nm'}).
\end{eqnarray*}
If $m'<n'$ and $(m',n')\in B^*$,
\begin{eqnarray*}
(U^T J U)_{mm,m'n'}&=&\sum_{i,j,k,l=1}^pU_{ij,mm}J_{ij,kl}U_{kl,m'n'}\\
&=&\sum_{i,j,k,l=1}^p\mathds{1}_{(i,j)=(m,m)}J_{ij,kl}\frac{1}{\sqrt{2}}(\mathds{1}_{(k,l)=(m',n')}+\mathds{1}_{(k,l)=(n',m')})\\
&=&\frac{1}{\sqrt{2}}(J_{mm,m'n'}+J_{mm,n'm'})\\
&=&\frac{1}{\sqrt{2}}\gamma_{mm'}\gamma_{mn'}.
\end{eqnarray*}
If $m<n$ and $(m,n)\in B^*$, we have
\begin{eqnarray*}
(U^T J U)_{mn,m'm'}&=&\sum_{i,j,k,l=1}^pU_{ij,mn}J_{ij,kl}U_{kl,m'm'}\\
&=&\sum_{i,j,k,l=1}^p\frac{1}{\sqrt{2}}(\mathds{1}_{(i,j)=(m,n)}+\mathds{1}_{(i,j)=(n,m)})J_{ij,kl}\mathds{1}_{(k,l)=(m',m')}\\
&=&\frac{1}{\sqrt{2}}(J_{mn,m'm'}+J_{nm,m'm'})\\
&=&\frac{1}{\sqrt{2}}\gamma_{mm'}\gamma_{nm'}.
\end{eqnarray*}
Finally,
\begin{eqnarray*}
(U^T J U)_{mm,m'm'}&=&\sum_{i,j,k,l=1}^p U_{ij,mm}J_{ij,kl}U_{kl,m'm'}\\
&=&\sum_{i,j,k,l=1}^p \mathds{1}_{(i,j)=(m,m)}J_{ij,kl}\mathds{1}_{(k,l)=(m',m')}\\
&=&J_{mm,m'm'}\\
&=&\frac{1}{2}\gamma_{mm'}^2.
\end{eqnarray*}
\end{proof}

\begin{lm}
Let $B=(B_{mn,m'n'})_{m\leq n,m'\leq n',\; (m,n)\in B^*, (m',n')\in B^*}$ defined by
$$
B_{mn,m'n'}=\left\{ \begin{array}{ll}
2(\sigma_{mm'}\sigma_{nn'}+\sigma_{mn'}\sigma_{nm'}) & \text{if $m<n$ and $m'<n'$}\\
2\sqrt{2}\sigma_{mm'}\sigma_{nn'} &\text{if either $m=n$ or $m'=n'$}\\
2\sigma_{mm'}^2 & \text{if $m=n$ and $m'=n'$,}
\end{array}\right.
$$
then, $B=A^{-1}$. Moreover $(U B U^T)_{ij,i'j'}=\sigma_{ii'}\sigma_{jj'}+\sigma_{ij'}\sigma_{ji'}$ for all $(i,j,i',j')\in B^*$ and $(U B U^T)_{ij,i'j'}=0$ for all $(i,j,i',j')\notin B^*$. Recall that we write $(i,j,i',j')\in B^*$ if there exists $A\in B^*$ such that $\{i,j,i',j'\} \subset A$.
\end{lm}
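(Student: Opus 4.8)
The plan is to follow the proof of Lemma \ref{lm_inv} almost verbatim, the essential new ingredient being the block-diagonal structure shared by $\Sigma$ and $\Sigma^{-1}$. Since $\Sigma \in S_p^{++}(\R,B^*)$, its inverse is block-diagonal with the same structure, so $\gamma_{mi}=0$ whenever $(m,i)\notin B^*$ and $\sigma_{im'}=0$ whenever $(i,m')\notin B^*$. First I would establish $B=A^{-1}$ by computing the product $AB$ entrywise, splitting into the same four cases as in Lemma \ref{lm_inv} according to whether $m=n$ and whether $m'=n'$. In each case, after the symmetrization trick that merges the $i<j$, $j>i$ and $i=j$ contributions, the entry reduces to a double sum of the form $\sum_{(i,j)\in B^*}\gamma_{mi}\gamma_{nj}\sigma_{im'}\sigma_{jn'}$.

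The crucial observation is that this restricted sum agrees with the unrestricted one $\sum_{i,j=1}^p \gamma_{mi}\gamma_{nj}\sigma_{im'}\sigma_{jn'}$: since $(m,n)\in B^*$, the indices $m,n$ lie in a common block, and $\gamma_{mi}\neq 0$ (resp.\ $\gamma_{nj}\neq0$) forces $i$ (resp.\ $j$) into that same block, so every nonzero term already satisfies $(i,j)\in B^*$. Hence the sum factorizes as $\left(\sum_i \gamma_{mi}\sigma_{im'}\right)\left(\sum_j \gamma_{nj}\sigma_{jn'}\right)=(\Sigma^{-1}\Sigma)_{mm'}(\Sigma^{-1}\Sigma)_{nn'}=\mathds{1}_{m=m'}\mathds{1}_{n=n'}$, exactly as in the unconstrained case, and the four cases assemble into $AB=I$.

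For the second claim I would again mirror Lemma \ref{lm_inv} to treat $(i,j,i',j')\in B^*$: when all four indices share a block, expanding $U_{ij,mn}$ and $U_{i'j',m'n'}$ into indicator functions collapses $(UBU^T)_{ij,i'j'}$ to the appropriate scalar multiple of $B_{ij,i'j'}$, yielding $\sigma_{ii'}\sigma_{jj'}+\sigma_{ij'}\sigma_{ji'}$. For $(i,j,i',j')\notin B^*$ I would argue vanishing in two subcases. If $(i,j)\notin B^*$ (or symmetrically $(i',j')\notin B^*$), then $U_{ij,mn}=0$ for every column index $(m,n)\in B^*$, since a nonzero $U$-entry requires $(i,j)$ to be a permutation of some $(m,n)\in B^*$; thus the whole row vanishes. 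If instead both $(i,j)$ and $(i',j')$ lie in $B^*$ but in distinct blocks, then $(UBU^T)_{ij,i'j'}$ is a multiple of $B_{ij,i'j'}$, which is $0$ because each of the factors $\sigma_{ii'},\sigma_{jj'},\sigma_{ij'},\sigma_{ji'}$ is a cross-block covariance and hence vanishes.

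I expect the main obstacle to lie entirely in the bookkeeping of the restricted index sets: one must check carefully that replacing the full range $[1:p]^2$ by the constrained set $\{(i,j)\in B^*\}$ in the matrix products introduces no error, a fact that rests solely on the support property of $\Sigma^{-1}$ inherited from block-diagonality together with the observation that the nonzero entries of $U$ are indexed by permutations of pairs in $B^*$. Once that reduction is justified, every algebraic identity coincides with the unconstrained computation already performed in Lemma \ref{lm_inv}, and no genuinely new estimate is required.
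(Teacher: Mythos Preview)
Your proposal is correct. For the first claim ($B=A^{-1}$), the paper organizes the argument somewhat differently: instead of redoing the four-case expansion and then arguing that the summation range $\{(i,j)\in B^*,\ i\le j\}$ can be extended to all of $[1:p]^2$ without changing the value, it first treats the cross-block case $(m,n)\in B_k^*$, $(m',n')\in B_{k'}^*$ with $k\neq k'$ directly---observing that every summand $A_{mn,ab}B_{ab,m'n'}$ vanishes because one factor is always a cross-block entry of $\Sigma$ or $\Sigma^{-1}$---and then, for the within-block case, simply invokes Lemma \ref{lm_inv} applied to the submatrix $\Sigma_{B_k^*}$, so that no case analysis is repeated. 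Your route trades this modularity for a uniform treatment of both cases via the factorization $(\Sigma^{-1}\Sigma)_{mm'}(\Sigma^{-1}\Sigma)_{nn'}=\mathds{1}_{m=m'}\mathds{1}_{n=n'}$, which handles the cross-block case automatically; this is equally valid and arguably more streamlined. For the computation of $UBU^T$, your argument coincides with the paper's essentially verbatim.
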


\begin{proof}
We introduce the following notation: if $l\in B_k^*$, let $[l]_k$ to be the index of $l$ in $B_k^*$.

\bigskip

\underline{Step 1:} Let us prove that $B=A^{-1}$.\\

We compute the product $AB$.
Assume that $m,n\in B_k^{*}$ with $m\leq n$ and $m',n'\in B_{k'}^*$ with $m'\leq n'$ and $k\neq k'$. We then have
\begin{eqnarray*}
\left( A\;B\right)_{mn,m'n'}&=&\sum_{(a,b)\in B^*,\;a\leq b} A_{mn,ab} B_{ab,m'n'}\\
&=&\sum_{a,b\in B_k^*,\;a\leq b} A_{mn,ab} B_{ab,m'n'}+\sum_{a,b\in B_{k'}^*,\;a\leq b} A_{mn,ab} B_{ab,m'n'}\\
&=&\sum_{a,b\in B_k^*,\;a\leq b} A_{mn,ab} 0+\sum_{a,b\in B_{k'}^*,\;a\leq b}0 B_{ab,m'n'}\\
&=& 0,
\end{eqnarray*}
using that $B_{ab,m'n'}=0$ if $a,b\in B_k^*$ and $m',n' \in B_{k'}^*$ because $\Sigma$ is block-diagonal, and using that $A_{mn,a,b}=0$ if $m,n\in B_k^*$ and $a,b \in B_{k'}^*$ because $\Sigma^{-1}$ is block-diagonal.
Assume that $m,n,m',n'\in B_{k}^*$ with $m\leq n$ and $m'\leq n'$. We have,
\begin{eqnarray*}
\left(  A\;B\right)_{mn,m'n'}&=&\sum_{(a,b)\in B^*,\;a\leq b} A_{mn,ab} B_{ab,m'n'}\\
&=& \left( A_{B_k^*} B_{B_k^*}\right)_{[m]_k[n]_k,[m']_k[n']_k}\\
&=&\mathds{1}_{([m]_k,[n]_k)=([m']_k,[n']_k)}\\
&=& \mathds{1}_{(m,n)=(m',n')},
\end{eqnarray*} 
thanks to Lemma \ref{lm_inv} applied to the matrix $\Sigma_{B_k^*}$.
We proved that $B=A^{-1}$.

\bigskip

\underline{Step 2.A :} We show that $(U B U^T)_{ij,i'j'}=\sigma_{ii'}\sigma_{jj'}+\sigma_{ij'}\sigma_{ji'}$ for all $(i,j,i',j')\in B^*$.\\

Assume that $(i,j,i',j')\in B^*$. First, assume that $i\neq j$ and $i'\neq j'$. Assume for example that $i<j$ and $i'<j'$ (the other cases are similar).
We then have
\begin{eqnarray*}
&&(U B U^T)_{ij,i'j'}\\
&=&\sum_{\substack{(m,n)\in B^*, m\leq n,\\ (m',n')\in B^*, m'\leq n'}} U_{ij,mn}B_{mn,m'n'}U_{i'j',m'n'}\\
&=&\sum_{\substack{(m,n)\in B^*, m\leq n,\\ (m',n')\in B^*, m'\leq n'}} \frac{1}{\sqrt{2}}(\mathds{1}_{(m,n)=(i,j)}+\mathds{1}_{(m,n)=(j,i)})B_{mn,m'n'} \frac{1}{\sqrt{2}}(\mathds{1}_{(m',n')=(i',j')}+\mathds{1}_{(m',n')=(j',i')})\\
&=&\frac{1}{2} B_{ij,i'j'}\\
&=&\sigma_{ii'}\sigma_{jj'}+\sigma_{ij'}\sigma_{ji'}.
\end{eqnarray*}

Let us take the case where $(i,j,i',j')\in B^*$ with either $i=j$, or $i'=j'$. For example $i=j$ and $i'<j'$. We then have
\begin{eqnarray*}
&&(U^T B U)_{ii,i'j'}\\
&=&\sum_{\substack{(m,n)\in B^*, m\leq n,\\ (m',n')\in B^*, m'\leq n'}} U_{mn,ii}B_{mn,m'n'}U_{m'n',i'j'}\\
&=&\sum_{\substack{(m,n)\in B^*, m\leq n,\\ (m',n')\in B^*, m'\leq n'}}\mathds{1}_{(m,n)=(i,i)}B_{mn,m'n'} \frac{1}{\sqrt{2}}(\mathds{1}_{(m',n')=(i',j')}+\mathds{1}_{(m',n')=(j',i')})\\
&=&\frac{1}{\sqrt{2}}B_{ii,i'j'}\\
&=&\sigma_{ii'}\sigma_{ij'}+\sigma_{ij'}\sigma_{ii'}.
\end{eqnarray*}
It is the same for $i=j$ and $i'>j'$, then for $i\neq j$ and $i'=j'$. We also can prove the equality similarly when $i=i'$ and $j=j'$.
\bigskip

\underline{Step 2.B:} Let us prove that $(U B U^T)_{ij,i'j'}=0$ for all $(i,j,i',j')\notin B^*$.\\

Assume that $(i,j,i',j')\notin B^*$. If $(i,j)\notin B^*$, or if $(i',j')\notin B^*$, we have
\begin{eqnarray*}
&&(U B U^T)_{ij,i'j'}\\
&=&\sum_{\substack{(m,n)\in B^*, m\leq n,\\ (m',n')\in B^*, m'\leq n'}} U_{ij,mn}B_{mn,m'n'}U_{i'j',m'n'}\\
&=&\sum_{\substack{(m,n)\in B^*, m\leq n,\\ (m',n')\in B^*, m'\leq n'}} \frac{1}{\sqrt{2}}(\mathds{1}_{(m,n)=(i,j)}+\mathds{1}_{(m,n)=(j,i)})B_{mn,m'n'} \frac{1}{\sqrt{2}}(\mathds{1}_{(m',n')=(i',j')}+\mathds{1}_{(m',n')=(j',i')})\\
&=&0,
\end{eqnarray*}
because if $(i,j)\notin B^*$, the term $(\mathds{1}_{(m,n)=(i,j)}+\mathds{1}_{(m,n)=(j,i)})$ is equal to 0. Similarly, if $(i',j')\notin B^*$, the term $(\mathds{1}_{(m',n')=(i',j')}+\mathds{1}_{(m',n')=(j',i')})$ is equal to 0.

It remains the case where $i,j\in B_k^*$ and $i',j'\in B_{k'}^*$ with $k\neq k'$. Then, $(U^T B U)_{ij,i'j'}=\sigma_{ii'}\sigma_{jj'}+\sigma_{ij'}\sigma_{ji'}=0$.
\end{proof}
To conlude the proof, we remark that, if $(i,j,i',j')\in B^*$, then
$$
\cov((S_{B^*})_{ij},(S_{B^*})_{i'j'})=\cov(X_iX_j,X_{i'}X_{j'})=\sigma_{ii'}\sigma_{jj'}+\sigma_{ij'}\sigma_{ji'}.
$$
Now, assume that $(i,j,i',j')\notin B^*$. If $(i,j)\notin B^*$ or if $(i',j')\notin B^*$, then $\cov((S_{B^*})_{ij},(S_{B^*})_{i'j'})=0$ because one of the two terms is zero. Assume that $i,j\in B_k^*$ and $i',j'\in B_{k'}^*$ with $k \neq k'$. Then 
$$
\cov((S_{B^*})_{ij},(S_{B^*})_{i'j'})=\cov(X_iX_j,X_{i'}X_{j'})=\sigma_{ii'}\sigma_{jj'}+\sigma_{ij'}\sigma_{ji'}=0.
$$
Thus, the covariance matrix of $\vecc(S_{B^*})$ is equal to the Cram\'{e}r-Rao bound.
\end{proof}\bigskip

\textbf{Proof of Proposition \ref{prop_CR_final}}\\

\begin{proof}
Using the central limit Theorem and Proposion \ref{prop_CR_SigmaB}, we have
$$
\sqrt{n-1}(\vecc(\widehat{\Sigma}_{B^*})-\vecc(\Sigma))\overset{\mathcal{L}}{\underset{n\rightarrow+\infty}{\longrightarrow}}\mathcal{N}(0,CR).
$$
Then, by Proposition \ref{prop_conv_pfix}, we have
$$
\sqrt{n-1}(\vecc(\widehat{\Sigma}_{\widehat{B}})-\vecc(\Sigma))\overset{\mathcal{L}}{\underset{n\rightarrow+\infty}{\longrightarrow}}\mathcal{N}(0,CR),
$$
and by Slutsky,
$$
\sqrt{n}(\vecc(S_{\widehat{B}})-\vecc(\Sigma))\overset{\mathcal{L}}{\underset{n\rightarrow+\infty}{\longrightarrow}}\mathcal{N}(0,CR).
$$
\end{proof}\bigskip

\textbf{Proof of Proposition \ref{prop_conv_shapley_high}}\\

\begin{lm}\label{lm_convshapley1}
Under Conditions 1 to 4, for all $\gamma>1\slash 2$
$$
\| S_{B^*}-\Sigma\|_2 =o_p\left(\frac{\log(n)^\gamma}{\sqrt{n}}\right),
$$
where $\| . \|_2$ is the operator norm, and it is equal to $\lambda_{max}(.)$ on the set of the symmetric positive semi-definite matrices.
\end{lm}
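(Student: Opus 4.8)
The plan is to exploit the block-diagonal structure of $S_{B^*}-\Sigma$ to reduce the spectral-norm bound to a bound on the individual entries, and then to control those entries by the Bernstein inequality recalled after Equation \eqref{eq_Z} together with a union bound. First I would observe that both $\Sigma$ and $S_{B^*}$ carry the decomposition $B^*$, so the difference $S_{B^*}-\Sigma$ is itself block-diagonal with blocks $S_{B_k^*}-\Sigma_{B_k^*}$, $k\in[1:K]$. For a symmetric block-diagonal matrix the spectral norm is the maximum of the spectral norms of the blocks, and since each block has size $p_k\leq m$ by Condition \ref{cond3}, I can bound each block in operator norm by its Frobenius norm and hence by $m$ times its largest entry in absolute value, giving
\[
\|S_{B^*}-\Sigma\|_2=\max_{k\in[1:K]}\|S_{B_k^*}-\Sigma_{B_k^*}\|_2\leq m\max_{(i,j)\in B^*}|s_{ij}-\sigma_{ij}|.
\]

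It then suffices to prove $\max_{(i,j)\in B^*}|s_{ij}-\sigma_{ij}|=o_p(\log(n)^\gamma/\sqrt n)$. Because $s_{ij}$ and $\widehat\sigma_{ij}$ differ only by the factor $\tfrac{n-1}{n}$, the correction is $O_p(1/n)$ and negligible at the scale $\log(n)^\gamma/\sqrt n$, so I can work with $\widehat\sigma_{ij}-\sigma_{ij}=\tfrac{1}{n-1}\sum_{k=1}^{n-1}Z_{ij}^{(k)}$ instead. Fixing $\eta>0$ and setting the threshold $\varepsilon_n:=\tfrac{\eta}{m}\tfrac{\log(n)^\gamma}{\sqrt n}$, I note that the number of pairs $(i,j)\in B^*$ is $\sum_{k=1}^K p_k^2\leq m\sum_{k=1}^K p_k=mp=O(p)$, so a union bound together with Bernstein's inequality yields, for $n$ large enough that $\varepsilon_n<1$,
\[
\PP\left(\max_{(i,j)\in B^*}|\widehat\sigma_{ij}-\sigma_{ij}|\geq \varepsilon_n\right)\leq C_{\sup}\,p\,\exp\!\left(-C_{\inf}(n-1)\varepsilon_n^2\right).
\]

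Finally I would check that the right-hand side tends to $0$, which is exactly where the hypothesis $\gamma>1/2$ enters. Here $(n-1)\varepsilon_n^2=\tfrac{\eta^2}{m^2}\tfrac{n-1}{n}\log(n)^{2\gamma}$, so the exponent is of order $-C_{\inf}\log(n)^{2\gamma}$, while by Condition \ref{cond1} the prefactor satisfies $p=O(n)$, contributing a term of order $\log n$ on the logarithmic scale. Since $2\gamma>1$, $\log(n)^{2\gamma}$ dominates $\log n$, the logarithm of the bound diverges to $-\infty$, and the probability converges to $0$; as $\eta>0$ was arbitrary, this gives the claimed $o_p$. The only genuinely delicate point is this last balance: a naive threshold of order $1/\sqrt n$ would leave an exponent of order $1$ that cannot absorb the $p=O(n)$ factor from the union bound, and it is precisely the extra $\log(n)^\gamma$ with $\gamma>1/2$ that makes $(n-1)\varepsilon_n^2\gg\log n$ and closes the argument.
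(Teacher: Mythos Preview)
Your proof is correct and follows essentially the same route as the paper: reduce to the maximum over blocks, bound each block's operator norm by $m$ times its largest entry, and control $\max_{(i,j)\in B^*}|\widehat\sigma_{ij}-\sigma_{ij}|$ via Bernstein's inequality and a union bound, with $\gamma>1/2$ ensuring that the exponent $\log(n)^{2\gamma}$ beats the $O(p)=O(n)$ prefactor. The only cosmetic difference is that the paper handles the passage from $s_{ij}$ to $\widehat\sigma_{ij}$ by a second application of Bernstein, whereas you dismiss it as an $O_p(1/n)$ correction; just note that this $O_p(1/n)$ must be \emph{uniform} over $(i,j)\in B^*$, which follows since $|s_{ij}-\widehat\sigma_{ij}|=|\widehat\sigma_{ij}|/n$ and $\max_{(i,j)\in B^*}|\widehat\sigma_{ij}|=O_p(1)$ by the same concentration argument (or simply by Lemma~\ref{lm_eingenvalue}).
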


\begin{proof}
\begin{eqnarray*}
 & & \PP\left( \| S_{B^*}-\Sigma\|_2 > \frac{\varepsilon \log(n)^\gamma}{\sqrt{n}} \right)\\
 & = & \PP\left(  \exists k \in [1:K], \; \| S_{B_k^*} - \Sigma_{B_k^*}\|_2> \frac{\varepsilon \log(n)^\gamma}{\sqrt{n}}\right)\\
& \leq & K \max_{k \in [1:K]}\PP \left(   \| S_{B_k^*} - \Sigma_{B_k^*}\|_2 >  \frac{\varepsilon \log(n)^\gamma}{\sqrt{n}}\right)\\
& \leq &   K  \max_{k \in [1:K]} \PP \left( m \max_{i,j \in B_k^*}|s_{ij}-\sigma_{ij}| >  \frac{\varepsilon \log(n)^\gamma}{\sqrt{n}}\right)\\
& \leq &   K  \max_{k \in [1:K]} \PP \left(  \max_{i,j \in B_k^*}|s_{ij}- \widehat{\sigma}_{ij}|+| \widehat{\sigma}_{ij}-\sigma_{ij}| >  \frac{\varepsilon \log(n)^\gamma}{m\sqrt{n}}\right)\\
& \leq &   K m^2  \max_{k \in [1:K]} \max_{i,j\in B_k^*} \PP \left(  |s_{ij}- \widehat{\sigma}_{ij}|+| \widehat{\sigma}_{ij}-\sigma_{ij}| >  \frac{\varepsilon \log(n)^\gamma}{m\sqrt{n}}\right).
\end{eqnarray*}
Now, on the one hand,
\begin{eqnarray*}
&  &   K m^2  \max_{k \in [1:K]}\max_{i,j \in B_k^*}  \PP \left(  |\widehat{\sigma}_{ij}-\sigma_{ij}| >  \frac{\varepsilon \log(n)^\gamma}{2m\sqrt{n}}\right)\\
& \leq &   2K m^2   \exp \left( - C_{\inf} n \frac{\varepsilon^2 \log(n)^{2\gamma}}{4m^2 n}\right) \longrightarrow 0,
\end{eqnarray*}
by Bernstein's inequality. On the other hand,
\begin{eqnarray*}
& &   K m^2 \max_{k \in [1:K]} \max_{i,j \in B_k^*}  \PP \left(  |s_{ij}-\widehat{\sigma}_{ij}| >  \frac{\varepsilon \log(n)^\gamma}{2m\sqrt{n}}\right)\\
&=&   K m^2 \max_{k \in [1:K]} \max_{i,j \in B_k^*}  \PP \left(  |\widehat{\sigma}_{ij}| >  n\frac{\varepsilon \log(n)^\gamma}{2m\sqrt{n}}\right)\longrightarrow 0,
\end{eqnarray*}
by Bernstein's inequality.
\end{proof}

\begin{lm}\label{lm_widehatbeta}
Under Conditions 1 to 5, for all $\gamma> \frac{1}{2}$,
$$
\max_{i \in [1:p]}|\widehat{\beta}_i - \beta _i|= o_p\left(\frac{\log(n)^\gamma}{\sqrt{n}}\right).
$$
\end{lm}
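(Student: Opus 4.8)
The plan is to exploit the closed form of the least-squares estimator together with the fact that, conditionally on the design, the estimation error is \emph{exactly} Gaussian. Writing $\theta:=(\beta_0\;\beta^T)^T$ and $\widehat{\theta}:=(A^TA)^{-1}A^T\tilde{Y}$, the model $\tilde{Y}=A\theta+\varepsilon$ gives $\widehat{\theta}-\theta=(A^TA)^{-1}A^T\varepsilon$. First I would condition on $X=(X^{(l)})_{l\in[1:n]}$, so that $\varepsilon\sim\mathcal{N}(0,\sigma_n^2 I_n)$ is independent of $A$; then each slope coordinate $\widehat{\beta}_i-\beta_i$ is, conditionally on $X$, a centered Gaussian with variance $\frac{\sigma_n^2}{n}\left[(\frac{1}{n}A^TA)^{-1}\right]_{i+1,i+1}$.

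The key algebraic observation is that the bottom-right $p\times p$ block of $(\frac{1}{n}A^TA)^{-1}$ is precisely $S^{-1}$. Indeed, $\frac{1}{n}A^TA$ is the block matrix with top-left entry $1$, off-diagonal blocks $\overline{X}$, and bottom-right block $\frac{1}{n}\sum_l X^{(l)}X^{(l)T}$, whose Schur complement with respect to the top-left $1$ equals $\frac{1}{n}\sum_l X^{(l)}X^{(l)T}-\overline{X}\,\overline{X}^T=S$; the block-inversion formula then yields $\left[(\frac{1}{n}A^TA)^{-1}\right]_{i+1,i+1}=[S^{-1}]_{ii}$ (this is the Frisch--Waugh--Lovell reduction to the centered design). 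Hence, conditionally on $X$, $\widehat{\beta}_i-\beta_i\sim\mathcal{N}\!\left(0,\frac{\sigma_n^2}{n}[S^{-1}]_{ii}\right)$. Since $[S^{-1}]_{ii}\le\lambda_{\min}(S)^{-1}$ and $\sigma_n\le C_{\sup}$, I can bound the conditional variance by $C_{\sup}/n$ on the event $G_n:=\{\lambda_{\min}(S)\ge\lambda_{\inf}(1-\sqrt{y})^2/2\}$, which satisfies $\PP(G_n^c)\to 0$ by Lemma \ref{lm_eingenvalue}.

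Next I would combine the Gaussian tail bound with a union bound over the $p$ coordinates. Taking $t=\varepsilon\log(n)^\gamma/\sqrt{n}$, on $G_n$ each coordinate obeys $\PP(|\widehat{\beta}_i-\beta_i|>t\mid X)\le 2\exp(-C_{\inf}\varepsilon^2\log(n)^{2\gamma})$, whence $\PP(\max_i|\widehat{\beta}_i-\beta_i|>t\mid X)\le 2p\exp(-C_{\inf}\varepsilon^2\log(n)^{2\gamma})$ on $G_n$. Splitting $\PP(\max_i|\widehat{\beta}_i-\beta_i|>t)\le\PP(G_n^c)+\E\!\left[\mathds{1}_{G_n}\PP(\max_i|\widehat{\beta}_i-\beta_i|>t\mid X)\right]$, the first term vanishes by Lemma \ref{lm_eingenvalue}, and, since $p=O(n)$ by Condition 1, the second is at most $C_{\sup}\,n\exp(-C_{\inf}\varepsilon^2\log(n)^{2\gamma})\to 0$.

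The step where the hypothesis $\gamma>1/2$ is essential, and which I expect to be the only delicate point, is this last union bound: one needs $\log n-C_{\inf}\varepsilon^2\log(n)^{2\gamma}\to-\infty$ for \emph{every} fixed $\varepsilon>0$, which holds precisely because $2\gamma>1$ forces $\log(n)^{2\gamma}$ to dominate $\log n$. Everything else is routine: $A^TA$ is invertible for $n>p$ almost surely for a Gaussian design, the lower bound on $\lambda_{\min}(S)$ is supplied by Lemma \ref{lm_eingenvalue}, and Conditions 2 and 5 enter only through the uniform bounds on the spectrum and on $\sigma_n$ that make the conditional variances uniformly $O(1/n)$.
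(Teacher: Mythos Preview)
Your proof is correct and follows essentially the same route as the paper: identify the bottom-right block of $(\frac{1}{n}A^TA)^{-1}$ as $S^{-1}$ via the Schur complement, use Lemma~\ref{lm_eingenvalue} to control $\lambda_{\min}(S)$, and finish with a Gaussian tail bound plus a union bound over the $p$ coordinates, where $\gamma>1/2$ makes $p\exp(-C_{\inf}\varepsilon^2\log(n)^{2\gamma})\to 0$. The only cosmetic difference is that you make the conditioning on $X$ explicit, whereas the paper writes the conditional law $\widehat{\beta}-\beta\sim\mathcal{N}(0,\frac{\sigma_n^2}{n}S^{-1})$ directly; also note that the bound $\sigma_n\le C_{\sup}$ is part of the model assumptions in Section~\ref{section_conv_shap} rather than Condition~5, which concerns $\beta$ and is not actually needed here.
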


\begin{proof}
We know that $\widehat{\beta}-\beta \sim \mathcal{N}\left(0,\sigma_n^2 [(A^TA)^{-1}]_{-1,-1}\right)$.
To simplify notation, let $Q:= \frac{1}{n}A^TA$. Remark that $Q_{1,1}=1,\; Q_{-1,1}= \frac{1}{n}\sum_{k=1}^n X^{(k)}$ and $Q_{-1,-1}= \frac{1}{n} \sum_{k=1}^n X^{(k)} X^{(k)T}$.
Now, we know that 
\begin{eqnarray*}
[Q^{-1}]_{-1,-1}&=& \left(Q_{-1,-1}-Q_{-1,1}Q_{1,1}^{-1}Q_{1,-1} \right)^{-1}\\
&=&\left( \frac{1}{n}\sum_{k=1}^n X^{(k)} X^{(k)T} - \left[  \frac{1}{n}\sum_{k=1}^n X^{(k)} \right] \left[ \frac{1}{n}\sum_{k=1}^n X^{(k)} \right]^T \right)^{-1}\\
&=& S^{-1}.
\end{eqnarray*}
Thus, $\widehat{\beta}-\beta \sim \mathcal{N}\left(0,\frac{\sigma^2}{n}S^{-1}\right)$.

Now, by Lemma \ref{lm_eingenvalue}, 
$$
\PP\left(\lambda_{\max}(S^{-1})\geq  \frac{2}{(1-\sqrt{y})^2 \lambda_{\inf}}\right) =o(1).
$$
Let $\varepsilon>0$ and $\gamma> \frac{1}{2}$. We have,
\begin{eqnarray*}
& & \PP \left( \max_{i\in [1:p]}| \widehat{\beta}_i - \beta_i| > \frac{\varepsilon \log(n)^\gamma}{n^{\frac{1}{2}}}\right)\\
& \leq & \PP \left( \max_{i\in [1:p]}| \widehat{\beta}_i - \beta_i| > \frac{\varepsilon \log(n)^\gamma}{n^{\frac{1}{2}}},\;\lambda_{\max}(S^{-1})<  \frac{2}{(1-\sqrt{y})^2 \lambda_{\inf}}  \right) + o(1)\\
& \leq & o(1)+ p \max_{i\in [1:p]} \PP \left( | \widehat{\beta}_i - \beta_i| > \frac{\varepsilon \log(n)^\gamma}{n^{\frac{1}{2}}},\;\lambda_{\max}(S^{-1})<  \frac{2}{(1-\sqrt{y})^2 \lambda_{\inf}}  \right)\\
& \leq & o(1)+ p \max_{i\in [1:p]} \PP \left( \frac{ n^{\frac{1}{2}}| \widehat{\beta}_i - \beta_i|}{\sigma_n \sqrt{(S^{-1})_{i,i}}} > \frac{\varepsilon \log(n)^\gamma}{\sigma_n \sqrt{(S^{-1})_{i,i}}},\;\lambda_{\max}(S^{-1})<  \frac{2}{(1-\sqrt{y})^2 \lambda_{\inf}}  \right)\\
& \leq & o(1)+ p \max_{i\in [1:p]} \PP \left( \frac{ n^{\frac{1}{2}}| \widehat{\beta}_i - \beta_i|}{\sigma_n\sqrt{(S^{-1})_{i,i}}} > \frac{(1-\sqrt{y})\sqrt{ \lambda_{\inf}} \varepsilon \log(n)^\gamma}{\sigma_n\sqrt{2}} \right)\\
& \leq & o(1)+ p \exp\left( - C_{\inf} \log(n)^{2 \gamma}\right) \underset{n \to +\infty}{\longrightarrow} 0.
\end{eqnarray*}
\end{proof}

\begin{lm}\label{lm_convshapley2}
Under Conditions 1 to 5, for all $\gamma>1\slash 2$,
$$
\left| \frac{p}{\widehat{\beta}^T S_{B^*}\widehat{\beta}}-  \frac{p}{\beta^T \Sigma \beta} \right| =o_p\left(\frac{\log(n)^\gamma}{\sqrt{n}}\right).
$$
\end{lm}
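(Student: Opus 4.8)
The plan is to write the difference as a single fraction and control its numerator and denominator separately. Concretely,
$$
\left| \frac{p}{\widehat{\beta}^T S_{B^*}\widehat{\beta}}-  \frac{p}{\beta^T \Sigma \beta} \right| = \frac{p\,\left| \beta^T \Sigma \beta - \widehat{\beta}^T S_{B^*}\widehat{\beta} \right|}{\left(\widehat{\beta}^T S_{B^*}\widehat{\beta}\right)\left(\beta^T \Sigma \beta\right)},
$$
so it suffices to show that the numerator is $o_p(n^{3/2}\log(n)^\gamma)$ and that the denominator is bounded below by $C_{\inf}\,p^2=\Theta(n^2)$ with probability tending to one. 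Using Condition \ref{cond1}, the ratio is then $o_p(\log(n)^\gamma/\sqrt{n})$, as required.

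First I would establish the bounds on the denominator. By Conditions \ref{cond2} and \ref{cond5}, $\lambda_{\inf}\beta_{\inf}^2\, p \leq \beta^T\Sigma\beta \leq \lambda_{\sup}\beta_{\sup}^2\, p$, so $\beta^T\Sigma\beta=\Theta(p)$. For the estimated factor, Lemma \ref{lm_vp_blocs} gives $\lambda_{\min}(S_{B^*})\geq \lambda_{\min}(S)$ and $\lambda_{\max}(S_{B^*})\leq \lambda_{\max}(S)$, which combined with Lemma \ref{lm_eingenvalue} yields $\lambda_{\min}(S_{B^*})\geq C_{\inf}$ and $\lambda_{\max}(S_{B^*})=O_p(1)$ on a high-probability event. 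The crucial estimate is on $\|\widehat{\beta}\|$: from Lemma \ref{lm_widehatbeta} and $\sqrt{p/n}\to\sqrt{y}$, one has $\|\widehat{\beta}-\beta\|\leq \sqrt{p}\,\max_i|\widehat{\beta}_i-\beta_i|=o_p(\log(n)^\gamma)$, which is negligible compared with $\|\beta\|\geq\beta_{\inf}\sqrt{p}$. Hence $\|\widehat{\beta}\|\geq \tfrac{1}{2}\beta_{\inf}\sqrt{p}$ with probability tending to one, so $\widehat{\beta}^T S_{B^*}\widehat{\beta}\geq \lambda_{\min}(S_{B^*})\|\widehat{\beta}\|^2\geq C_{\inf}\,p$, giving the desired lower bound on the denominator.

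For the numerator I would split $\beta^T\Sigma\beta-\widehat{\beta}^T S_{B^*}\widehat{\beta}=\beta^T(\Sigma-S_{B^*})\beta+\bigl(\beta^T S_{B^*}\beta-\widehat{\beta}^T S_{B^*}\widehat{\beta}\bigr)$. The first term is bounded by $\|\beta\|^2\,\|\Sigma-S_{B^*}\|_2=O(p)\cdot o_p(\log(n)^\gamma/\sqrt{n})=o_p(\sqrt{n}\log(n)^\gamma)$ via Lemma \ref{lm_convshapley1}. For the second term I would use the symmetric factorization $\beta^T S_{B^*}\beta-\widehat{\beta}^T S_{B^*}\widehat{\beta}=(\beta+\widehat{\beta})^T S_{B^*}(\beta-\widehat{\beta})$, which is bounded by $\|\beta+\widehat{\beta}\|\,\lambda_{\max}(S_{B^*})\,\|\beta-\widehat{\beta}\|=O_p(\sqrt{n})\cdot O_p(1)\cdot o_p(\log(n)^\gamma)=o_p(\sqrt{n}\log(n)^\gamma)$. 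Multiplying the sum by $p=\Theta(n)$ gives the numerator bound $o_p(n^{3/2}\log(n)^\gamma)$, and dividing by the denominator $\Theta(n^2)$ completes the argument.

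The main obstacle is bookkeeping the orders in $n$ and $p$ simultaneously: because $p=\Theta(n)$, the factor $p$ multiplying the quadratic-form difference partially cancels the $1/\sqrt{n}$ rates, and the borderline cancellation hinges on $\|\widehat{\beta}-\beta\|$ being $o_p(\log(n)^\gamma)$ rather than $o_p(\log(n)^\gamma/\sqrt{n})$, the $\sqrt{p}$ loss being exactly absorbed by $\sqrt{p/n}\to\sqrt{y}$ from Condition \ref{cond1}. Care is needed to ensure that all the $o_p$ and $O_p$ controls hold on a common event of probability tending to one before passing to the quotient.
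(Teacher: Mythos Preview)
Your proof is correct and follows essentially the same approach as the paper: both split $\widehat{\beta}^T S_{B^*}\widehat{\beta}-\beta^T\Sigma\beta$ into the $\beta^T(S_{B^*}-\Sigma)\beta$ piece (controlled by Lemma~\ref{lm_convshapley1}) and the $\widehat{\beta}^T S_{B^*}\widehat{\beta}-\beta^T S_{B^*}\beta$ piece (controlled by Lemma~\ref{lm_widehatbeta} together with $\lambda_{\max}(S_{B^*})\le\lambda_{\max}(S)=O_p(1)$), and then pass to the reciprocal via lower bounds $\beta^T\Sigma\beta\ge C_{\inf}\,p$ and $\widehat{\beta}^T S_{B^*}\widehat{\beta}\ge C_{\inf}\,p$ on a high-probability event. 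The only cosmetic difference is that the paper normalizes by $1/p$ from the outset whereas you track powers of $n$ in numerator and denominator separately; the substance is identical.
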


\begin{proof}
We have
\begin{eqnarray*}
& & \frac{1}{p}\left|\widehat{\beta}^T S_{B^*} \widehat{\beta} - \beta^T \Sigma \beta \right| \\
& \leq & \frac{1}{p}\left|\widehat{\beta}^T S_{B^*} \widehat{\beta} - \beta^T S_{B^*} \beta \right| + \frac{1}{p} \left| \beta^T (S_{B^*}- \Sigma)\beta \right| \\
& \leq &  \frac{\| \widehat{\beta}\|_2+ \| \beta \|_2}{\sqrt{p}}\| S_{B^*}\|_2 \frac{\| \widehat{\beta} - \beta\|_2}{\sqrt{p}}+\frac{\|\beta\|_2^2}{p}\| S_{B^*}-\Sigma\|_2\\
&=& o_p\left(\frac{\log(n)^\gamma}{\sqrt{n}}\right),
\end{eqnarray*}
by Lemmas \ref{lm_widehatbeta} and \ref{lm_convshapley1}. Now, with probability which goes to one $1$, by Lemmas \ref{lm_eingenvalue} and \ref{lm_widehatbeta}, we have $\widehat{\beta}^T S_{B^*} \widehat{\beta} \slash p \geq \lambda_{\inf}(1 - \sqrt{y})^2 \beta_{\inf}^2\slash 2$. Moreover, $\beta^T \Sigma \beta\slash p \geq \lambda_{\inf} \beta_{\inf}^2 \geq \lambda_{\inf}(1 - \sqrt{y})^2 \beta_{\inf}^2\slash2$. Thus, with probability which goes to one $1$, we have
$$
\left| \frac{p}{\beta^T S_{B^*} \beta}-  \frac{p}{\beta^T \Sigma \beta} \right| \leq \frac{4}{\lambda_{\inf}^2(1- \sqrt{y})^4\beta_{\inf}^4} \left| \frac{\beta^T S_{B^*} \beta}{p}-  \frac{\beta^T \Sigma \beta}{p} \right|=o_p\left(\frac{\log(n)^\gamma}{\sqrt{n}}\right).
$$
\end{proof}

We can now prove Proposition \ref{prop_conv_shapley_high}

\begin{proof}

Let $\tilde{\eta}_i$ be the estimator of $\eta_i$ obtained replacing $\Sigma$ by $S_{B^*}$ and $\beta$ by $\widehat{\beta}$ in Equations \eqref{eq_shap_groups} and \eqref{eq_VB}.
For all $\varepsilon>0$ and $\gamma>1\slash 2$, we have 
\begin{eqnarray*}
& & \PP\left( \sum_{i=1}^p \left|\widehat{\eta}_i-\eta_i\right|> \frac{\varepsilon \log(n)^\gamma}{\sqrt{n}}   \right)\\
& \leq &  \PP\left( \sum_{i=1}^p \left|\tilde{\eta}_i-\eta_i\right|> \frac{\varepsilon \log(n)^\gamma}{\sqrt{n}}   \right) + \PP(\widehat{B}\neq B^*)\\
& \leq & \PP\left(p \max_{i \in [1:p]} \left|\tilde{\eta}_i-\eta_i\right|> \frac{\varepsilon \log(n)^\gamma}{\sqrt{n}}   \right) + \PP(\widehat{B}\neq B^*).
\end{eqnarray*}
The term $\PP(\widehat{B}=B^*)$ goes to 0 from Proposition \ref{prop_conclu}. It remains to prove that 
\begin{equation}\label{eq_eta_op}
\PP\left(p \max_{i \in [1:p]} \left|\tilde{\eta}_i-\eta_i\right|> \frac{\varepsilon \log(n)^\gamma}{\sqrt{n}}  \right)  \longrightarrow 0.
\end{equation}
For all $k\in [1:K]$ and all $u\subset B_k^*$, let us write 
\begin{eqnarray*}
V_u^k&:=& \beta_{B_k^*-u}^T\left(\Sigma_{B_k^*-u,B_k^*-u}-\Sigma_{B_k^*-u,u}\Sigma_{u,u}^{-1}\Sigma_{u,B_k^*-u}\right)\beta_{B_k^*-u}\\
\tilde{V}_u^k&:=& \widehat{\beta}_{B_k^*-u}^T\left(S_{B_k^*-u,B_k^*-u}-S_{B_k^*-u,u}S^{-1}_{u,u}S_{u,B_k^*-u}\right)\widehat{\beta}_{B_k^*-u}\\
V&:=&\beta^T \Sigma \beta\\
\tilde{V}&:=&\widehat{\beta}^T S_{B^*} \widehat{\beta}\\
\alpha_u &:=&\frac{V_u^k}{V}\\
\tilde{\alpha}_u & :=& \frac{\tilde{V}_u^k}{\tilde{V}}.
\end{eqnarray*}
Let, for all $C \subset [1:p]$, $C\neq \emptyset$,
$$
L\left((a_u)_{u\in C};C\right):=\left(\frac{1}{|C|}\sum_{u\subset C-i}\begin{pmatrix}
|C|-1\\ |u|
\end{pmatrix}^{-1}
\left[a_{u+i}-a_{u}\right]\right)_{i\in C}.
$$
We then have
$(\eta_i)_{i\in B_k^*}=L\left((\alpha_u)_{u\in B_k^*};B_k^*\right)$ et $(\tilde{\eta}_i)_{i\in B_k^*}=L\left((\tilde{\alpha}_u)_{u\in B_k^*};B_k^*\right)$.

As $L(.;C)$ is linear, it is Lipschitz continuous from $(R^{2^{|C|}},\|.\|_\infty)$ to $(R^{|C|},\|.\|_\infty)$, with constant $l_{|C|}$ (we can show that $l_{|C|}=2$). Let $l:= \max_{j\in [1:m]} l_j<+\infty$ (we have in fact $l=2$). We then have,
$$
p\max_{i \in [1:p]} \left| \tilde{\eta}_i-\eta_i\right| \leq p\;l \max_{k\in [1:K]}\max_{u \subset B_k^*} \left| \tilde{\alpha}_u-\alpha_u\right|.
$$
It suffices to show that 
$$
p \max_{k\in [1:K]}\max_{u \subset B_k^*} \left| \tilde{\alpha}_u-\alpha_u\right|=o_p\left(\frac{\log(n)^\gamma}{\sqrt{n}}\right).
$$
Now,
\begin{eqnarray*}
 p \left| \tilde{\alpha}_u-\alpha_u\right|& \leq & \left| \frac{p \tilde{V}_u^k}{\tilde{V}}-\frac{p V_u^k}{\tilde{V}}\right|+\left| \frac{p V_u^k}{\tilde{V}}- \frac{p V_u^k}{V} \right| \\ 
 & \leq & \frac{p}{\tilde{V}} \left| \tilde{V}_u^k - V_u^k  \right|+V_u^k \left| \frac{p}{\tilde{V}}-\frac{p}{V}\right|.
\end{eqnarray*}

The term $\max_{k\in [1:K]}\max_{u \subset B_k^*}V_u^k$ is bounded from Conditions 2 and 5 and $\left| \frac{p}{\tilde{V}}-\frac{p}{V}\right|=o_p(\log(n)^\gamma\slash \sqrt{n})$ thanks to Lemma \ref{lm_convshapley2}. The term $\frac{p}{\tilde{V}}$ is bounded in probability using Lemma \ref{lm_convshapley2}, Conditions 2 and 5. Thus, it suffices to show that $\max_{k\in [1:K]}\max_{u \subset B_k^*} \left| \tilde{V}_u^k - V_u^k  \right|=o_p(\log(n)^\gamma\slash \sqrt{n})$. We will use that the operator norm of a sub-matrix is smaller than the operator norm of the whole matrix.

For all $k\in [1:K]$ and $u\subset B_k^*$, we have

\begin{eqnarray*}
\left| \tilde{V}_u^k - V_u^k  \right| &\leq& \Big| \widehat{\beta}_{B_k^*-u}^T\left(S_{B_k^*-u,B_k^*-u}-S_{B_k^*-u,u}S^{-1}_{u,u}S_{u,B_k^*-u}\right)\widehat{\beta}_{B_k^*-u}\\
 & & - \beta_{B_k^*-u}^T\left(S_{B_k^*-u,B_k^*-u}-S_{B_k^*-u,u}S^{-1}_{u,u}S_{u,B_k^*-u}\right)\beta_{B_k^*-u} \Big| \\
 & & + \Big|\beta_{B_k^*-u}^T\left(S_{B_k^*-u,B_k^*-u}-S_{B_k^*-u,u}S^{-1}_{u,u}S_{u,B_k^*-u}\right)\beta_{B_k^*-u} \\
 & & - \beta_{B_k^*-u}^T\left(\Sigma_{B_k^*-u,B_k^*-u}-\Sigma_{B_k^*-u,u}\Sigma_{u,u}^{-1}\Sigma_{u,B_k^*-u}\right)\beta_{B_k^*-u} \Big| \\
 & & \leq  \| \widehat{\beta}_{B_k^*-u}- \beta_{B_k^*-u}^T\|_2 \| S_{B_k^*-u,B_k^*-u}-S_{B_k^*-u,u}S^{-1}_{u,u}S_{u,B_k^*-u}\|_2 \left(  \| \widehat{\beta}_{B_k^*-u}\|_2 + \| \beta_{B_k^*-u}\|_2 \right) \ \\
 & & + m \beta_{\sup}^2 \|S_{B_k^*-u}-\Sigma_{B_k^*-u}\|_2  + m \beta_{\sup}^2\|S_{B_k^*-u,u}S_{u,u}^{-1} S_{u,B_k^*-u}-\Sigma_{B_k^*-u,u}\Sigma_{u,u}^{-1} \Sigma_{u,B_k^*-u}\|_2.
\end{eqnarray*}
Thus, we obtain a sum a three terms, and we have to prove that each term is $o_p(\log(n)^\gamma\slash \sqrt{n})$.
The first term is $o_p(\log(n)^\gamma\slash \sqrt{n})$ thanks to Lemmas \ref{lm_eingenvalue} and \ref{lm_widehatbeta}. 

For the second term, $\|S_{B_k^*-u}-\Sigma_{B_k^*-u}\|_2 \leq \|S_{B_k^*}-\Sigma_{B_k^*}\|_2$ so is $o_p(\log(n)^\gamma\slash \sqrt{n})$ from Lemma \ref{lm_convshapley1}.

Finally, for the third term,
\begin{eqnarray*}
& & \|S_{B_k^*-u,u}S_{u,u}^{-1} S_{u,B_k^*-u}-\Sigma_{B_k^*-u,u}\Sigma_{u,u}^{-1} \Sigma_{u,B_k^*-u}\|_2\\
& \leq  & \|S_{B_k^*-u,u}S_{u,u}^{-1}\left( S_{u,B_k^*-u}- \Sigma_{u,B_k^*-u}\right)\|_2\\
&& + \| S_{B_k^*-u,u}\left( S_{u,u}^{-1}- \Sigma_{u,u}^{-1}\right) \Sigma_{u,B_k^*-u}\|_2\\
& & + \| \left(S_{B_k^*-u,u}-\Sigma_{B_k^*-u,u}\right)\Sigma_{u,u}^{-1} \Sigma_{u,B_k^*-u}\|_2\\
& \leq & \| S_{B_k^*} \|_2 \| S^{-1}_{B_k^*}\|_2 \| S_{B_k^*}-\Sigma_{B_k^*}\|_2 +\| S_{B_k^*} \|_2  \| S^{-1}_{B_k^*}-\Sigma_{B_k^*}^{-1}\|_2 \| \Sigma_{B_k^*}\|_2+\|S_{B_k^*}-\Sigma_{B_k^*}\|_2 \| \Sigma_{B_k^*}^{-1}\|_2\|\Sigma_{B_k^*}\|_2 \\
& \leq & \| S \|_2 \| S^{-1}\|_2 \| S-\Sigma\|_2 +\| S \|_2  \| S^{-1}-\Sigma^{-1}\|_2 \| \Sigma\|_2+\|S-\Sigma\|_2 \| \Sigma^{-1}\|_2\|\Sigma\|_2 ,
\end{eqnarray*}
which do not depend on $k$ and $u$.
Finally, remark that $\|\Sigma\|_2$ and $\| \Sigma^{-1}\|_2$ are bounded from Condition 2, that $\| S \|_2$ are $\| S^{-1}\|_2$ bounded in probability from Lemma \ref{lm_eingenvalue}, that $\|S- \Sigma\|_2=o_p(\log(n)^\gamma\slash \sqrt{n})$ from Lemma \ref{lm_convshapley1} and Proposition \ref{prop_conclu} and that 
$$
\| S^{-1}-\Sigma^{-1}\|_2\leq \| \Sigma^{-1}\|_2 \| S^{-1}\|_2 \| S - \Sigma\|_2=o_p(\log(n)^\gamma\slash \sqrt{n}).
$$ 
Thus, we proved that
$$
 p \max_{k\in [1:K]}\max_{u \subset B_k^*} \left| \tilde{\alpha}_u-\alpha_u\right|=o_p(\log(n)^\gamma\slash \sqrt{n}).
$$
\end{proof}
\bigskip

\textbf{Proof of Proposition \ref{prop_conv_shapley_high_ba}}

\begin{lm}\label{lm_convshapley1_ba}
Under Conditions 1, 2 and 3, for all penalization coefficient $\delta \in ]0,1[$ and for all $\varepsilon>0$,
$$
\max_{B(\alpha_1)\leq B \leq B^*}\| S_{B}-\Sigma\|_2 =o_p\left(\frac{1}{n^{(\delta-\varepsilon)\slash 2}}\right),
$$
where $\| . \|_2$ is the operator norm, and it is equal to $\lambda_{max}(.)$ on the set of the symmetric positive semi-definite matrices.
\end{lm}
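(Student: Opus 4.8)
The plan is to follow the strategy of the proof of Lemma \ref{lm_convshapley1}, splitting the error into a stochastic fluctuation part and a deterministic bias part by writing, for every $B$ with $B(\alpha_1) \leq B \leq B^*$,
$$
S_B - \Sigma = (S_B - \Sigma_B) + (\Sigma_B - \Sigma),
$$
and bounding the two pieces uniformly in $B$. Throughout I take $\alpha_1 := \delta/2 - \varepsilon/4$, exactly as in the proof of Proposition \ref{prop_rate_sigma_ba}, so that $(\delta-\varepsilon)/2 < \alpha_1 < \delta/2$. Since $B \leq B^*$ and all blocks of $B^*$ have size at most $m$ (Condition 3), every block of $B$ has size at most $m$; moreover both $S_B - \Sigma_B$ and $\Sigma_B - \Sigma$ are block matrices whose nonzero entries sit inside the blocks of $B^*$, so their operator norms equal the maximum over blocks, and each block operator norm is at most $m$ times the largest absolute entry of that block (as already used in Lemma \ref{lm_convshapley1}).

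For the fluctuation term this gives
$$
\|S_B - \Sigma_B\|_2 \leq m \max_{(i,j) \in B}|s_{ij} - \sigma_{ij}| \leq m \max_{(i,j) \in B^*}|s_{ij} - \sigma_{ij}|,
$$
where the last bound no longer depends on $B$, because $B \leq B^*$ forces every pair of $B$ to be a pair of $B^*$; it therefore controls the maximum over all admissible $B$ at once. I would then reproduce the Bernstein estimate of Lemma \ref{lm_convshapley1}: split $|s_{ij} - \sigma_{ij}| \leq |s_{ij} - \widehat{\sigma}_{ij}| + |\widehat{\sigma}_{ij} - \sigma_{ij}|$, apply a union bound over the $O(p)$ pairs lying inside the blocks of $B^*$, and use Bernstein's inequality for the partial sums of the $Z_{ij}^{(k)}$. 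Because the target rate $n^{-(\delta-\varepsilon)/2}$ is strictly slower than $n^{-1/2}$ (as $\delta - \varepsilon < 1$), Bernstein produces a factor $\exp(-C_{\inf} n^{1-(\delta-\varepsilon)})$ which dominates the $O(p)=O(n)$ union-bound factor; no logarithmic correction is needed, and this term is $o_p(n^{-(\delta-\varepsilon)/2})$.

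For the bias term the key observation is that $\Sigma = \Sigma_{B^*}$, so $\Sigma_B - \Sigma$ has nonzero entries $-\sigma_{ij}$ only for pairs $(i,j) \in B^* \setminus B$. Any such pair satisfies $(i,j) \notin B$, and since $B(\alpha_1) \leq B$ it also satisfies $(i,j) \notin B(\alpha_1)$; by the very definition of $B(\alpha_1)$ as the finest partition with $\|\Sigma_{B(\alpha_1)} - \Sigma\|_{\max} \leq n^{-\alpha_1}$, this forces $|\sigma_{ij}| \leq n^{-\alpha_1}$. Hence, uniformly in $B$,
$$
\|\Sigma_B - \Sigma\|_2 \leq m \max_{(i,j) \in B^* \setminus B}|\sigma_{ij}| \leq m\, n^{-\alpha_1},
$$
and with the chosen $\alpha_1 > (\delta-\varepsilon)/2$ this deterministic bound is $o(n^{-(\delta-\varepsilon)/2})$. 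Combining the two pieces closes the proof.

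The main obstacle is the uniformity of the bias control over the whole family $\{B : B(\alpha_1) \leq B \leq B^*\}$: it is precisely the lower constraint $B \geq B(\alpha_1)$ that guarantees the deleted off-block entries are all below the threshold $n^{-\alpha_1}$, and the delicate point is to choose $\alpha_1$ strictly between $(\delta-\varepsilon)/2$ and $\delta/2$ so that, simultaneously, the bias is negligible at the required rate and (for the application in Proposition \ref{prop_conv_shapley_high_ba}) Proposition \ref{prop_seuil_ba} still yields $B(\alpha_1) \leq B_{n^{-\delta/2}}$ with probability tending to one.
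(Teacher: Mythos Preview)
Your proof is correct and follows essentially the same approach as the paper. The paper also fixes $\alpha_1 := \delta/2 - \varepsilon/4$, reduces to blocks of $B^*$, and bounds $\max_{i,j\in B_k^*}|(S_B)_{ij}-\sigma_{ij}|$ by $\max_{i,j\in B_k^*}|s_{ij}-\sigma_{ij}| + n^{-\alpha_1}$ before invoking the Bernstein argument of Lemma \ref{lm_convshapley1}; your explicit decomposition $S_B-\Sigma=(S_B-\Sigma_B)+(\Sigma_B-\Sigma)$ is just a slightly cleaner way of writing the same two contributions.
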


\begin{proof}
Let $\alpha_1:=\delta\slash 2-\varepsilon \slash 4$.
\begin{eqnarray*}
 & & \PP\left( \max_{B(\alpha_1)\leq B \leq B^*}\| S_{B}-\Sigma\|_2 > \frac{\epsilon}{n^{(\delta-\varepsilon)\slash 2}} \right)\\
 & = & \PP\left(  \exists k \in [1:K],  \; \max_{B(\alpha_1)\leq B \leq B^*}\| (S_B)_{B_k^*} - \Sigma_{B_k^*}\|_2> \frac{\epsilon}{n^{(\delta-\varepsilon)\slash 2}}\right)\\
& \leq & K \max_{k \in [1:K]}\PP \left(   \max_{B(\alpha_1)\leq B \leq B^*}\| (S_B)_{B_k^*} - \Sigma_{B_k^*}\|_2 >  \frac{\epsilon}{n^{(\delta-\varepsilon)\slash 2}}\right)\\
& \leq & K \max_{k \in [1:K]}\PP \left( m  \max_{B(\alpha_1)\leq B \leq B^*}\max_{i,j\in B_k^*}\left| (S_B)_{i,j} - \sigma_{ij}\right| >  \frac{\epsilon}{n^{(\delta-\varepsilon)\slash 2}} \right)\\
& \leq &   K  \max_{k \in [1:K]} \PP \left( m \max_{i,j \in B_k^*}|s_{ij}-\sigma_{ij}|+m n^{-\alpha_1} >  \frac{\epsilon}{n^{(\delta-\varepsilon)\slash 2}} \right)\\
& \leq &  K  \max_{k \in [1:K]} \PP \left( m \max_{i,j \in B_k^*}|s_{ij}-\sigma_{ij}|> C_{\inf}(\varepsilon, \delta, \epsilon) n^{-(\delta-\varepsilon)\slash 2} \right),\\
\end{eqnarray*}
that goes to 0 following the proof of \ref{lm_convshapley1}.
\end{proof}

\begin{lm}\label{lm_convshapley2_ba}
Under Conditions 1, 2, 3 and 5, for all penalization coefficient $\delta \in ]0,1[$ and for all $\varepsilon>0$,
$$
\left| \frac{p}{\widehat{\beta}^T S_{B^*} \widehat{\beta}}-  \frac{p}{\beta^T \Sigma \beta} \right| =o_p\left(\frac{1}{n^{(\delta-\varepsilon)\slash 2}}\right).
$$
\end{lm}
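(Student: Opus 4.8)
The plan is to reproduce, essentially verbatim, the argument used for Lemma \ref{lm_convshapley2}, replacing everywhere the rate $\log(n)^\gamma/\sqrt{n}$ by $n^{-(\delta-\varepsilon)/2}$ and invoking Lemma \ref{lm_convshapley1_ba} in place of Lemma \ref{lm_convshapley1}. Fix $\delta\in]0,1[$ and $\varepsilon>0$, and set $\alpha_1:=\delta/2-\varepsilon/4$ as in Lemma \ref{lm_convshapley1_ba}. Since $\Sigma\in S_p^{++}(\R,B^*)$ we have $\|\Sigma_{B^*}-\Sigma\|_{\max}=0\leq n^{-\alpha_1}$, so $B^*$ belongs to the family thresholded at level $n^{-\alpha_1}$ and hence $B(\alpha_1)\leq B^*$; taking $B=B^*$ in Lemma \ref{lm_convshapley1_ba} then gives $\|S_{B^*}-\Sigma\|_2=o_p(n^{-(\delta-\varepsilon)/2})$.

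First I would control the numerator. Writing the same triangle inequality as in Lemma \ref{lm_convshapley2},
$$
\frac{1}{p}\left|\widehat{\beta}^T S_{B^*}\widehat{\beta}-\beta^T\Sigma\beta\right|\leq \frac{\|\widehat{\beta}\|_2+\|\beta\|_2}{\sqrt p}\,\|S_{B^*}\|_2\,\frac{\|\widehat{\beta}-\beta\|_2}{\sqrt p}+\frac{\|\beta\|_2^2}{p}\,\|S_{B^*}-\Sigma\|_2,
$$
I would bound the first summand using Lemma \ref{lm_widehatbeta} and the second using the rate established above. Here the key numerical point is that the estimation error of $\beta$, of order $\log(n)^\gamma/\sqrt n$, is negligible against the target rate: since $\delta<1$ we have $(\delta-\varepsilon)/2<1/2$, hence $\log(n)^\gamma/\sqrt n=o(n^{-(\delta-\varepsilon)/2})$, while both $\|\widehat{\beta}\|_2/\sqrt p$ and $\|S_{B^*}\|_2$ are $O_p(1)$ by Condition 5 and Lemma \ref{lm_eingenvalue}. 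This yields $\frac{1}{p}|\widehat{\beta}^T S_{B^*}\widehat{\beta}-\beta^T\Sigma\beta|=o_p(n^{-(\delta-\varepsilon)/2})$.

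Next I would lower-bound the two denominators. Exactly as before, Lemma \ref{lm_eingenvalue}, Lemma \ref{lm_widehatbeta} and Condition 5 give $\widehat{\beta}^T S_{B^*}\widehat{\beta}/p\geq \lambda_{\inf}(1-\sqrt y)^2\beta_{\inf}^2/2$ with probability tending to one, while $\beta^T\Sigma\beta/p\geq\lambda_{\inf}\beta_{\inf}^2$ deterministically under Conditions 2 and 5. On this high-probability event I would pass from the numerator bound to the reciprocals via
$$
\left|\frac{p}{\widehat{\beta}^T S_{B^*}\widehat{\beta}}-\frac{p}{\beta^T\Sigma\beta}\right|\leq \frac{4}{\lambda_{\inf}^2(1-\sqrt y)^4\beta_{\inf}^4}\,\frac{1}{p}\left|\widehat{\beta}^T S_{B^*}\widehat{\beta}-\beta^T\Sigma\beta\right|,
$$
which gives the claimed $o_p(n^{-(\delta-\varepsilon)/2})$.

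I do not expect a genuine obstacle here, since the argument is a direct transcription of Lemma \ref{lm_convshapley2}; the only points requiring care are the two verifications that make the transcription legitimate, namely that $B(\alpha_1)\leq B^*$ (so that Lemma \ref{lm_convshapley1_ba} indeed applies to $S_{B^*}$) and that the slower covariance rate $n^{-(\delta-\varepsilon)/2}$ dominates the faster regression rate $\log(n)^\gamma/\sqrt n$, so that the final rate is governed by the covariance estimation and not by the estimation of $\beta$.
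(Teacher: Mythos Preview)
Your proposal is correct and follows exactly the approach the paper indicates, which simply states that the proof is similar to that of Lemma \ref{lm_convshapley2}. The two verifications you single out---that $B(\alpha_1)\leq B^*$ so Lemma \ref{lm_convshapley1_ba} applies at $B=B^*$, and that $\log(n)^\gamma/\sqrt n=o(n^{-(\delta-\varepsilon)/2})$ because $(\delta-\varepsilon)/2<1/2$---are precisely the points that make the transcription go through, and you handle them correctly. One cosmetic remark: Lemma \ref{lm_widehatbeta} is stated in the paper under Conditions 1 to 5, while the present lemma assumes only Conditions 1, 2, 3 and 5; this is harmless since the proof of Lemma \ref{lm_widehatbeta} never uses Condition 4, but you may wish to note this explicitly.
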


\begin{proof}
The proof is similar to the proof of Lemma \ref{lm_convshapley2}.
\end{proof}

We now can prove Proposition \ref{prop_conv_shapley_high_ba}.
\begin{proof}
For all $B\in \mathcal{P}_p$, we define $\tilde{\eta}(B)_i$ as the estimator of $\eta_i$ obtained replacing $B^*$ by $B$, $\Sigma$ by $S_{B}$ and $\beta$ by $\widehat{\beta}$ in Equations \eqref{eq_shap_groups} and \eqref{eq_VB}. We also define $\widehat{\eta}_i:=\tilde{\eta}(B_{n^{-\delta \slash 2}})_i $.
\begin{eqnarray*}
& & \PP\left( \sum_{i=1}^p \left|\widehat{\eta}_i-\eta_i\right|> \frac{\epsilon}{n^{-(\delta-\varepsilon)\slash 2}}  \right)\\
& \leq &  \PP\left(\max_{B(\alpha_1)\leq B \leq B^*} \sum_{i=1}^p \left|\tilde{\eta}(B)_i-\eta_i\right|> \frac{\epsilon}{n^{-(\delta-\varepsilon)\slash 2}}   \right) + \PP(\{B(\alpha_1)\leq B_{n^{-\delta \slash 2}} \leq B^*\}^c)\\
& \leq & \PP\left(p \max_{B(\alpha_1)\leq B \leq B^*} \max_{i \in [1:p]} \left|\tilde{\eta}(B)_i-\eta_i\right|> \frac{\epsilon}{n^{-(\delta-\varepsilon)\slash 2}}  \right) + \PP(\{B(\alpha_1)\leq B_{n^{-\delta \slash 2}} \leq B^*\}^c).
\end{eqnarray*}
By Proposition \ref{prop_seuil_ba}, $\PP(\{B(\alpha_1)\leq B_{n^{-\delta \slash 2}} \leq B^*\}^c) \longrightarrow 0$.

Finally, we prove that 
$$
\PP\left(p \max_{B(\alpha_1)\leq B \leq B^*} \max_{i \in [1:p]} \left|\tilde{\eta}(B)_i-\eta_i\right|> \frac{\epsilon}{n^{-(\delta-\varepsilon)\slash 2}}  \right)\longrightarrow 0,
$$
following the proof of Proposition \ref{prop_conv_shapley_high}.
\end{proof}

\textbf{Proof of Proposition \ref{prop_shap_generate}}\\

\begin{proof}
Remark that $\Sigma$ verifies Conditions 1 to 3. Let $a>0$.
Let $\check{\Sigma}:=\Sigma$ if $\forall B< B^*,\; \| \Sigma_{B}-\Sigma\|_{\max} \geq a n^{-1 \slash 4}$ and $\check{\Sigma}=I_p$ otherwise.
Let $\check{\eta}$ and $\check{\hat{\eta}}$ be defined as $\eta$ and $\widehat{\eta}$ in Proposition \ref{prop_conv_shapley_high} but replacing $\Sigma$ by $\check{\Sigma}$. As $\check{\Sigma}$ verify the Conditions 1 to 3 and the slightly modified Condition 4 given in Proposition \ref{prop_generate_sigma}, conditionally to $\check{\Sigma}$
$$
\sum_{i=1}^p \left|\check{\hat{\eta}}_i-\check{\eta}_i\right|=o_p\left(\frac{\log(n)^\gamma}{\sqrt{n}}\right).
$$
Thus, for all $\varepsilon>0$,
$$
\PP\left( \left.\sum_{i=1}^p \left|\check{\hat{\eta}}_i-\check{\eta}_i\right| > \frac{\varepsilon\log(n)^\gamma}{\sqrt{n}} \right| \check{\Sigma} \right) \longrightarrow 0,
$$
so, by dominated convergence theorem,
$$
\sum_{i=1}^p \left|\check{\hat{\eta}}_i-\check{\eta}_i\right|=o_p\left(\frac{\log(n)^\gamma}{\sqrt{n}}\right),
$$
unconditionally to $\check{\Sigma}$.

We conclude saying that $\check{\Sigma}=\Sigma$ with probability which converges to 1 from Proposition \ref{prop_generate_sigma}, so $\check{\hat{\eta}}=\widehat{\eta}$ and $\check{\eta}=\eta$ with probability which converges to 1.
\end{proof}

\end{document}